\documentclass[12pt]{amsart}
\input{preamble.tex}
\input{figures.tex}
 
\newcommand{\intset}[1]{[1;#1]}
\begin{document}

\author{Catharina Stroppel}
\address{Mathematisches Institut, Universit\"at Bonn, Endenicher Allee 60, 53115 Bonn, Germany
\href{http://www.math.uni-bonn.de/ag/stroppel/}{catharina.stroppel.de}}
\email{stroppel@math.uni-bonn.de}

\author{Paul Wedrich}
\address{Fachbereich Mathematik, Universit\"at Hamburg, 
Bundesstra{\ss}e 55, 
20146 Hamburg, Germany
\href{https://paul.wedrich.at/}{paul.wedrich.at}}
\email{paul.wedrich@uni-hamburg.de}

\title[Braiding on Soergel bimodules: semistrictness and naturality]{Braiding on type A Soergel bimodules:\\ semistrictness and naturality}
\begin{abstract}  
We consider categories of Soergel bimodules for the symmetric groups $S_n$ in
their $\gln$-realizations for all $n$ in $\N_0$ and assemble them into a locally
linear monoidal bicategory. Chain complexes of Soergel bimodules likewise form a
locally dg-monoidal bicategory which can be equipped with the structure of a
braiding, whose data includes the Rouquier complexes of shuffle braids.
The braiding, together with a uniqueness result, was established  in an infinity-categorical setting in recent work with Yu Leon Liu, Aaron Mazel-Gee and David Reutter. 

In the present article, we construct this braiding explicitly and describe its
requisite coherent naturality structure in a concrete dg-model for the
morphism categories. To this end, we first assemble the
Elias--Khovanov--Williamson diagrammatic Hecke categories as well as categories of
chain complexes thereover into locally linear semistrict monoidal
$2$-categories. Along the way, we prove strictness results for certain standard categorical constructions, which may be of independent interest. In a second step, we provide explicit (higher) homotopies for the naturality
of the braiding with respect to generating morphisms of the
Elias--Khovanov--Williamson diagrammatic calculus. Rather surprisingly, we
observe hereby that higher homotopies appear already for height move relations
of generating morphisms. Finally, we extend the homotopy-coherent naturality
data for the braiding to all chain complexes using cohomology-vanishing
arguments. 
\end{abstract}

\maketitle

\setcounter{tocdepth}{2}

\tableofcontents


\section{Introduction}
Many invariants in low-dimensional topology use (type A) braid groups and their
representations. These representations often factor through an action of the corresponding Hecke
algebras of the symmetric groups, particularly in the context of
knot and link invariants. A similar observation applies to categorified topological
invariants, a concept pioneered by Khovanov through his categorification of the
Jones polynomial, \cite{Kho}. His construction can be seen as an instance of
actions of categorified (type~A) Hecke algebras on categories, see
\cite{MR2275632, ICM}.   

Categorifications of Hecke algebras and their actions go back to
Kazhdan--Lusztig \cite{KLHecke} and Soergel \cite{Soergel} in the context of the
Kazhdan--Lusztig conjectures in Lie theory, and led to the introduction of the
(additive monoidal categories of) \emph{Soergel bimodules}. In this paper, we
will primarily consider Soergel bimodules for the symmetric groups $S_n$ in
their $\gln$-realizations. Soergel's construction, which recovers the Hecke
algebra with the regular actions after passage to the split Grothendieck group,
has been lifted by Rouquier~\cite{0409593} to a faithful action of the braid
group on the homotopy category of chain complexes of Soergel bimodules. The action of each braid group generator from $\Braidg_n$ is hereby
given by tensoring with a specific complex of Soergel bimodules for $S_n$. As
shown in ~\cite{0409593}, these basic Rouquier complexes satisfy the braid
relations in the homotopy category. The problem of finding an upgrade to a
homotopy-coherent model such that the $\Braidg_n$-actions satisfy all naturality
constraints is the main focus of our paper. We give an affirmative answer and
also provide a concrete framework which combines the aforementioned categorified Hecke algebras with their regular
actions of braid groups for all $n$ into a semistrict monoidal $2$-category with a braiding. 

\smallskip
To formulate the results, we first assemble the non-categorified Hecke
algebras with their regular (braid group) actions together into a braided
monoidal ($1$-)category:

For $n \in \N_0 \coloneqq \{ 0, 1, 2, \ldots \}$ the \textit{Hecke algebra} $H_n$ for the symmetric group $S_n$ 
is a quotient of the group algebra over $\Z[q^{\pm 1}]$ of the Artin braid
group $\Braidg_n$. Taken together, they give rise to a
braided monoidal category $H$ consisting of the following data:

\begin{enumerate}
\item
The objects are given by natural numbers $n \in \N_0$.
\item
The endomorphism algebra of the object $n \in H$ is $H_n$. Other homs are trivial.
\item The monoidal structure is given on objects by $m \otimes n
\coloneqq m + n$, and on morphisms by the map $H_m \times H_n
\to H_{m+n}$ corresponding to the subgroup $S_m \times
S_n \hookrightarrow S_{m+n}$.
\item
The braiding natural isomorphism $m \otimes n \xrightarrow{\sim} n \otimes m$
is given by the image in $\End_H(m+n) \coloneqq H_{m+n}$ of the positive
$(m,n)$-shuffle braid in $\Braidg_{m+n}$.
\end{enumerate}

This braided monoidal category $H$ is (although usually not formulated in this
way) a central player in quantum topology, since all Reshetikhin--Turaev link
invariants of type A \cite{MR1036112} and related topological quantum field
theories (TQFTs) are controlled by it through quantum Schur--Weyl duality. This,
in particular, includes the Jones polynomial \cite{MR766964}, the quantum $\mathfrak{sl}_k$ link invariants \cite{MR1036112}, as well as the TQFTs of
Witten--Reshetikhin--Turaev \cite{MR990772,MR1091619} and
Crane--Yetter--Kauffman \cite{MR1452438}.
\smallskip 

\subsection{A semistrict monoidal 2-category with braiding from categorification}

As mentioned already, the Hecke algebra $H_n$ admits a categorification to the
monoidal additive category $\Sbim_n$ of Soergel bimodules~\cite{Soergel}, in the
sense that its split Grothendieck ring recovers $H_n$. More precisely, it recovers $H_n$ in its Kazhdan--Lusztig presentation. For an interpretation of the usual braid generators, one could work with complexes of Soergel bimodules (instead of $\Sbim_n$). 
Indeed, the images of the braid group generators in
$H_n$ can be categorified by the Rouquier complexes~\cite{0409593} in the bounded
chain homotopy category $\Kb(\Sbim_n)$. The triangulated Grothendieck ring
still recovers $H_n$. That is, the Rouquier complexes satisfy the braid relations up to homotopy and induce a braid group action on the Grothendieck group.

Important for us is the existence of a diagrammatic
monoidal category $\DS_n$ which, upon Cauchy completion, gives a presentation
of $\Sbim_n$ in terms of generators and relations, \cite{EK,EW}.  
These
Elias--Khovanov--Williamson diagrammatic categories provide strict monoidal
models of $\Sbim_n$, which are especially suited for computations. The main
takeaway of this paper is a categorification of $H$: 

\begin{maintheorem}[Monoidality and Braiding]
\label[Main Theorem]{thmA} There exists a semistrict monoidal $2$-category
$\Kbloc(\DS)$ including the following data:
    \begin{enumerate}

        \item
        
        The objects are given by natural numbers $n \in \N_0$.
        
        \item
        
        The endomorphism category of the object $n$ is $\Kb(\DS_n)$. Other homs are trivial.
        
        \item The monoidal structure is given on objects by $m \otimes n
        \coloneqq m + n$, and on morphism by a functor $\Kb(\DS_m)\times \Kb(\DS_n)
        \to \Kb(\DS_{m+n})$ corresponding to the parabolic subgroup $S_m\times
        S_n\hookrightarrow S_{m+n}$.
        \end{enumerate}
    Moreover, this semistrict monoidal $2$-category is equipped with braiding equivalences $m \otimes n \xrightarrow{\sim} n
    \otimes m$, given by the Rouquier complexes of the positive
    $(m,n)$-shuffle braids in $\Braidg_{m+n}$. These braiding equivalences are natural in both arguments.

\end{maintheorem}
The $2$-category $\Kbloc(\DS)$ and its braiding equivalences control categorified quantum link
invariants such as Khovanov's categorification of the Jones polynomial
\cite{Kho}, see e.g.  \cite{MWW,ICM} and references therein for more context.

The naturality of the braiding equivalences expressed in \hyperlink{thmA}{Main Theorem} and the compatibility of the braiding morphisms with the monoidal
structure\footnote{For $\Kbloc(\DS)$, this compatibility is a
consequence of the Rouquier Canonicity from \Cref{thm:Rouquier-canonicity}.} are two main steps in exhibiting a \emph{braided} monoidal structure on the $2$-category $\Kbloc(\DS)$, see \Cref{conj:main} and \Cref{thm:last}. 

\subsection{(Semi)strictness}  The precise formulation of (semistrict) braided monoidal $2$-categories \cite{MR1266348, MR1402727} has a long history see e.g. \cite{letter, webpage, Gurski} for some background. Readers only familiar with the $1$-categorical notion of  braided monoidal categories might hope for a small list of data and coherence conditions defining an arbitrary braided monoidal bicategory. This is unfortunately too much to hope for, see  \cite{webpage} or \cite[Appendix~C]{schommer-pries-thesis} (at least if small means than then ten dense pages of data and coherences). The semistrictness  condition however dramatically simplifies the situation. A semistrict monoidal $2$-category has a pleasantly manageable set of data and coherence conditions  \cite[Lemma 4]{MR1402727} and a braiding (encoding the data of a braiding equivalence satisfying naturality and coherences) is still possible to spell out \cite[Definition 6]{MR1402727}. Unfortunately, the hope for an interesting (non-symmetric) braiding and for semi-strictness seem not to be very compatible, thus examples providing both are very rare.

The attentive reader might have observed that the higher category constructed in
\hyperlink{thmA}{Main Theorem} is surprisingly strict. Monoidal categories of
bimodules over a fixed commutative ring are monoidal, but typically not strictly
monoidal, and the same is true for Soergel bimodules in their algebraic
incarnation. Thus, we could have expected a bicategory rather than a
$2$-category. Likewise, it may be surprising that the additional monoidal
structure is semistrict, even more so when complexes of Soergel bimodules are
under consideration. To achieve the level of strictness of \hyperlink{thmA}{Main
Theorem} we employ two tools. First, we work with the diagrammatic presentations
of the monoidal categories of Soergel bimodules, which are strict. Second, we
revisit several standard dg categorical constructions and check that they
preserve strictness of monoidal structures, when engineered carefully. For
example, tensoring chain complexes involves taking total complexes, which is
strictly associative only when summands are indexed lexicographically (see
\Cref{assforposets} and also \cite[Proposition 3.5]{MR4046069}). Curiously, this
view is incompatible with the common conception of chain complexes as
\emph{chain objects indexed by $\Z$, connected by differentials}, see
\Cref{warn:nonstrict}, but rather much closer to the concepts of twisted
complexes and pretriangulated hulls. In \hyperlink{thmA}{Main Theorem} the
notation $\Kb(\DS_n)$ refers the bounded homotopy category of chain complexes
built from $\DS_n$ with its inherited \emph{strict} monoidal structure. The
general strictness results we obtain are applicable in other contexts and should
be of interest independent of our paper. 

\subsection{Outline of the construction and byproducts}
The starting point for our construction is \Cref{thm:ssmtwocat} in \Cref{sec:ssmondiag} which states the following:  
\begin{byproduct}
    \label{byproda}
\emph{
The diagrammatic categories $\DS_n$ assemble into a semistrict monoidal
$2$-category\footnote{As spelled out in \cite[Lemma 4]{MR1402727}, equivalently, a monoid in the category Gray \cite[\S3, \S8]{Gurski}.}  $\DS$ with respect to an external tensor product $\boxtimes\colon
\DS_m\times \DS_{n}\to \DS_{m+n}$. }
\end{byproduct}
Very roughly, the
$\boxtimes$-product\footnote{
The almost tautologically good behaviour of $\boxtimes$ on $\DS$ reflects the
fact that the definitions of the various $\DS_n$ appear very homogeneous in the
parameter $n$. Arguably, it might be more natural to first construct $\DS$ as
free locally linear semistrict monoidal $2$-category on the set of objects
$\N_0$ by specifying generating $1$-morphisms, generating $2$-morphisms, and
certain linear relations on composite $2$-morphisms, and then to recover $\DS_n$
as endomorphisms of $n$.}
of two diagrammatic morphism changes the colors of the
second diagram by $m$ steps and then superposes the diagrams, see
\Cref{tensor1hom}. This color shifting is crucial for naturality. Algebraically, the analog of the color shifting is the following: given polynomial algebras $R_m=\mathbb{C}[X_1,\ldots, X_m]$ and $R_n=\mathbb{C}[X_1,\ldots,X_n]$ and an endomorphism $f$ of the algebra $R_n$, the tensor product  $1\boxtimes f$  endomorphism of $R_{m}\boxtimes R_n=R_{m}\boxtimes_\mathbb{C}R_n=\mathbb{C}[X_1,\ldots,X_{m+n}]$ satisfies the equality $(1\otimes f)(X_{i+m})=f(X_{i})$  involving the shift of indices by $m$ for  $1\leq i\leq n$. 

In \Cref{Strictificationproof} we replace the 
graded linear hom-categories $\DS_n$ by their pretriangulated hulls  $\Chb(\DS_n)$  and show: 
\begin{byproduct}
The pretriangulated hulls $\Chb(\DS_n)$ form the hom-categories  of a locally graded dg-$2$-category
$\Chbloc(\DS)$, in which the horizontal composition is still strict.
\end{byproduct}

 We then proceed to the homotopy categories hom-wise, to obtain the locally
linear $2$-category $\Kbloc(\DS)$, whose $1$-morphisms are chain complexes over
$\DS_n$ and $2$-morphisms are chain maps up to homotopy. Importantly, the
composition of $1$-morphisms is still strict. Extending the tensor product
$\boxtimes$ from \Cref{byproda}, we obtain a locally linear version of
\cite[Definition 2 and 3]{MR1402727}:
\begin{byproduct} 
The $\Kb(\DS_n)$ form the homomorphism categories of a semistrict monoidal locally linear $2$-category $\Kbloc(\DS)$.
\end{byproduct}
The proof of the \hyperlink{thmA}{Main Theorem} requires more, namely showing that the Rouquier complexes of shuffle
braids are natural in both arguments.

The main issue to be addressed here is that
in a $2$-categorical setting, the \emph{naturality} of the braiding is not a
property but \emph{additional structure}, subject to additional coherence conditions. 
Not only do we need to know that morphisms slide through the braiding, we
actually have to \emph{provide} specific $2$-isomorphisms that implement this sliding 
and are moreover compatible with other $2$-morphisms. In
the setting of $\Kbloc(\DS)$, where $1$-morphisms are chain complexes over
various $\DS_n$, we more specifically need 

\begin{itemize}
    \item the data of a \emph{slide chain map}  $\slide_{X_1,X_2}$ for every pair of $1$-morphisms $X_1,X_2$, i.e.~a homotopy equivalence that expresses that the
    $1$-morphisms $X_1$ and $X_2$ slide through the Rouquier braiding.
    \item the existence of a \emph{slide homotopy} $h_{f_1\boxtimes f_2}$ for every pair $f_1\colon X_1 \to Y_1$, $f_2\colon X_2
    \to Y_2$ of $2$-morphisms, i.e.~a homotopy that relates
    the chain maps given by applying $f_1$ and $f_2$ before and after sliding the target resp. source $1$-morphisms through the braiding.
    \end{itemize}
    We refer to  \Cref{fig:slidemapsandhomotopies} for a schematic illustration of these requirements. For readers familiar with
    link homology theories, these two levels (slide map and homotopy) roughly correspond to Reidemeister
    moves and Carter--Saito movie moves respectively.
   
\begin{figure}[h]
\[ 
\begin{tikzcd}[scale=1,column sep=2.5cm]
        \begin{tikzpicture}[anchorbase,xscale=-.3,yscale=.3,tinynodes]
            \draw[thick] (-2,2) to (1,2) \pr (4,0);
            \draw[thick] (-2,3) to (1,3) \pr (4,1);
            \draw[thick] (-2,4) to (1,4) \pr (4,2);
            \draw[wh] (1,1) \pr (3,4) to  (4,4);
            \draw[thick] (-1.5,1) to (1,1) \pr (3,4) to (4,4);
            \draw[wh] (1,0) to (2,0) \pr (4,3);
            \draw[thick] (-1.5,0) to (1,0) to (2,0) \pr (4,3);
            \draw[thick,fill=white] (0,-.2) rectangle (1.5,1.2);
            \draw[thick,fill=white] (-1.5,1.8) rectangle (0,4.2);
            \node at (.75,.3) {$X_1$};
            \node at (-.75,2.9) {$X_2$};
        \end{tikzpicture}
        \ar[r, "\slide_{X_1,X_2}"]
        \ar[d, swap, "\id \hcomp (f_1\boxtimes f_2)"]
    &
    \begin{tikzpicture}[anchorbase,xscale=.3,yscale=-.3,tinynodes]
        \draw[thick] (-1.5,2) to (1,2) \pr (4,0);
        \draw[thick] (-1.5,3) to (1,3) \pr (4,1);
        \draw[thick] (-1.5,4) to (1,4) \pr (4,2);
        \draw[wh] (1,1) \pr (3,4) to  (4,4);
        \draw[thick] (-1.5,1) to (1,1) \pr (3,4) to (4,4);
        \draw[wh] (1,0) to (2,0) \pr (4,3);
        \draw[thick] (-1.5,0) to (1,0) to (2,0) \pr (4,3);
        \draw[thick,fill=white] (0,-.2) rectangle (1.5,1.2);
        \draw[thick,fill=white] (-1.5,1.8) rectangle (0,4.2);
        \node at (.75,.7) {$X_1$};
        \node at (-.75,3.1) {$X_2$};
    \end{tikzpicture}
        \ar[d,"(f_2\boxtimes f_1)\hcomp \id"]
          \ar[dl,Rightarrow,shorten >=5ex,shorten <=5ex, "h_{f_1\boxtimes f_2}"]
\\
        \begin{tikzpicture}[anchorbase,xscale=-.3,yscale=.3,tinynodes]
            \draw[thick] (-2,2) to (1,2) \pr (4,0);
            \draw[thick] (-2,3) to (1,3) \pr (4,1);
            \draw[thick] (-2,4) to (1,4) \pr (4,2);
            \draw[wh] (1,1) \pr (3,4) to  (4,4);
            \draw[thick] (-1.5,1) to (1,1) \pr (3,4) to (4,4);
            \draw[wh] (1,0) to (2,0) \pr (4,3);
            \draw[thick] (-1.5,0) to (1,0) to (2,0) \pr (4,3);
            \draw[thick,fill=white] (0,-.2) rectangle (1.5,1.2);
            \draw[thick,fill=white] (-1.5,1.8) rectangle (0,4.2);
            \node at (.75,.3) {$Y_1$};
            \node at (-.75,2.9) {$Y_2$};
        \end{tikzpicture}
            \ar[r, "\slide_{Y_1,Y_2}"]
        &
        \begin{tikzpicture}[anchorbase,xscale=.3,yscale=-.3,tinynodes]
            \draw[thick] (-2,2) to (1,2) \pr (4,0);
            \draw[thick] (-2,3) to (1,3) \pr (4,1);
            \draw[thick] (-2,4) to (1,4) \pr (4,2);
            \draw[wh] (1,1) \pr (3,4) to  (4,4);
            \draw[thick] (-1.5,1) to (1,1) \pr (3,4) to (4,4);
            \draw[wh] (1,0) to (2,0) \pr (4,3);
            \draw[thick] (-1.5,0) to (1,0) to (2,0) \pr (4,3);
            \draw[thick,fill=white] (0,-.2) rectangle (1.5,1.2);
            \draw[thick,fill=white] (-1.5,1.8) rectangle (0,4.2);
            \node at (.75,.7) {$Y_1$};
            \node at (-.75,3.1) {$Y_2$};
        \end{tikzpicture} 
         \end{tikzcd}
        \]
\caption{Slide chain maps and slide homotopies.} 
\label{fig:slidemapsandhomotopies}
\end{figure}
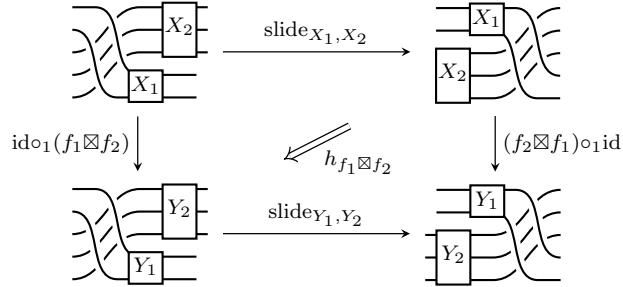

We take a two step approach to fulfill the requirements for the
naturality of the braiding on $\Kbloc(\DS)$. \hfill\\
\emph{Step 1:} We treat the case of $1$-morphisms from $\DS\hookrightarrow \Kbloc(\DS)$ in \Cref{sec:natprebraid}. \hfill\\
\emph{Step 2:} We extend the naturality to arbitrary
$1$-morphisms in $\Kbloc(\DS)$ in \Cref{sec:extensiontocomplexes}.

Constructing the slide maps for $1$-morphisms in $\DS\hookrightarrow \Kbloc(\DS)$ is straightforward, see \Cref{prop:sliding-objects}. Establishing the existence
of slide homotopies for $2$-morphisms in $\DS\hookrightarrow \Kbloc(\DS)$ requires more work and is done in \Cref{thm:naturality}. The proof of this \emph{First Naturality Theorem} is by an explicit
construction that  involves a \empty{reduction step} to the case of diagrammatic
generators, see \Cref{sec:slidereduction}, and then relies on \emph{explicit computations of
homotopies}, see \Cref{sec:slidegens}. As a result,  we get slightly more
than absolutely necessary:
\begin{byproduct}\label{byprochoices}
For $\DS\hookrightarrow \Kbloc(\DS)$, we provide \emph{explicit} slide chain maps and slide homotopies depending on some auxiliary
choices, see \Cref{exa:hiho}. 
\end{byproduct}

In \Cref{ssec:hhomotopies} we work specifically in the dg enhancement
$\Chbloc(\DS)$ of $\Kbloc(\DS)$ and interpret these homotopies as second layer
of a homotopy-coherent naturality structure of the braiding. In fact, the
mentioned dependence on auxiliary choices is trivialized by higher homotopies,
which are themselves canonically determined up to even higher homotopy ad
infinitum, as we prove in \Cref{ssec:hhomotopies} by a cohomology-vanishing
argument. 
\begin{byproduct} 
The explicit slide chain maps and homotopies from \Cref{byprochoices} are canonical up to coherent homotopy, and thus independent of auxiliary choices.
\end{byproduct}

For general $1$-morphisms in $\Kbloc(\DS)$, i.e. chain complexes over $\DS$, it seems difficult to 
construct explicit slide chain maps directly. By working in the homotopy-coherent
setting of $\Chbloc(\DS)$ instead, we describe a general procedure to construct the
slide chain maps for $1$-morphisms, the slide homotopies for $2$-morphisms, and
the full infinite hierarchy of associated higher homotopies in one sweep, see 
\Cref{thm:cabledcross-data}. Noteworthy, the construction of the slide map
for a chain complex of length $k+1$ requires the knowledge of the slide maps for the
chain objects as well as of the higher homotopies for the differential up to height
$k$, see \Cref{exa:cone}. 

Inspired by  \cite{MR1402727}, our approach is based on a centralizer construction on a dg level, more precisely  on a \emph{twisted $A_\infty$-monoidal centralizer} construction 
for pairs of parallel dg-monoidal functors which we introduce in
\Cref{def:twistedcentralizer}. This notion is inspired by the dg-center from \cite{GHW}. Afterwards, we realize the Rouquier
complexes of shuffle braids together with their naturality data as objects of such a twisted $A_\infty$-monoidal centralizer as a consequence of our \emph{Second Naturality Theorem}, stated as \Cref{thm:cabledcross-data}, in  \Cref{cormain}.
The extension of the naturally data from $\DS$ to $\Kbloc(\DS)$ then essentially
boils down to an extension along a deformation retract of $2$-sided bar
complexes for (monoidal) dg-categories, \Cref{prop:barcxdefr}. To summarize: 

\begin{byproduct}
The extension problem of the naturality data for the braiding from the additive category to the dg-category of complexes thereover is unobstructed and controlled by deformation retract data for $2$-sided bar
complexes of (monoidal) dg-categories.
\end{byproduct}

 In the concrete case of interest, we obtain:
\begin{byproduct}
    For all complexes in $\Chbloc(\DS)$ we provide explicit slide chain maps, slide homotopies, and an infinite hierarchy of higher homotopies that witness the homotopy-coherent naturality of the braiding by Rouquier complexes of shuffle braids.
\end{byproduct}

Finally, in \Cref{cor:slideandhomotopy} we again descend from $\Chbloc(\DS)$ to
$\Kbloc(\DS)$ and see that the slide chain maps witness the naturality of the braiding morphisms in, which proves \hyperlink{thmA}{Main
Theorem}. Finally, we observe that this provides almost all coherence conditions required for a braided monoidal
structure on $\Kbloc(\DS)$.

\subsection{Direct precursors of this work}
Our \hyperlink{thmA}{Main Theorem} has two direct precursors.
In \cite[Section 6]{MWW}, the second-named author with Morrison and Walker
observed that the braid (and tangle) invariants obtained in the context of the
$\glN$ link homology theory of Khovanov and Rozansky can be assembled into a
semistrict braided monoidal $2$-category. In this setting, the
axioms of a braided $2$-category are consequences of the functoriality of the
categorical tangle invariant under tangle cobordisms, which was shown in
\cite{MR3877770, EST}. The analogous observation also applies to the categorical
braid invariant provided by Rouquier complexes of Soergel bimodules (which are in fact complexes of Bott--Samelson bimodules), see
\cite[Remark~6.3]{MWW}. In this case, the axioms hold by the functoriality of
Rouquier complexes, which was verified by Elias--Krasner~\cite{MR2721032}. 
Our \hyperlink{thmA}{Main Theorem} is a substantial improvement of these results.

Indeed, the braided $2$-category from \cite[Remark~6.3]{MWW} is entirely combinatorial on
the level of objects and $1$-morphisms. The only allowed $1$-morphisms hereby are Rouquier
complexes of braids. This is in contrast to our setup, where the $1$-morphisms in $\Kbloc(\DS)$ are
\emph{arbitrary complexes of (diagrammatic) Bott-Samelson bimodules}.

The second precursor of the \hyperlink{thmA}{Main Theorem} appears in the recent work \cite{liu2024braided} of
the authors with Liu, Mazel-Gee, and Reutter. The main result there is the construction
of a fully homotopy-coherent braided-monoidal $(\infty,2)$-category
based on complexes of Soergel bimodules, summarized in the following two theorems from \cite{liu2024braided}.

\begin{thmA}[Monoidality Theorem]
\label[Monoidality Theorem]{oldthmA} 
    There is a monoidal $(\infty,2)$-category $\newKbloc(\SBim)$ with objects labelled by natural numbers $n \in \mathbb{N}_0$ and whose endomorphim $\infty$-categories are the $\k$-linear, stable, idempotent-complete $\infty$-categories of chain complexes over $\SBim_n$, with a $\Z$-action by grading shift.
\end{thmA} 
    \begin{thmB}[Braiding Theorem]
    \label[Braiding Theorem]{oldthmB} 
    There exists a braided monoidal (i.e., $\EE_2$-algebra) structure on $\newKbloc(\Sbim)$ that enhances its monoidal structure, such that the braiding $1 \otimes 1 \xra{\sim} 1 \otimes 1$ in $\newKbloc(\Sbim)$ admits an equivalence with the Rouquier complex corresponding to the positive braid group generator in $\Braidg_2$.
\end{thmB}

By taking the homotopy $2$-category of $\newKbloc(\Sbim)$, i.e. quotienting by all
$3$-morphisms, we obtain an $\EE_2$-monoidal $(2,2)$-category $\cH$, which we
expect to be essentially equivalent to $\Kbloc(\DS)$ from the \hyperlink{thmA}{Main Theorem}. For many purposes the differences are minor and negligible and one may freely pick the most suitable of the two approaches. 
However, we would like to point out the main differences between the two constructions:
\begin{itemize}
\item $\Kbloc(\DS)$ is based on the diagrammatic categories $\DS_n$ instead of
the (algebraic) Soergel bimodule categories $\SBim_n$. In fact, $\DS_n$ models
\emph{Bott-Samelson bimodules}, which yield Soergel bimodules \emph{after} the formal process of Cauchy
completion. 
In contrast, the construction of $\cH$ \emph{requires} idempotent completeness from the start.
\item $\Kbloc(\DS)$ is realized as a \emph{semistrict} monoidal
$2$-category, i.e. the horizontal composition is \emph{strictly} associative and the
tensor product is as strictly associative as generically possible. In contrast,
$\cH$ is built in a world in which it does not even make sense to ask for such strictness
properties.
\item In $\Kbloc(\DS)$, all data associated to the braiding can
be exhibited in a \emph{completely explicit and algorithmically accessible way}. This
includes the $2$-morphisms that witness the naturality of the braiding, namely homotopy
equivalences that allow an arbitrary complex over $\DS_n$ to be slid through the
Rouquier complex of the $(m,n)$-shuffle braid, see \Cref{cor:slideandhomotopy}. This is in strong contrast to $\cH$, where the higher data is encoded in the language of $(\infty,2)$-categories and obtained from abstract lifting arguments.
\end{itemize} 

We moreover like to stress (the main reason which makes a rigorous comparison of the two approaches a non-trivial task)  that the construction of $\Kbloc(\DS)$ here follows an entirely different
strategy than the construction of $\cH$ in \cite{liu2024braided}. 

To  expand on this, we recall that the main tool for the constructions in
\cite{liu2024braided} is the monoidal fiber functor $\Hloc\colon \newKbloc(\Sbim)
\ra \stkBZ$ to the symmetric monoidal $\infty$-category $\stkBZ$ of small stable
idempotent-complete $\k$-linear $\infty$-categories. Informally, this functor
can be interpreted as taking the homology of complexes of Soergel bimodules or,
equivalently, as descending from the homotopy category to the  derived category
of chain complexes of (all) bimodules over the respective polynomial rings. This
latter category comes with a \emph{symmetric} braiding. In \cite{liu2024braided}, 
the higher data of this symmetric braiding on $\stkBZ$ gets pulled back along
the above fiber functor to a (non-symmetric!) braiding on $\newKbloc(\Sbim)$. A
crucial observation behind this different behaviour (symmetric versus non-symmetric) of the braiding is that
tensoring with the Rouquier complex associated to the positive shuffle braid in
$\Braidg_n$ associated to $w\in S_n$  simplifies drastically in $\stkBZ$.
Namely, in the derived category of all bimodules, it  just twists one of the two
polynomial ring actions by $w$, see \Cref{rem:Rouquierswap}. 

As a consequence, the complicated coherence data is \emph{inherited} and \emph{the
computational input for the construction of the braiding on $\newKbloc(\Sbim)$ is
minimal}. Namely,  it amounts to showing that the braiding is natural up to homotopy for
a restricted class of $1$-morphisms, but no higher morphisms. We encode this in 
\cite{liu2024braided} by the notion of a \emph{prebraiding} on
the functor $h_1 \BSBim_n\to h_1 \newKbloc(\Sbim)$, where $h_1$ expresses the
truncation to $1$-morphisms. This turns out to be sufficient to lift to a
braiding due to connectivity properties of the map  $\mathbb{A}_1\otimes
\EE_1\to \EE_2$ of $\infty$-operads, see \cite{liu2024mathbbenalgebrasmcategories} for a further study of such connectivity bounds.
An extension of Theorems A and B from \cite{liu2024braided} 
to Soergel bimodules valued in a connective $\mathbb{E}_\infty$-ring spectrum
with a complex orientation is obtained in \cite{liu2024braidingcomplexorientedsoergel}.

Our construction here follows a different route than \cite{liu2024braided}. Instead of importing higher
coherences via a somewhat abstract fiber functor, we now provide naturality
data by hand as described above.  In practice, instead of using the fiber functor
and checking a property at the level of $1$-morphisms (existence of certain
homotopy equivalences), we provide data at the level of $1$-morphisms, check a
property at the level of $2$-morphisms (existence of certain homotopies), and
then verify that the data necessary at the level of $2$-morphisms and higher
lives in a contractible space. 

\subsection{Related work}
Elements of naturality of the Rouquier braiding and related structures have
previously been observed and discussed in various contexts, including skein
algebra categorification \cite[Discussion after Conjecture 1.8]{1806.03416},
cabling operations in link homology \cite[\S~1.3]{GHW}, and wrapping and
flatting functors \cite[\S~6.4]{MR4526088},
\cite{elias2018gaitsgorys}. Closest to our work is the paper of
Mackaay--Miemimitz---Vay~\cite{MR4669324}, which appeared while our paper and
its sibling \cite{liu2024braided} were in preparation and which concerns
evaluation birepresentations of affine type A Soergel bimodules. Technical
ingredients in their construction are slide maps and the existence of slide
homotopies for diagrammatic Soergel bimodules through Rouquier complexes of
Coxeter braids, which share common consequences with our computations in
\Cref{sec:diags}. Higher homotopical data, which is necessary for extending this
data to complexes of Soergel bimodules, has not been considered in
\cite{MR4669324}. 

In the final stages of this work, we were informed by Ben Elias and Matthew
Hogancamp about their upcoming article \cite{EH24} on Drinfeld centralizers and
Rouquier complexes. This paper develops a theory of $A_\infty$-Drinfeld
centralizers\footnote{We independently introduce and use elements of such a
theory in \Cref{sec:extensiontocomplexes}} and, as an application, establishes
the centrality of Rouquier complexes of half- and full-twist braids. An analog
of our \Cref{cor:slideandhomotopy} can then be deduced \cite[Corollary
1.10]{EH24}. While the motivation and the setup of \cite{EH24} are different
from ours and the development independent, we expect that their results and ours
are compatible and complementary. In particular, the centrality of the full
twist suggests that a combination with our results here or in
\cite{liu2024braided} could lead to a framed $\mathbb{E}_2$-algebra structure on
complexes of type A Soergel bimodules, factorization homology over
\emph{oriented} surfaces, and thus homotopy-coherent wrapping and flattening
functors as indicated above.

\vspace{10pt}
\noindent \textbf{Acknowledgements.} The authors would also like to thank Yu
Leon Liu, Aaron Mazel-Gee, and David Reutter for many useful dicussions and for
the collaboration towards \cite{liu2024braided}, and Ben Elias and Matthew
Hogancamp for exchanging drafts of our papers. PW would like to thank Eugene Gorsky, Matthew Hogancamp, Scott
Morrison, and Kevin Walker for joint work, which has influenced the approach and
results in this paper, and David Penneys for useful discussions. CS would like
to thank Daniel Bermudez, Jonas Nehme and Liao Wang for valuable feedback on a
first draft.

\vspace{10pt}
\noindent \textbf{Funding.}
CS is supported by the Gottfried
Wilhelm Leibniz Prize of the German Research Foundation.
PW acknowledges support from the Deutsche Forschungsgemeinschaft (DFG, German
Research Foundation) under Germany's Excellence Strategy - EXC 2121 ``Quantum
Universe'' - 390833306 and the Collaborative Research Center - SFB 1624 ``Higher
structures, moduli spaces and integrability''.

Finally, the authors acknowledge the important role of the Spring 2020 MSRI programs ``Higher Categories and Categorification'' and ``Quantum Symmetries'' supported by the National Science Foundation grant DMS-1440140 and the 2019 Erwin--Schr\"{o}dinger institute workshop ``Categorification in quantum topology
and beyond'' in catalyzing their collaboration.

\vspace{10pt}
\noindent \textbf{Notation.}
We fix as ground field $\k=\C$. All vector spaces, linear maps, tensor products etc. are over $\k$ if not specified otherwise. Algebras are always  unital associative $\k$-algebra and algebra homomorphisms are unital. 

For $a,b\in \N_0$ we abbreviate $[a,b]:=\{n\in \Z\mid a \leq n\leq b\}$, which includes $[a,b]=\emptyset$ if $b<a$. We moreover denote by $S_n$ the symmetric group of order $n!$.

\section{Soergel bimodules (of type \texorpdfstring{$A$}{A})}
\label{S:SBim}

We start by recalling the monoidal graded $\k$-linear
categories $\SBim_n$ of Soergel bimodules, \cite{Soergel, Soergel2}, for $S_n$ acting
on its natural representation (a version for the reflection representation is
another possible but less natural choice). We also consider the diagrammatic
categories $\DS_n$ from \cite{EK, EW}, which present categories of Bott-Samelson bimodules in terms of generators and relations and allow
to recover $\SBim_n$ as Cauchy completion, see \Cref{def:SBim}.

We go one step further and show in both settings, algebraic and diagrammatic, that the Soergel
bimodules for all $n\in \N_0$ assemble into a monoidal bicategorical structure.

\subsection{Soergel bimodules and Bott--Samelson bimodules}
\label{ssec:Sbim}
For this subsection we fix  a nonegative integer $n$ and let $R_n=\C[x_1,x_2,\ldots, x_n]$ denote the
polynomial ring over $\C$ in $n$ variables viewed as polynomial functions on
$\C^n$ in the standard way. Permuting the basis vectors of $\C^n$ induces a left
action of the symmetric group $W=S_n$ on $R_n$ such that the simple
transposition $s_i:=(i,i+1)$ acts by swapping the variables $x_i$ and
$x_{i+1}$. Denote $\alpha_i=x_{i}-x_{i+1}$ for $1\leq i\leq n-1$. Then
restriction to the span of the $\alpha_i$'s gives the usual geometric
representation of $W$ viewed as Coxeter group generated by the simple
transpositions. For any subgroup $G$ of $W$ let $R_n^G$ be the subalgebra of
$G$-invariants in $R_n$. In case $G=\langle s_i\rangle$ for some $1\leq i\leq
n-1$ we abbreviate $R_n^G=R_n^i$. We will view $R_n$ as a graded (by which we
mean $\Z$-graded) algebra by putting the generators $x_i$ in degree $2$. Then $R_n^i$ is a graded subalgebra and we have a canonical decomposition $R_n=
R_n^i\oplus \alpha_iR_n^i\cong R_n^i\oplus R_n^i\langle 2\rangle$ as graded
$R_n^i$-bimodules. Here and in the following we denote, for a given graded (bi)module $M=\oplus_{i\in\Z} M_i$ and fixed $j\in \Z$,  
by $M\langle j\rangle$ the graded (bi)module which equals $M$ as (bi)module but with the grading shifted up by
$j$, i.e. $M\langle j\rangle_i=M_{i-j}$. The grading shifting functors $\langle
j\rangle$, $j\in\Z$ equip the category of graded $(R_m,R_n)$-bimodules for fixed
$m,n$ with an action of the group $\Z$. As homomorphisms between graded
bimodules we can consider homogeneous homomorphisms of arbitrary degree or only
those of degree zero. In the first case, we consider categories of graded
bimodules as enriched in graded $\k$-modules, i.e. as \emph{graded $\k$-linear}
categories. In the second case, we obtain a category enriched in
$\k$-modules, i.e. a \emph{$\k$-linear category}.

\begin{definition} We abbreviate $R=R_n$. For $\underline{i}=(i_1,\ldots, i_k)\in \intset{n-1}^k$ with $k \in \N_0$ and $j\in\Z$, the corresponding \emph{Bott--Samelson bimodule} is the graded $R_n$-bimodule
\begin{equation}
\label{eq:BS}
B_{\underline{i}}\langle
j\rangle:=R\otimes_{R^{s_{i_k}}}R\otimes_{R^{s_{i_{k-1}}}}\cdots\otimes_{R^{s_{i_1}}}R\langle
j-k\rangle.
\end{equation} 
If $k=0$ this is by convention the bimodule $R\langle
j\rangle$. In the case $k=1$ we abbreviate
\begin{equation}
\label{eq:BSi}
B_i:=     R\otimes_{R^{s_{i}}}R\langle-1\rangle.
\end{equation} 
\end{definition}
\begin{definition}
    \label{def:versionsBS}
For fixed $n\in \N$ we define the following categories attached to $n$:
\begin{itemize}
    \item  $\BSbimcl^\gr_n$, the \emph{graded category of Bott--Samelson
    bimodules}, is defined as the graded $\k$-linear full subcategory of graded
    $R_n$-bimodules with objects $B_{\underline{i}}\langle j\rangle$ as in
    \eqref{eq:BS} (for arbitrary $k$, $j$, $\underline{i}$). This category comes
    equipped with the $\Z$-action by grading shifts and a compatible graded $\k$-linear structure.
    \item $\BSbim_n$, the 
\emph{category of Bott--Samelson bimodules}, is obtained by restricting to the degree zero part of the morphism spaces in
    $\BSbimcl^\gr_n$. We call this the \emph{degree zero subcategory} or the \emph{usual category of Bott-Samelson bimodules}, and 
    consider it equipped with the $\Z$-action by grading shifts and the inherited $\k$-linear structure.
    \item $\overline{\BSbim}^\gr_n$, the \emph{category of
    unshifted Bott--Samelson bimodules}, is the graded $\k$-linear full subcategory of graded
    $R_n$-bimodules $B_{\underline{i}}$, where $j=0$ in \eqref{eq:BS}.
\end{itemize}
\end{definition}

The three variations of categories of Bott--Samelson bimodules reflect different ways of handling gradings. The version
$\overline{\BSbim}^\gr_n$ is most convenient when connecting to the diagrammatic
calculus of $\DS_n$, while $\BSbim_n$ features in the definition of Soergel
bimodules, and we will freely pass between these equivalent versions, see \cite[(2.1)]{MazStr}.

\begin{definition}
    \label{def:SBim}
The monoidal $\k$-linear category $\SBim_n$ of \emph{Soergel bimodules for $\mathfrak{gl}_n$}, see \Cref{SBimglsl} for the terminology, is
the Karoubian closure of $\BSBim_n$, i.e. the smallest additive,
itempotent-complete full subcategory of the $\k$-linear category of graded $R_n$-bimodules
and degree zero bimodule homomorphisms, which contains $\BSBim_n$. We consider $\SBim_n$ 
equipped with the $\Z$-action by grading shifts. 
\end{definition}

\begin{definition}
    Given a monoidal category $\mathcal{C}$ with unit $\mathbf{1}$, a \emph{monoid} in $\mathcal{C}$ is a triple $(A,\mm,\epsilon)$ of an object $A$ and morphisms $\mm\colon A\otimes A\to A$ and $\epsilon\colon \mathbf{1}\to A$ in $\mathcal{C}$ satisfying the associativity conditions for $\mm$ and the unit condition for the pair $(\epsilon, \mm)$. Dually,  a \emph{comonoid} in $\mathcal{C}$  is a triple $(A,\Delta,\eta)$ satisfying the opposite conditions. A \emph{Frobenius object} in $\mathcal{C}$ is the data $\mathbb{A}=(A,\mm,\Delta,\epsilon,\eta)$ of a monoid $(A,\mm,\epsilon)$ and a comonoid   $(A,\Delta,\eta)$ which are linked by the \emph{Frobenius condition} 
    \begin{equation}\label{Frob}
    (\id \otimes\mm)\circ(\Delta\otimes\id )=\Delta\circ\mm=\mm\otimes\id \circ(\id \otimes\Delta).
    \end{equation}
\end{definition}

\begin{lemma}\label{counitadj}
If $\mathbb{A}\in\mathcal{C}$ is a Frobenius object, then $A\otimes_-\colon\mathcal{C}\to\mathcal{C}$ is self-adjoint with unit and counit maps given by $\eta\circ\mm$ and $\Delta\circ\eta$ respectively.
\end{lemma}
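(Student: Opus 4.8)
The plan is to write down the adjunction data explicitly and verify the two triangle identities; the only inputs are the Frobenius condition \eqref{Frob} together with the monoid and comonoid (co)unit axioms, so the argument is a short diagram chase (this is the classical statement that a Frobenius object yields a self-adjoint tensoring functor). Concretely, set $F\coloneqq A\otimes(-)\colon\mathcal{C}\to\mathcal{C}$, so that $F\circ F$ is naturally isomorphic to $(A\otimes A)\otimes(-)$ via the associator of $\mathcal{C}$. I would define a counit $c\colon F\circ F\Rightarrow\mathrm{Id}_{\mathcal{C}}$ and a unit $u\colon\mathrm{Id}_{\mathcal{C}}\Rightarrow F\circ F$ by setting, at an object $X$, the morphisms $c_X\coloneqq(\eta\circ\mm)\otimes\id_X$ and $u_X\coloneqq(\Delta\circ\epsilon)\otimes\id_X$, inserting the left and right unitors of $\mathcal{C}$ to identify $\mathbf{1}\otimes X$ with $X$. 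Naturality in $X$ is automatic: each component is a fixed morphism of $\mathcal{C}$ tensored with $\id_X$, and $F$ acts on morphisms by $\id_{A\otimes A}\otimes(-)$, so the naturality squares commute by bifunctoriality of $\otimes$.

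The main step is the verification of the two triangle identities, which at an object $X$ read $c_{FX}\circ F(u_X)=\id_{FX}$ and $F(c_X)\circ u_{FX}=\id_{FX}$. After cancelling the ubiquitous factor $(-)\otimes\id_X$, the first one reduces to the claim that the composite
\[
A\ \xrightarrow{\ \id_A\otimes(\Delta\circ\epsilon)\ }\ A\otimes A\otimes A\ \xrightarrow{\ (\eta\circ\mm)\otimes\id_A\ }\ A
\]
equals $\id_A$. This follows by applying, in turn, the Frobenius equality $(\mm\otimes\id_A)\circ(\id_A\otimes\Delta)=\Delta\circ\mm$ from \eqref{Frob}, the right unit axiom $\mm\circ(\id_A\otimes\epsilon)=\id_A$ of the monoid, and the left counit axiom $(\eta\otimes\id_A)\circ\Delta=\id_A$ of the comonoid. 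The second triangle identity is the mirror computation, invoking instead the other Frobenius equality $(\id_A\otimes\mm)\circ(\Delta\otimes\id_A)=\Delta\circ\mm$, the left unit axiom of the monoid, and the right counit axiom of the comonoid. Having produced $c$ and $u$ satisfying the triangle identities, we conclude $F\dashv F$, i.e. $A\otimes(-)$ is self-adjoint with the stated unit and counit.

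I do not expect a genuine obstacle here. The only points requiring care are keeping track of the associativity and unit coherence isomorphisms of $\mathcal{C}$ — which by Mac Lane's coherence theorem may be suppressed by passing to an equivalent strict monoidal category, reducing the whole verification to a two-line string-diagram manipulation — and selecting, in each of the two triangle identities, the correct one of the two equalities bundled in \eqref{Frob} and the correct handedness of the monoid and comonoid (co)unit axioms. (One could alternatively phrase everything in terms of string diagrams and observe that the two composites pictorially straighten a ``zig-zag'' of $A$-strands, but the algebraic reduction above is already complete.)
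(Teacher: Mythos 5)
Your proof is correct and is exactly the diagram chase the paper has in mind: the paper's proof simply asserts that the snake/triangle identities "follow directly from the Frobenius condition and the definition of the (co)unit," and your expansion (Frobenius identity, then the monoid (co)unit axioms of the appropriate handedness in each triangle) is that verification spelled out. Note also that you have tacitly — and correctly — read the statement's $\Delta\circ\eta$ as $\Delta\circ\epsilon$ (the former does not typecheck, since $\eta\colon A\to\mathbf{1}$ and $\Delta\colon A\to A\otimes A$), and identified $\eta\circ\mm$ as the counit $FF\Rightarrow\mathrm{Id}$ and $\Delta\circ\epsilon$ as the unit $\mathrm{Id}\Rightarrow FF$.
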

\begin{proof}
The snake relations for the (co)unit of the adjunction follow directly from the Frobenius condition \eqref{Frob} and the definition of the (co)unit in $A$.
\end{proof}

Consider the Demazure operator $\partial_i\colon R_n\to R_n^{s_i}$, $f\mapsto \frac{f-s_i(f)}{x_i-x_{i+1}}$ for $i\in\intset{n-1}$. The definitions directly provide the following examples (note that $\Delta, \mm$ are of degree $-1$, whereas $\epsilon,\eta$ are of degree $1$ and the (co)unit of adjunction of degree zero).

\begin{lemma}\label{BSFrob}
    Each Bott-Samelson bimodule $B_i$, $i\in\intset{n-1}$ admits the structure of a Frobenius object in $\overline{\BSbim}^\gr_n$ with structure maps
    \[\mm\colon B_i\otimes_{R_n}B_i
\leftrightarrows B_i\colon\Delta \quad\text{and}\quad \eta\colon B_i\leftrightarrows  R_n
\colon\epsilon, \] 
given by the bimodule homomorphisms
\begin{equation*}
\begin{gathered}
\mm(f_1\otimes f_2\otimes g_1\otimes g_2)=f_1\partial_i(f_2g_1)\otimes g_2, \quad
\Delta(f\otimes g)=f\otimes 1\otimes 1\otimes g,\\
\quad 
\eta(f\otimes g)=fg, \quad
\epsilon(f)=\frac{1}{2}(\alpha_i\otimes 1 +1\otimes\alpha_i)f,
\end{gathered}
\end{equation*}
where $f,f_1,f_2, g, g_1,g_2\in R_n$.
\end{lemma}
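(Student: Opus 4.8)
The statement is a bookkeeping verification, so the plan is to
(i) pin down the tensor-product identifications and check that the four maps are
well-defined homogeneous $R_n$-bimodule homomorphisms of the stated degrees, and then
(ii) verify the monoid, comonoid and Frobenius axioms, each of which reduces to standard
properties of the Demazure operator $\partial_i$ and of the decomposition
$R_n=R_n^i\oplus\alpha_i R_n^i$ (with $s_i$ acting by $-1$ on $\alpha_i$). The only facts
about $\partial_i$ I will use are: $\partial_i$ is $R_n^{s_i}$-bilinear and surjective onto
$R_n^{s_i}$ with kernel $R_n^{s_i}$; the twisted Leibniz rule
$\partial_i(fg)=\partial_i(f)\,g+s_i(f)\,\partial_i(g)$; and $\partial_i(1)=0$,
$\partial_i(\alpha_i)=2$.

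First I would fix conventions. Collapsing the inner $\otimes_{R_n}$ gives
$B_i\otimes_{R_n}B_i\cong R\otimes_{R^{s_i}}R\otimes_{R^{s_i}}R\langle-2\rangle$ (which is the
object $B_{(i,i)}$ of $\overline{\BSbim}^\gr_n$) and
$B_i^{\otimes 3}\cong R\otimes_{R^{s_i}}R\otimes_{R^{s_i}}R\otimes_{R^{s_i}}R\langle-3\rangle$,
and the monoidal unit is $\mathbf{1}=R_n$. With this understood, the four-variable formula for
$\mm$ means $f_1\otimes f_2g_1\otimes g_2\mapsto f_1\partial_i(f_2g_1)\otimes g_2$, which is
well-defined precisely because $\partial_i$ is $R_n^{s_i}$-bilinear and hence descends across all
the tensor symbols; and $\Delta$ is ``insert $1\otimes 1$'', while $\eta$ is multiplication --
both manifestly $R_n$-bimodule maps. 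Combining $\deg\partial_i=-2$, $\deg\alpha_i=2$ and the
$\langle-k\rangle$-shifts in \eqref{eq:BS}, \eqref{eq:BSi} one reads off that $\mm$ and $\Delta$
have degree $-1$ and $\eta,\epsilon$ have degree $+1$. The one subtle point is that
$\epsilon$ is right multiplication of $R_n$ into the element
$\beta:=\tfrac12(\alpha_i\otimes 1+1\otimes\alpha_i)\in B_i$, so $\epsilon$ is an $R_n$-bimodule
map \emph{iff} $\beta$ is central in $B_i$; I would prove this by checking $x_i\beta=\beta x_i$
in $R\otimes_{R^{s_i}}R$, writing $x_i=\tfrac12(x_i+x_{i+1})+\tfrac12\alpha_i$ with
$x_i+x_{i+1},\alpha_i^2\in R_n^{s_i}$ so that both sides reduce to the same expression; centrality
against $x_{i+1}$ then follows by applying the automorphism $s_i\otimes s_i$ (which fixes
$R_n^{s_i}$ and negates $\beta$), and against the remaining $x_j$ it is trivial since
$x_j\in R_n^{s_i}$.

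For the comonoid axioms: $\Delta$ just inserts $1\otimes 1$ in the middle, so coassociativity is
immediate, and $(\eta\otimes\id)\circ\Delta=\id=(\id\otimes\eta)\circ\Delta$ by multiplying one
inserted $1$ back in. For associativity of $\mm$, I would trace a general element
$a_1\otimes\dots\otimes a_6$ of $B_i^{\otimes 3}$ through both $\mm\circ(\mm\otimes\id)$ and
$\mm\circ(\id\otimes\mm)$, obtaining $a_1\,\partial_i(a_2a_3)\,\partial_i(a_4a_5)\otimes a_6$ in
both cases (using $R_n^{s_i}$-linearity of $\partial_i$ to pull factors out of $\partial_i$). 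The
unit law $\mm\circ(\epsilon\otimes\id)=\id$ is the crux: feeding
$\epsilon(1)\otimes(f\otimes g)=\tfrac12(\alpha_i\otimes 1+1\otimes\alpha_i)\otimes f\otimes g$
into $\mm$ gives $\tfrac12\bigl(\alpha_i\partial_i(f)+\partial_i(\alpha_i f)\bigr)\otimes g$, and
$\partial_i(\alpha_i f)=\partial_i(\alpha_i)f+s_i(\alpha_i)\partial_i(f)=2f-\alpha_i\partial_i(f)$
collapses this to $f\otimes g$; the mirror identity $\mm\circ(\id\otimes\epsilon)=\id$ is checked
the same way, using additionally $g=\tfrac12(g+s_ig)+\tfrac12\alpha_i\partial_i(g)$ to rewrite the
term $fg\otimes 1$. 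Finally, for the Frobenius condition \eqref{Frob} I would trace an element
$u_1\otimes u_2\otimes u_3\otimes u_4$ of $B_i\otimes_{R_n}B_i$ through each of
$\Delta\circ\mm$, $(\id\otimes\mm)\circ(\Delta\otimes\id)$ and $(\mm\otimes\id)\circ(\id\otimes\Delta)$
and obtain $u_1\partial_i(u_2u_3)\otimes 1\otimes 1\otimes u_4$ in all three cases; the two outer
branches require moving the $R_n^{s_i}$-element $\partial_i(u_2u_3)$ past tensor symbols and using
$\partial_i(1)=0$ to discard the inserted unit, exactly as in the associativity step.

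I do not expect a conceptual obstacle: the content is careful tracking of which tensor symbols
are over $R^{s_i}$ versus over $R_n$ and of the grading shifts. The two places that genuinely
need an argument rather than inspection are the centrality of $\beta$ (without which $\epsilon$ is
not even a morphism in $\overline{\BSbim}^\gr_n$) and the unit law, and both of these hinge on the
single identity $\partial_i(\alpha_i f)+\alpha_i\partial_i(f)=2f$ coming from twisted Leibniz.
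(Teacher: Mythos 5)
Your proposal is correct and is precisely the detailed version of the paper's proof, which simply declares the lemma "a straightforward check using the definitions and the property $\partial_i(fg)=\partial_i(f)g+s_i(f)\partial_i(g)=\partial_i(f)s_i(g)+f\partial_i(g)$". The two points you single out as needing actual work — centrality of $\tfrac12(\alpha_i\otimes 1+1\otimes\alpha_i)$ and the unit law via $\partial_i(\alpha_if)+\alpha_i\partial_i(f)=2f$ — are exactly the places where the twisted Leibniz rule is invoked, so your argument matches the intended one.
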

\begin{proof}This is a straightforward check using the definitions and the property $\partial_i(fg)=\partial_i(f)g+s_i(f)\partial_i(g)=\partial_i(f)s_i(g)+f\partial_i(g)$ for any $i\in \intset{n-1}$, $f,g\in R_n$.
\end{proof}

\subsection{The monoidal bicategory of Soergel bimodules}
\label{sec:monbicat}
In this section, we assemble the Soergel bimodules for all $n\in\N_0$ into a locally $\k$-linear monoidal bicategory. The set of objects in this category will be $\N_0$ and the only nonzero hom-categories are the endomorphism categories; for $n\in \N_0$ this is $\SBim_n$ with composition being  $\hcomp=_-{\otimes_{R_n}}_-$.

    To this end, for each $n\in \N_0$ we consider the monoidal $\k$-linear
    category $\SBim_n$ as a locally $\k$-linear bicategory (short:
    $\k$-bicategory) with a unique object denoted $n$. The composition of
    1-morphisms is denoted $\hcomp$ and the composition of 2-morphisms $\vcomp$.
    For example, Soergel bimodules $M,M'$ in $\SBim_n$ are now considered as
    1-endomorphisms of the (unique) object $n$ with composition of 1-morphisms
    given by
     \[M\hcomp M':=M\otimes_{R_n} M'\] The composition operation $\circ_2$ is
    the usual composition of bimodule homomorphisms. 

\begin{definition}
\label{def:variableshift}
Given
$a,b,c\in\N_0$ we define the \emph{variable shifting morphisms} as the algebra homomorphism $\jj_{a|c}\colon R_b\hookrightarrow R_{a+b+c}$, $x_i\mapsto x_{i+a}$. 
\end{definition}
\begin{definition}[Variable shifting functors]
Given an $R_b$-bimodule $M$, we can consider  the tensor product $R_a\otimes_\k M \otimes_\k  R_c$ for any fixed $a,b\in\N_0$. 
This is an $R_a\otimes_\k 
R_b\otimes_\k  R_c$-bimodule, and hence a  $R_{a+b+c}$-bimodule via the algebra isomorphism
\begin{equation}\label{eqforshift}
R_a\otimes_\k R_b \otimes_\k R_c \xrightarrow{\jj_{0|b+c}\otimes \jj_{a|c} \otimes \jj_{a+b|c} } 
R_{a+b+c} \otimes_\k R_{a+b+c} \otimes_\k R_{a+b+c} \xrightarrow{\mathrm{multiply}} R_{a+b+c}
\end{equation}
This construction defines a  monoidal functor $\jj_{a|c}$ from $R_b$-bimodules to $R_{a+b+c}$-bimodules.
\end{definition}
Via \eqref{eqforshift}, we get induced monoidal functors:
\begin{equation}
\label{eq:embed}
    \jj_{a|c} \colon \BSbim_b \to \BSbim_{a+b+c}, \quad \jj_{a|c} \colon \Sbim_b \to \Sbim_{a+b+c}
\end{equation}
that act on the generating Bott-Samelson bimodules by a simple index-shift:
\begin{equation}
\label{eq:index-shift}
\jj_{a|c}(B_i)=B_{
i+a}
\end{equation}
\begin{definition}\label{monoidalstr}
Given an $R_m$-bimodule $M_m$ and an
$R_n$-bimodule $N_n$ we define their $\boxtimes$-product as the $R_{m+n}$-bimodule
\[M_m\boxtimes N_n := \jj_{0|n}(M_m) \hcomp \jj_{m|0}(N_n) \] 
and assign to a pair $f\colon M_m\to M'_m$, $g\colon N_n\to N'_n$ of bimodules morphisms  the morphism
\[f \boxtimes g:= \jj_{0|n}(f) \hcomp \jj_{m|0}(g) \colon M_m\boxtimes N_n \to M'_m\boxtimes N'_n\]
of bimodules. It is immediate that $\boxtimes$ is functorial in both arguments and sends pairs of Bott-Samelson bimodules to Bott-Samelson bimodules and likewise for Soergel bimodules. We call the resulting functors \emph{parabolic induction}:
\begin{equation}
    \label{eq:parabolicind}
    \boxtimes \colon \BSBim_m \times \BSBim_n \to \BSBim_{m+n}
    \quad\text{and}\quad
    \boxtimes \colon \SBim_m \times \SBim_n \to \SBim_{m+n} .
\end{equation}
\end{definition}

\begin{remark}
The functors $\boxtimes$ are $\k$-bilinear on the level of bimodule homomorphisms and respect the
$\Z$-action on bimodules in the sense that there are canonical isomorphisms:
\[(M_m\langle k \rangle) \boxtimes (M_n\langle l \rangle) \cong (M_m\boxtimes
M_n)\langle k+l \rangle.\]
    \end{remark}

    \begin{remark}
    The $\k$-module underlying $M_m\boxtimes N_n$ is nothing else than  $M_m\otimes_\k N_n$, but the notation $\boxtimes$ indicates the existence of additional structural morphisms. 
    \end{remark}
To make additional interchange morphisms for  $\boxtimes$ explicit, it is useful to factor the definition of the $\boxtimes$-product into two steps. First we define it when one factor is a regular bimodule
\[R_m\boxtimes N_n := \jj_{m|0}(N)\quad\text{respectively}\quad  M_m \boxtimes R_n := \jj_{0|n}(M), \]
and then extend to the general case using $\hcomp$:
\begin{equation}\label{Defbox}
M_m\boxtimes N_n := (M_m \boxtimes R_n) \hcomp (R_m\boxtimes N_n) = \jj_{0|n}(M) \hcomp \jj_{m|0}(N)
\end{equation}
In the second step we have chosen a particular order of the $\hcomp$-factors,
which is related to the other possible order via the \emph{tensorator} canonical isomorphism.
\begin{equation}
\label{eq:tensorator}
   \boxtimes_{M_m,N_n}\colon (M_m \boxtimes R_n) \hcomp (R_m\boxtimes N_n) \to (R_m\boxtimes N_n) \hcomp (M_m \boxtimes R_n).
\end{equation}
The tensorator is natural in $M_m$ and $N_n$ and yields isomorphisms that witness the more general \emph{interchange law}
\begin{equation}
    \label{eq:interchanglaw}
    (M_m\hcomp M'_m)  \boxtimes (N_n \hcomp N'_n) \cong (M_m\boxtimes N_n) \hcomp (M'_m\boxtimes N'_n).
\end{equation}

Its compatibility with $\hcomp$ is expressed by
the interchange isomorphisms \eqref{eq:interchanglaw} that we describe explicitly
for Bott-Samelson bimodules as follows.

\begin{lemma}[Interchange isomorphisms]
Let $k,l\in\N$, $m\in\intset{n-1}$ and $\underline{i}\in \intset{m}^k$, $\underline{j}\in [m+2,n]^l$. Then there exist a canonical isomorphism 
\begin{equation}
\label{BiBj}
\boxtimes_{\underline{i}, \underline{j}}\colon B_{\underline{i}}\hcomp B_{\underline{j}}\cong B_{\underline{j}}\hcomp B_{\underline{i}}.
\end{equation}
If, moreover, $k',l'\in\N$ and $\underline{i'}\in \intset{m}^{k'}$, $\underline{j'}\in [m+2,n]^{l'}$ and we denote concatenation of tuples by juxtaposition, such as $\underline{i}\underline{i'}$ and $\underline{j}\underline{j'}$, then the following equalities
hold:
\begin{align}\label{slideboxtimescomp}
\begin{split}
\boxtimes_{\underline{i}, \underline{j}\underline{j'}}=
    (\id\hcomp\boxtimes_{\underline{i}, \underline{j'}})\vcomp
    (\boxtimes_{\underline{i}, \underline{j}}\hcomp\id)\colon\quad &
    B_{\underline{i}}\hcomp B_{\underline{j}\underline{j'}}\to B_{\underline{j}\underline{j'}}\hcomp B_{\underline{i}},\\
     \boxtimes_{\underline{i}\underline{i'}, \underline{j}}=
(\boxtimes_{\underline{i}, \underline{j}}\hcomp\id)\vcomp
       (\id\hcomp\boxtimes_{\underline{i'}, \underline{j}})\colon\quad
       &B_{\underline{i}\underline{i'}}\hcomp B_{\underline{j}}\to B_{\underline{j}}\hcomp B_{\underline{i}\underline{i'}} .
       \end{split}
\end{align}
Given bimodule maps $\alpha\colon B_{\underline{i}}\to B_{\underline{i'}}$ and $\beta\colon B_{\underline{j}}\to B_{\underline{j'}}$ for some $i',j'\in\N$, we have 
\begin{equation}\label{slideboxtimes}
\boxtimes_{\underline{i}, \underline{j'}}\vcomp (\id\hcomp \beta)
=(\beta\hcomp \id)\vcomp\boxtimes_{\underline{i}, \underline{j}},
\quad
\boxtimes_{\underline{i'}, \underline{j}}\vcomp (\alpha\hcomp \id)
=(\id\hcomp \alpha)\vcomp\boxtimes_{\underline{i}, \underline{j}}
\end{equation}
\end{lemma}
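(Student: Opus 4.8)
The plan is to unwind the definition of the tensorator $\boxtimes_{M_m,N_n}$ in \eqref{eq:tensorator} into an explicit bimodule isomorphism and then verify the four claimed equalities by inspecting their effect on elementary tensors. Concretely, for Bott--Samelson bimodules $B_{\underline i}$ with $\underline i \in \intset{m}^k$ and $B_{\underline j}$ with $\underline j \in [m+2,n]^l$, both $B_{\underline i}\hcomp B_{\underline j}$ and $B_{\underline j}\hcomp B_{\underline i}$ are, as $\k$-modules, identified with $B_{\underline i}\otimes_\k B_{\underline j}$ after applying the variable-shifting functors $\jj_{0|n}$ and $\jj_{m|0}$; the point of the index constraints $\underline i \in \intset{m}^k$, $\underline j \in [m+2,n]^l$ is precisely that the variables appearing in $\jj_{0|n}(B_{\underline i})$ (namely $x_1,\dots,x_{m+1}$) are disjoint from and non-adjacent to those in $\jj_{m|0}(B_{\underline j})$ (namely $x_{m+2},\dots,x_{n+1}$), so that the two factors commute past one another via the flip on the $\k$-tensor factors, with no Demazure-type correction terms. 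I would first record this explicit description of $\boxtimes_{\underline i,\underline j}$ (essentially the symmetry isomorphism of $\otimes_\k$ transported through \eqref{Defbox}), checking that it is a map of $R_{m+n}$-bimodules using that the two commuting subalgebras $\jj_{0|n}(R_{m+1})$ and $\jj_{m|0}(R_{n-m-1+\cdots})$ together generate $R_{n+1}$ in a way symmetric in the two orders.

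With this explicit form in hand, equations \eqref{slideboxtimescomp} become a bookkeeping check: concatenating tuples $\underline j\underline j'$ corresponds to inserting more tensor factors on the right, and sliding $B_{\underline i}$ past $B_{\underline j}\hcomp B_{\underline j'}$ in one step must agree with first sliding past $B_{\underline j}$ and then past $B_{\underline j'}$, because the underlying flip maps on iterated $\k$-tensor products satisfy the corresponding ``one big transposition equals composite of adjacent ones'' identity. Since there are no scalar corrections, this is a strict equality of bimodule homomorphisms, verified on elementary tensors $f_0\otimes f_1\otimes\cdots$; the interchange/hexagon-type coherence here degenerates to the bare functoriality of the flip. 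The second line of \eqref{slideboxtimescomp} is entirely symmetric, swapping the roles of the two factors.

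For the naturality equations \eqref{slideboxtimes}, I would use that the tensorator was already declared natural in both arguments (stated just below \eqref{eq:tensorator}): given $\alpha\colon B_{\underline i}\to B_{\underline i'}$ and $\beta\colon B_{\underline j}\to B_{\underline j'}$, the maps $\id\hcomp\beta$ and $\beta\hcomp\id$ are, under the identification above, the same $\k$-linear map $\jj_{0|n}(\id)\otimes\jj_{m|0}(\beta)$ applied on the two sides, and $\boxtimes$ is by construction the flip, which commutes with any map of the form $\phi\otimes\psi$. Thus \eqref{slideboxtimes} is the naturality square for the symmetry of $\otimes_\k$, transported through \eqref{Defbox}; again there is nothing to compute beyond unwinding definitions. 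The one genuinely substantive point --- and the step I expect to require the most care --- is the very first one: pinning down the explicit formula for $\boxtimes_{\underline i,\underline j}$ and checking it is well-defined over the relevant tensor products of $R$-invariant subrings (i.e.\ that the flip descends through the balanced tensor products $\otimes_{R^{s}}$ defining the Bott--Samelson bimodules). This uses that for $i\in\intset{m}$ and $j\in[m+2,n]$ the subrings $R^{s_i}$ and $R^{s_j}$ interact trivially after the index shifts, so the relevant relations are imposed in disjoint sets of variables; once that is established, all four identities in \eqref{slideboxtimescomp}--\eqref{slideboxtimes} follow formally. Finally, one notes that the general interchange law \eqref{eq:interchanglaw} for arbitrary Bott--Samelson bimodules is recovered by writing each $B_{\underline i}$, $B_{\underline j}$ as an $\hcomp$-composite of the single-index $B_i$, $B_j$ and iterating, with consistency guaranteed by \eqref{slideboxtimescomp}.
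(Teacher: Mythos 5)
Your proposal is correct and matches the paper's argument in substance: the paper likewise exploits that the two index sets are distant, defining $\boxtimes_{\underline{i},\underline{j}}$ by swapping the bimodule generators $1\otimes z_{i_1}\otimes\cdots\otimes 1$ (with $z\in\{1,\alpha\}$) of the two factors and using their invariance under the simple reflections coming from the other factor, after which \eqref{slideboxtimescomp} and \eqref{slideboxtimes} follow directly. Your packaging via the external $\otimes_\k$-decomposition and the flip is just a slightly more structural phrasing of the same swap, and it correctly isolates the one substantive check, namely that the flip descends through the balanced tensor products over the invariant subrings.
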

\begin{proof}
Note that the $\underline{z}=1\otimes z_{i_1}\otimes\cdots\otimes z_{i_{k-1}}\otimes 1$ with $z_{i_t}\in\{1,\alpha_t\}$ generate $B_{\underline{i}}$ as bimodule. Moreover,  $z_{i_t}$ is $s_{j_t}$-invariant for $t\in\intset{n-1}$. Similarly, we have generators 
$\underline{z}'=1\otimes z_{j_1}\otimes\cdots\otimes z_{j_{l-1}}\otimes 1$ for $B_{\underline{j}}$. Using the invariance properties we see that their tensor products $\underline{z}\otimes\underline{z}'$, and $\underline{z}'\otimes\underline{z}$,
generate the bimodule $B_{\underline{i}}\hcomp B_{\underline{j}}$ and $B_{\underline{j}}\hcomp B_{\underline{i}}$ respectively. 
Using the invariance conditions one can verify that the assignment 
$\underline{z}\otimes\underline{z}'
$ on generators defines the desired isomorphism \eqref{BiBj}. The property \eqref{slideboxtimes} follows then directly.  
\end{proof}

In summary we can define the monoidal bicategory of Soergel bimodules: 
\begin{proposition}
    \label{prop:Sbimmonbicat}
The parabolic induction functors \eqref{eq:parabolicind} endow the \emph{bicategory of Soergel bimodules}
\begin{equation}\label{SBimDef}
\SBim:=\SBim_\mathfrak{gl}:= \bigsqcup_{n\geq 0} \SBim_n
\end{equation}
with the structure of a locally $\k$-linear monoidal bicategory with
monoidal product denoted $\boxtimes$. By restriction, the same result holds for the \emph{bicategory of Bott--Samelson bimodules} $\BSBim= \bigsqcup_{n\geq 0} \BSBim_n $.
\end{proposition}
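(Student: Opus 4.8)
The plan is to assemble the evident canonical data and then reduce the monoidal-bicategory axioms to coherence of their constituents; the only genuinely new input is the Interchange Lemma above, together with the observation that $\boxtimes$ factors through the variable-shifting functors $\jj_{a|c}$, which compose coherently via \eqref{eqforshift}. First I would record that each monoidal $\k$-linear category $\SBim_n$ is a $\k$-linear bicategory with a single object $n$, with $\hcomp={}_-\!\otimes_{R_n}\!{}_-$, with $\vcomp$ the composition of bimodule maps, and with the canonical associator and unitors of $\otimes_{R_n}$, which satisfy pentagon and triangle. Declaring all hom-categories between distinct objects to be zero, $\SBim=\bigsqcup_{n\ge 0}\SBim_n$ is a locally $\k$-linear bicategory with object set $\N_0$; on objects we put $m\boxtimes n:=m+n$, so that $(\N_0,+,0)$ is a \emph{strict} monoid and all coherence at the level of objects is trivial. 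The unit object is $0$ and the unit $1$-morphism is $R_0=\C$.

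Next I would exhibit $\boxtimes$ as a pseudofunctor $\SBim\times\SBim\to\SBim$. On hom-categories it is $\boxtimes\colon\SBim_m\times\SBim_n\to\SBim_{m+n}$, $(M,N)\mapsto\jj_{0|n}(M)\hcomp\jj_{m|0}(N)$, which is $\k$-bilinear on $2$-morphisms and, by \eqref{eq:index-shift}, preserves Bott--Samelson bimodules. Its pseudofunctoriality $2$-cell is the interchange isomorphism \eqref{eq:interchanglaw}; expanding both sides via \eqref{Defbox}, it reduces to swapping the two inner $\hcomp$-factors, i.e.\ to the tensorator \eqref{eq:tensorator}. On Bott--Samelson bimodules the tensorator is precisely the isomorphism \eqref{BiBj} of the Interchange Lemma, the point being that $\jj_{0|n}(M_m)$ and $\jj_{m|0}(N_n)$ have disjoint variable supports; it is natural in both arguments by \eqref{slideboxtimes} and coherent with respect to $\hcomp$ by \eqref{slideboxtimescomp}. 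Since $\SBim_n$ is the additive idempotent-complete hull of $\BSbim_n$ and any Soergel bimodule is a retract of a direct sum of Bott--Samelson bimodules, naturality of the tensorator forces a unique extension to all of $\SBim$, compatible with $\hcomp$ and with the unit constraints; this step — the tensorator and its descent to idempotent-complete bimodules — is the main conceptual content of the proof.

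I would then build the associativity and unit equivalences. Using monoidality of each $\jj_{a|c}$ and the canonical composition isomorphisms $\jj_{a'|c'}\circ\jj_{a|c}\cong\jj_{a+a'|c+c'}$ from \eqref{eqforshift}, one checks that $(M_m\boxtimes N_n)\boxtimes P_p$ and $M_m\boxtimes(N_n\boxtimes P_p)$ are both canonically isomorphic to $\jj_{0|n+p}(M_m)\hcomp\jj_{m|p}(N_n)\hcomp\jj_{m+n|0}(P_p)$; this defines the associator $a_{M,N,P}$ as a pseudonatural adjoint equivalence which, since the object-level associator is the identity, lives entirely in the hom-categories. For the unit, $M_m\boxtimes R_0=\jj_{0|0}(M_m)\hcomp\jj_{m|0}(R_0)\cong M_m\hcomp R_m\cong M_m$ via the unitor of $\hcomp$, and symmetrically $R_0\boxtimes N_n\cong N_n$, yielding the left/right unitor equivalences.

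Finally I would verify coherence. The modifications completing the data of a monoidal bicategory — the pentagonator $\pi$, the unit $2$-cells $\mu,\lambda,\rho$, and the $2$-cells expressing compatibility of the tensorator with the associator and unitors — are all assembled from the coherence isomorphisms of the pieces: pentagon and triangle for the various $\otimes_{R}$, coherence of the monoidal functors $\jj_{a|c}$ and of their composition isomorphisms, strictness at the object level, and the naturality/coherence of the tensorator in \eqref{slideboxtimes}--\eqref{slideboxtimescomp}. The remaining axioms (pentagon of pentagonators, the two hexagon-type conditions, and the unit coherences) then hold because every face of the relevant diagrams is built from these canonical data; this is a lengthy but routine diagram chase with no new difficulty, and it is the bulk of the bookkeeping. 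Since all of the above was already carried out at the Bott--Samelson level — the $\jj_{a|c}$ restrict to $\BSbim$, $\boxtimes$ preserves Bott--Samelson bimodules, and the tensorator is the explicit map \eqref{BiBj} — the same argument gives the monoidal bicategory $\BSBim=\bigsqcup_{n\ge 0}\BSBim_n$ and shows $\BSBim\hookrightarrow\SBim$ is a monoidal pseudofunctor. The hard part is thus not any single verification but combining the Interchange Lemma with the descent of the tensorator to $\SBim$ (via idempotent-completeness) and then keeping track of the full coherence data of a weak monoidal bicategory.
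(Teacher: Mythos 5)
Your proposal is correct and follows essentially the same route as the paper, which presents \Cref{prop:Sbimmonbicat} as a summary of the preceding constructions (the functors $\jj_{a|c}$, the tensorator \eqref{eq:tensorator}, and the interchange isomorphisms \eqref{BiBj}--\eqref{slideboxtimes}) and defers the axioms of a monoidal bicategory to \cite{schommer-pries-thesis}. The only cosmetic difference is that you obtain the tensorator on $\SBim$ by additive/idempotent-complete descent from the Bott--Samelson case, whereas the paper's \Cref{monoidalstr} and \eqref{eq:tensorator} are already phrased for arbitrary bimodules; both are fine.
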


We refer to \cite{schommer-pries-thesis} for the axioms of monoidal bicategories
and related structures, as well as a survey of the history of these notions.

\begin{remark} \label{SBimglsl} We call the bicategory defined in \eqref{SBimDef} the bicategory of
Soergel bimodules \emph{of type $\mathfrak{gl}_n$}, since we use the  polynomial
functions on the various $\mathfrak{h}_{\mathfrak{gl}_n}^\star=\C^n$, that is
the Cartan subalgebras for $\mathfrak{gl}_n$. One can similarly define the
\emph{bicategory of Soergel bimodules $\SBim_\mathfrak{sl}$ of type
$\mathfrak{sl}_n$}, by taking the algebras $\overline{R}_n\subset R_n$ of
polynomial functions on the corresponding Cartan subalgebras
$\mathfrak{h}_{\mathfrak{sl}_n}^\star=\C^{n-1}$. Note that
$\overline{R}_n=\k[\alpha_i\mid i\in\intset{n-1}]$, where
$\alpha\in\mathfrak{h}_{\mathfrak{gl}_n}^*$ are the simple roots. Since\footnote{Here we use that we work over a field of
characteristic zero.} 
$R_n=R_n'[x_1+\cdots + x_n]$ we can view the bicategory of Soergel bimodules of type $\mathfrak{gl}_n$ as a
central extension of the one of type $\mathfrak{sl}_n$; the morphism categories
are just obtained via the base change $_-\otimes_{R'_n}R_n$. 
\end{remark}

\begin{remark}
\label{rem:inmorita}
    An alternatively approach towards a monoidal $\k$-bicategory of Soergel bimodules can be based on the ambient monoidal bicategory of algebras, bimodules and bimodule maps. More precisely: associated to any braided monoidal category $\Vcat$ with coequalizers that
    are preserved by the tensor product in each variable separately, there
    exists, following  \cite[Section 8.9]{GPS}, the following monoidal bicategory $\mathrm{Alg}(\Vcat)$, that we call the \emph{Morita
    bicategory}:
    \begin{itemize}
        \item objects in $\mathrm{Alg}(\Vcat)$ are the monoids in $\Vcat$,
        \item $1$-morphisms from $A$ to $B$ in $\mathrm{Alg}(\Vcat)$ are $B$-$A$-bimodule objects in $\Vcat$,
        \item horizontal composition $\hcomp$ is by tensoring bimodules over intermediate monoids,
        \item $2$-morphisms in $\mathrm{Alg}(\Vcat)$ are the bimodule morphisms in $\Vcat$,
        \item the monoidal structure and composition of $2$-morphism are inherited from $\Vcat$.
    \end{itemize}
    Now we specialize to the case where $\Vcat=\gVect$, the
    category  of $\Z$-graded $\k$-vector spaces with trivial (not
    Koszul) braiding. The objects of $\mathrm{Alg}(\gVect)$ are graded algebras
    over $\k$, the $1$-morphisms are graded bimodules, and the $2$-morphisms are
    grading-preserving bimodule homomorphisms. Inside $\mathrm{Alg}(\gVect)$, we now build a $2$-full monoidal sub-bicategory $\BSBim'$ as follows:
    \begin{itemize}
        \item objects in $\BSBim'$ are the graded algebras $R_n=\k[x_1,\dots,x_n]$ for $n\in \N_{0}$,
        \item $1$-morphisms in $\BSBim'$ are generated inside
        $\mathrm{Alg}(\Vcat)$ under $\boxtimes$, $\hcomp$ and grading-shifts
        by the elementary Bott-Samelson bimodule $B_1:=R_2
        \otimes_{R_2^{S_2}}R_2 \langle 1\rangle$, and by the bimodules corresponding to the algebra isomorphisms $\jj_{0|n}\otimes
        \jj_{m|0}$ and their inverses for $m,n\in \N_0$,
          \item $2$-morphisms between $1$-morphisms in $\BSBim'$ are the same as in $\mathrm{Alg}(\gVect)$.
    \end{itemize}
    This is itself a monoidal bicategory, as the objects are closed under
    $\boxtimes$, the $1$-morphisms are closed under $\boxtimes$ and $\hcomp$,  and
    we have the associativity and unitality $1$-morphisms that are necessary for
    a monoidal bicategory. 
    
    We can also let the $1$-morphisms be
    generated by all grading shifts of Bott-Samelson bimodules directly to define a monoidal bicategory $\BSBim''$:
    \begin{itemize}
        \item $1$-morphisms in $\BSBim''$ are generated inside
        $\mathrm{Alg}(\Vcat)$ under $\boxtimes$ and $\hcomp$ by the grading-shifted Bott-Samelson bimodules $B_{\underline{i}}\langle
            j+k\rangle$ and the bimodules corresponding to the algebra isomorphisms $\jj_{0|n}\otimes
        \jj_{m|0}$ and their inverses for $m,n\in \N_0$.
    \end{itemize} 
    Since the tensor product of
    $\gVect$ preserves finite direct sums in both arguments, direct sums are
    also preserved in under $\boxtimes$ in $\mathrm{Alg}(\gVect)$ and so we
    obtain a locally additive and idempotent complete monoidal bicategory
    $\SBim''$ by replacing in the above:
    \begin{itemize}
        \item $1$-morphisms in $\SBim''$ are generated inside
        $\mathrm{Alg}(\Vcat)$ under $\boxtimes$ and $\hcomp$ by Soergel
        bimodules and the bimodules corresponding to the algebra isomorphisms
        $\jj_{0|n}\otimes \jj_{m|0}$ and their inverses for $m,n\in \N_0$.
    \end{itemize} 

    In $\BSBim'$, $\BSBim''$, and $\SBim''$, every object of the form $\boxtimes_{i=1}^m R_{n_i}$ is
    equivalent to some  $R_{\sum_i n_i}$ via variable shifting morphisms, and so the morphism
    categories in these three monoidal bicategories can be identified with morphism categories in $\BSBim$ and $\SBim$, respectively, which we use in this paper.

\end{remark}

\section{Various strictness results for dg-categories}
In this section we  establish certain strictness results for monoidal dg-categories. The Strictness Theorem, \Cref{cor:moninherit}, might be interesting on its own,  but will in particular be applied  to categories of (chain complexes of) Soergel or Bott--Samelson bimodules. Before we define these categories and study their monoidal structures we first consider monoidal structures on sets. 

To this end, we remind ourselves of a few basic facts on total orders on sets.
    Recall that for totally ordered sets $(X,\leq)$ and $(Y,\leq)$, the
    \emph{lexicographic order} on $X\times Y$ is defined by 
\[(x,y)\leq (x',y') 
\quad \iff\quad  
\left(
x<x' \;\text{ or }\; x=x, y<y' 
\right).
\] 
Similarly, one defines the lexicographic order on cartesian products of higher arity.

\begin{lemma}[Associativity of lexicographic order] \label{assforposets}
Assume we are given totally ordered sets $(X,\leq)$, $(Y,\leq)$, $(Z,\leq)$. Consider the canonical identifications of sets 
\begin{equation}\label{assposs}
(X\times Y)\times Z \rightarrow X\times Y \times Z \rightarrow X \times (Y\times
Z). 
\end{equation}
Then the iteratively defined lexicographic order on $(X\times Y)\times Z$ and  $X \times (Y\times
Z)$ agree with the lexicographic order on  $X\times Y \times Z $ and induce identifications \eqref{assposs} of posets.

\end{lemma}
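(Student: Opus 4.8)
The plan is to unwind the recursive definition of the lexicographic order in all three bracketings and check that the resulting binary relations on the common underlying set of triples coincide verbatim; since all three orders then live on the same set and agree as relations, the set-identifications \eqref{assposs} are automatically isomorphisms of posets.

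First I would record the auxiliary fact that, for totally ordered sets $(A,\leq)$ and $(B,\leq)$, the strict part of the lexicographic order on $A\times B$ is
\[
(a,b)<(a',b')\quad\iff\quad a<a'\;\text{ or }\;\bigl(a=a'\text{ and }b<b'\bigr),
\]
which follows from the definition together with totality of $\leq$ on $A$; this is the only place totality enters. Unfolding from the outside in, $((x,y),z)\leq((x',y'),z')$ in $(X\times Y)\times Z$ holds iff $(x,y)<(x',y')$ in $X\times Y$, or $(x,y)=(x',y')$ and $z\leq z'$; expanding the first clause via the auxiliary fact, this becomes the three-clause condition ``$x<x'$, or ($x=x'$ and $y<y'$), or ($x=x'$, $y=y'$, $z\leq z'$).'' Symmetrically, $(x,(y,z))\leq(x',(y',z'))$ in $X\times(Y\times Z)$ holds iff $x<x'$, or $x=x'$ and $(y,z)\leq(y',z')$ in $Y\times Z$, which expands to the same three-clause condition. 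By definition this is precisely the (ternary) lexicographic order on $X\times Y\times Z$, so all three relations agree.

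Finally, since the canonical maps in \eqref{assposs} are the identity on underlying sets (each side is literally the set of triples $(x,y,z)$ after the obvious reassociation of parentheses) and the three orders have just been shown to be the same relation, every map in \eqref{assposs} both preserves and reflects $\leq$, hence is an isomorphism of posets. The statement for cartesian products of higher arity then follows by an immediate induction on the number of factors, bracketing off one factor at a time and invoking the ternary case.

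I do not expect a genuine obstacle here: the content is entirely bookkeeping. The only point demanding a moment's care is tracking strict versus non-strict inequalities when passing between the recursive (binary) definition and the flattened (ternary) one, which is exactly what the auxiliary fact about the strict part of a lexicographic order is there to handle.
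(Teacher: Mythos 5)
Your proof is correct and follows essentially the same route as the paper's: unfold the recursive definition in both bracketings and observe that each reduces to the same three-clause ternary condition, so the identity maps on triples are poset isomorphisms. Your explicit auxiliary fact about the strict part of the lexicographic order is a welcome extra bit of care that the paper's chain of equivalences handles only implicitly (and somewhat loosely with regard to strict versus non-strict inequalities).
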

\begin{proof}
For $x,x'\in X$, $y,y'\in Y$, $z,z'\in Z$, we use the definition to check:
\begin{align*}
    ((x,y),z)\leq ((x',y'),z')  
    \quad &\iff\quad  \left( 
    (x,y)<(x',y')     \;\text{ or }\; (x,y)=(x',y'), z<z' \right)
    \\
    \quad &\iff\quad  
        \left( 
    x<x'  \;\text{ or }\;  x=x', y<y' \;\text{ or }\;  (x,y)=(x',y'), z<z' 
    \right)
    \\
 \quad &\iff\quad  
          \left( 
        x<x'     \;\text{ or }\;     x=x', (y,z)\leq (y',z') 
        \right)
        \\
    \quad &\iff\quad  
    (x,(y,z))\leq (x',(y',z')).  
\end{align*}
Thus, the iteratively defined orders agree with the lexicographic order on $X\times Y\times Z$.
\end{proof}
Note that this result does not hold for arbitrary orderings on the product. As a consequence, the lexicographic ordering allows us to  use the Cartesian product to define a strict monoidal structure on ordered sets or a strict monoidal skeleton for the category of finite sets. This will now be used to construct a strict monoidal structure on categories of chain complexes built from objects in a strict monoidal category. 

\subsection{Revisiting dg-categorical constructions}\label{backgrounddg}
We start by collecting  background material on standard constructions with
    differential graded (dg-)categories. We assume familiarity with the
    definitions of dg-categories, functors and natural transformations, see e.g.
    \cite{BondalKapranov, Kellerondg}. The main ingredient that we need in the
    following sections is the construction of the dg-catgory of chain complexes over a
    $\k$-linear category in terms of the pretriangulated hull, see
    \Cref{newdefCb}.   
    We follow the conventions in
    \cite[\S 2-3]{GHW} and use the cohomological convention for the $\Z$-grading
    of chain complexes (in particular, the differential increases the degree).
    We denote by $\Hom^l$ the homogeneous component of cohomological degree
    $l\in \Z$ of a morphism complex. 

\begin{definition}\label{add}
    If $\CS$ is a dg-category, the \emph{additive suspended ordered envelope} $\adds\CS$
    of $\CS$ is the dg-category whose 
    \begin{itemize}
    \item objects are collections
    $\{\susp{a_i}X^i\}_{i\in I}$ where $I$ runs through the isomorphism classes of finite, totally ordered sets, 
    $X^i\in \CS$ and $a_i\in \Z$.    
   \item The morphism complexes in $\adds\CS$ are
        \[
        \Hom^l_{\adds\CS}\left(\{\susp{a_i}X^i\}_{i\in I},\{\susp{b_j}Y^j\}_{j\in J}\right) 
        = \prod_{i\in I}\bigoplus_{j\in J} \Hom^{l+b_j-a_i}_{\CS}(X^i,Y^j)
        \]
        with differential constructed from the differential $d_{\CS}$ of $\CS$ by
        \begin{equation}
        \label{eq:diff-susp-env}
        d_{\adds\CS}((f_{ji})_{(j,i)\in J\times I}) = ((-1)^{b_j}d_{\CS}(f_{ji}))_{(j,i)\in J\times I}.
        \end{equation}
    A morphism in this hom space can be considered as a $J\times I$ matrix
    $(f_{ji})$ of morphisms $f_{ji}\in \Hom_\CS(X^i,Y^j)$.  
    \item Composition of
    morphisms is given by matrix multiplication\footnote{We refer to \cite[\S 3]{MR2174270} for a beautiful explanation of this construction in the context of the additive (though not suspended) completion.} with multiplication of entries
    modelled on composition in $\CS$.
    \end{itemize}
    There is a canonical fully faithful dg-functor $\CS\rightarrow \adds\CS$ sending $X\mapsto \{X\}$ (with a singleton as
    indexing set), and we may identify $\CS$ with its image in $\adds\CS$.
\end{definition}
 
We formulate an obvious observation explicitly to stress the involved orders: 
\begin{lemma}\label{obviousbutcrucial}
The dg-category $\adds\CS$ is additive with biproducts of objects given by disjoint unions of collections such that 
the biproduct of an object indexed by $I$ with an object indexed by $J$ is indexed by $I\sqcup J$ equipped with the induced total order with $I<J$. 
\end{lemma}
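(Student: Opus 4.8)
The statement is essentially a bookkeeping lemma about the envelope $\adds\CS$ from \Cref{add}, so the plan is to unpack the definition and verify the three assertions (additivity, that biproducts are given by disjoint unions of collections, and that the indexing set of a biproduct is $I\sqcup J$ with the prescribed order) directly. First I would fix two objects $A=\{\susp{a_i}X^i\}_{i\in I}$ and $B=\{\susp{b_j}Y^j\}_{j\in J}$, and propose the candidate biproduct $A\oplus B := \{\susp{c_k}Z^k\}_{k\in I\sqcup J}$, where the indexing set $I\sqcup J$ carries the total order restricting to the given orders on $I$ and $J$ and placing every element of $I$ below every element of $J$, and where $(c_k,Z^k)$ equals $(a_i,X^i)$ for $k=i\in I$ and $(b_j,Y^j)$ for $k=j\in J$. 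This is a legitimate object of $\adds\CS$ since $I\sqcup J$ is again finite and totally ordered.

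Next I would write down the structure maps. Using the matrix description of morphisms in \Cref{add}, the inclusion $\iota_A\colon A\to A\oplus B$ is the $(I\sqcup J)\times I$ matrix that is the identity on the $I$-block and zero on the $J$-block, and similarly for $\iota_B$, $\pi_A$, $\pi_B$. These are degree-zero cocycles: the differential \eqref{eq:diff-susp-env} only rescales entries by signs $(-1)^{b_j}$ and applies $d_\CS$ entrywise, and the identity and zero morphisms of $\CS$ are $d_\CS$-closed, so $d_{\adds\CS}\iota_A=0$, etc. The biproduct identities $\pi_A\iota_A=\id_A$, $\pi_B\iota_B=\id_B$, $\pi_A\iota_B=0$, $\pi_B\iota_A=0$, and $\iota_A\pi_A+\iota_B\pi_B=\id_{A\oplus B}$ are then immediate from block matrix multiplication, since composition in $\adds\CS$ is matrix multiplication with entrywise composition in $\CS$. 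This shows $A\oplus B$ is a biproduct, hence $\adds\CS$ is additive (the zero object being the collection indexed by the empty set).

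The remaining point is the compatibility of the order with iterated biproducts, but this is exactly what \Cref{assforposets} provides: forming $(A\oplus B)\oplus C$ uses the order on $(I\sqcup J)\sqcup K$ obtained by iterating the construction, and forming $A\oplus(B\oplus C)$ uses the order on $I\sqcup(J\sqcup K)$; by \Cref{assforposets} (with the evident identification of disjoint unions with the three-fold lexicographic-type order, treating the indexing labels as the ``first coordinate'') both agree with the order on $I\sqcup J\sqcup K$ in which $I<J<K$ blockwise. So the biproduct is strictly associative on the nose, which is the content of the phrase ``equipped with the induced total order with $I<J$''. I do not expect a genuine obstacle here; the only mild subtlety — and the reason the lemma is worth stating — is that the identification of $\adds\CS$ with a category of ``$\Z$-indexed chain objects'' would break strict associativity (cf.\ \Cref{warn:nonstrict}), so one must resist that temptation and work with the ordered-collection model throughout, invoking \Cref{assforposets} exactly where associativity is needed.
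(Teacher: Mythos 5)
Your verification is correct, and it fills in details that the paper itself omits: the lemma is stated there as an ``obvious observation'' with no written proof, so there is no argument to compare against beyond the definition-unpacking you carry out. Your route — candidate object indexed by $I\sqcup J$ with $I<J$, block-matrix structure maps, closedness from $d_\CS(\id)=0$ and the form of \eqref{eq:diff-susp-env}, biproduct identities by block multiplication, empty collection as zero object — is exactly the intended verification.

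One small remark: the final paragraph on strict associativity of iterated biproducts is not actually part of the lemma's claim, and your appeal to \Cref{assforposets} there is slightly off-target, since that lemma concerns lexicographic orders on Cartesian \emph{products} (it is used for associativity of the tensor product in \eqref{eq:distr}), whereas associativity of ordered disjoint unions $I\sqcup J\sqcup K$ with blockwise order is immediate on its own. This is harmless extra content, not a gap.
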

    
 One can easily verify that $\adds\CS$ is a suspended category in the sense of \cite{KellerVossieck, KellerChain} (with the obvious suspension functor induced by $\Sigma$). 
       Applying the construction $\adds$ once again to $\CS\rightarrow \adds\CS$ yields an equivalence $\adds \CS \cong \adds\adds\CS$. Thus, $\adds$ is an idempotent operation. 
This justifies,  with \Cref{obviousbutcrucial},  our  terminology  \emph{additive suspended ordered envelope}.

Motivated by \Cref{obviousbutcrucial}, we use, in the setup from there, the notation
    \begin{equation}\label{orderedoplus}
    \bigoplus_{i\in I} \susp{a_i}X^i := \{\susp{a_i}X^i\}_{i\in I} \in \adds\CS,
    \end{equation}
    and also write $0$ for the empty direct sum (corresponding to the case
    $I=\emptyset$).
   
    \begin{definition}
        If $\CS$ is a dg-category, the \emph{twisted envelope} $\Tw(\CS)$ of
$\CS$ is the dg-category with objects $\tw_\a(X)$, where $X\in\CS$ and $\a\in
\End^1_\CS(X)$ satisfying $d_{\CS}(\a)+\a^2=0$, the \emph{Maurer--Cartan equation}. The morphism complexes
in $\Tw(\CS)$ are by definition
\[
\Hom_{\Tw(\CS)}\Big(\tw_\a(X),\tw_\b(Y)\Big) := \Hom_\CS(X,Y)
\]
with differential
$d_{\Tw(\CS)}(f) = d_{\CS}(f) + \b\circ f - (-1)^{|f|} f\circ \a$.
We say that $\CS$ \emph{has twists} if the fully faithful dg-functor
$\CS \rightarrow \Tw(\CS)$ sending $X\mapsto \tw_0(X)$ is an equivalence.
    \end{definition}

For us, the most important twists are cones: If $f\in \Hom^1_\CS(X,Y)$ is a
degree $1$ closed morphism, then the cone of $f$ gives the object $\tw_\a(X\oplus
Y)$ in $\Tw(\CS)$ with $\a=\smMatrix{0& 0\\ f & 0}$.

\begin{remark}
We consider, compatible with \eqref{orderedoplus}, the summands of $\tw_\a(X\oplus Y)$ as ordered. The order is induced by $f$, i.e.  $X=X_1$ and $Y=X_2$ with $1<2$.
\end{remark} 

\begin{remark}
The name twisted  \emph{envelope} is justified, as one can show that $\Tw(\CS)$ has
twists, so that the inclusion $\Tw(\CS)\rightarrow \Tw(\Tw(\CS))$ is an
equivalence,  see \cite{BondalKapranov, Drinfeld, Tabuada, Kellerondg}---also for background on $\Tw(\CS)$ and $\pretr{\CS}$ defined below. 
\end{remark}

The next definition is standard up to total orders on labelling sets built into $\adds\CS$:
\begin{definition}
    A dg-category $\CS$ is \emph{pretriangulated} if it is suspended and closed
    under the formation of cones. The \emph{pretriangulated hull} $\pretr{\CS}$ of a dg-category $\CS$ is the full subcategory of $\Tw(\adds\CS)$ generated by $\adds\CS$ under the formation of cones.
\end{definition}

\subsection{Bounded chain complexes}
\begin{definition}\label{newdefCb}
    Let $\CS$ be a $\k$-linear category. We define the \emph{dg-category of bounded
    chain complexes $\Chb(\CS)$ over} $\CS$ as the pretriangulated hull $\pretr{\CS}$ of $\CS$, considered as a dg-category with morphism complexes concentrated in degree zero. 
\end{definition}

\begin{remark}
Note that the objects of $\Chb(\CS)$ are exactly the bounded chain complexes
assembled from finite direct sums of homological grading shifts of objects of $\CS$, since every such chain complex is an iterated cone from objects of $\adds\CS$ 
   \end{remark} 
   
 Recall that in any dg-category, homogeneous morphisms in the kernel of the
    differential are called \emph{closed} and those in the image of the
    differential are called \emph{exact}. The exact morphisms form an ideal
    inside the closed morphisms. Two parallel morphisms are \emph{homotopic} if
    their difference is exact.  A closed degree zero morphism is a
    \emph{homotopy equivalence} if it is invertible up to homotopy. 
    The \emph{cohomology category} of a dg-category is defined to be the graded
    $\k$-linear category with the same objects, but with graded morphism spaces
    computed as the (co)homology of the morphism complexes. Its degree zero
    subcategory is called the \emph{homotopy category} of the dg-category. It is
    clear that homotopy equivalences in a dg-category are exactly the degree
    zero closed morphisms that induce isomorphisms in the homotopy category.

    \begin{exa}
        Let $\CS$ be a $\k$-linear category. The cohomology category of
        $\Chb(\CS)$ will be denoted $H^*(\Chb(\CS))$. It has the same objects as
        $\Chb(\CS)$, but with graded morphism spaces computed as the homology of
        the morphism complexes of $\Chb(\CS)$. The homotopy category of
        $\Chb(\CS)$ is also called the \emph{bounded homotopy category of $\CS$}
        and denoted $\Kb(\CS)$. Explicitly, the objects of $\Kb(\CS)$ are chain
        complexes and its morphisms are degree zero closed morphisms considered
        up to degree zero exact morphisms, i.e. chain maps up to homotopy. The
        notion of homotopy equivalence in the dg-category $\Chb(\CS)$ is
        equivalent to the common (classical) concept of the same name.
    \end{exa}

    Finally we recall a concrete model for the idempotent completion of a
    category $\CS$.

    \begin{definition}\label{Karconcrete} For a category $\CS$, the \emph{Karoubi envelope} $\Kar(\CS)$ is the category wherein
        \begin{itemize}
            \item objects are pairs $(c,p)$, where $c$ is an object of $\CS$ and $p\colon c\to c$ is an idempotent endomorphism in $\CS$, i.e.~ satisfying $p\circ p = p$,
            \item morphisms $(c,p) \to (d,q)$ in $\Kar(\CS)$ are morphisms $f\colon c \to d$ in $\CS$ with the property $f=f\circ p = q \circ f$,
            \item composition is inherited from $\CS$.
        \end{itemize}
        In case $\CS$ is graded we require idempotents to be  homogeneous of degree zero. The \emph{canonical embedding} $\CS\to \Kar(\CS)$ is the functor defined by  $c\mapsto (c,\id_c)$ on objects and by $f\mapsto f$ on morphisms. If this functor is an equivalence $\CS$ is 
        \emph{idempotent complete}.
    \end{definition}
    \begin{remark}
Note that  $\Kar(\CS)$ inherits structure, such as $\k$-linearity or dg, from $\CS$. 
\end{remark}
\subsection{Strictness for monoidal dg-categories}
\label{Strictificationproof}
It is a well-known fact that if $\CS$ is a monoidal $\k$-linear category, then
$\Chb(\CS)$ and $\Kb(\CS)$ inherit monoidal structures such that the respective
embeddings of $\CS$ are monoidal. In the following, we need a strengthening of
these statements for strict monoidal structures. 

\begin{proposition}
    \label{thm:moninherit}
    Suppose $\CS$ is a monoidal $\k$-linear category. Then the following hold:
    \begin{enumerate}
        \item $\adds\CS$ inherits the structure of a monoidal dg-category, in which the tensor product distributes over biproducts. 
    \item $\Chb(\CS)$ inherits the structure of a monoidal dg-category, in which the tensor product of complexes is given by the totalization of a double complex formed from term-wise monoidal products, using the usual Koszul sign rule for the differential. 
    \item $H^*(\Chb(\CS))$ inherits the structure of a monoidal graded $\k$-linear category. 
    \item $\Kb(\CS)$ inherits the structure of a monoidal $\k$-linear category. 
    \item $\Kar(\CS)$ inherits the structure of a monoidal dg-category.
       \end{enumerate}
       In all cases, the embedding of $\CS$ is monoidal. Analogous results hold for $\k$-bicategories.
    \end{proposition}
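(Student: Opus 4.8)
The plan is to construct, in each of the five cases, an explicit tensor bifunctor extending $\otimes$ on $\CS$, to take the associativity and unit constraints to be the ones induced by those of $\CS$, and to check that the universal constructions of \Cref{add}, \Cref{newdefCb} and \Cref{Karconcrete} are compatible with these bifunctors. Throughout, the bookkeeping of indexing sets is done with the lexicographic order, and \Cref{assforposets} is exactly the input that lets the associativity on $\CS$ be transported without a correction term (so that, when $\CS$ is strict monoidal, the output is strict as well; this is the refinement that feeds into \Cref{cor:moninherit}).

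First I would treat $\adds\CS$. On objects set
\[
\Bigl(\bigoplus_{i\in I}\susp{a_i}X^i\Bigr)\otimes\Bigl(\bigoplus_{j\in J}\susp{b_j}Y^j\Bigr) := \bigoplus_{(i,j)\in I\times J}\susp{a_i+b_j}\bigl(X^i\otimes Y^j\bigr),
\]
with $I\times J$ carrying the lexicographic order, and on morphisms tensor the matrix entries, inserting the Koszul sign prescribed by \eqref{eq:diff-susp-env} so that the assignment is a chain map on morphism complexes. Bilinearity, functoriality and compatibility with $d_{\adds\CS}$ are a direct check. Associativity and unitality then reduce, via \Cref{assforposets} identifying the three lexicographic orders on $(I\times J)\times K$, $I\times J\times K$ and $I\times(J\times K)$, to associativity and unitality in $\CS$. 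Distributivity over the biproducts of \Cref{obviousbutcrucial} holds up to the canonical reindexing isomorphism coming from $I\times(J\sqcup K)\cong (I\times J)\sqcup(I\times K)$ as sets, and the embedding $\CS\hookrightarrow\adds\CS$ is monoidal because singletons are closed under cartesian products. This gives (1).

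Next I would pass through twists to the pretriangulated hull. On $\Tw(\adds\CS)$ set $\tw_\a(X)\otimes\tw_\b(Y) := \tw_{\a\otimes\id_Y+\id_X\otimes\b}(X\otimes Y)$; the Maurer--Cartan equation for the new twist follows from those for $\a$ and $\b$ once one observes that $d(\a\otimes\id_Y)=d(\a)\otimes\id_Y$ (since $\otimes$ is a chain map and $d\id=0$), that $(\a\otimes\id_Y)^2=\a^2\otimes\id_Y$, and that the cross terms $(\a\otimes\id_Y)(\id_X\otimes\b)$ and $(\id_X\otimes\b)(\a\otimes\id_Y)$ cancel by the Koszul sign rule. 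On morphism complexes the product is the one from $\adds\CS$, and $d_{\Tw}$ is respected by a short computation. Since $\otimes$ sends objects of $\adds\CS$ into $\adds\CS$ and sends the cone of $f\colon A\to B$ to the cone of $f\otimes\id_C$, the full subcategory $\Chb(\CS)=\pretr{\CS}\subset\Tw(\adds\CS)$ is closed under $\otimes$; unwinding iterated cones identifies the resulting product as the totalization of the termwise-tensor double complex with the usual Koszul signs, yielding (2). Now $\otimes$ is a dg-bifunctor on $\Chb(\CS)$, hence descends to a bifunctor on the cohomology category $H^*(\Chb(\CS))$ and, restricting to degree zero, on $\Kb(\CS)$ --- concretely because the tensor product of an exact morphism with a closed one is exact, so null-homotopies are preserved; this gives (3) and (4). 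For (5), put $(c,p)\otimes(d,q):=(c\otimes d,\,p\otimes q)$, which is a legitimate object since $p\otimes q$ is a homogeneous degree-zero idempotent, and $f\otimes g$ automatically satisfies the factorization condition defining morphisms in $\Kar(\CS)$; there are no sign subtleties, and the constraints are inherited. In all five cases the canonical embedding of $\CS$ is monoidal (strict if $\CS$ is).

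Finally, the bicategorical statement is obtained by applying each construction to every hom-category $\mathcal{B}(x,y)$ of a locally $\k$-linear monoidal bicategory. Horizontal composition $\hcomp$ and the monoidal product $\boxtimes$ are functors out of products of hom-categories, so by the bilinear-bifunctor arguments above they extend (compatibly with differentials and Koszul signs) to the envelopes, and the lexicographic convention together with \Cref{assforposets} keeps $\hcomp$ strictly associative where it was and transports the strictness properties of $\boxtimes$. The main obstacle, and really the only substantive remaining work, is organizational rather than conceptual: one must verify that the full package of coherence data --- associators and unitors for $\hcomp$, the tensorator and interchange isomorphisms for $\boxtimes$, and the pentagon/hexagon-type axioms they obey --- lifts to the envelopes. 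I expect this to go through formally: each coherence cell is assembled functorially from composition and $\otimes$ in $\CS$, so its lift is forced by the bifunctoriality above, and each axiom, being an equality of morphisms built from such cells, is detected already on the image of $\CS$ under the respective embedding, where it holds by hypothesis.
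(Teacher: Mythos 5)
Your proposal is correct and follows essentially the same route as the paper's proof: the lexicographically ordered tensor product on $\adds\CS$ with signed Kronecker products on morphisms (using \Cref{assforposets} for associativity), the extension to $\Tw(\adds\CS)$ via $\tw_\a(X)\otimes\tw_\b(Y)=\tw_{\a\otimes\id+\id\otimes\b}(X\otimes Y)$ and restriction to the pretriangulated hull, descent to $H^*$ and $\Kb$ because exact morphisms form an ideal under tensoring with closed ones, and the componentwise definition on $\Kar(\CS)$. The paper likewise dispatches the bicategorical case as "analogous", so your final paragraph matches its level of detail.
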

We will give a proof of this folklore result below. Our main point, however, is that these monoidal structure can be engineered to be strict whenever $\CS$ is strict:

\begin{theorem}[Strictness Theorem]
    \label{cor:moninherit}
In the setting of \Cref{thm:moninherit} we also have:       

If the monoidal structure on $\CS$ is strict, then the inherited monoidal structure on, respectively,  $\adds\CS$, $\Chb(\CS)$, $H^*(\Chb(\CS))$, $\Kb(\CS)$, $\Kar(\CS)$ from (1)-(5)  is strict. 

Analogous results hold for locally $\k$-linear $2$-categories.
\end{theorem}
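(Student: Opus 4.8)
The strategy is to trace through each of the five constructions in \Cref{thm:moninherit} and verify that the structural isomorphisms supplied by the (not-yet-given) proof of that proposition are in fact identities when the monoidal structure on $\CS$ is strict. Since the constructions are iterated—$\Chb(\CS)=\pretr{\CS}$ is built from $\adds\CS$ via $\Tw$ and cones, $\Kb(\CS)=\mathrm{H}^0\Chb(\CS)$, and $\Kar$ is orthogonal—it suffices to establish strictness for $\adds\CS$, for $\Tw$, for the passage to cone-closure inside $\Tw(\adds\CS)$, and for $\Kar$, and then assemble. The key point throughout is the bookkeeping of total orders on indexing sets: by \Cref{assforposets}, the lexicographic order makes the Cartesian product of totally ordered sets \emph{strictly} associative (and strictly unital against a singleton), so the tensor product of two ordered direct sums $\bigoplus_{i\in I}\susp{a_i}X^i \boxtimes \bigoplus_{j\in J}\susp{b_j}Y^j := \bigoplus_{(i,j)\in I\times J}\susp{a_i+b_j}(X^i\boxtimes Y^j)$, with $I\times J$ lexicographically ordered, is literally associative on the nose. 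This is exactly the role that \Cref{assforposets} was set up to play, and it is the linchpin that prevents the associator from being a nontrivial (if canonical) reshuffling isomorphism.

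\textbf{Step 1 (the envelope $\adds\CS$).} Define $\boxtimes$ on objects of $\adds\CS$ by the formula above and on morphism matrices by the evident Kronecker-type product (using strict bilinearity of $\boxtimes$ on $\CS$ and the Koszul sign from \eqref{eq:diff-susp-env} on the suspension degrees). Strict associativity of $\boxtimes$ on objects is \Cref{assforposets} applied to the triple $I,J,K$ of indexing sets together with strict associativity of $\boxtimes$ and of $+$ in $\Z$; strict associativity on morphisms is then a matrix computation that reduces entrywise to strict associativity in $\CS$, the signs matching because the Koszul rule is itself associative. Strict unitality is the singleton case. This proves (1) in the strict setting, and since $\adds$ preserves $\k$-linearity of hom-complexes, the monoidal dg-structure distributes over the biproducts of \Cref{obviousbutcrucial} by construction.

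\textbf{Step 2 ($\Tw$, cones, and $\Chb$).} On $\Tw(\CS)$, set $\tw_\a(X)\boxtimes\tw_\b(Y):=\tw_{\a\boxtimes\id+\id\boxtimes\b}(X\boxtimes Y)$; the Maurer--Cartan equation for the sum follows from the ones for $\a,\b$ together with $(\a\boxtimes\id)(\id\boxtimes\b)=(\id\boxtimes\b)(\a\boxtimes\id)=\a\boxtimes\b$ (strict interchange in $\CS$) and strict bilinearity. Associativity and unitality are inherited verbatim from $\CS$ because the underlying objects and hom-complexes are those of $\CS$, and the twisted differential $d+\b\circ(-)-(-1)^{|f|}(-)\circ\a$ is compatible with the Kronecker product of the twisting terms again by strict interchange. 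For cones of a closed degree-$1$ map $f\colon X\to Y$, written as $\tw_{\smMatrix{0&0\\ f&0}}(X\oplus Y)$ with the order $X=X_1<Y=X_2$, the tensor $\cone(f)\boxtimes Z$ is, under the strict distributivity of $\boxtimes$ over the ordered biproduct from Step 1 and the lexicographic-order convention, literally $\cone(f\boxtimes\id_Z)$ with the correct induced order; and similarly on the other side, so $\pretr{\CS}=\Chb(\CS)$ is closed under $\boxtimes$ and the associator/unitors restrict from those of $\Tw(\adds\CS)$, which are identities by Steps 1--2. This gives strict (2); that $\boxtimes$ on $\Chb(\CS)$ is the totalization of the term-wise-product double complex with Koszul signs is immediate from the construction, matching the claim in \Cref{thm:moninherit}(2). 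Parts (3) and (4) follow by passing to the cohomology category and its degree-zero part: a strict monoidal structure on a dg-category descends to a strict monoidal structure on $\mathrm{H}^*$ and on $\mathrm{H}^0$ since the tensor functor is already strictly associative and unital on objects and on the closed/exact morphisms. For (5), on $\Kar(\CS)$ set $(c,p)\boxtimes(d,q):=(c\boxtimes d,\,p\boxtimes q)$ (an idempotent by strict interchange) and $f\boxtimes g$ the restriction of the ambient product; associativity and unitality are inherited from $\CS$ on the nose because objects and morphisms of $\Kar(\CS)$ sit inside those of $\CS$.

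\textbf{Step 3 (bicategorical/2-categorical version).} For locally $\k$-linear $2$-categories one applies Steps 1--2 hom-category-wise: the horizontal composition $\hcomp$ is unchanged, the external product $\boxtimes$ acts on hom-categories, and the required strictness is precisely the strictness of the monoidal structures on each $\Chb$, $H^*\Chb$, $\Kb$, $\Kar$ of the hom-categories established above, together with the observation that the coherence $2$-isomorphisms (tensorator, interchange) are built from the same Kronecker products and hence are identities when the input data are strict. No new phenomenon arises.

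\textbf{Main obstacle.} The only genuinely delicate point is the order bookkeeping in Step 2: cones and, more importantly, iterated cones (general bounded complexes) carry indexing posets that are \emph{built} out of Cartesian products and disjoint unions, and one must check that the two ways of parenthesizing a threefold tensor produce not merely isomorphic but \emph{identical} ordered objects. This is where \Cref{assforposets} (and the companion fact that $\sqcup$ with the $I<J$ convention is strictly associative) does the real work; the remark immediately following \Cref{assforposets} warns that this fails for arbitrary orderings, so one must be careful to use the lexicographic convention consistently, including in the ordering of summands of a cone ($X_1<X_2$) fixed in the remark after the definition of $\Tw$. Everything else is a routine, if lengthy, check that strict bilinearity plus strict interchange in $\CS$ propagate through matrices and Koszul signs—no coherence is lost because none was introduced.
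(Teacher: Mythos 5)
Your proposal is correct and follows essentially the same route as the paper: lexicographic ordering of index sets (via \Cref{assforposets}) to make the tensor product on $\adds\CS$ strictly associative, the Maurer--Cartan twist formula $\tw_\a(X)\otimes\tw_\b(Y)=\tw_{\a\otimes\id+\id\otimes\b}(X\otimes Y)$ on $\Tw(\adds\CS)$ with strictness inherited from associativity on twisting elements, descent to $H^*$ and $\Kb$ via closedness/exactness of tensor products, and the componentwise formula on $\Kar(\CS)$. The only difference is cosmetic: you spell out the closure of cones under the tensor product slightly more explicitly than the paper, which simply notes that the structure extends to the pretriangulated hull by definition.
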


To prove \Cref{thm:moninherit} and \Cref{cor:moninherit}, we need some
    bookkeeping for the indexing of direct sums to control the associativity of
    their tensor products. For this we recall \Cref{assforposets}. 

    \begin{lemma}\label{deftensorstrict}
Assume $\CS$ is a monoidal $\k$-linear category. Then the assignment 
  \begin{equation} 
    \label{eq:distr}
           \left( \bigoplus_{i\in I} \susp{a_i}X^i\right) \otimes \left( \bigoplus_{j\in J} \susp{b_j}Y^j\right):=
           \bigoplus_{(i,j)\in I \times J} \susp{a_i+b_j} (X^i\otimes Y^j), 
        \end{equation}
on the level of objects, extends to a dg-functor $\adds \CS\otimes \adds \CS\to
\adds \CS$. Here,  $I\times J$ is equipped with the lexicographic order and
$\adds \CS\otimes \adds\CS$ refers to the tensor product of dg-categories.  
 \end{lemma}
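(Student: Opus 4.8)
The plan is to build the required functor as the composite
\[
\adds\CS\otimes\adds\CS \xrightarrow{\ \mathrm{dist}\ } \adds(\CS\otimes\CS) \xrightarrow{\ \adds(\otimes)\ } \adds\CS ,
\]
where $\otimes\colon\CS\otimes\CS\to\CS$ is the monoidal product of $\CS$, viewed as a dg-functor out of the tensor product of dg-categories (on underlying categories it is the monoidal bifunctor, which is $\k$-bilinear on morphisms, and its compatibility with composition is the interchange law in $\CS$; since $\CS$ is an ordinary $\k$-linear category, both $\CS\otimes\CS$ and $\adds(\CS\otimes\CS)$ carry the zero differential, so there is nothing further to check for $\otimes$ to be a dg-functor). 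First I would record the elementary fact that $\adds$ is functorial for dg-functors: a dg-functor $G\colon\mathcal{D}\to\mathcal{E}$ induces $\adds G\colon\adds\mathcal{D}\to\adds\mathcal{E}$ by applying $G$ termwise to objects (keeping indexing sets and shifts) and entrywise to morphism matrices; this is again a dg-functor because $G$ preserves degrees and therefore commutes with the sign $(-1)^{b_j}$ in \eqref{eq:diff-susp-env}. Applying this to $\otimes$ produces $\adds(\otimes)$, so the problem reduces to constructing the ``distributivity'' dg-functor $\mathrm{dist}$, and then to identifying the composite on objects with \eqref{eq:distr}.

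On objects, $\mathrm{dist}$ sends $\bigl(\bigoplus_{i\in I}\susp{a_i}X^i,\ \bigoplus_{j\in J}\susp{b_j}Y^j\bigr)$ to $\bigoplus_{(i,j)\in I\times J}\susp{a_i+b_j}(X^i,Y^j)$, with $I\times J$ equipped with the lexicographic order; post-composing with $\adds(\otimes)$ then reproduces exactly the right-hand side of \eqref{eq:distr}. On morphisms it suffices, by $\k$-bilinearity, to define $\mathrm{dist}$ on an elementary tensor $f\otimes g$, with $f=(f_{i'i})\colon\bigoplus_i\susp{a_i}X^i\to\bigoplus_{i'}\susp{a'_{i'}}X'^{i'}$ of degree $|f|$ and $g=(g_{j'j})\colon\bigoplus_j\susp{b_j}Y^j\to\bigoplus_{j'}\susp{b'_{j'}}Y'^{j'}$ of degree $|g|$: I would let $\mathrm{dist}(f\otimes g)$ be the matrix whose $\bigl((i',j'),(i,j)\bigr)$-entry is
\[
(-1)^{|g|\, a_i}\, f_{i'i}\otimes g_{j'j}\ \in\ \Hom_{\CS\otimes\CS}\bigl((X^i,Y^j),(X'^{i'},Y'^{j'})\bigr),
\]
with $a_i$ the grading shift attached to the \emph{source} summand $X^i$. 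A one-line degree count (the left factor has degree $|f|+a'_{i'}-a_i$ in $\CS$, the right one $|g|+b'_{j'}-b_j$) shows this entry lies in the homogeneous piece demanded of a degree $|f|+|g|$ morphism between the shifted objects, so $\mathrm{dist}$ preserves degrees.

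It then remains to verify that $\mathrm{dist}$ is a dg-functor. It is automatically a chain map, all relevant differentials being zero; the Koszul sign in the formula is nonetheless not arbitrary, being forced by compatibility with composition, and is precisely the one that later produces the usual Koszul sign rule for the differential of a tensor product of complexes once one passes to $\Tw$ and $\Chb(\CS)$ (cf.\ \Cref{thm:moninherit}). Preservation of identities is immediate since the sign is trivial on the diagonal. The one genuine computation is preservation of composition: expanding $\mathrm{dist}\bigl((f_1\otimes g_1)\circ(f_2\otimes g_2)\bigr)$ via $(f_1\otimes g_1)\circ(f_2\otimes g_2)=(-1)^{|g_1|\,|f_2|}(f_1f_2)\otimes(g_1g_2)$ in $\adds\CS\otimes\adds\CS$ together with bilinearity of $\otimes$, and expanding $\mathrm{dist}(f_1\otimes g_1)\circ\mathrm{dist}(f_2\otimes g_2)$ by matrix multiplication over the intermediate index set $I_1\times J_1$ together with the interchange law in $\CS$, one finds that the two expressions for a fixed $\bigl((i_2,j_2),(i_0,j_0)\bigr)$-entry agree summand-by-summand, the two accumulated signs differing by $(-1)^{2(b^1_{j_1}-b^2_{j_2})(a^0_{i_0}-a^1_{i_1})}=1$.

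The main obstacle is exactly this sign bookkeeping in the composition check; everything else is formal. Finally, I would stress that the lexicographic order on $I\times J$ plays no role in the present statement -- any total order makes $\mathrm{dist}$, and hence $\adds(\otimes)\circ\mathrm{dist}$, a well-defined dg-functor -- and is singled out here only because its associativity, \Cref{assforposets}, is what will make the associator of the inherited monoidal structure strict in the subsequent statements.
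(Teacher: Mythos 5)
Your proof is correct and follows essentially the same route as the paper: the heart of both is the signed Kronecker product on morphism matrices with the sign $(-1)^{|g|\,a_i}$ (the paper writes it as $(-1)^{(b'_j-b_j)a_i}$, which agrees mod $2$ since $|g|=b_j-b'_j$), checked against composition via the interchange law. Your factorization through $\adds(\CS\otimes\CS)$ and the explicit sign bookkeeping are a mild repackaging of what the paper asserts more tersely.
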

 \begin{proof}
We start by considering an analog of the assignment \eqref{eq:distr} on the
        level of morphism, which is modelled on the Kronecker product of
        matrices with the Koszul signs incorporated. More precisely, assume we
        are given a pair of matrix entries of morphisms  in $\adds\CS$, say
        $\susp{a_i}X^i \to \susp{a'_i}U^i$ of degree $a_i-a'_i$ and
        $\susp{b_j}Y^j \to \susp{b'_j}V^j$ of degree $b_j-b'_j$.  The
        corresponding pair of components $f_i\in \Hom_\CS(X^i,U^i)$ and $g_j\in
        \Hom_{\CS}(Y^j,V^j)$ yield the matrix entry $(-1)^{(b'_j-b_j)a_i} f_i
        \otimes g_j \in \Hom_\CS(X^i\otimes Y^j,U^i\otimes V^j)$, which we can
        interpret again as a morphism in $\adds\CS$ in the obvious way,  namely
        as $\susp{a_i+b_j} (X^i\otimes Y^j)  \to \susp{a'_i+b'_j} (U^i\otimes
        V^j)$ of degree $a_i+b_j -a'_i-b'_j$.  After this reformulation in terms
        of matrices, tensor product of morphisms is nothing else then the
        (signed version) of the Kronecker product.  The signs are built in so
        that the graded interchange law $(f'_i\otimes g'_j)\circ (f_i\otimes
        g_j) = (-1)^{(b_j'-b''_j)(a_i-a'_i)}(f'_i\circ f_i)\otimes (g'_j\circ
        g_j)$ holds for any other pair of components $f'_i\in \Hom_\CS(U^i,R^i)$
        and $g'_j\in \Hom_{\CS}(V^j,S^j)$ representing in $\CS$ morphisms
        $\susp{a'_i}U^i \to \susp{a''_i}R^i$ and $\susp{b'_j}V^j \to
        \susp{b''_j}S^j$. It is now clear that the assignments on pairs of
        objects and pairs of morphism in $\CS$ descend to a dg-functor $\adds
        \CS\otimes \adds \CS\to \adds \CS$.
         \end{proof}

    \begin{proof}[Proof of \Cref{thm:moninherit} and \Cref{cor:moninherit}] We will only prove the case of
    monoidal categories, the generalization to bicategories being analogous. We treat each case (1)-(5).
    
        (1) By  \Cref{deftensorstrict}, the tensor product
        functor $\CS\otimes \CS \to \CS$ can be extended to a dg-functor $\adds \CS\otimes
        \adds \CS\to \adds \CS$. The monoidal unit in $\adds\CS$ will be the image
        of the unit ${\mathbf 1}$ of $\CS$ under the canonical embedding. As unitors in
        $\adds\CS$ we use diagonal matrices, whose entries are the unitors from
        the monoidal structure of $\CS$. By the associativity of forming
        lexicographic orders on triple products of indexing sets $I\times J
        \times K$ given by \Cref{assforposets}, we also see that we can take
        diagonal matrices of associators from $\CS$ as associators in
        $\adds\CS$. The checks of triangle and pentagon axioms are now
        straightforward, namely componentwise for diagonal matrices. \Cref{cor:moninherit}
        holds for  the now constructed monoidal structure on $\adds\CS$, since the triviality of
        unitors and associators is inherited.

        (2) We extend the monoidal structure on $\adds\CS$ to the
        twisted envelope $\Tw(\adds\CS)$.  Namely, we define on objects   
         \[\tw_\a(X)\otimes \tw_\b(Y):= \tw_{\a\otimes \id+\id\otimes
        \b}(X\otimes Y),\] noting  that $\a\otimes \id+\id\otimes \b\in
        \End^1(X\otimes Y)$ satisfies the Maurer--Cartan equation. The tensor
        product on morphisms is now easy to define: it is determined already by
        that on  $\adds\CS$; a straightforward check shows that it induces
        chain maps on morphism complexes with respect to the desired perturbed
        differential $d_{\Tw(\adds\CS)}$.  If $\CS$ and $\adds\CS$ are strict,
        then the strictness of the tensor product is inherited by
        $\Tw(\adds\CS)$, since it acts associatively on Maurer--Cartan elements.
        By definition, the (strict) monoidal structure now extends to the
        pretriangulated hull $\Chb(\CS)$ proving \Cref{cor:moninherit} for $\Chb(\CS)$.
        
        (3) and (4) are now straightforward: the (strict) monoidal structure on $\Chb(\CS)$ descends to $H^*(\Chb(\CS))$ and $\Kb(\CS)$ since the tensor products of closed morphisms are closed and exact morphisms form an ideal under tensoring with closed morphisms. Obviously, the embedding functor from $\CS$ is monoidal in each case.

        (5) we extend the monoidal structure from $\CS$ to $\Kar(\CS)$ as
         follows. For objects we set $(c,p)\otimes_{\Kar(\CS)} (d,q):= (c\otimes
         d, p\otimes q)$ and for morphisms $f\otimes_{\Kar(\CS)} g := f\otimes
         g$. The associators and unitors of $\CS$ can be reused for $\Kar(\CS)$ using \Cref{Karconcrete}:
         they only depend on the object-part $c$ of the pairs $(c,p)$ in each
         argument and represent morphisms in the Karoubi envelope by naturality.
         The triangle and pentagon relations are easily checked.      
     It is also clear that the inherited monoidal structure on $\Kar(\CS)$ will be
        strict whenever the original one on $\CS$ was strict. This also completes the proof of \Cref{cor:moninherit}.
    \end{proof}

    \begin{warn}
        \label{warn:nonstrict}
        The strictness of the monoidal structures of the first four cases in \Cref{cor:moninherit}  heavily relies on the associativity of the lexicographic orders chosen for
        the ordering of direct sums. For other orders, one may be forced to use
        non-trivial permutation matrices of $\CS$-associators instead of
        diagonal matrices as associators which might prevent strictness even if $\CS$
        was strict. 
        
        In particular, when tensoring chain complexes, one should \emph{not}
        pick a total order that refines the order by cohomological degree. To
        see this, let us consider three complexes $X$, $Y$, $Z$ and---only for
        the purpose of this warning---denote by $X^i$ the part of cohomological
        degree $i$ in $X$. Then the part of cohomological degree $i$ in the two
        possible triple products are computed respectively as:
        \begin{align*}
        ((X\otimes Y)\otimes Z)^i &= \bigoplus_{a\in \Z} \bigoplus_{b\in \Z} (X^b \otimes Y^{a-b})\otimes Z^{i-a},\\
       \quad \text{respectively}\quad
        (X\otimes (Y\otimes Z))^i &= \bigoplus_{a\in \Z} \bigoplus_{b\in \Z} X^a \otimes (Y^{b}\otimes Z^{i-a-b}).
        \end{align*}
        The second direct sum is manifestly ordered lexicographically in the
        indices, but in the first sum $(X^1 \otimes Y^{0})\otimes Z^{1}$ would
        appear before $(X^0 \otimes Y^{2})\otimes Z^{0}$. 
    \end{warn}

    \begin{remark}
        Analogous statements as in \Cref{thm:moninherit} and \Cref{cor:moninherit} hold for the (local) additive completion of monoidal $\k$-linear categories (resp. bicategories), and strictness can be preserved, see e.g. \cite[5.1.12-5.1.13]{Penneysnotes} for the statement. The proof is analogous (but easier) to the case $\adds\CS$, since no shifts have to be accounted for. Strictness of locally passing to pretriangulated hulls in locally finite $p$-dg $2$-categories was already observed in the proof of \cite[Proposition 3.5]{MR4046069}.
     \end{remark} 

\subsection{Monoidal locally dg-bicategories associated to Soergel bimodules} 
   Suppose that $\CS$ is a monoidal $\k$-bicategory with monoidal product $\boxtimes$. Then the monoidal structure can be extended to the bicategories obtained in \Cref{thm:moninherit} by arguments as in \Cref{Strictificationproof}.  
      
   We keep the discussion here brief since a more relevant stricter version is the subject of \Cref{thm:ssmtwocatb}, but like to mention the following constructions:
    \begin{itemize}
    \item A monoidal locally dg-bicategory $\Chbloc(\CS)$, by performing the construction $\Chb$ on the $\k$-linear hom-categories of $\CS$. The $\boxtimes$-product of $1$-morphisms is given by the totalization of a double complex formed from term-wise $\boxtimes$ products of $1$-morphisms in $\CS$. 
    \item A monoidal locally graded $\k$-linear bicategory $H^*(\Chbloc(\CS))$, by applying $H^*$ hom-category-wise. 
    \item A monoidal locally $\k$-bicategory $\Kbloc(\CS)$, by applying $\Kb$ hom-category-wise. 
    \end{itemize}
For a thorough discussion of the concept of dg-bicategories, see \cite[\S 4.1]{gyenge2021heisenbergcategorycategory}. 

These constructions in particular apply to the monoidal locally $\k$-linear bicategory $\CS=\SBim$ (and also $\CS=\BSBim$) from \Cref{prop:Sbimmonbicat} with its compositions $\vcomp, \hcomp, \boxtimes$.     
 Concretely, for each object $n$ of $\CS$, the endomorphism category $\SBim_n$ is a monoidal $\k$-linear category that we now consider as a monoidal dg-category with morphism complexes concentrated in degree zero. The compositions $\vcomp, \hcomp$ of $\SBim_n$ extend to the pretriangulated hull $\pretr{\SBim_n}=\Chb(\SBim_n)$, see \Cref{newdefCb}, which thus inherits the structure of a monoidal dg-category. Collecting all objects $n$ again, we obtain the locally dg-bicategory $\Chbloc(\SBim)$, which now additionally inherits a monoidal structure from $\boxtimes$. The $1$-morphisms in $\Chbloc(\SBim)$ are the bounded chain complexes\footnote{Note that we model the formation of chain complexes via the pretriangulated hull to obtain the strictness results from \Cref{cor:moninherit}!} of Soergel bimodules for $\mathfrak{gl}_n$ for all $n\in \N_0$. By passing locally to the homotopy categories we obtain $\Kbloc(\SBim)$.  Hence we defined the following crucial players.
 \begin{definition} We use the following notation and terminology:
    \begin{itemize}
   \item $\Chb(\SBim_n)$ the monoidal \emph{dg-category of chain complexes
    of Soergel bimodules for $\mathfrak{gl}_n$},
    \item $\Chbloc(\SBim)$ with its monoidal structure induced by $\boxtimes$ the
    \emph{monoidal dg-bicategory of chain complexes of Soergel bimodules},
    \item $\Kb(\SBim_n)$ the monoidal \emph{homotopy category of
    Soergel bimodules for $\mathfrak{gl}_n$}, 
    \item $\Kbloc(\SBim)$ the \emph{monoidal homotopy bicategory of
    Soergel bimodules}.
    \end{itemize}
    Similarly, for $\BSbim$ instead of $\Sbim$.
    \end{definition}
\begin{remark}\label{rkdelicate}
We can spell out the data of a monoidal bicategory equivalent to $\Kbloc(\SBim)$: the objects are indexed by natural numbers, $1$-morphisms are chain complexes of Soergel bimodules, $2$-morphisms are chain maps up to homotopy, and the monoidal product $\boxtimes$ acts on objects by $m\boxtimes n=m+n$ and on $1$- and $2$-morphism by tensoring over $\k$.  

Our constructions in \Cref{backgrounddg}
 with the resulting Strictness Theorem, \Cref{cor:moninherit}, imply a stricter version of $\Kbloc(\SBim)$ if we replace the Soergel bimodules categories by equivalent strict ones. This will be done in  \Cref{thm:ssmtwocatb} using the diagrammatical presentations from 
\Cref{thm:equivalence} with \Cref{rem:diagtosbim}.
\end{remark}
 
In \cite[Section 6]{liu2024braided}, $\infty$-categorical incarnations of the (monoidal) bicategories of Bott--Samelson bimodules and Soergel bimodules are constructed by following a strategy analogous to that of Remark~\ref{rem:inmorita}. To make the connection, note that we can apply \cite[Construction 1.3.1.6]{HA} to a(n idempotent complete)  dg-category $\Chb(\CS)$ as above and  
obtain  the \emph{dg-nerve} of $\Chb(\CS)$, which is a ($\k$-linear and
idempotent-complete) stable $\infty$-category $\newKb(\CS)=N_{\dg}(\Chb(\CS))$, see \cite[Definition 3.4.1]{liu2024braided}.
Taking its $1$-truncation recovers  the usual homotopy category $\Kb(\CS)$. This construction applies to the hom-categories $\SBim_n$ in $\SBim$. The \hyperlink{oldthmA}{Monoidality Theorem}, formulated in the introduction,  provides, in a fully homotopy coherent way, a monoidal $(\infty,2)$-category whose
$(2,2)$-truncation recovers $\Kbloc(\SBim)$ in the version from \Cref{rkdelicate}, see \cite[Proposition 6.4.2.]{liu2024braided},  together with the bisubcategories $\Sbim$ and $\BSbim$,  \cite[\S6.1, \S6.3]{liu2024braided}. 
In all these  cases the monoidal structure is given by parabolic induction. 
     
\section{Semistrict monoidal 2-categories via diagrammatics}
In this section we apply the Strictness Theorem, \Cref{cor:moninherit}, to Soergel bimodules. To be able to do so, we need a strictified version of the monoidal category of Soergel bimodules. For this 
we consider the diagrammatic calculus for Bott--Samelson bimodules, which was
developed by Elias--Khovanov~\cite{EK} and Elias--Williamson~\cite{EW}. Importantly, this framework 
allows us to do explicit calculations in $\overline{\BSbim}^\gr_n$ for each $n\in \N_0$, which will 
be used to compute higher homotopies. 
As a main result we assemble these categories (respectively homotopy categories thereof) into a semistrict monoidal $2$-category, see \Cref{defsemistrict}.
We also explain how to extend all this, via a concrete model of Karoubi envelope, from Bott--Samelson to Soergel bimodules, \Cref{rem:diagtosbim}.

\subsection{Diagrammatic calculus for Bott--Samelson bimodules}
\label{sec:diag}
The Elias--Khovanov--Williamson diagrammatic calculus is a version of
the usual string diagram calculus,  see e.g. \cite{TVbook},  for pivotal monoidal categories obtained by
choosing a collection of generating objects and of generating morphisms represented
by string diagrams. The strings will be labeled by generating objects, which
correspond to the elementary Bott--Samelson bimodules $B_i$ from \eqref{eq:BSi} or, equivalently, to
simple transpositions $s_i$. In the diagrammatic calculus we work with strictly pivotal and strictly monoidal categories which allows us to   
suppress the associators, unitors and the pivotality isomorphisms.  

 \begin{definition} \label{Def:Dn}
    For fixed $n\in \N_0$ consider the strictly pivotal and strictly
monoidal graded $\k$-linear category $\DS_n^{\free}$, freely generated by $n-1$
Frobenius algebra objects $(c_i, \mm_i,\Delta_i,\epsilon_i,\eta_i)$,
$i\in\intset{n-1}$, where the (co)multiplication and (co)unit morphisms are homogeneous
of degree $1$ and $-1$ respectively, and by additional generating morphisms
\begin{align}
\label{gen1}
  c_{i}\hcomp c_{j}\hcomp c_i &\to c_{j}\hcomp c_i\hcomp c_{j} \, &&\text{if } |i-j|=1,\\
 \label{gen2}   c_i \hcomp c_j &\to c_j \hcomp c_i\, &&\text{if } |i-j|>1,\\
\label{gen3}    x_k\colon  \one & \to \one,
\end{align} 
for
$i,j\in\intset{n-1}$, $k\in \intset{n}$,
of respective degrees $0$, $0$ and $2$. We denote here the monoidal product by $\hcomp$. 
\end{definition}

\begin{remark} \label{xcomm}
Note that $x_ix_j=x_jx_i$, since the endomorphism algebra of $\one$ is commutative, \cite[Proposition~2.2.10]{EGNO}.
\end{remark}
To get a diagrammatic description we encode the objects $c_i$ for $i\in\intset{n-1}$ by colors. Since we will soon consider
parabolic induction functors, where we will allow $n$ to vary, we choose now once and for all a countable sequence 
\begin{equation}
\label{eq:colorsequence}
    \{c_1,c_2,c_3,c_4,c_5, \dots\} :=
\{\textrm{\textcolor{red}{red}},\textrm{\textcolor{blue}{blue}},\textrm{\textcolor{green}{green}},
\textrm{\textcolor{orange}{orange}},\textrm{\textcolor{purple}{purple}},\dots\}
\end{equation}
of such colors. The generating morphisms in $\DS_n^{\free}$ are displayed by the following diagrams (read from bottom to top with vertical stacking as composition $\vcomp$ and with monoidal structure $\hcomp$ displayed by horizontal juxtaposition): 
\begin{equation}\label{diaggenerators}
    \begin{tikzpicture}[anchorbase,scale=.75]
        \node at (0,0) {
           \begin{tikzpicture}[anchorbase,scale=.2]
            \draw[bl] (1,-0.01) \uur (2,1.) (3,-0.01) \uul (2,1.) to (2,2.01); 
            \end{tikzpicture}
        ,\;
            };
        \node at (1,0) {
           \begin{tikzpicture}[anchorbase,scale=-.2]
            \draw[bl] (1,-0.01) \uur (2,1.) (3,-0.01) \uul (2,1.) to (2,2.01); 
            \end{tikzpicture}
        ,\;
            };
        \node at (2,0) {
           \begin{tikzpicture}[anchorbase,scale=.2]
            \draw[bl] (0,-0.01) \pu (0,-1.01);
            \fill[bl] (0,-1) circle (2.5mm); 
            \end{tikzpicture}
            ,\;
        };
        \node at (3,0) {
           \begin{tikzpicture}[anchorbase,scale=-.2]
            \draw[bl] (0,-0.01) \pu (0,-1.01);
            \fill[bl] (0,-1) circle (2.5mm); 
            \end{tikzpicture}
        ,\;
            };
        \node at (4,0) {\sixv ,};
        \node at (5.2,0) {
           \begin{tikzpicture}[anchorbase,scale=-.2]
            \draw[bl] (0,-0.01) \pd (2,-2.01);
            \draw[gr] (2,-0.01) \pd (0,-2.01); 
            \end{tikzpicture}
            ,\;
            };
            \node at (6.2,0) {
           \begin{tikzpicture}[anchorbase,smallnodes,scale=-.2]
            \draw (0,-0.01) rectangle (2,2.01);
            \node at (1,1) {$x_k$};
            \end{tikzpicture}
            };
        \end{tikzpicture}
\end{equation}
Here, the shapes of the first four diagrams (\emph{merge, split, startdot, enddot}) represent the multiplication, the comultiplication, the unit and the counit respectively for the Frobenius object of the given color (exemplified here for the color blue), and the other three diagrams display the additional types \eqref{gen1}-\eqref{gen3} of generating morphisms. Note that the six-valent vertex exists in two mirror versions, 
$\sixv$ and $\rotatebox[origin=c]{180}{\sixv}$, 
 for any two adjacent colors (here displayed using red and blue); similarly the crossing exists in two versions, $ \begin{tikzpicture}[anchorbase,scale=-.2]
            \draw[bl] (0,-0.01) \pd (2,-2.01);
            \draw[gr] (2,-0.01) \pd (0,-2.01); 
            \end{tikzpicture}$ and ${\begin{tikzpicture}[anchorbase,scale=-.2]
            \draw[gr] (0,-0.01) \pd (2,-2.01);
            \draw[bl] (2,-0.01) \pd (0,-2.01); 
            \end{tikzpicture}},$ 
             for any two distant colors (here: blue and green).

Note that the defining relations for a Frobenius algebra object (here of color blue) are 
\begin{equation}
    \label{eq:Drel}
        \begin{tikzpicture}[anchorbase,xscale=.2,yscale=.2]
        \draw[bl] (1,1.01) \uur (2,2.) (2,-0.01) to (2,3.) ; 
        \fill[bl] (1,1) circle (2.5mm); 
        \end{tikzpicture} 
        =
        \begin{tikzpicture}[anchorbase,xscale=-.2,yscale=.2]
        \draw[bl] (1,1.01) \uur (2,2.) (2,-0.01) to (2,3.) ; 
        \fill[bl] (1,1) circle (2.5mm); 
        \end{tikzpicture}  
        =
        \begin{tikzpicture}[anchorbase,xscale=.2,yscale=-.2]
        \draw[bl] (1,1.01) \uur (2,2.) (2,-0.01) to (2,3.) ; 
        \fill[bl] (1,1) circle (2.5mm); 
        \end{tikzpicture} 
        =
        \begin{tikzpicture}[anchorbase,xscale=-.2,yscale=-.2]
        \draw[bl] (1,1.01) \uur (2,2.) (2,-0.01) to (2,3.) ; 
        \fill[bl] (1,1) circle (2.5mm); 
        \end{tikzpicture}  
        =
        \begin{tikzpicture}[anchorbase,xscale=.2,yscale=.2]
        \draw[bl] (2,-0.01) to (2,3.); 
        \end{tikzpicture}
        \quad
        ,
        \quad
        \begin{tikzpicture}[anchorbase,xscale=.2,yscale=.2]
        \draw[bl] (1,-0.01) \uur (2,1.) (3,-0.01) \uul (2,1.) \uur (3,2.01) (5,-0.01)   \uul (3,2.) to (3,3.01); 
        \end{tikzpicture} 
        =
        \begin{tikzpicture}[anchorbase,xscale=-.2,yscale=.2]
        \draw[bl] (1,-0.01) \uur (2,1.) (3,-0.01) \uul (2,1.) \uur (3,2.01) (5,-0.01)   \uul (3,2.) to (3,3.01);
        \end{tikzpicture}    
        \quad
        ,
        \quad
        \begin{tikzpicture}[anchorbase,xscale=.2,yscale=-.2]
        \draw[bl] (1,-0.01) \uur (2,1.) (3,-0.01) \uul (2,1.) \uur (3,2.01) (5,-0.01)   \uul (3,2.) to (3,3.01);
        \end{tikzpicture} 
        =
        \begin{tikzpicture}[anchorbase,xscale=-.2,yscale=-.2]
        \draw[bl] (1,-0.01) \uur (2,1.) (3,-0.01) \uul (2,1.) \uur (3,2.01) (5,-0.01)   \uul (3,2.) to (3,3.01);
        \end{tikzpicture}
        \quad
        ,
        \quad
        \begin{tikzpicture}[anchorbase,xscale=.2,yscale=.2]
        \draw[bl]   (2,-0.01) to (2,1) \ulu (1,2) to (1,3) 
                    (2,1) to [out=45,in=225] (3,2)  
                    (4,-0.01) to (4,1) \uul (3,2) to (3,3); 
        \end{tikzpicture} 
        =
        \begin{tikzpicture}[anchorbase,xscale=.2,yscale=.2]
        \draw[bl] (1,-0.01) \uur (2,1.) (3,-0.01) \uul (2,1.) to (2,2.01) \ulu (1,3) (2,2.01) \uru (3,3); 
        \end{tikzpicture}  
        = 
        \begin{tikzpicture}[anchorbase,xscale=-.2,yscale=.2]
            \draw[bl]   (2,-0.01) to (2,1) \ulu (1,2) to (1,3) 
                        (2,1) to [out=45,in=225] (3,2)  
                        (4,-0.01) to (4,1) \uul (3,2) to (3,3); 
            \end{tikzpicture} .
\end{equation}

\begin{notation}
Boxes decorated by arbitrary polynomials $p\in \k[x_1,\dots,x_n]$ are our shorthand notation for the evident linear combinations of products of generators from \eqref{gen3} using \Cref{xcomm}, and we use the following caps and cups
\[\begin{tikzpicture}[anchorbase,smallnodes,scale=-.2]
        \draw (0,-0.01) rectangle (2,2.01);
        \node at (1,1) {$p$};
        \end{tikzpicture}
        := 
        p(\begin{tikzpicture}[anchorbase,smallnodes,scale=-.2]
            \draw (0,-0.01) rectangle (2,2.01);
            \node at (1,1) {$x_1$};
            \end{tikzpicture},\ldots, 
            \begin{tikzpicture}[anchorbase,smallnodes,scale=-.2]
            \draw (0,-0.01) rectangle (2,2.01);
            \node at (1,1) {$x_n$};
            \end{tikzpicture})
                    \quad, \quad
    \begin{tikzpicture}[anchorbase,scale=.2]
        \draw[white] (2,2.01) to (2,2.0);
        \draw[bl] (1,-0.01) \ur (2,1.) (3,-0.01) \ul (2,1.); 
        \end{tikzpicture}   
        :=
        \begin{tikzpicture}[anchorbase,scale=.2]
        \draw[bl] (1,-0.01) \uur (2,1.) (3,-0.01) \uul (2,1.) to (2,2.01) ; 
        \fill[bl] (2,2) circle (2.5mm); 
        \end{tikzpicture}    
        \quad, \quad
        \begin{tikzpicture}[anchorbase,xscale=.2,yscale=-.2]
        \draw[white] (2,2.01) to (2,2.0);
        \draw[bl] (1,-0.01) \ur (2,1.) (3,-0.01) \ul (2,1.); 
        \end{tikzpicture}   
        :=
        \begin{tikzpicture}[anchorbase,xscale=.2,yscale=-.2]
        \draw[bl] (1,-0.01) \uur (2,1.) (3,-0.01) \uul (2,1.) to (2,2.01) ; 
        \fill[bl] (2,2) circle (2.5mm); 
        \end{tikzpicture} . 
\]
 to display the (co)unit of the adjunctions from \Cref{counitadj}.
\end{notation}
\begin{definition} For $n\in \N_0$, the diagrammatic monoidal category
$\DS_n:=\DS_{\mathfrak{gl}_n}$ is the quotient of $\DS_n^{\free}$ by the tensor
ideal of morphisms generated by the following diagrammatic relations:
\begin{itemize}[leftmargin=*]
\item \emph{Isotopy:} diagrams related by an ambient isotopy relative to the boundary are equal.
\item \emph{Polynomial forcing and needle relations:}
for each color $c_i$, $i\in \intset{n-1}$ and any $p\in \C[x_1,\dots,x_n]$  the relations
\begin{equation}
\label{eq:polyforce}
    \begin{tikzpicture}[anchorbase,smallnodes, scale=.2]
        \draw[bl] (0,-0.01) \pu (0,3.01);
        \draw (-3,0.51) rectangle (-1,2.51);
        \node at (-2,1.5) {$p$};
    \end{tikzpicture}  
    =
    \begin{tikzpicture}[anchorbase,smallnodes,scale=.2]
        \draw[bl] (0,-0.01) \pu (0,3.01);
        \draw (4.5,0.51) rectangle (.5,2.51);
        \node at (2.5,1.5) {$s_i(p)$};
    \end{tikzpicture}  
+
\begin{tikzpicture}[anchorbase,smallnodes,scale=.2]
        \draw[bl] (0,-0.01) \pu (0,1.01);
        \fill[bl] (0,1) circle (2.5mm); 
        \fill[bl] (0,2) circle (2.5mm); 
        \draw[bl] (0,2) \pu (0,3.01);
        \draw (4.5,0.51) rectangle (.5,2.51);
        \node at (2.5,1.5) {$\del_i(p)$};
    \end{tikzpicture}, \quad
    \begin{tikzpicture}[anchorbase,smallnodes,scale=.2]
        \draw[bl] (0,1.01) \pu (0,2.01);
        \fill[bl] (0,1) circle (2.5mm); 
        \fill[bl] (0,2) circle (2.5mm); 
    \end{tikzpicture} 
    = 
    \begin{tikzpicture}[anchorbase,smallnodes,scale=.2]
        \draw (2.5,0.51) rectangle (0,2.51);
        \node at (1.25,1.5) {$\alpha_i$};
    \end{tikzpicture} 
    = 
    \begin{tikzpicture}[anchorbase,smallnodes,scale=.2]
        \draw (7,0.51) rectangle (0,2.51);
        \node at (3.5,1.5) {$x_i-x_{i+1}$};
    \end{tikzpicture} 
    , \quad
    \begin{tikzpicture}[anchorbase,scale=.2]
        \draw[bl] (0,-2) to (0,-1) \lu (-1,0) \ur (0,1) \rd (1,0) \dl (0,-1); 
        \end{tikzpicture}   = 
        0.
    \end{equation}
\item \emph{Compatibilities:} Frobenius structures interact with six- and four-valent vertices: 
\begin{equation}
\label{eq:comp}
        \begin{tikzpicture}[anchorbase,scale=.2]
            \draw[bl] (1,-0.01) \uur (2,1.) (3,-0.01) \uul (2,1.) to (2,2.01); 
            \draw[rd] (2,-0.01) to (2,1.)  \ulu (1,2.01) (2,1.)  \uru (3,2.01);
            \fill[bl] (2,1.9) circle (2.5mm);
        \end{tikzpicture}
        =
        \begin{tikzpicture}[anchorbase,scale=.2]
            \draw[bl] (1,-0.01) \ur (2,1) \rd (3,-0.01); 
            \draw[rd] (1,2.01) \dr (2,1.35) \ru (3,2.01); 
            \draw[rd] (2,-0.01) to (2,.5);
            \fill[rd] (2,.5) circle (2.5mm);
        \end{tikzpicture}
        +
        \begin{tikzpicture}[anchorbase,scale=.2]
            \draw[bl] (1,-0.01) to (1,0.5) (3,-0.01) to (3,0.5); 
            \draw[rd] (2,-0.01) to (2,1.)  \ulu (1,2.01) (2,1.)  \uru (3,2.01);
            \fill[bl] (1,.5) circle (2.5mm);
            \fill[bl] (3,.5) circle (2.5mm);
        \end{tikzpicture}\quad, \quad    
        \begin{tikzpicture}[anchorbase,scale=.2]
        \draw[bl] (0,-1.01) \uur (2,1.) (4,-1.01) \uul (2,1.) to (2,2.01); 
        \draw[rd] (2,-0.01) to (2,1.)  \ulu (1,2.01) (2,1.)  \uru (3,2.01);
        \draw[rd] (1,-1.01) \uur (2,0) (3,-1.01) \uul (2,0);
    \end{tikzpicture}  =
    \begin{tikzpicture}[anchorbase,scale=.2]
        \draw[bl] (1,-0.01) \uur (2,1.) \uur (3,2) to (3,3); 
        \draw[bl] (2,1.) to [out=315,in=180] (3,.4) to [out=0,in=225] (4,1.);
        \draw[rd] (2,-0.01) to (2,1.)  \ulu (1,3.01) (2,1) to (4,1) ;
        \draw[rd] (4,-0.01) to (4,1)  \uru (5,3.01);
        \draw[bl] (5,-0.01) \uul (4,1.) \uul (3,2); 
    \end{tikzpicture}  
\quad,\quad
    \begin{tikzpicture}[anchorbase,scale=.2]
        \draw[gr] (0,-0.01) \pu (2,2);
        \fill[gr] (2,1.9) circle (2.5mm); 
        \draw[bl] (2,-0.01) \pu (0,2.01); 
        \end{tikzpicture}
        =        
        \begin{tikzpicture}[anchorbase,scale=.2]
        \draw[gr] (0,-0.01) \pu (0,0.5);
        \fill[gr] (0,0.5) circle (2.5mm); 
        \draw[bl] (2,-0.01) \pu (0,2.01); 
        \end{tikzpicture}
        \quad,\quad
            \begin{tikzpicture}[anchorbase,scale=.2]
        \draw[gr] (0,-0.01) \uur (1,1) \pu (3,3);
        \draw[gr] (2,-0.01) \uul (1,1); 
        \draw[bl] (4,-0.01) to  (4,1)\pu (0,3.01); 
        \end{tikzpicture}
        =    
        \begin{tikzpicture}[anchorbase,scale=.2]
        \draw[gr] (0,-0.01) \uur (3,2) to (3,3);
        \draw[gr] (2,-0.01) \pu (3.5,1.25) \uul (3,2); 
        \draw[bl] (4,-0.01) \pu (0,2.01) to (0,3.01); 
        \end{tikzpicture}\quad.
\end{equation}
\item \emph{Parabolic relations:} the following relation for any choice of three colors
\begin{equation}
\label{eq:para}
    \begin{tikzpicture}[anchorbase,scale=.2]
    \draw[bl] (0,-0.01) \pu  (4,4); 
    \draw[gr] (2,-0.01) \pu  (0,2)\pu (2,4); 
    \draw[pu] (4,-0.01) \pu (0,4); 
    \end{tikzpicture}
    =
    \begin{tikzpicture}[anchorbase,scale=.2]
    \draw[bl] (0,-0.01) \pu  (4,4); 
    \draw[gr] (2,-0.01) \pu  (4,2)\pu (2,4); 
    \draw[pu] (4,-0.01) \pu (0,4); 
    \end{tikzpicture}
    \quad,\quad
    \begin{tikzpicture}[anchorbase,scale=.2]
    \draw[bl] (1,-0.01) \uur (2,1.) (3,-0.01) \uul (2,1.) to (2,2.01) \pu (4,4); 
    \draw[rd] (2,-0.01) to (2,1.)  \ulu (1,2.01) \pu (3,4) (2,1.)  \uru (3,2.01) \pu (5,4);;
    \draw[or] (5,-0.01) to (5,1) \pu (1,4); 
    \end{tikzpicture}
    =
    \begin{tikzpicture}[anchorbase,scale=.2]
    \draw[bl] (1,-0.01) \pu (3,2.01) \uur (4,3.) (3,-0.01) \pu (5,2.01) \uul (4,3.) to (4,4.01); 
    \draw[rd] (2,-0.01) \pu (4,2.01) to (4,3.)  \ulu (3,4.01) (4,3.)  \uru (5,4.01);
    \draw[or] (5,-0.01) \pu (1,3) to (1,4); 
    \end{tikzpicture}
    \quad,\quad
        \begin{tikzpicture}[anchorbase,scale=.2]
            \draw[gr]   (0,-0.01) \uur (1,1)
                        (3,-0.01) \uul (1,1) \uur (3,3)
                        (5,-0.01) \uul (3,3) \pu (3,5);
            \draw[bl] (1,-0.01) to  (1,1)  to [out=135,in=215] (1,4) to (1,5)
                                    (1,1) to [out=45,in=215] (3,2) to (3,3)\uru (4,5)
                                                (3,3) to [out=135,in=315] (1,4) to (1,5)
                      (4,-0.01) \uul (3,2);
            \draw[rd]   (2,-0.01) \pu (3,2) \ulu (1,4) \ulu (0,5) 
                                                (1,4) \uru (2,5)
                                    (3,2) \uru (5,5);
        \end{tikzpicture}
        =    
        \begin{tikzpicture}[anchorbase,xscale=-.2,yscale=.2]
            \draw[gr]   (0,-0.01) \uur (1,1)
                        (3,-0.01) \uul (1,1) \uur (3,3)
                        (5,-0.01) \uul (3,3) \pu (3,5);
            \draw[bl] (1,-0.01) to  (1,1)  to [out=135,in=215] (1,4) to (1,5)
                                    (1,1) to [out=45,in=215] (3,2) to (3,3)\uru (4,5)
                                                (3,3) to [out=135,in=315] (1,4) to (1,5)
                      (4,-0.01) \uul (3,2);
            \draw[rd]   (2,-0.01) \pu (3,2) \ulu (1,4) \ulu (0,5) 
                                                (1,4) \uru (2,5)
                                    (3,2) \uru (5,5);
        \end{tikzpicture}.
\end{equation}
as long as they correspond to the possible rank three parabolic subgroups of $S_n$, namely
of type $A_1\times A_1 \times A_1$, $A_1\times A_2$, and $A_3$ in $S_n$
respectively.
\end{itemize}
\end{definition}

\begin{remark} 
    Examples of isotopy relations are the snake relations for each color, see
    \Cref{counitadj}, and the more general vertex rotation relations (in any color):                                
\begin{equation}
    \label{eq:Drell}
    \begin{tikzpicture}[anchorbase,scale=.2]
        \draw[bl] (1,-1) \pu (2.25,1) \ul (1.75,1.5) \pl (0.25,0.5) \lu (-.25,1) \pu (1,3); 
        \end{tikzpicture}
        =
        \begin{tikzpicture}[anchorbase,scale=.2]
            \draw[bl] (1,-1)  \pu (1,3); 
            \end{tikzpicture}
            =
            \begin{tikzpicture}[anchorbase,xscale=-.2,yscale=.2]
                \draw[bl] (1,-1) \pu (2.25,1) \ul (1.75,1.5) \pl (0.25,0.5) \lu (-.25,1) \pu (1,3); 
                \end{tikzpicture}
            \quad, \quad
    \begin{tikzpicture}[anchorbase,scale=.2]
        \draw[bl] (1,3)\pd  (0.5,1.51) to [out=270,in=225] (2,1.) 
        (2,-1) \pu (2.75,0.25) \uul (2,1.) to (2,1.2)  \pu (3,3); 
        \draw[rd] 
        (1,-1)\pu (2,0.8) to (2,1.)  \ulu (1.25,1.75) \pu (2,3) 
        (2,1.)  to [out=45,in=90] (3.5,0.51) \pd (3,-1);
    \end{tikzpicture}    =
    \begin{tikzpicture}[anchorbase,scale=.2]
        \draw[rd] (1,-1) to (1,-0.01) \uur (2,1.) 
        (3,-1) to (3,-0.01) \uul (2,1.) to (2,2.01) to (2,3); 
        \draw[bl] (2,-1) to (2,-0.01) to (2,1.)  \ulu (1,2.01) to (1,3)
        (2,1.)  \uru (3,2.01) to (3,3);
    \end{tikzpicture}   \quad, \quad
        \begin{tikzpicture}[anchorbase,scale=.2]
        \draw[gr] (0,-1) \pu (1.5,0.5) \pu (0.5,1.5) \pu (2,3);
        \draw[bl] (2,-1) \pu (2.25,1) \ul (1.75,1.5) \pl (0.25,0.5) \lu (-.25,1) \pu (0,3); 
        \end{tikzpicture}
        =
        \begin{tikzpicture}[anchorbase,scale=.2]
        \draw[gr] (0,-1) to (0,-0.01) \pu (2,2.01) to (2,3);
        \draw[bl] (2,-1) to (2,-0.01) \pu (0,2.01) to (0,3); 
        \end{tikzpicture}\quad.
\end{equation}
\end{remark}
\begin{remark}
     We leave it to the reader to verify that among the useful consequences of the defining relations are that distant colors satisfy the Reidemeister 2 move and that any diagram vanishes which contains a facet bounded by a polygon of a single color:
    \begin{equation}\label{Reidem2}
    \begin{tikzpicture}[anchorbase,scale=.2]
    \draw[gr] (0,-0.01) \pu (2,2) \pu (0,4);
    \draw[bl] (2,-0.01) \pu (0,2) \pu (2,4);
    \end{tikzpicture}
    =
    \begin{tikzpicture}[anchorbase,scale=.2]
        \draw[gr] (0,-0.01)  \pu (0,4);
        \draw[bl] (2,-0.01)  \pu (2,4);
        \end{tikzpicture}
        \quad, \quad
    \begin{tikzpicture}[anchorbase,scale=.2]
            \draw[bl] (0,0)  to  (-1,1) to  (-1,2) to (0, 3) to (3, 3) to (4, 2) to (4,1) to (3,0) to  (3.25,-.5) ;
            \draw[bl] (0,0)  to (-.25,-.5) 
            (-1,1) to (-1.5,.75)  
            (-1,2) to (-1.5,2.25)  
            (0, 3) to (-.25,3.5)  
            (3, 3) to (3.25,3.5)  
            (4, 2) to (4.5,2.25)
            (4,1) to (4.5,0.75);
            \draw[bl, dotted] (0,0)  to  (3,0);
    \end{tikzpicture}
    =0.
\end{equation}
\end{remark}

\begin{remark}
    The $\mathfrak{sl}_n$-version $\DS_{\mathfrak{sl}_n}$ of the diagrammatic category, cf.~\Cref{SBimglsl}, is defined analogously to $\DS_{\mathfrak{gl}_n}$, but with the generators $x_i$ omitted and the polynomial forcing relations \eqref{eq:polyforce} replaced by
    \begin{equation}
        \label{eq:barbell}
    \begin{tikzpicture}[anchorbase,smallnodes, scale=.2]
        \draw[rd] (1,-0.01) \pu (1,3.01);
        \draw[rd] (0,1.01) \pu (0,2.01);
        \fill[rd] (0,1) circle (2.5mm); 
        \fill[rd] (0,2) circle (2.5mm); 
    \end{tikzpicture}  
    =
    -\;
    \begin{tikzpicture}[anchorbase,smallnodes,scale=.2]
        \draw[rd] (-1,-0.01) \pu (-1,3.01);
        \draw[rd] (0,1.01) \pu (0,2.01);
        \fill[rd] (0,1) circle (2.5mm); 
        \fill[rd] (0,2) circle (2.5mm); 
    \end{tikzpicture}  
+2\; 
\begin{tikzpicture}[anchorbase,smallnodes,scale=.2]
        \draw[rd] (0,-0.01) \pu (0,1.01);
        \fill[rd] (0,1) circle (2.5mm); 
        \fill[rd] (0,2) circle (2.5mm); 
        \draw[rd] (0,2) \pu (0,3.01);
    \end{tikzpicture} 
    \quad,\quad
    \begin{tikzpicture}[anchorbase,smallnodes, scale=.2]
        \draw[rd] (1,-0.01) \pu (1,3.01);
        \draw[bl] (0,1.01) \pu (0,2.01);
        \fill[bl] (0,1) circle (2.5mm); 
        \fill[bl] (0,2) circle (2.5mm); 
    \end{tikzpicture}  
    =
    \begin{tikzpicture}[anchorbase,smallnodes,scale=.2]
        \draw[rd] (-1,-0.01) \pu (-1,3.01);
        \draw[bl] (0,1.01) \pu (0,2.01);
        \fill[bl] (0,1) circle (2.5mm); 
        \fill[bl] (0,2) circle (2.5mm); 
    \end{tikzpicture}  
    +
    \begin{tikzpicture}[anchorbase,smallnodes,scale=.2]
        \draw[rd] (-1,-0.01) \pu (-1,3.01);
        \draw[rd] (0,1.01) \pu (0,2.01);
        \fill[rd] (0,1) circle (2.5mm); 
        \fill[rd] (0,2) circle (2.5mm); 
    \end{tikzpicture}  
-\; 
\begin{tikzpicture}[anchorbase,smallnodes,scale=.2]
        \draw[rd] (0,-0.01) \pu (0,1.01);
        \fill[rd] (0,1) circle (2.5mm); 
        \fill[rd] (0,2) circle (2.5mm); 
        \draw[rd] (0,2) \pu (0,3.01);
    \end{tikzpicture} 
    \quad,\quad
    \begin{tikzpicture}[anchorbase,smallnodes, scale=.2]
        \draw[rd] (1,-0.01) \pu (1,3.01);
        \draw[gr] (0,1.01) \pu (0,2.01);
        \fill[gr] (0,1) circle (2.5mm); 
        \fill[gr] (0,2) circle (2.5mm); 
    \end{tikzpicture}  
    =
    \begin{tikzpicture}[anchorbase,smallnodes,scale=.2]
        \draw[rd] (-1,-0.01) \pu (-1,3.01);
        \draw[gr] (0,1.01) \pu (0,2.01);
        \fill[gr] (0,1) circle (2.5mm); 
        \fill[gr] (0,2) circle (2.5mm); 
    \end{tikzpicture}  
\end{equation}
for every color, every pair or adjacent colors (here: red and blue), and every pair of distant colors (here: red and green) respectively.
\end{remark}

The main result of \cite{EK} is  a diagrammatic Bott--Samelson bimodule category:
\begin{thm}\label{thm:equivalence} There is an equivalence of monoidal graded $\k$-linear categories \[\DS_n\xrightarrow{\cong} \overline{\BSbim}^\gr_n\] 
with sends the object $c_i$ of color $i$ to $B_i$, and the generating morphisms to bimodule morphisms as follows. The generating morphisms $
           \begin{tikzpicture}[anchorbase,scale=.2]
            \draw[bl] (1,-0.01) \uur (2,1.) (3,-0.01) \uul (2,1.) to (2,2.01); 
            \end{tikzpicture}$~, $
           \begin{tikzpicture}[anchorbase,scale=-.2]
            \draw[bl] (1,-0.01) \uur (2,1.) (3,-0.01) \uul (2,1.) to (2,2.01); 
            \end{tikzpicture}
       $~, $
           \begin{tikzpicture}[anchorbase,scale=.2]
            \draw[bl] (0,-0.01) \pu (0,-1.01);
            \fill[bl] (0,-1) circle (2.5mm); 
            \end{tikzpicture}$\, ,
            $
           \begin{tikzpicture}[anchorbase,scale=-.2]
            \draw[bl] (0,-0.01) \pu (0,-1.01);
            \fill[bl] (0,-1) circle (2.5mm); 
            \end{tikzpicture}
$ are sent to the bimodule morphisms $\mm,\Delta,\epsilon,\eta$ from the Frobenius structure on $B_i$ from \Cref{BSFrob}, the morphism $
    \begin{tikzpicture}[anchorbase,scale=.2]
        \draw[rd] (1,-0.01) \uur (2,1.) (3,-0.01) \uul (2,1.) to (2,2.01); 
        \draw[bl] (2,-0.01) to (2,1.)  \ulu (1,2.01) (2,1.)  \uru (3,2.01);
    \end{tikzpicture}
$ is sent to $B_i\otimes_{R} B_{i+1} \otimes_{R} B_i \to B_{i+1}\otimes_{R} B_{i} \otimes_{R} B_{i+1}$ given  by
\begin{align*}
R \otimes_{R^{s_i}} R \otimes_{R^{s_{i+1}}} R \otimes_{R^{s_i}} R 
&\to 
R \otimes_{R^{s_{i+1}}} R \otimes_{R^{s_i}} R \otimes_{R^{s_{i+1} }} R 
\\
1 \otimes 1 \otimes 1 \otimes 1 & \mapsto 1 \otimes 1 \otimes 1 \otimes 1\\
1 \otimes x_i \otimes 1 \otimes 1 & \mapsto (x_i+x_{i+1}) \otimes 1 \otimes 1 \otimes 1- 1\otimes 1 \otimes 1 \otimes x_{i+1},
\end{align*}
the generator $\begin{tikzpicture}[anchorbase,scale=-.2] \draw[bl] (0,-0.01) \pd
            (2,-2.01); \draw[gr] (2,-0.01) \pd (0,-2.01); \end{tikzpicture}$ is
            sent to the canonical isomorphism $B_i\otimes B_j \cong B_j \otimes
            B_i$ from (a special case of) \eqref{BiBj}, and finally $\boxed{x_k}$ is sent to the
            endomorphism of multiplying by the variable $x_k$.
\end{thm}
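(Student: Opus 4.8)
```latex
The plan is to unpack the statement of \Cref{thm:equivalence}, which is really the Elias--Khovanov theorem of \cite{EK} transported to the $\mathfrak{gl}_n$-realization $\overline{\BSbim}^\gr_n$ rather than the $\mathfrak{sl}_n$-realization. Concretely, the proof has two halves: (a) check that the assignment on objects and generators respects all the defining relations of $\DS_n$, so that it descends to a well-defined monoidal graded $\k$-linear functor $\DS_n\to\overline{\BSbim}^\gr_n$; and (b) check that the resulting functor is an equivalence, i.e. essentially surjective and fully faithful. For (a) one works relation by relation through the list in the definition of $\DS_n$: the Frobenius relations \eqref{eq:Drel} hold by \Cref{BSFrob}; the isotopy relations (snake relations, vertex rotations \eqref{eq:Drell}) hold because of \Cref{counitadj}, i.e. because $B_i\otimes_{R_n}-$ is self-adjoint with the stated (co)units, combined with the explicit degree count in the remark following \Cref{BSFrob}; polynomial forcing \eqref{eq:polyforce} is exactly the Leibniz-type identity $fg = s_i(g)f + \alpha_i\partial_i(g)f$ in $B_i$ together with $\partial_i(fg)=\partial_i(f)g + s_i(f)\partial_i(g)$ used in \Cref{BSFrob}, and the needle relation is the computation that $\partial_i(\alpha_i)\cdot(\text{something})=0$ after feeding $\alpha_i^2/$ through $\eta\circ\Delta$; the compatibility relations \eqref{eq:comp} and the parabolic relations \eqref{eq:para} are checked by evaluating both sides on the bimodule generators $1\otimes z_{i_1}\otimes\cdots\otimes 1$ with $z_{i_t}\in\{1,\alpha_t\}$, exactly as in the proof of the Interchange Lemma \eqref{BiBj}. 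The six-valent vertex formula is pinned down precisely by the displayed values $1\otimes 1\otimes 1\otimes 1\mapsto 1\otimes 1\otimes 1\otimes 1$ and $1\otimes x_i\otimes 1\otimes 1\mapsto (x_i+x_{i+1})\otimes 1\otimes 1\otimes 1 - 1\otimes 1\otimes 1\otimes x_{i+1}$, and one must verify this is a well-defined bimodule map (it respects the relations $x_i\otimes 1 = 1\otimes s_i(x_i)$ etc. imposed by the tensor factors) and that it satisfies the relations of the diagrammatic calculus in which it participates.

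For part (b), essential surjectivity is immediate from the definitions: $\overline{\BSbim}^\gr_n$ is the full subcategory on objects $B_{\underline i}$, and $c_{i_1}\hcomp\cdots\hcomp c_{i_k}$ maps to $B_{i_1}\otimes_{R_n}\cdots\otimes_{R_n}B_{i_k}=B_{\underline i}$ up to an overall (irrelevant, since we are in the unshifted category) grading normalization. Full faithfulness is the substantive point. The standard route, following \cite{EK, EW}, is to exhibit for each pair of objects $\underline i,\underline j$ a spanning set of $\Hom_{\DS_n}(c_{\underline i},c_{\underline j})$ — the ``light leaves'' or a double-leaves basis adapted to the realization — whose image in $\Hom_{\overline{\BSbim}^\gr_n}(B_{\underline i},B_{\underline j})$ is linearly independent over $\k$ (or over the appropriate polynomial ring), and to compare graded ranks. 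Over $\k=\C$ the Soergel/Bott--Samelson Hom-spaces are free graded modules of known graded rank (this is the classical Soergel Hom formula, available since we are in characteristic zero), so one matches the cardinality of the diagrammatic spanning set against the graded rank of the bimodule Hom, concluding that the spanning set is a basis on both sides and the functor is an isomorphism on each Hom. Alternatively, one may simply cite \cite{EK} for the $\mathfrak{sl}_n$-statement and then transport along the base change $-\otimes_{R'_n}R_n$ described in \Cref{SBimglsl}, observing that $R_n = R'_n[x_1+\cdots+x_n]$ is a polynomial extension by a central variable; this is the cleanest option and is what the phrasing ``The main result of \cite{EK}'' signals is intended.

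The main obstacle is the full-faithfulness/rank comparison, i.e. establishing that the diagrammatically defined Hom-spaces are not bigger than the bimodule Hom-spaces. Checking well-definedness (part (a)) is lengthy but entirely mechanical once \Cref{BSFrob}, \Cref{counitadj}, and the Interchange Lemma are in hand; the nontrivial input is the Soergel-type Hom formula / existence of a light-leaves basis, which is exactly the content of \cite{EK} (and whose $\mathfrak{gl}_n$ vs.\ $\mathfrak{sl}_n$ bookkeeping is handled by the central-extension remark in \Cref{SBimglsl}). For the purposes of this paper it therefore suffices to verify (a) in the $\mathfrak{gl}_n$-realization — in particular pinning down the six-valent vertex image as displayed — and to invoke \cite{EK} together with \Cref{SBimglsl} for (b).
```
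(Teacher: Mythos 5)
The paper gives no proof of this theorem at all: it is stated as the main result of Elias--Khovanov \cite{EK} (with \cite{EW} covering general realizations such as the $\gln$ one used here, which is the bookkeeping point your remark about \Cref{SBimglsl} correctly flags), and your proposal rightly concludes that citing \cite{EK} is the intended argument. Your sketch of the underlying proof --- well-definedness by checking the defining relations against \Cref{BSFrob} and \eqref{BiBj}, essential surjectivity by construction, and full faithfulness via a light-leaves basis matched against the Soergel Hom formula --- is an accurate summary of how the cited result is established, so there is nothing to correct.
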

\begin{corollary} \label{rem:diagtosbim}
With the concrete \Cref{Karconcrete} for the Karoubi envelope, \Cref{thm:equivalence} implies that the Soergel bimodule categories from \Cref{def:SBim} admit a diagrammatic description as well.
\end{corollary}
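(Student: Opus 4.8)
The plan is to transport the equivalence $\DS_n \xrightarrow{\cong} \overline{\BSbim}^\gr_n$ of \Cref{thm:equivalence} along the chain of categorical completions that turns $\overline{\BSbim}^\gr_n$ into $\SBim_n$, using at the final step the explicit model $\Kar(-)$ of \Cref{Karconcrete}. Recall from \Cref{def:versionsBS} that $\BSbim_n$ is obtained from $\overline{\BSbim}^\gr_n$ by first adjoining formal grading shifts $\langle j\rangle$, $j\in\Z$, and then retaining only degree-zero morphisms (this is the standing identification between the three variants of Bott--Samelson categories used freely after \Cref{def:versionsBS}). Performing the identical operation on the graded $\k$-linear category $\DS_n$ yields a $\k$-linear category $\DS_n^{\langle\rangle}$ carrying a $\Z$-action by grading shift, and \Cref{thm:equivalence} promotes at once to a $\Z$-equivariant $\k$-linear equivalence $\DS_n^{\langle\rangle} \xrightarrow{\cong} \BSbim_n$ sending $c_i\langle j\rangle$ to $B_i\langle j\rangle$.

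By \Cref{def:SBim}, $\SBim_n$ is the smallest additive, idempotent-complete full subcategory of graded $R_n$-bimodules (with degree-zero homomorphisms) containing $\BSbim_n$; equivalently, it is the idempotent completion of the additive completion of $\BSbim_n$. Both additive completion and idempotent completion are functorial constructions on $\k$-linear categories that preserve equivalences, and for the latter I would use the concrete Karoubi envelope $\Kar(-)$ of \Cref{Karconcrete}. Applying these to $\DS_n^{\langle\rangle}$ then produces a $\k$-linear category — whose objects are pairs $(c,p)$ with $c$ a finite formal direct sum of grading-shifted diagrammatic objects and $p$ a diagrammatic idempotent endomorphism, with morphisms as in \Cref{Karconcrete} — together with an equivalence of this category with $\SBim_n$. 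This is the desired diagrammatic presentation of the category of Soergel bimodules for $\mathfrak{gl}_n$.

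To record compatibility with the monoidal and bicategorical structure, I would invoke that the equivalence of \Cref{thm:equivalence} is one of monoidal graded $\k$-linear categories, and that by \Cref{thm:moninherit}, part (5), together with the remark on additive completions following \Cref{cor:moninherit}, grading-shift completion, additive completion, and $\Kar(-)$ all inherit monoidal structures for which the relevant embeddings and equivalences are monoidal; hence the diagrammatic model is monoidally equivalent to $\SBim_n$. Collecting these equivalences over all $n\in\N_0$ and transporting parabolic induction $\boxtimes$ along them yields a monoidal bicategory with these diagrammatic endomorphism categories equivalent to $\SBim$ from \Cref{prop:Sbimmonbicat}; the stricter variant of this statement, with horizontal composition and $\boxtimes$ made as strict as possible, is exactly \Cref{thm:ssmtwocatb} and relies on \Cref{cor:moninherit}. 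The only point needing care — not a genuine obstacle — is the bookkeeping matching the all-degrees convention for morphisms in $\DS_n$ with the formal-shifts/degree-zero convention in $\BSbim_n$, which is routine given the standing identification above.
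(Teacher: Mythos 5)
Your argument is correct and follows essentially the same route as the paper: adjoin grading shifts and pass to the degree-zero subcategory to match $\BSbim_n$ via \Cref{thm:equivalence}, then complete using the concrete Karoubi envelope of \Cref{Karconcrete} together with additive completion. The only (immaterial) difference is that you take the additive completion before $\Kar(-)$ whereas the paper applies $\Kar(-)$ first and then forms formal direct sums; the two orders yield equivalent Cauchy completions, and your extra paragraph on monoidal compatibility is a harmless addition beyond what the paper's proof records.
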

\begin{proof}
Indeed, starting with $\DS_n$,  we first adjoin grading shifts of objects and subsequently restrict to the degree zero subcategory as in \Cref{def:versionsBS}. This results in a diagrammatic category equivalent to $\BSBim_n$. Then one takes the Karoubi envelope using \Cref{Karconcrete}  and finally passes to the additive completion by taking formal direct sums and matrices of morphism spaces (as in  \Cref{add}, see also \cite{MR2174270}). The result is then a diagrammatic model for $\SBim_n$.
\end{proof}

 \begin{remark} 
    \label{rem:dualities}
    \Cref{thm:equivalence} can be upgraded to provide diagrammatic incarnations of
    graded $\k$-linear bicategories of \emph{singular} Bott-Samelson bimodules (i.e  of generalizations of Bott-Samelson bimodules as originally defined in  \cite{Soergel}, \cite{MR2097586}, \cite{MR2844932})
    in terms of \emph{foams} embedded in an axis-parallel unit cube in $\R^3$,
    see e.g. \cite[Appendix A]{HRW1}. For example, the six-valent vertex
    corresponds to the following foam:
\begin{equation}\label{foam}
 \scalebox{3.0}[3.0]{  \sixv}
        \quad \longleftrightarrow \quad\quad
    \begin{tikzpicture} [anchorbase,xscale=-.3,yscale=.4,tinynodes]
	\path [fill=yellow, opacity=.1] (0,2) to (0,2+4) to [out=0,in=150] (3.5,1.5+4) to (4.5,1.5+4) to [out=30,in=180] (7.5,2+4) to (7.5,2) to [out=180,in=45] (6.5,1.5) to (5.5,1.5) to [out=150,in=30] (2.5,1.5) to (1.5,1.5) to [out=135,in=0] (0,2);
	\path [fill=yellow, opacity=.1] (.5,1) to (.5,1+4) to [out=0,in=135] (2,.5+4) to (3,.5+4) to [out=30,in=210] (3.5,1.5+4) to (4.5,1.5+4) to [out=315,in=135] (6,.5+4) to (7,.5+4) to [out=45,in=180] (8,1+4) to (8,1) to [out=180,in=315] (6.5,1.5) to (5.5,1.5) to [out=225,in=45] (5,.5) to (4,.5) to [out=150,in=330] (2.5,1.5) to (1.5,1.5) to [out=225,in=0] (.5,1);
	\path [fill=yellow, opacity=.1]  (1,0) to (1,0+4) to [out=0,in=225] (2,.5+4) to (3,.5+4) to [out=330,in=210] (6,.5+4) to (7,.5+4) to [out=315,in=180] (8.5,0+4) to (8.5,0) to [out=180,in=330] (5,.5) to (4,.5) to [out=210,in=0] (1,0);
	\path [fill=white, opacity=1]  (3,.5+4) to [out=270,in=180] (4.5,-.75+4) to [out=0,in=270] (6,.5+4) to (7,.5+4) to (7,0+4) to [out=270,in=90] (5,1) to (5,.5) to (4,.5) to (4,1) to [out=90,in=270] (2,0+4) to (2,0.5+4) to (3,.5+4);
	\path [fill=white, opacity=1] (2.5,1.5) to [out=90,in=180] (4,2.75) to [out=0,in=90] (5.5,1.5) to (6.5,1.5) to (6.5,2) to [out=90,in=270] (4.5,1+4) to (4.5,1.5+4) to (3.5,1.5+4) to (3.5,1+4) to [out=270,in=90] (1.5,2) to (1.5,1.5) to (2.5,1.5);
		\path [fill=red, opacity=.2]  (3,.5+4) to [out=270,in=180] (4.5,-.75+4) to [out=0,in=270] (6,.5+4) to (7,.5+4) to (7,0+4) to [out=270,in=90] (5,1) to (5,.5) to (4,.5) to (4,1) to [out=90,in=270] (2,0+4) to (2,0.5+4) to (3,.5+4);
	\path [fill=blue, opacity=.2] (2.5,1.5) to [out=90,in=180] (4,2.75) to [out=0,in=90] (5.5,1.5) to (6.5,1.5) to (6.5,2) to [out=90,in=270] (4.5,1+4) to (4.5,1.5+4) to (3.5,1.5+4) to (3.5,1+4) to [out=270,in=90] (1.5,2) to (1.5,1.5) to (2.5,1.5);
	\draw [very thick] (0,2) to [out=0,in=135] (1.5,1.5);
	\draw [very thick] (.5,1) to [out=0,in=225] (1.5,1.5);
	\draw [very thick] (1,0) to [out=0,in=210] (4,.5);
	\draw [double] (1.5,1.5) to (2.5,1.5);
	\draw [very thick] (2.5,1.5) to [out=330,in=150] (4,.5);
	\draw [very thick] (2.5,1.5) to [out=30,in=150] (5.5,1.5);
	\draw [double] (4,.5) to (5,.5);
	\draw [very thick] (5,.5) to [out=45,in=225] (5.5,1.5);
	\draw [double] (5.5,1.5) to (6.5,1.5);
	\draw [very thick] (5,.5) to [out=330,in=180] (8.5,0);
	\draw [very thick] (6.5,1.5) to [out=315,in=180] (8,1);
	\draw [very thick] (6.5,1.5) to [out=45,in=180] (7.5,2);
	\draw  (0,2) to (0,2+4);
	\draw  (.5,1) to (.5,1+4);
	\draw  (1,0) to (1,0+4);
	\draw  (7.5,2) to (7.5,2+4);
	\draw  (8,1) to (8,1+4);
	\draw  (8.5,0) to (8.5,0+4);
	\draw [very thick, red] (2.5,1.5) to [out=90,in=180] (4,2.75) to [out=0,in=90] (5.5,1.5);
	\draw [very thick,directed=.55, red] (3,.5+4) to [out=270,in=180] (4.5,-.75+4) to [out=0,in=270] (6,.5+4);
	\draw [very thick,rdirected=.4, red] (2,.5+4) to (2,0+4) to [out=270,in=90] (4,1) to (4,.5);
	\draw [very thick,rdirected=.45, red] (3.5,1.5+4) to (3.5,1+4) to [out=270,in=90] (1.5,2) to (1.5,1.5);
	\draw [very thick,directed=.65, red] (4.5,1.5+4) to (4.5,1+4) to [out=270,in=90] (6.5,2) to (6.5,1.5);
	\draw [very thick,directed=.6, red] (7,.5+4) to (7,0+4) to [out=270,in=90] (5,1) to (5,.5);
	\draw [very thick] (0,2+4) to [out=0,in=150] (3.5,1.5+4);
	\draw [very thick] (.5,1+4) to [out=0,in=135] (2,.5+4);
	\draw [very thick] (1,0+4) to [out=0,in=225] (2,.5+4);
	\draw [double] (2,.5+4) to (3,.5+4);
	\draw [very thick] (3,.5+4) to [out=30,in=210] (3.5,1.5+4);
	\draw [very thick] (3,.5+4) to [out=330,in=210] (6,.5+4);
	\draw [double] (3.5,1.5+4) to (4.5,1.5+4);
	\draw [very thick] (4.5,1.5+4) to [out=315,in=135] (6,.5+4);
	\draw [double] (6,.5+4) to (7,.5+4);
	\draw [very thick] (7,.5+4) to [out=315,in=180] (8.5,0+4);
	\draw [very thick] (7,.5+4) to [out=45,in=180] (8,1+4);
	\draw [very thick] (4.5,1.5+4) to [out=30,in=180] (7.5,2+4);
    \draw[->]  (-.5,-.5) to (1.5,-.5+0);
    \draw[->]  (-.5,-.5) to (-1.5,-.5+1.5);
    \draw[->]  (-.5,-.5) to (-.5,-.5+2);
    \node at (.8,-.8) {$y$};
    \node at (-1.8,-.5+1) {$x$};
    \node at (0,-.5+1.5) {$z$};
\end{tikzpicture}
    \end{equation}
We will not need the foam calculus in this paper, but it might help to guide intuition: 
First, \eqref{foam} clearly shows that there are three possible ways of composing morphisms, similar to the three ways of stacking two cubes, i.e. placing them next to each other.  
We can stack cubes \emph{vertically} (corresponding to $\vcomp$), \emph{horizontally}
(corresponding to $\hcomp$) or \emph{behind} each other (corresponding to parabolic
induction, $\boxtimes$). Second, \eqref{foam}  shows that reflections in coordinate hyperplanes induce symmetries of the diagrammatic calculus.

    For each fixed $n\in \N_0$ the diagrammatic category $\DS_n$ has a subgroup of automorphisms isomorphic to $(\Z/2\Z)^3$ generated by the following involutions:
    \begin{itemize}
        \item The \emph{Dynkin diagram automorphism}: the covariant and monoidal
        automorphism $r_x$ that reverses the sequence of colors $c_i\mapsto
        c_{n-i}$ in objects and diagrams and sends $x_i \mapsto -x_{n+1-i}$. 
        \item The \emph{vertical automorphism}: the  covariant opmonoidal automorphism $r_y$ that reverses
        objects and reflects diagrams in a vertical line in the drawing surface.
        \item The \emph{horizontal automorphism}: the contravariant monoidal automorphism $r_z$ that fixes
        objects and reflects diagrams in a horizontal line in the drawing
        surface.
    \end{itemize}
    As a composite we also obtain:
    \begin{itemize}
        \item The contravariant and opmonoidal duality automorphism $r_{yz}=r_y\circ r_z$,
        which reverses objects and rotates diagrams by $\pi$ in the plane.
    \end{itemize}
    Analogous symmetries for a monoidal bicategory based on the dotted embedded
    cobordism categories underlying Khovanov homology appear in \cite[\S2.2]{HRW4}.
\end{remark}
\begin{cor}\label{cor:selfduality}The generators $c_i$, $i\in[1;n-1]$, of $\DS_n^{\free}$ and also $\DS_n$ are self-dual. Thus, the categories are rigid (with diagrammatic duality $r_{yz}$ as in \Cref{rem:dualities}).
\end{cor}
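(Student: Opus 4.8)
The plan is to read off the self-duality of each $c_i$ from its Frobenius structure, and then bootstrap rigidity of the ambient category from this.

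First I would recall that in \Cref{Def:Dn} each generator $c_i$ is, by construction, a Frobenius object $(c_i,\mm_i,\Delta_i,\epsilon_i,\eta_i)$ of $\DS_n^{\free}$, so that \Cref{counitadj} applies verbatim: the functor $c_i\hcomp-$ is self-adjoint, with unit and counit the degree-zero morphisms $\coev_i=\Delta_i\circ\epsilon_i\colon\one\to c_i\hcomp c_i$ and $\ev_i=\eta_i\circ\mm_i\colon c_i\hcomp c_i\to\one$. Diagrammatically these are exactly the cup and cap introduced in the Notation following \Cref{Def:Dn}. The zigzag (snake) identities for this adjunction are, as recorded in the proof of \Cref{counitadj}, a formal consequence of the Frobenius relation \eqref{Frob} (equivalently \eqref{eq:Drel}) together with the (co)unit axioms, hence they already hold in $\DS_n^{\free}$; passing to the quotient, they continue to hold in $\DS_n$. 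This exhibits $c_i$ simultaneously as a left and as a right dual of itself in both categories.

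Next, to deduce rigidity I would use that every object of $\DS_n^{\free}$ (respectively $\DS_n$) is a $\hcomp$-word $c_{i_1}\hcomp\cdots\hcomp c_{i_k}$ in the generators — the remaining generating data, namely the polynomial boxes $x_k$ and the morphisms \eqref{gen1}--\eqref{gen3}, introduce no new objects. I would declare the dual of such a word to be the reversed word $c_{i_k}\hcomp\cdots\hcomp c_{i_1}$, with evaluation and coevaluation assembled by nesting the $\ev_{i_j}$ and $\coev_{i_j}$; the snake relations for the word then reduce to those for the individual letters. Hence every object has a two-sided dual and the categories are rigid. Finally, tracing this construction through on morphisms shows that the induced dual of a morphism $f$, obtained by nesting $f$ between the iterated cups and caps, is precisely the diagram of $f$ rotated by $\pi$ in the drawing plane with its boundary words reversed — that is, the contravariant opmonoidal automorphism $r_{yz}=r_y\circ r_z$ of \Cref{rem:dualities}; strict pivotality ensures that left and right duals agree and that $r_{yz}$ is an involution.

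The argument is entirely formal, so I do not anticipate a genuine obstacle; the only points that need a moment's care are the grading bookkeeping — checking that $\coev_i$ and $\ev_i$ really have degree $0$, using that the Frobenius (co)units have degree $-1$ and the (co)multiplications degree $1$ as fixed in \Cref{Def:Dn} — and verifying that the nested (co)evaluations for a long word satisfy the snake identities on the nose, both of which are routine consequences of \Cref{counitadj}.
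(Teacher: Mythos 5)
Your proof is correct and follows essentially the same route as the paper, whose own argument is just the one-line observation that self-duality is immediate from the Frobenius maps and relations \eqref{eq:Drel} together with \Cref{rem:dualities}; you have simply spelled out the cup/cap construction via \Cref{counitadj}, the extension to tensor words, and the identification of the duality with $r_{yz}$. (Minor point: your reading $\coev_i=\Delta_i\circ\epsilon_i$ silently corrects what appears to be a typo in \Cref{counitadj}, which is the right thing to do.)
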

\begin{proof}This is clear from  the maps and relations in \eqref{eq:Drel} (and \Cref{rem:dualities}).
\end{proof}

\subsection{Semistrict monoidal structure on the diagrammatic categories}
\label{sec:ssmondiag}
Next, we assemble
the monoidal categories $\DS_n$, $n\in \N_0$, into a semistrict monoidal
$2$-category $\DS$. 

\begin{definition}
    \label{def:DS}
    Let $\DS$ denote the locally graded $\k$-linear $2$-category on the set of objects $\N_0$,
    whose morphism categories for $m,n\in \N_0$ are
    \[
    \Hom_{\DS}(m,n)=
    \begin{cases} 
    \DS_n & m=n,\\
    0 & m\neq n.
    \end{cases}
    \]
    Here, $0$ denotes the graded $\k$-linear category on a single object with
    endomorphism algebra $0$. The composition of endomorphisms is given by the
    (strictly associative!) horizontal composition $\hcomp$ in $\DS_n$, and it
    is trivial for non-endo-morphisms.
\end{definition}
\begin{remark}\label{iota} If $\cal{I}$ denotes the $2$-category with one
object, one endomorphism and one $2$-morphism, then there is an obvious functor
of $2$-categories $\iota\colon\cal{I}\to\DS$ which picks out the object $0$ with
the its identity 1- and 2- morphisms.
\end{remark}
\begin{notation}
The $2$-category $\DS$ inherits a diagrammatic calculus from the one of $\DS_n$. Unlike
the one-object $2$-categories $\DS_n$, we now have several objects $n\in \N_0$ and
in the graphical calculus we now label the regions with the relevant
objects. For example,
\[
\begin{tikzpicture}[anchorbase,scale=.3,tinynodes]
            \draw[bl] (1,-0.01) \uur (2,1.) (3,-0.01) \uul (2,1.) to (2,2.01); 
            \node at (1,1.5) {$3$};
            \node at (3,1.5) {$3$};
            \node at (2,.2) {$3$};
            \end{tikzpicture}
\]
encodes a $2$-morphism between the $1$-morphisms $c_1 \hcomp c_1$ and $c_1$, and each of the $1$-morphisms is an endomorphism of the object $3$. Of course, the labelling of regions has to be constant in each diagram (only $1$-endomorphisms are nontrivial). Labelling all regions instead of just one is still useful, since it directly gives information about the colors that can appear in each region (here only blue and red, corresponding to $c_1$ and $c_2$) and which polynomials can float in each region (here polynomials in $x_1$, $x_2$, and $x_3$).

We often omit labels and implicitly assume the reader to pick any which makes the respective expression well-defined.
\end{notation}

Our next goal is to equip $\DS$ with a monoidal structure, that mimics the
$\boxtimes$-product from \S\ref{sec:monbicat}. First we need analogs of the
functors $\jj_{a|c} \colon \BSbim_b \to \BSbim_{a+b+c}$ from \eqref{eq:embed},
which are induced by variable-shifting morphisms from
Definition~\ref{def:variableshift}.

\begin{definition}\label{shifts}
For any triple $a,b,c\in \N_0$ consider the monoidal graded $\k$-linear \emph{color-shifting functor}
\[
\jj_{a|c} \colon \DS_b \to \DS_{a+b+c}
\]
which is determined on objects by sending the generating object $c_i\in \DS_b$, $1\leq i <b$ to the object $c_{a+i}\in \DS_{a+b+c}$ and on morphisms by sending each diagram to the same diagram, but with all indices of variables and all colors in the enumeration from \eqref{eq:colorsequence} shifted by $+a$. (It is straightforward from the definition of $\DS_n$ that color-shifting respects the defining relations of the $\DS_n$.)
\end{definition}

\begin{example} For $(a,b,c)=(1,3,10)$ we have for instance, recalling \eqref{eq:colorsequence},
\[
\jj_{1|10}\left(\begin{tikzpicture}[anchorbase,scale=.2]
        \draw[rd] (1,-0.01) \uur (2,1.) (3,-0.01) \uul (2,1.) to (2,2.01); 
        \draw[bl] (2,-0.01) to (2,1.)  \ulu (1,2.01) (2,1.)  \uru (3,2.01);
    \end{tikzpicture}
    \right)
    =
    \begin{tikzpicture}[anchorbase,scale=.2]
        \draw[bl] (1,-0.01) \uur (2,1.) (3,-0.01) \uul (2,1.) to (2,2.01); 
        \draw[gr] (2,-0.01) to (2,1.)  \ulu (1,2.01) (2,1.)  \uru (3,2.01);
    \end{tikzpicture}
  \quad\text{or, if we also label (some) regions:}\quad
\jj_{1|10}\left(\begin{tikzpicture}[anchorbase,scale=.2,tinynodes]
        \draw[rd] (1,-0.01) \uur (2,1.) (3,-0.01) \uul (2,1.) to (2,2.01); 
        \draw[bl] (2,-0.01) to (2,1.)  \ulu (1,2.01) (2,1.)  \uru (3,2.01);
        \node at (0,1) {$3$};
        \node at (4,1) {$3$};
    \end{tikzpicture}
    \right)
    =
    \begin{tikzpicture}[anchorbase,scale=.2,tinynodes]
        \draw[bl] (1,-0.01) \uur (2,1.) (3,-0.01) \uul (2,1.) to (2,2.01); 
        \draw[gr] (2,-0.01) to (2,1.)  \ulu (1,2.01) (2,1.)  \uru (3,2.01);
                \node at (0,1) {$14$};
        \node at (4,1) {$14$};
    \end{tikzpicture}.
  \]  
\end{example}
\begin{definition}
    \label{def:boxtimesdef}
For $m,n\in \N_0$, consider the \emph{parabolic induction functor} $\boxtimes$ given by 
\begin{align}\label{parabinddiag}
    \boxtimes\colon \DS_m\times \DS_n \to \DS_{m+n}, \qquad 
    (M,N) \mapsto \jj_{0|n}(M) \hcomp \jj_{m|0}(N) 
\end{align}
on objects and on morphisms.
\end{definition}
\begin{example} \label{tensor1hom}
For $(m,n)=(2,3)$ we have, recalling \eqref{eq:colorsequence},
\[
\left(\!\!
\begin{tikzpicture}[anchorbase,scale=.2,tinynodes]
            \draw[rd] (1,-0.01) \uur (2,1.) (3,-0.01) \uul (2,1.) to (2,2.01); 
        \node at (.75,1) {$2$};
        \node at (3.25,1) {$2$};
            \end{tikzpicture}\!\!
\right)\boxtimes 
\left(\!\!\begin{tikzpicture}[anchorbase,scale=.2,tinynodes]
        \draw[rd] (1,-0.01) \uur (2,1.) (3,-0.01) \uul (2,1.) to (2,2.01); 
        \draw[bl] (2,-0.01) to (2,1.)  \ulu (1,2.01) (2,1.)  \uru (3,2.01);
         \node at (.25,1) {$3$};
         \node at (3.75,1) {$x_2$};
        \node at (5.75,1) {$3$};
    \end{tikzpicture}\!\!
    \right)
    =
    \jj_{0|3}\left(
    \begin{tikzpicture}[anchorbase,scale=.2,tinynodes]
            \draw[rd] (1,-0.01) \uur (2,1.) (3,-0.01) \uul (2,1.) to (2,2.01); 
            \end{tikzpicture}
    \right)
    \hcomp
    \jj_{2|0}\left(
\begin{tikzpicture}[anchorbase,scale=.2,tinynodes]
        \draw[rd] (1,-0.01) \uur (2,1.) (3,-0.01) \uul (2,1.) to (2,2.01); 
        \draw[bl] (2,-0.01) to (2,1.)  \ulu (1,2.01) (2,1.)  \uru (3,2.01);
        \node at (3.75,1) {$x_2$};
    \end{tikzpicture}\!\!
    \right)
    =
        \begin{tikzpicture}[anchorbase,scale=.2,tinynodes]
            \draw[rd] (1,-0.01) \uur (2,1.) (3,-0.01) \uul (2,1.) to (2,2.01); 
            \end{tikzpicture}
            \hcomp 
            \begin{tikzpicture}[anchorbase,scale=.2,tinynodes]
        \draw[gr] (1,-0.01) \uur (2,1.) (3,-0.01) \uul (2,1.) to (2,2.01); 
        \draw[or] (2,-0.01) to (2,1.)  \ulu (1,2.01) (2,1.)  \uru (3,2.01);
        \node at (3.75,1) {$x_4$};
    \end{tikzpicture}
    =
                \begin{tikzpicture}[anchorbase,scale=.2,tinynodes]
        \draw[rd] (-2,-0.01) \uur (-1,1.) (0,-0.01) \uul (-1,1.) to (-1,2.01); 
        \draw[gr] (1,-0.01) \uur (2,1.) (3,-0.01) \uul (2,1.) to (2,2.01); 
        \draw[or] (2,-0.01) to (2,1.)  \ulu (1,2.01) (2,1.)  \uru (3,2.01);
        \node at (3.75,1) {$x_4$};
    \end{tikzpicture}\in\DS_5.
  \]  
  Note that it is useful to label the regions in the left expression, since this encodes by how many steps the colors of the second diagram have to be shifted (here by $m=2$).
\end{example}

Next we need the \emph{tensorators}, that is the analogs of the canonical isomorphisms from \eqref{eq:tensorator}. In the diagrammatic setting, we specify them as follows.

\begin{definition}
\label{pseudo7}
    Let $m,n\in \N_0$ and  $c_{\underline{i}}:= c_{i_1}\hcomp \cdots \hcomp c_{i_k}\in\DS_m$,  $c_{\underline{j}}:=c_{j_1}\hcomp\cdots \hcomp c_{j_l}\in \DS_n$.  Then we define an invertible $2$-morphism $\boxtimes_{c_{\underline{i}},c_{\underline{j}}}\colon
      (c_{\underline{i}} \boxtimes \id) \hcomp (\id \boxtimes c_{\underline{j}})\to(\id \boxtimes c_{\underline{j}})\hcomp (c_{\underline{i}} \boxtimes \id)
    $
    by the following diagram, modelled on the $(k,l)$-shuffle permutation:
    \begin{equation}\label{pseudonat}
\boxtimes_{c_{\underline{i}},c_{\underline{j}}} :=    
\begin{tikzpicture}[anchorbase,scale=.3,tinynodes]
    \draw[bla] (0,-0.01) \pu  (6,4); 
    \draw[dotted] (1,-0.01) \pu  (7,4);
    \draw[bla] (2,-0.01) \pu  (8,4); 
    \draw[bla] (6,-0.01) \pu (0,4);
    \draw[dotted] (7,-0.01) \pu  (1,4);
    \draw[bla] (8,-0.01) \pu (2,4); 
    \node at (0,-.5) {$c_{i_1}$}; 
    \node at (2,-.5) {$c_{i_k}$}; 
    \node at (5.6,-.5) {$c_{m+j_1}$}; 
    \node at (8.4,-.5) {$c_{m+j_l}$};
    \node at (-.4,4+.5) {$c_{m+j_1}$}; 
    \node at (2.4,4+.5) {$c_{m+j_l}$}; 
    \node at (6,4+.5) {$c_{i_1}$}; 
    \node at (8,4+.5) {$c_{i_k}$};
    \end{tikzpicture}.
    \end{equation}
Here we omit to display the colors, which are encoded by the generating objects
$c_{i_1}, \cdots, c_{i_k}$ and $c_{m+j_1},\dots, c_{m+j_l}$. We just observe
that the color-shift by $+m$ guarantees that there are four-valent vertex
generators for every pair of strands that have to cross.
\end{definition}

Setting $f:=c_{\underline{i}}$ and $g:=c_{\underline{j}}$, the $2$-isomorphism
$\boxtimes_{f,g}$ witnesses the commutativity of the square of $1$-morphisms:
\[
\begin{tikzcd}
m+n 
\arrow{rrr}
{\id\boxtimes g} \arrow[swap]{d}{f\boxtimes \id} &&&   m+n \arrow{d}{f\boxtimes \id} \ar[dlll,Rightarrow,shorten >=5ex,shorten <=5ex,swap, "\boxtimes_{f,g}"] \\
m+n \arrow{rrr}{\id\boxtimes g} &&&   m+n.
\end{tikzcd}
\] 

    The tensorators from \eqref{pseudonat} can be used to make the parabolic
    induction functors from \Cref{def:boxtimesdef} compatible with the monoidal
    structure $\hcomp$. To see this, we first equip $\DS_m\times \DS_n$ with the Cartesian monoidal structure, which is defined componentwise on objects as $(f_2,g_2)\hcomp (f_1,g_1):=(f_2 \hcomp f_1,g_2\hcomp g_1)$ and on morphisms by $(\alpha_2,\beta_2)\hcomp (\alpha_1,\beta_1):=(\alpha_2 \hcomp \alpha_1,\beta_2\hcomp \beta_1)$. The tensorators now provide isomorphisms:
    \[
        (f_2\boxtimes g_2)\hcomp (f_1\boxtimes g_1) \xrightarrow{\cong} (f_2 \hcomp f_1)\boxtimes (g_2\hcomp g_1)
    \]
    which are natural in all four arguments $f_1,f_2,g_1,g_2$, so that
    $\boxtimes$ becomes monoidal with respect to $\hcomp$. Moreover, $\boxtimes$
    is graded $\k$-bilinear on the level of morphism spaces, so that it factors
    through a monoidal functor from the Deligne-Kelly tensor product of monoidal
    graded $\k$-linear categories. In fact, we can give a fully diagrammatic description of this tensor product:

    \begin{definition}
        \label{def:parabolicsub}
       For $m,n\in \N_0$ we let $\DS_{m,n}$ denote the
       diagrammatic monoidal graded $\k$-linear category, whose objects are
       words in generators $c_i$ for $i\in \intset{m+n-1}\setminus\{m\}$ and whose morphisms spaces are spanned by all generating morphisms of $\DS_{m+n}$, which do not involve $c_m$, subject to all diagrammatic relations in $\DS_{m+n}$ not involving $c_m$.
    \end{definition}

    The diagrammatic category $\DS_{m,n}$ models Bott-Samelson bimodules for the parabolic subgroup $S_m\times S_n \hookrightarrow S_{m+n}$. In fact, the parabolic induction functor \eqref{parabinddiag} factors through $\DS_{m,n}$, which motivates the (ab)use of notation $\DS_m\boxtimes \DS_n:= \DS_{m,n}$ and immediately yields a functor $\DS_{m,n}\to \DS_{m+n}$.
    Furthermore, note that an analog of the Dynkin diagram automorphism $r_x$ of $\DS_{m+n}$ induces an equivalence $\DS_{m,n}\to \DS_{n,m}$.

\begin{definition}
    \label{def:swap}
For $m,n\in \N_0$ we let $\swap_{m,n}\colon \DS_{m,n}\to \DS_{n,m}$ denote the equivalence of monoidal graded $\k$-linear categories induced by acting with the $(m-1,n-1)$-shuffle permutation on the ordered set of colors $(c_1,\dots, c_{m-1}, c_{m+1}, \dots c_{m+n-1})$ and by the $(m,n)$-shuffle permutation on the ordered set of variables $(x_1,\cdots, x_{m+n})$. 
\end{definition}

Similarly to how $2$-categories with one object can be viewed as strict monoidal
categories, we now consider $3$-categories with one object. These give rise to
\emph{semistrict monoidal 2-categories} in the sense of \cite[\S4.3]{KapVoe} and \cite[Definitions 2 and
3]{MR1402727}. 
\begin{definition}\label{defsemistrict}
A \emph{semistrict
monoidal $2$-category}, also known as a \emph{Gray monoid} (that is a monoid in the category $\operatorname{Gray}$, see \cite[\S3,\S8]{Gurskibook}), is a $\mathcal{V}$-category with one object, where
$\mathcal{V}$ is the symmetric monoidal category with (strict) $2$-categories as
objects, (strict) $2$-functors as morphisms and with monoidal structure given by
the Gray tensor product (with unit object $\cal{I}$ as in \Cref{iota}). 
\end{definition}
This
definition is strict enough to explicitly spell out the data and coherence
conditions, which is done, for example, in \cite[Lemma 4]{MR1402727}. Informally speaking, it is a monoidal bicategory whose underlying bicategory is in fact a $2$-category and  roughly half of the coherence morphisms for the monoidal structure are identity $2$-morphisms.

\begin{theorem}
    \label{thm:ssmtwocat}
  The data of $\boxtimes$ from Definitions~\ref{def:boxtimesdef} and \ref{pseudo7} and
  $\iota$ from \Cref{iota} equip the locally graded $\k$-linear $2$-category $\DS$ from \Cref{def:DS} with a semistrict
  monoidal structure in the sense of \cite[Lemma~4]{MR1402727}.
\end{theorem}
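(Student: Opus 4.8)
The plan is to verify, one by one, the data and coherence conditions of a Gray monoid (semistrict monoidal $2$-category) as enumerated in \cite[Lemma~4]{MR1402727}, establishing strictness wherever it is claimed and transporting the genuinely $2$-categorical coherences from the already-constructed monoidal bicategory $\BSBim$ of \Cref{prop:Sbimmonbicat} via the equivalence \Cref{thm:equivalence}. First I would record that the underlying bicategory of $\DS$ is in fact a strict $2$-category: its only non-trivial hom-categories are the $\DS_n$, and horizontal composition there is the monoidal product $\hcomp$ of a \emph{strict} monoidal category, hence strictly associative and unital; likewise $(\N_0,+,0)$ with $0$ selected by $\iota$ is a strict monoid on the set of objects, and $\iota\colon \cal{I}\to\DS$ from \Cref{iota} is a strict $2$-functor.

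Next I would check that $\boxtimes$ from \Cref{def:boxtimesdef} organizes into a strict cubical functor. Restricting $\boxtimes$ to one variable recovers the color-shifting functors of \Cref{shifts}: by \eqref{parabinddiag} one has $f\boxtimes\id_n = \jj_{0|n}(f)$ and $\id_m\boxtimes g = \jj_{m|0}(g)$. Since color-shifting merely relabels colors and variables in a diagram and respects all defining relations of the $\DS_\bullet$, each $\jj_{a|c}$ is a strict graded $2$-functor; moreover $\jj_{0|0}=\id$, and $\jj_{a'|c'}\circ\jj_{a|c}$ is color-shifting by $a+a'$ on the left and $c+c'$ on the right, with the compatible effect on variables. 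From this, strict functoriality of $\boxtimes$ in each variable, strict associativity $(f\boxtimes g)\boxtimes h = f\boxtimes(g\boxtimes h)$ on all cells, and strict unitality $\id_0\boxtimes f = f = f\boxtimes\id_0$ follow by direct inspection of diagrams; in particular the unitors and ``half'' of the associators for the monoidal structure are forced to be identity $2$-morphisms.

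The remaining data are the tensorators $\boxtimes_{c_{\underline{i}},c_{\underline{j}}}$ of \Cref{pseudo7}. Since the $1$-morphisms of $\DS$ are exactly the words $c_{\underline{i}}$ in the generating objects (together with $\one$), these $2$-isomorphisms are defined for every composable pair. I would then verify: (i) invertibility, with inverse the mirrored shuffle diagram, using the distant-color Reidemeister~$2$ move in \eqref{Reidem2}; (ii) the normalization $\boxtimes_{\id,g}=\id=\boxtimes_{f,\id}$, immediate since an empty shuffle is an identity diagram; (iii) the factorization identities of the type \eqref{slideboxtimescomp}, expressing compatibility of $\boxtimes_{-,-}$ with $\hcomp$ in each variable, which hold by planar isotopy because a shuffle of a concatenated block of strands decomposes into a composite of smaller shuffles; (iv) the naturality equations of the type \eqref{slideboxtimes} with respect to $2$-morphisms; and (v) the coherence relating the three tensorators attached to a triple $f\boxtimes g\boxtimes h$ (the axiom making $\boxtimes$ strictly associative \emph{as} a cubical functor). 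For (iv)--(v) I would invoke \Cref{thm:equivalence}: it is an equivalence (in particular faithful) sending $\boxtimes$ to $\boxtimes$ and each distant crossing --- hence, via \eqref{slideboxtimescomp}, each shuffle tensorator --- to the corresponding interchange isomorphism \eqref{BiBj} of Bott--Samelson bimodules, so (iv) and (v) in $\DS$ descend from the interchange relations already verified for $\BSBim$ and from the coherence of the monoidal bicategory $\BSBim$. Alternatively, (iv) admits a direct diagrammatic proof: decomposing $\alpha$ and $\beta$ into generating $2$-morphisms and using (iii) together with the pasting calculus, it reduces to sliding a single generator of $\DS_m$ or $\DS_n$ past the shuffle crossings; since $\DS_m$- and $\DS_n$-colors differ by at least $m\ge 2$ and the variable sets $\{x_1,\dots,x_m\}$ and $\{x_{m+1},\dots,x_{m+n}\}$ are disjoint and untouched by the shuffle, the requisite moves are precisely the distant-color compatibilities \eqref{eq:comp} and their rotations, the type $A_1\times A_2$ and $A_1\times A_1\times A_1$ parabolic relations \eqref{eq:para}, and polynomial forcing \eqref{eq:polyforce}, together with isotopy; (v) is then the type $A_1\times A_1\times A_1$ relation. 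The unit axioms involving $\iota$ are trivial because $\DS_0$ has only the identity $1$-morphism and $\jj_{0|0}=\id$. Assembling (i)--(v) with the strictness established above yields the full list of data and coherence conditions of \cite[Lemma~4]{MR1402727}.

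I expect item (iv), together with the closely related (v), to be the main obstacle: the coherent $2$-functoriality of the shuffle tensorators with respect to \emph{all} $2$-morphisms. Although conceptually transparent --- it says that shuffling colors is compatible with every generating move of the calculus --- it is exactly where the content of the parabolic relations \eqref{eq:para} enters, and a self-contained verification would have to run through all generating $2$-morphisms of \Cref{Def:Dn}. Routing these equations through \Cref{thm:equivalence} from the bimodule side, where \Cref{prop:Sbimmonbicat} already guarantees the monoidal-bicategory coherence, bypasses most of this bookkeeping.
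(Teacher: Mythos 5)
Your proposal is correct and follows essentially the same route as the paper: both verify the data list (1)--(7) and properties (i)--(viii) of \cite[Lemma~4]{MR1402727}, using the color-shifting functors $\jj_{a|c}$ for strict functoriality and associativity in each variable, the shuffle diagrams \eqref{pseudonat} as tensorators, and the distant-color compatibility and parabolic relations (equivalently, \eqref{slideboxtimes} and \eqref{slideboxtimescomp} transported through \Cref{thm:equivalence}) for the tensorator naturality and factorization axioms. The only quibble is your phrase that the colors of the two factors ``differ by at least $m\ge 2$'': the relevant gap in color indices is at least $2$ (the index $m$ is skipped), which is all that is needed for the distant-color relations to apply.
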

\begin{proof}
We verify the lists (1)-(7) of data and (i)-(viii) of properties from \cite[Lemma~4]{MR1402727}. The object in (1) (picked out by $\iota$) is just $0\in\N_0$, and
for $m,n\in\N_0$ we set $m\otimes n:=m\boxtimes n=m+n$ which gives (2). Given
diagrammatic $1$-morphisms $f\in\End_\DS(m)$ and $g\in\End_\DS(n)$ for
$m,n\in\N_0$, we can take $\jj_{0|n}(f)$ and $\jj_{m|0}(g)$ as the $1$-morphisms
required for (3) and (4). Similarly for the $2$-morphisms required by (5) and (6).
As $2$-isomorphisms for (7) we use $\boxtimes_{f,g}$ from \Cref{pseudo7}.
Thus we have all the required data and it remains to check the properties. 

Property (i) requires that tensoring with an object $m\in\N_0$ from the left (or
with $n\in\N_0$ on the right) induces a $2$-functor. Indeed it is given by
$n\mapsto m+n$ (respectively $m\mapsto m+n$) on objects and by the
color-shifting functor $\jj_{m|0}\colon \DS_n\to \DS_{m+n}$ (respectively
$\jj_{0|n}\colon \DS_m\to \DS_{m+n}$) on morphism categories. The property (ii)
is obvious. 

For (iii) we clearly have associativity $(m+n)+p=m+(n+p)$ on objects. The remaining properties follow from the following identities of color-shifting functors:
\begin{align*}
\jj_{m|0}\circ \jj_{n|0} &= \jj_{m+n,0} &\DS_p &\to \DS_{m+n+p}& h &\mapsto \id_{m+n} \boxtimes h\\  
\jj_{m|0}\circ \jj_{0|p} &= \jj_{0|p} \circ \jj_{m,0}  &\DS_n &\to \DS_{m+n+p}& g &\mapsto \id_m\boxtimes g \boxtimes \id_{p}\\ 
\jj_{0|n+p} &= \jj_{0|p} \circ \jj_{0,n} &\DS_m &\to \DS_{m+n+p}& f &\mapsto f \boxtimes \id_{n+p}.
\end{align*}
(iv) is obvious by the definition of the functors from \Cref{shifts}. (v) is
obvious, since $\jj_{0|n}(\id_m)=\id_{n+m}=\jj_{m|0}(\id_n)$. (vi) and (vii) are
equalities for the $2$-morphisms $\boxtimes_{f,g}$ which are equivalent to
\eqref{slideboxtimes} via \Cref{thm:equivalence}. Diagrammatically, they look
like
    \begin{equation*}
\begin{tikzpicture}[anchorbase,scale=.3,tinynodes]
    \draw[bla] (0,-2) \pu(0,-0.01) \pu  (6,4); 
    \draw[dotted] (1,-2) \pu(1,-0.01) \pu  (7,4);
    \draw[bla] (2,-2) \pu (2,-0.01) \pu  (8,4); 
    \draw[bla] (6,-2) \pu (6,-0.01) \pu (0,4);
    \draw[dotted] (7,-2) \pu (7,-0.01) \pu  (1,4);
    \draw[bla] (8,-2) \pu (8,-0.01) \pu (2,4); 
    \draw[fill=white] (5.8,-1.75) rectangle (8.2,-.25);
    \node at (7,-1) {$\beta$};
    \end{tikzpicture}
    \;\;=\;\;
    \begin{tikzpicture}[anchorbase,scale=-.3,tinynodes]
    \draw[bla] (0,-2) \pu(0,-0.01) \pu  (6,4); 
    \draw[dotted] (1,-2) \pu(1,-0.01) \pu  (7,4);
    \draw[bla] (2,-2) \pu (2,-0.01) \pu  (8,4); 
    \draw[bla] (6,-2) \pu (6,-0.01) \pu (0,4);
    \draw[dotted] (7,-2) \pu (7,-0.01) \pu  (1,4);
    \draw[bla] (8,-2) \pu (8,-0.01) \pu (2,4); 
    \draw[fill=white] (5.8,-1.75) rectangle (8.2,-.25);
    \node at (7,-1) {$\beta$};
    \end{tikzpicture}, \quad
    \begin{tikzpicture}[anchorbase,xscale=-.3,yscale=.3,tinynodes]
    \draw[bla] (0,-2) \pu(0,-0.01) \pu  (6,4); 
    \draw[dotted] (1,-2) \pu(1,-0.01) \pu  (7,4);
    \draw[bla] (2,-2) \pu (2,-0.01) \pu  (8,4); 
    \draw[bla] (6,-2) \pu (6,-0.01) \pu (0,4);
    \draw[dotted] (7,-2) \pu (7,-0.01) \pu  (1,4);
    \draw[bla] (8,-2) \pu (8,-0.01) \pu (2,4); 
    \draw[fill=white] (5.8,-1.75) rectangle (8.2,-.25);
    \node at (7,-1) {$\alpha$};
    \end{tikzpicture}
    \;\;=\;\;
    \begin{tikzpicture}[anchorbase,xscale=.3,yscale=-.3,tinynodes]
    \draw[bla] (0,-2) \pu(0,-0.01) \pu  (6,4); 
    \draw[dotted] (1,-2) \pu(1,-0.01) \pu  (7,4);
    \draw[bla] (2,-2) \pu (2,-0.01) \pu  (8,4); 
    \draw[bla] (6,-2) \pu (6,-0.01) \pu (0,4);
    \draw[dotted] (7,-2) \pu (7,-0.01) \pu  (1,4);
    \draw[bla] (8,-2) \pu (8,-0.01) \pu (2,4); 
    \draw[fill=white] (5.8,-1.75) rectangle (8.2,-.25);
    \node at (7,-1) {$\alpha$};
    \end{tikzpicture}.
    \end{equation*}
Diagrammatically, these relations hold in fact  for any $2$-morphisms $\alpha$, $\beta$,  because the last two compatibility relations \eqref{eq:comp} and the first two parabolic relations \eqref{eq:para} guarantee that (linear combinations of) diagrams can slide through and cross through any bundle of strands,  provided the colors involved in the respective two bundles of strands are from sets $C_1$ and $C_2$ which are all mutually distant.

Finally, (viii) is via
\Cref{thm:equivalence} equivalent to \eqref{slideboxtimescomp}; diagrammically we have
\begin{equation*}
\begin{tikzpicture}[anchorbase,scale=.2,tinynodes]
    \draw[bla] (0,-0.01) \pu  (6,4) \pu (12,8); 
    \draw[dotted] (1,-0.01) \pu  (7,4) \pu (13,8);
    \draw[bla] (2,-0.01) \pu  (8,4) \pu (14,8); 
    \draw[bla] (6,-0.01) \pu (0,4) \pu (0,8);
    \draw[dotted] (7,-0.01) \pu  (1,4) \pu (1,8);
    \draw[bla] (8,-0.01) \pu (2,4) \pu (2,8); 
    \draw[bla] (12,-0.01) \pu (12,4) \pu (6,8);
    \draw[dotted] (13,-0.01) \pu (13,4) \pu  (7,8);
    \draw[bla] (14,-0.01)\pu (14,4) \pu (8,8); 
    \end{tikzpicture}
    \;\;=\;\;
\begin{tikzpicture}[anchorbase,scale=.2,tinynodes]
    \draw[bla] (0,-0.01) \pu (12,8); 
    \draw[dotted] (1,-0.01) \pu  (13,8);
    \draw[bla] (2,-0.01) \pu (14,8); 
    \draw[bla] (6,-0.01) \pu (0,8);
    \draw[dotted] (7,-0.01) \pu (1,8);
    \draw[bla] (8,-0.01) \pu (2,8); 
    \draw[bla] (12,-0.01) \pu (6,8);
    \draw[dotted] (13,-0.01) \pu (7,8);
    \draw[bla] (14,-0.01) \pu (8,8); 
    \end{tikzpicture}, \quad
    \begin{tikzpicture}[anchorbase,xscale=.2,yscale=-.2,tinynodes]
    \draw[bla] (0,-0.01) \pu  (6,4) \pu (12,8); 
    \draw[dotted] (1,-0.01) \pu  (7,4) \pu (13,8);
    \draw[bla] (2,-0.01) \pu  (8,4) \pu (14,8); 
    \draw[bla] (6,-0.01) \pu (0,4) \pu (0,8);
    \draw[dotted] (7,-0.01) \pu  (1,4) \pu (1,8);
    \draw[bla] (8,-0.01) \pu (2,4) \pu (2,8); 
    \draw[bla] (12,-0.01) \pu (12,4) \pu (6,8);
    \draw[dotted] (13,-0.01) \pu (13,4) \pu  (7,8);
    \draw[bla] (14,-0.01)\pu (14,4) \pu (8,8); 
    \end{tikzpicture}
    \;\;=\;\;
\begin{tikzpicture}[anchorbase,xscale=.2,yscale=-.2,tinynodes]
    \draw[bla] (0,-0.01) \pu (12,8); 
    \draw[dotted] (1,-0.01) \pu  (13,8);
    \draw[bla] (2,-0.01) \pu (14,8); 
    \draw[bla] (6,-0.01) \pu (0,8);
    \draw[dotted] (7,-0.01) \pu (1,8);
    \draw[bla] (8,-0.01) \pu (2,8); 
    \draw[bla] (12,-0.01) \pu (6,8);
    \draw[dotted] (13,-0.01) \pu (7,8);
    \draw[bla] (14,-0.01) \pu (8,8); 
    \end{tikzpicture}.
    \end{equation*}
This finishes the verification of the data and properties from  \cite[Lemma 4]{MR1402727}. 
\end{proof}

\begin{theorem}
    \label{thm:ssmtwocatb}
    Consider the semistrict monoidal structure on the locally graded $\k$-linear $2$-category
    $\DS$ from \Cref{thm:ssmtwocat}. By
applying the construction $\Kb$ hom-category-wise we obtain a semistrict monoidal locally graded $\k$-linear $2$-category $\Kbloc(\DS)$.     
\end{theorem}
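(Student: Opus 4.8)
The plan is to combine the Strictness Theorem for monoidal structures on hom-categories of $\k$-bicategories, \Cref{cor:moninherit}, with the semistrict monoidal structure on $\DS$ established in \Cref{thm:ssmtwocat}. The key observation is that $\Kbloc(\DS)$ is obtained from $\DS$ by a purely hom-category-wise construction, namely by replacing each morphism category $\DS_n$ with $\Kb(\DS_n)$, and such operations interact well with both the strict horizontal composition $\hcomp$ inside each $\DS_n$ and with the external tensor product $\boxtimes$, since the latter is built diagrammatically out of color-shifting functors and composition with tensorators.

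The first step is to apply \Cref{cor:moninherit} (in its ``locally $\k$-linear $2$-categories'' form) to each $\DS_n$, equipped with its strict monoidal structure $\hcomp$ from \Cref{Def:Dn}: this shows that $\Kb(\DS_n)$ inherits a strict monoidal structure, and that the embedding $\DS_n\hookrightarrow \Kb(\DS_n)$ is (strictly) monoidal. Assembling over all $n$, the endomorphism categories $\Kb(\DS_n)$ become the hom-categories of a locally graded $\k$-linear $2$-category $\Kbloc(\DS)$ with strictly associative horizontal composition, extending $\DS$. The second step is to extend the semistrict monoidal $2$-functor structure: the color-shifting functors $\jj_{a|c}\colon \DS_b\to\DS_{a+b+c}$ from \Cref{shifts} are monoidal $\k$-linear functors of strict monoidal categories, hence by the functoriality part of \Cref{thm:moninherit}/\Cref{cor:moninherit} they induce monoidal functors $\jj_{a|c}\colon \Kb(\DS_b)\to\Kb(\DS_{a+b+c})$ (taking term-wise images of complexes and totalizing, which is automatic for a monoidal functor on pretriangulated hulls). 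Consequently $\boxtimes\colon \Kb(\DS_m)\times\Kb(\DS_n)\to\Kb(\DS_{m+n})$ is defined by the same formula $(M,N)\mapsto \jj_{0|n}(M)\hcomp\jj_{m|0}(N)$ as in \Cref{def:boxtimesdef}, and the tensorators $\boxtimes_{c_{\underline i},c_{\underline j}}$ from \Cref{pseudo7}, being closed degree-zero $2$-morphisms, descend to chain homotopy classes in $\Kb(\DS_{m+n})$; one extends them to complexes by the term-wise totalization recipe of \Cref{thm:moninherit}(2), using the Koszul sign rule and the naturality relations \eqref{slideboxtimes}, \eqref{slideboxtimescomp} from the Interchange Lemma.

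The third step is to re-verify the data (1)--(7) and axioms (i)--(viii) of \cite[Lemma~4]{MR1402727} for $\Kbloc(\DS)$. Here the point is that every item was checked for $\DS$ in the proof of \Cref{thm:ssmtwocat} via identities of color-shifting functors and via the relations \eqref{slideboxtimes}, \eqref{slideboxtimescomp}: the composition identities $\jj_{m|0}\circ\jj_{n|0}=\jj_{m+n|0}$ etc.\ are preserved under passing to $\Kb$ because $\Kb$ is functorial; the $2$-morphism axioms (vi)--(viii) now hold up to homotopy for arbitrary $2$-morphisms of complexes, which is sufficient since $2$-morphisms in $\Kbloc(\DS)$ are chain maps up to homotopy, and in fact they hold on the nose for the chosen representatives by the term-wise construction. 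The compatibility of $\boxtimes$ with direct sums and shifts (needed for the totalization to be well-defined and for the $\Z$-action) is exactly the distributivity clause in \Cref{thm:moninherit}(1).

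The main obstacle is purely bookkeeping rather than conceptual: one must make sure that the totalization/ordering conventions underlying \Cref{cor:moninherit} (lexicographic ordering of summands, cf.\ \Cref{assforposets} and \Cref{warn:nonstrict}) are used consistently for \emph{both} the internal tensor product $\hcomp$ on complexes over $\DS_n$ \emph{and} the external $\boxtimes$, so that the semistrictness — in particular the associativity identities in axiom (iii), which on the level of complexes require $\jj_{m|0}(\jj_{n|0}(M))=\jj_{m+n|0}(M)$ as \emph{ordered} totalized complexes — is genuinely strict and not merely up to coherent isomorphism. Once the conventions are fixed as in \Cref{Strictificationproof}, this is routine, and the theorem follows; I would phrase the proof as: ``Apply \Cref{cor:moninherit} hom-category-wise to the semistrict monoidal $2$-category $\DS$ of \Cref{thm:ssmtwocat}. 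The color-shifting functors $\jj_{a|c}$ and tensorators $\boxtimes_{c_{\underline i},c_{\underline j}}$ induce the corresponding data on $\Kb(\DS_\bullet)$ by functoriality of $\Kb$ and term-wise totalization, and the axioms (i)--(viii) of \cite[Lemma~4]{MR1402727} hold since they did for $\DS$ and are preserved (up to homotopy, which is equality in $\Kb$) under these constructions.''
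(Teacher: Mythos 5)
Your proposal is correct and follows essentially the same route as the paper: apply the Strictness Theorem (\Cref{cor:moninherit}) hom-category-wise to preserve strict horizontal composition, extend the parabolic induction functors and the tensorators to complexes term-wise with the appropriate Koszul signs, and then re-verify the data and axioms of \cite[Lemma~4]{MR1402727} by inheritance from \Cref{thm:ssmtwocat}. The paper's only additional explicit detail is the precise sign convention for the extended tensorators (a sign appears exactly when both factor terms carry an odd suspension shift), which your appeal to the Koszul-sign totalization recipe of \Cref{thm:moninherit}(2) covers.
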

\begin{proof}
By \Cref{thm:moninherit} and \Cref{cor:moninherit}, we can perform the construction $\Kb$ on the level of
 hom-categories without losing the strictness of the composition of $1$-morphisms.
 We thus obtain a locally graded $\k$-linear $2$-category $\Kbloc(\DS)$.

To provide the semistrict monoidal structures, we follow closely the proof of \Cref{thm:ssmtwocat} and again turn to the list 
(1)-(7) of data and the list  (i)-(viii) of properties from \cite[Lemma 4]{MR1402727}. The
assignments (1) and (2) are taken verbatim as in the case of $\DS$ and the
verification of (i) and (ii) are identical.
Next we note that the parabolic induction functors from \Cref{def:boxtimesdef} extend from $\DS$ to 
$\Kbloc(\DS)$. They provide the data (3),
(4), (5), and (6) and it is clear that they still satisfy (iii). In place of the tensorator $2$-morphisms \eqref{pseudonat} from \Cref{pseudo7},
we now use degree zero chain maps
\[
\boxtimes_{c,c'}\colon
      (c \boxtimes \id) \hcomp (\id \boxtimes c')\to(\id \boxtimes c')\hcomp (c \boxtimes \id),
\]
whose non-zero components are given by (adjusted by Koszul signs)  $2$-morphisms of the form
\eqref{pseudonat} used previously.  Namely, the terms of the
source complex $(c \boxtimes \id) \hcomp (\id \boxtimes c')$ are in canonical
bijection to pairs of elements corresponding to the factors, i.e. one element comes from the complex $c$ and the other from $c'$.  The same statement holds for the terms in 
the target
complex $(\id \boxtimes c')\hcomp (c \boxtimes \id)$.  Together, we obtain 
a canonical bijection between the terms in the source with those in the target complex. Then take, up to signs,  the
morphisms \eqref{pseudonat} as components of the morphisms spaces between  the partners under the constructed bijection, and let  the components between
all other terms be zero. An additional sign has to be included exactly when both factor terms carry an odd $\Sigma$-shift. 
The remaining relations (iv)--(viii) are inherited from the
corresponding relations in $\DS$.  This provides the data and properties required by  \cite[Lemma~4]{MR1402727}. 
\end{proof}
 \begin{rem} In the proof of
    \Cref{thm:ssmtwocatb} we have, apart from the strictness assumptions, not used any special properties of $\DS$. Thus, the statement of  \Cref{thm:ssmtwocatb}  holds for any locally (graded) $\k$-linear $2$-category in place of $\DS$. For example, one could work with the diagrammatic model for Soergel bimodules from \Cref{rem:diagtosbim} instead.
\end{rem}

\begin{rem}
    \label{rem:semmonchb}
    We expect that the semistrict monoidal structure on $\DS$ also enables the
    construction of semistrict monoidal structures on the locally graded dg-$2$-category $\Chbloc(\DS)$ and on the locally bigraded $\k$-linear
    $2$-category $H^*(\Chbloc(\DS))$. A proof could start very similarly, but in comparison to 
    \Cref{thm:ssmtwocatb}, more care is needed when verifying (iv), since this relation concerns $2$-morphisms $\alpha$, $\beta$ which are now, in contrast to the case of $\Kbloc(\DS)$, 
    possibly of 
    nonzero homological degree. Additional Koszul signs appear which have to be dealt with.  Since we only need
    the statement for $\Kbloc(\DS)$, we refrain from a further discussion of semistrict
    monoidal structures in the other cases. 
\end{rem}

\section{Braiding on Soergel bimodules}
\label{sec:braiding}
Our goal is to explicitly describe the braiding on Soergel bimodules in a
dg-model. More precisely, we will first work in the monoidal dg-bicategory
$\Chbloc(\SBim)$  and parallel or afterwards in its diagrammatic
analog $\Chbloc(\DS)$. In this section we describe the braiding 1-morphisms
(certain chain complexes) and in the following sections the requisite (higher)
naturality data, i.e. chain maps and higher homotopies of such. 

\subsection{Rouquier complexes}
\begin{definition} For  $n\geq 2$ let $\Braidg_n$ be the braid group with
(Artin) generators  $\Ag_i$, $i\in [1;n-1]$.  Given a braid generator $\Ag_i$ or its
inverse, we will consider the following complexes\footnote{Here and in the following, $\uwave{B}$ indicates that $B$ is concentrated in degree zero of the complex} in $\Chb(\SBim_n)$:
\begin{equation}
    \label{eqn:Rouq-gen-alg}
    \Rouq(\Ag_i) 
    := \left( 0 \xrightarrow{} \uwave{B_i} \xrightarrow{\epsilon} R\langle-1\rangle \xrightarrow{} 0 \right)\, , \quad
    \Rouq(\Ag_i^{-1}) 
    := \left( 0 \xrightarrow{} R\langle 1\rangle \xrightarrow{\eta} \uwave{B_i} \xrightarrow{} 0 \right).
\end{equation}
Here $\eta$ and $\epsilon$ are the counit and unit maps from \Cref{BSFrob}.
Explicitly, $\eta$ is induced by the multiplication map $B_i=R \otimes_{R^{s_i}}
R \langle -1 \rangle \to  R\langle-1\rangle$ and $\epsilon$ is the bimodule map
determined by $1 \mapsto x_i\otimes 1 - 1\otimes x_{i+1}$. Their diagrammatic
analogs in $\Chb(\DS_n)$ are given by $\begin{tikzpicture}[anchorbase,scale=-.2]
\draw[bl] (0,-0.01) \pu (0,-1.01); \fill[bl] (0,-1) circle (2.5mm); 
            \end{tikzpicture}
            $ and $
           \begin{tikzpicture}[anchorbase,scale=.2]
            \draw[bl] (0,-0.01) \pu (0,-1.01);
            \fill[bl] (0,-1) circle (2.5mm); 
            \end{tikzpicture}$. An expression
$\ub=\Ag_{i_1}^{\epsilon_1}\cdots\Ag_{i_r}^{\epsilon_r}$ with
$\epsilon_j\in\{\pm\}$  is called a \emph{braid word} with corresponding
element $\beta\in \Braidg_n$. The word is \emph{positive} if $\epsilon_j=1$ for
$j\in[1;r]$. 

Given  $\ub$, define
\begin{equation*}
    \label{eqn:Rouq-alg}
    \Rouq(\ub) 
    := 
    \Rouq(\Ag_{i_1}^{\epsilon_1}) \hcomp \cdots \hcomp \Rouq(\Ag_{i_r}^{\epsilon_r}),
\end{equation*}
where we make use of the horizontal composition\footnote{This agrees also with the derived tensor product, since Soergel bimodules are free from either side.} $\hcomp=\otimes_R$ in
$\Chb(\SBim_n)$. By convention, the empty braid word gives $\Rouq(\emptyset)=R$. 
\end{definition}
The complexes  $ \Rouq(\ub) $ are called \emph{Rouquier complexes}, since they
were first thoroughly studied by Rouquier who proved in particular in
\cite{0409593} that,  up to canonical homotopy equivalence, these complexes are
independent of the chosen braid word representing $\beta$. More precisely the
following holds:

\begin{thm}[Rouquier canonicity]
    \label{thm:Rouquier-canonicity} Let $n\in \N_0$ and $\ub_1$ and $\ub_2$ be
    braid words representing the same braid $\beta\in \Braidg_n$, then in
    $\Chb(\SBim_n)$, and equivalently in $\Chb(\DS_n)$, there exist homotopy
    equivalences
    \[\psi_{\ub_1,\,\ub_2}\colon \Rouq(\ub_1) \to \Rouq(\ub_2),\] which form a
transitive system, i.e. if $\ub_3$ is a third braid word representing the same
braid, then we have
\[\psi_{\ub_2,\,\ub_3}\vcomp \psi_{\ub_1,\,\ub_2} \htc \psi_{\ub_1,\,\ub_3}.\]
Moreover, if $\ub_1'$, $\ub_2'$ is another pair of braid words representing a
(different) braid then 
\begin{equation}
\label{eqn:Rouquier-can-tensor}
  \psi_{\ub_1\ub_1',\ub_2\ub_2'} \htc \psi_{\ub_1,\ub_2}\hcomp \psi_{\ub_1',\ub_2'}. 
\end{equation}
\end{thm}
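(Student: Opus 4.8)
The plan is to follow the classical strategy of Rouquier \cite{0409593}, refined by the braid-cobordism functoriality of Elias--Krasner \cite{MR2721032}: first produce the homotopy equivalences $\psi_{\ub_1,\ub_2}$ by decomposing an equality of braid words into elementary moves, then isolate a rigidity statement for the endomorphism dg-algebras of Rouquier complexes, and finally deduce transitivity and tensor-compatibility from that rigidity. Throughout it suffices to argue in $\Chb(\SBim_n)$: by \Cref{thm:equivalence} the diagrammatic category $\DS_n$ is equivalent to $\overline{\BSbim}^{\gr}_n$, and by \Cref{thm:moninherit} (and the remark following it) this equivalence transports through the formation of $\Chb$ and of homotopy categories, carrying $\eta,\epsilon$ to the displayed diagrams; so the statement for $\Chb(\DS_n)$ follows formally from the one for $\Chb(\SBim_n)$.

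For \emph{existence}, I would use that any two braid words $\ub_1,\ub_2$ representing the same $\beta\in\Braidg_n$ are connected by a finite sequence of applications of the defining relations of $\Braidg_n$ --- far commutation $\Ag_i\Ag_j=\Ag_j\Ag_i$ for $|i-j|>1$, the braid relation $\Ag_i\Ag_{i+1}\Ag_i=\Ag_{i+1}\Ag_i\Ag_{i+1}$, and the cancellations $\Ag_i\Ag_i^{-1}=\Ag_i^{-1}\Ag_i=\emptyset$ --- each performed inside a longer word, i.e.\ horizontally composed on both sides with identity complexes via the strict $\hcomp$. For each elementary move one fixes a preferred homotopy equivalence of the associated short tensor products of Rouquier complexes: far commutation gives an honest isomorphism $\Rouq(\Ag_i)\hcomp\Rouq(\Ag_j)\cong\Rouq(\Ag_j)\hcomp\Rouq(\Ag_i)$ built from the interchange isomorphism for distant colors, cf.\ \eqref{BiBj}; the braid relation is Rouquier's computation that both triple products are homotopy equivalent to a common minimal complex by explicit Gaussian elimination; and the cancellations follow from the Frobenius relations on $B_i$ from \Cref{BSFrob}, which give $B_i\hcomp B_i\cong B_i\langle 1\rangle\oplus B_i\langle-1\rangle$ and hence a contraction of the two-term complex, so $\Rouq(\Ag_i)\hcomp\Rouq(\Ag_i^{-1})\simeq R$ and likewise for the other order. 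Composing the elementary equivalences along a chosen sequence of moves yields a homotopy equivalence $\psi_{\ub_1,\ub_2}\colon\Rouq(\ub_1)\to\Rouq(\ub_2)$. Applying this with $\ub_2=\emptyset$ to the words $\ub\,\ub^{-1}$ and $\ub^{-1}\ub$, where $\ub^{-1}$ denotes the reverse word with every generator inverted, shows in particular that $\Rouq(\ub)$ is invertible in $\Kb(\SBim_n)$.

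The key intermediate result I would prove is a \emph{rigidity lemma}: for every word $\ub$ representing $\beta$, $\End^{\bullet\bullet}_{\Kb(\SBim_n)}(\Rouq(\ub))\cong R_n$ as a bigraded $\k$-algebra, concentrated in cohomological degree zero and nonnegative internal degree. Indeed, invertibility makes $\Rouq(\ub)\hcomp(-)$ an autoequivalence of $\Kb(\SBim_n)$, so $\End^{\bullet\bullet}(\Rouq(\ub))\cong\End^{\bullet\bullet}(\Rouq(\ub^{-1})\hcomp\Rouq(\ub))\cong\End^{\bullet\bullet}(R)$, and morphisms in $\Kb(\SBim_n)$ between the one-term complex $R$ and its internal shifts are just the degree-preserving bimodule endomorphisms of $R$, i.e.\ $R_n$ acting centrally. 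In particular the bidegree-$(0,0)$ part is $\k\cdot\id$. Combining this with the homotopy equivalences from the existence step to identify $\Hom$-complexes with endomorphism complexes, it follows that whenever $\ub_1,\ub_2$ represent the same braid, any homotopy equivalence $\Rouq(\ub_1)\to\Rouq(\ub_2)$ is unique up to a nonzero scalar.

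Finally, for \emph{transitivity} I would observe that $\psi_{\ub_2,\ub_3}\vcomp\psi_{\ub_1,\ub_2}$ and $\psi_{\ub_1,\ub_3}$ are both homotopy equivalences $\Rouq(\ub_1)\to\Rouq(\ub_3)$, hence agree up to a nonzero scalar by the rigidity lemma; that this scalar equals $1$ is exactly the assertion that the preferred elementary equivalences compose coherently across the ``relations among relations'' of the braid group (disjoint moves commuting, the Zamolodchikov and higher-braid identities), which is the content of Elias--Krasner functoriality \cite{MR2721032}. The tensor-compatibility \eqref{eqn:Rouquier-can-tensor} follows by the same two-step reasoning: $\psi_{\ub_1\ub_1',\ub_2\ub_2'}$ and $\psi_{\ub_1,\ub_2}\hcomp\psi_{\ub_1',\ub_2'}$ are homotopy equivalences $\Rouq(\ub_1)\hcomp\Rouq(\ub_1')\to\Rouq(\ub_2)\hcomp\Rouq(\ub_2')$, so agree up to a scalar by rigidity, and a sequence of moves from $\ub_1\ub_1'$ to $\ub_2\ub_2'$ that first rewrites the left factor (tensored on the right with $\Rouq(\ub_1')$) and then the right factor (tensored on the left with $\Rouq(\ub_2)$) realizes $\psi_{\ub_1,\ub_2}\hcomp\psi_{\ub_1',\ub_2'}$ up to homotopy by strict functoriality of $\hcomp$, forcing the scalar to be $1$. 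I expect the main obstacle to be precisely this last coherence point --- verifying that the normalized elementary equivalences are compatible with all relations among relations so that every scalar is pinned to $1$; everything else is formal once rigidity is in hand, and the reduction to the diagrammatic setting is automatic.
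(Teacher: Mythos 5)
The paper gives no proof of this theorem --- it is stated as a rephrasing of Rouquier's results in \cite{0409593} (strengthening \cite[Proposition 2.19]{elias2017categorical}) --- and your argument is precisely the standard one underlying those references: elementary-move equivalences, the rigidity computation $\End^{0,0}_{\Kb}(\Rouq(\ub))\cong\k\cdot\id$ via invertibility, and then transitivity up to a scalar pinned to $1$. Your reasoning is correct, and you rightly identify the only genuinely delicate point --- that the elementary equivalences are independent of the chosen sequence of moves, i.e.\ compatible with the relations among relations --- which is legitimately outsourced to the Elias--Krasner functoriality \cite{MR2721032} (or handled by a normalization on the $R$-term of the complex), exactly as in the literature the paper cites.
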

This rephrasing of Rouquier's results from \cite{0409593} also slightly strengthens \cite[Proposition 2.19]{elias2017categorical}.
From now on we may abuse notation and write $\Rouq(\beta)$ instead of $\Rouq(\ub)$.

\begin{lemma}\label{lem:dualrouquier}  The negative Rouquier complex
$F(\sigma^{-1}_i)$ is the dual of the positive Rouquier complex $F(\sigma_i)$.
More generally, the functor $F$ sends mutually inverse braids to
mutually dual Rouquier complexes.
\end{lemma}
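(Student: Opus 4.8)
The plan is to prove the lemma in two stages: first the case of a single generator, then the general case by an induction on braid word length using the canonicity and tensor-compatibility of Rouquier complexes from \Cref{thm:Rouquier-canonicity}. Throughout, "dual" refers to the rigid/pivotal duality $r_{yz}$ on $\Chb(\DS_n)$ (equivalently, the usual bimodule duality $\uHom_{R}(-,R)$ up to shift on the algebraic side), which exists because the generating objects $c_i$ are self-dual by \Cref{cor:selfduality}, and the duality is a contravariant monoidal functor, so it sends $\hcomp$-products to $\hcomp$-products in the reverse order and commutes with the cone construction up to the standard sign.

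First I would treat the single-generator case. The complex $F(\sigma_i) = (\uwave{B_i} \xrightarrow{\epsilon} R\langle -1\rangle)$ has underlying terms $B_i$ in degree $0$ and $R\langle -1 \rangle$ in degree $1$. Applying the duality functor: $B_i$ is self-dual (up to the grading bookkeeping built into \eqref{eq:BSi}), $R\langle -1\rangle$ dualizes to $R\langle 1\rangle$, and the cohomological grading is reversed, so the dual of $F(\sigma_i)$ has $R\langle 1\rangle$ in degree $-1$ and $B_i$ in degree $0$; after the (harmless) overall shift fixing the degree conventions this is $(R\langle 1\rangle \xrightarrow{?} \uwave{B_i})$. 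It remains to identify the differential: the dual of $\epsilon\colon B_i \to R\langle -1\rangle$ is, by the Frobenius self-duality of $B_i$ established in \Cref{BSFrob} (and recorded diagrammatically via the caps/cups of \Cref{counitadj}), precisely the unit map $\eta\colon R\langle 1\rangle \to B_i$. Concretely, $\epsilon$ and $\eta$ are interchanged by rotating the dot diagrams $\begin{tikzpicture}[anchorbase,scale=.2]\draw[bl] (0,-0.01) \pu (0,-1.01);\fill[bl] (0,-1) circle (2.5mm);\end{tikzpicture}$ and $\begin{tikzpicture}[anchorbase,scale=-.2]\draw[bl] (0,-0.01) \pu (0,-1.01);\fill[bl] (0,-1) circle (2.5mm);\end{tikzpicture}$ by $\pi$, which is exactly the action of $r_{yz}$. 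Hence the dual of $F(\sigma_i)$ is $F(\sigma_i^{-1})$, on the nose in the diagrammatic model (and up to canonical isomorphism in $\Chb(\SBim_n)$).

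For the general case, write $\beta = \sigma_{i_1}^{\epsilon_1}\cdots \sigma_{i_r}^{\epsilon_r}$ with inverse $\beta^{-1} = \sigma_{i_r}^{-\epsilon_r}\cdots \sigma_{i_1}^{-\epsilon_1}$. Since duality is contravariant monoidal with respect to $\hcomp$, we get
\[
F(\beta)^{\vee} \;\simeq\; F(\sigma_{i_r}^{\epsilon_r})^{\vee} \hcomp \cdots \hcomp F(\sigma_{i_1}^{\epsilon_1})^{\vee} \;\simeq\; F(\sigma_{i_r}^{-\epsilon_r}) \hcomp \cdots \hcomp F(\sigma_{i_1}^{-\epsilon_1}) \;=\; F(\beta^{-1}),
\]
using the single-generator case term by term (noting the case of a negative generator follows by dualizing the identity for the positive one, since $r_{yz}$ is an involution). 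The last equality is the definition of $F$ on braid words. Finally, to upgrade "the word $\beta^{-1}$" to "the braid $\beta^{-1}$", one invokes \Cref{thm:Rouquier-canonicity}: $F(\beta^{-1})$ is well-defined up to canonical homotopy equivalence independently of the chosen word, and the compatibility \eqref{eqn:Rouquier-can-tensor} guarantees the homotopy equivalences above are coherent. The main (and only genuine) obstacle is the careful tracking of grading shifts $\langle \pm 1\rangle$, homological shifts, and Koszul signs so that the dual of $F(\sigma_i)$ matches $F(\sigma_i^{-1})$ exactly rather than merely up to an ambiguous shift; once the single-generator identification is pinned down, the rest is a formal consequence of the monoidality of duality and Rouquier canonicity.
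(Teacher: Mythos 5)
Your proposal is correct and follows essentially the same route as the paper: the paper's proof also extends the duality automorphism $r_{yz}$ from \Cref{rem:dualities} to $\Chb(\DS_n)$ (negating quantum and homological degrees), observes by inspection that $F(\sigma_i^{-1})=r_{yz}(F(\sigma_i))$, which is the dual of $F(\sigma_i)$ by \Cref{cor:selfduality}, and deduces the general case from the contravariant (op)monoidality of $r_{yz}$. Your version merely spells out the term-by-term identification of the differential and the word-reversal step that the paper leaves as "follows directly."
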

\begin{proof}
    The symmetries of the diagrammatic category $\DS_n$ from
    \Cref{rem:dualities} extend to symmetries of $\Chb(\DS_n)$ with one caveat:
    contravariant symmetries require negating both, the quantum
    grading shifts and the homological degrees of complexes. By inspection we have
    $
        F(\sigma^{-1}_i) = r_{yz}(F(\sigma_i))
    $ which is the dual of $F(\sigma_i)$ by \Cref{cor:selfduality}. The claim for general braids follows directly.
\end{proof}
\begin{definition}\label{def:invertible}
Recall that an \emph{invertible object} in a monoidal category is a dualizable
object, for which the unit and counit of the duality are isomorphisms. More
generally, a $1$-morphism in a bicategory is an \emph{(adjoint) equivalence} if
it has a (left or right) adjoint, for which the unit and counit $2$-morphisms
are isomorphisms.
\end{definition}

\begin{corollary}
\label{cor:Rouqinv}
    The Rouquier complexes $\Rouq(\beta)$ for $\beta\in \Braidg_n, n\in \N_0$, are invertible in $\Kb(\SBim_n)$  and
    thus invertible up to homotopy in $\Chb(\SBim_n)$. The same result holds
    equivalently for $\DS_n$ in place of $\SBim_n$.
\end{corollary}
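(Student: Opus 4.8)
The plan is to deduce \Cref{cor:Rouqinv} from the Rouquier canonicity result \Cref{thm:Rouquier-canonicity} together with \Cref{lem:dualrouquier} and the definitions collected in \Cref{def:invertible}. The key observation is that for any braid $\beta \in \Braidg_n$ we have $\beta \beta^{-1} = \beta^{-1}\beta = e$, the identity braid, and that $\Rouq$ is (strictly) monoidal with respect to $\hcomp$, so that there are equalities of complexes $\Rouq(\ub)\hcomp \Rouq(\ub^{-1}) = \Rouq(\ub\,\ub^{-1})$ and $\Rouq(\ub^{-1})\hcomp \Rouq(\ub) = \Rouq(\ub^{-1}\ub)$ for any braid word $\ub$ representing $\beta$ (with $\ub^{-1}$ the reversed word with inverted letters).

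First I would apply \Cref{thm:Rouquier-canonicity} to the two braid words $\ub\,\ub^{-1}$ and $\emptyset$, both of which represent the identity braid $e \in \Braidg_n$: this yields a homotopy equivalence $\psi_{\ub\ub^{-1},\,\emptyset}\colon \Rouq(\ub)\hcomp\Rouq(\ub^{-1}) \to \Rouq(\emptyset) = R$, and likewise a homotopy equivalence $\psi_{\ub^{-1}\ub,\,\emptyset}\colon \Rouq(\ub^{-1})\hcomp\Rouq(\ub) \to R$. Since $R = \Rouq(\emptyset)$ is the monoidal unit of $\Chb(\SBim_n)$, this exhibits $\Rouq(\ub^{-1})$ as a two-sided inverse of $\Rouq(\ub)$ up to homotopy, i.e.\ $\Rouq(\beta)$ becomes invertible in the homotopy category $\Kb(\SBim_n)$. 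The transitivity and tensor-compatibility clauses of \Cref{thm:Rouquier-canonicity} (equations involving $\psi_{\ub_2,\ub_3}\vcomp\psi_{\ub_1,\ub_2}\htc\psi_{\ub_1,\ub_3}$ and \eqref{eqn:Rouquier-can-tensor}) can be invoked if one wants the unit and counit of this inverse pair to be coherent, but for mere invertibility in $\Kb(\SBim_n)$ the existence of the equivalences above suffices. By \Cref{lem:dualrouquier}, $\Rouq(\beta^{-1})$ is moreover the dual object $\Rouq(\beta)^\vee$, so this invertibility upgrades to the statement that $\Rouq(\beta)$ is a dualizable object whose duality unit and counit are isomorphisms in $\Kb(\SBim_n)$ — precisely the notion of invertible object from \Cref{def:invertible}.

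The passage from $\Kb(\SBim_n)$ back to $\Chb(\SBim_n)$ is then formal: a closed degree zero morphism in a dg-category is a homotopy equivalence if and only if it descends to an isomorphism in the homotopy category, as recalled in the discussion preceding \Cref{Karconcrete}. Hence the chain maps $\psi_{\ub\ub^{-1},\emptyset}$ and $\psi_{\ub^{-1}\ub,\emptyset}$, being isomorphisms in $\Kb(\SBim_n)$, are homotopy equivalences in $\Chb(\SBim_n)$, which is the assertion that $\Rouq(\beta)$ is invertible up to homotopy there. The diagrammatic statement for $\DS_n$ in place of $\SBim_n$ follows by exactly the same argument, using that \Cref{thm:Rouquier-canonicity} and \Cref{lem:dualrouquier} are stated equivalently for $\Chb(\DS_n)$; alternatively it is transported across the equivalence $\DS_n \xrightarrow{\cong}\overline{\BSbim}^\gr_n$ of \Cref{thm:equivalence} and the resulting equivalence on homotopy categories. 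There is no real obstacle here — the only point requiring a moment's care is bookkeeping the monoidality of $\Rouq$ (so that $\Rouq(\ub)\hcomp\Rouq(\ub^{-1})$ is literally $\Rouq(\ub\ub^{-1})$ rather than merely isomorphic to it), but this is immediate from the definition $\Rouq(\ub):=\Rouq(\Ag_{i_1}^{\epsilon_1})\hcomp\cdots\hcomp\Rouq(\Ag_{i_r}^{\epsilon_r})$ and the strict associativity of $\hcomp$.
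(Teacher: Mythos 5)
Your proposal is correct and follows exactly the route the paper intends: the paper states the corollary without an explicit proof, but the surrounding text (the remark constructing $\ev_X$, $\coev_X$ from the Reidemeister-2 equivalences $\psi_{\ub,1_n}$ of \Cref{thm:Rouquier-canonicity}, combined with \Cref{lem:dualrouquier}) is precisely your argument of applying canonicity to the words $\ub\,\ub^{-1}$, $\ub^{-1}\ub$ and $\emptyset$, all representing the identity braid. The only point you gloss slightly is the standard fact that an object admitting a two-sided tensor inverse is automatically invertible in the sense of \Cref{def:invertible} (i.e.\ the duality unit and counit are themselves isomorphisms), but this is routine and consistent with the level of detail the paper itself supplies.
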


For later use, let us spell out what this means for a Rouquier complex
$X:=F(\ub)$. More generally, we describe here the data of an object $X$ in a
monoidal dg-category (with monoidal unit $\one$), which is \emph{invertible up to
homotopy}. First of all, dualizability of $X$ requires the existence of another
object $X'$ (e.g. $X'=F({\ub}^{-1})$ for 
$X=F({\ub}$)), as well as chain maps:
\begin{gather*}
    \ev_{X} \colon X \hcomp X' \to \one, \;\; \coev_{X} \colon \one \to X'\hcomp X,\;\;
    \ev_{X'} \colon X' \hcomp X \to \one, \;\;\coev_{X'} \colon \one \to X\hcomp X',
\end{gather*} 
which satisfy the snake relations \emph{up to homotopy}:
\begin{equation}\label{homotopysnakes}
  \begin{split}
    (\ev_{X} \hcomp \id_{X})\vcomp (\id_{X}\hcomp \coev_{X}) \simeq \id_{X}, 
    (\id_{X'}\hcomp \ev_{X}) \vcomp(\coev_{X} \hcomp \id_{X'}) \simeq \id_{X'},
    \\
(\ev_{X'} \hcomp \id_{X'})\vcomp(\id_{X'}\hcomp \coev_{X'}) \simeq \id_{X'}, 
    (\id_{X}\hcomp \ev_{X'}) \vcomp(\coev_{X'} \hcomp \id_{X}) \simeq \id_{X}.
  \end{split}
\end{equation}
\begin{definition}\label{def:hinvertibility}
\emph{Invertibility up to homotopy} requires additionally to \eqref{homotopysnakes}, that the pair $\ev_{X}$, $\coev_{X'}$ as well as $\ev_{X'}$,
$\coev_{X}$ form mutually inverse homotopy equivalences:
\begin{gather*}
    \ev_{X} \vcomp\coev_{X'}  \simeq \id_{\one}, \quad
    \coev_{X'} \vcomp\ev_{X}   \simeq \id_{X\hcomp X'},\\
    \ev_{X'} \vcomp\coev_{X}  \simeq \id_{\one}, \quad 
    \coev_{X} \vcomp \ev_{X'}   \simeq \id_{X'\hcomp X}. 
\end{gather*}
\end{definition}

\begin{remark}
For Rouquier complexes $X=\Rouq(\ub)$ and $X':=F({\ub}^{-1})$, the chain maps
$\ev_{X}$, $\coev_{X}$ and their analogues for $X'$ can be constructed as
composites of elementary Reidemeister 2 homotopy equivalences using \Cref{thm:Rouquier-canonicity}, i.e. from the
$\psi_{\ub, 1_n}$ connecting $\ub=\Ag_i\Ag_i^{-1}$ or $\ub=\Ag_i^{-1}\Ag_i$ and
the identity braid element $1_n\in \Braidg_n$. It is interesting to observe that in this setting the snake relations as well as the relations
\begin{equation}\label{onthenose}
    \ev_{X} \vcomp\coev_{X'}  = \id_{\one}, \quad \ev_{X'} \vcomp\coev_{X}  = \id_{\one}
\end{equation}
hold \emph{on the nose} (i.e.~not just up to homotopy).
The relations \eqref{onthenose} are generalizations of the Carter-Saito movie move number 9, see
\cite[Braid movie move 2]{MR2721032} for details.
\end{remark}

\begin{rem}
    \label{rem:Rouquierswap}
    Consider $n \geq 2$ and $R=R_n$ as above. For every $w\in S_n$ we  denote by 
    $R_{\circlearrowleft w}$ the graded $R$-bimodule which is obtained from
    the regular $R$-bimodule by twisting the right-action by $w$: i.e. $r\in R$
    acts on $R_{\circlearrowleft w}$ from the right as multiplication by $w(r)$.
    For non-trivial $w$, the $R$-bimodule $R_{\circlearrowleft w}$ is \emph{not}
    an object of $\SBim_n$. 

    However, there are short exact sequences of $R$-bimodules
    \begin{align*}
        0 \to R_{\circlearrowleft s_i}\langle 1\rangle \xrightarrow{f}  B_i \xrightarrow{\epsilon} R\langle -1 \rangle,\quad
        0 \to R \langle 1\rangle \xrightarrow{\eta} &B_i \to R_{\circlearrowleft s_i} \langle -1 \rangle
    \end{align*}
    where $f\colon 1\mapsto x_i\otimes 1 - 1\otimes x_i$.
    In the bounded derived category of graded $R$-bimodules, the swap bimodule
    $R_{\circlearrowleft s_i}$ is thus equivalent to either of the shifted
    Rouquier complexes $F(\sigma_i)\langle -1\rangle$ and $F(\sigma^{-1})\langle
    1 \rangle$. Similarly, all Rouquier complexes become derived equivalent to
    permutation bimodules. Since there is an isomorphism of bimodules $R_{\circlearrowleft w}\otimes_RR_{\circlearrowleft w'}\cong R_{\circlearrowleft ww'}$ via $f\otimes g\mapsto fw(g)$ for any $w,w'\in S_n$, the swap bimodule gives a symmetric braiding. To obtain an interesting
    (non-symmetric!) braiding, it is thus essential to work up-to-chain-homotopy,
    rather than up-to-quasi-isomorphism. Nevertheless, the comparison with 
permutation bimodules is conceptually important and motivates the grading
    shifts in the next definition.
\end{rem}

\subsection{Braiding 1-morphisms} 
The braiding $1$-morphisms in $\Chbloc(\SBim)$ and
$\Chbloc(\DS)$ are, as in \cite{liu2024braided},  given by (shifted) Rouquier complexes for certain shuffle braids:

\begin{definition}
For $m,n\geq 0$, we define the \emph{Rouquier complexes for cabled crossings}:
\begin{equation}\label{cabledcrossing}
\begin{aligned}
    \cabledcross_{m,n}&:=\Rouq( (\Ag_{n}\cdots\Ag_{1})\cdots (\Ag_{i+n-1}\cdots\Ag_{i}) \cdots (\Ag_{m+n-1}\cdots\Ag_{m}) )\langle -m n \rangle,\\
    \cabledcross'_{m,n}&:=\Rouq( (\Ag\inv_{n}\cdots\Ag\inv_{m+n-1}) \cdots (\Ag\inv_{i}\cdots\Ag\inv_{i+m-1}) \cdots (\Ag\inv_{1}\cdots\Ag\inv_{m}) )\langle m n \rangle.
\end{aligned}
\end{equation}
The underlying braids, and by extension, also the complexes $\cabledcross_{m,n}$
(resp. $\cabledcross'_{m,n}$) will be called \emph{positive} (resp. \emph{negative})
\emph{cabled crossings}. The special cabled crossings $\cabledcross_{m,1}$,
$\cabledcross'_{m,1}$, $\cabledcross_{1,n}$, and $\cabledcross'_{1,n}$ will also be
called \emph{Coxeter braids}, since they are braid lifts of Coxeter words.
\end{definition}
\begin{figure}[h]
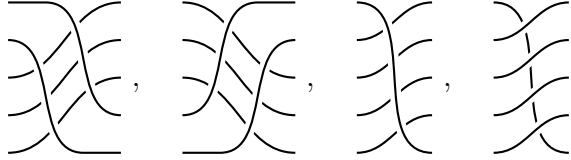

\vspace{-5mm}
    \[
\cabledcrossfig
    \]
    \caption{Cabled crossings $\cabledcross_{2,3}$ and $\cabledcross'_{3,2}$,
    Coxeter braids $\cabledcross_{1,4}$ and $\cabledcross'_{1,4}$. Braid words and their graphical representatives are read right to left.}
    \label{fig:cabledcross}
    \end{figure}
    
\begin{lemma}
    \label{lem:cabled-crossings-from-coxeter}
    Cabled crossings are assembled from Coxeter braids, i.e.  (for $m,n\geq 0$) 
\begin{align*}
\cabledcross_{m,n} 
&= 
(\cabledcross_{1,n}\boxtimes \one_{m-1}) 
\hcomp \cdots \hcomp 
(\one_{m-1-i} \boxtimes \cabledcross_{1,n}\boxtimes\one_{i})
\hcomp \cdots \hcomp
(\one_{m-1}\boxtimes \cabledcross_{1,n})\\
& \hte
(\one_{n-1}\boxtimes \cabledcross_{m,1})
\hcomp \cdots \hcomp 
(\one_{i} \boxtimes \cabledcross_{m,1}\boxtimes\one_{n-1-i})
\hcomp \cdots \hcomp
(\cabledcross_{m,1}\boxtimes \one_{n-1} ),
\\
\cabledcross'_{m,n} 
&= 
(\one_{n-1}\boxtimes \cabledcross'_{m,1})
\hcomp \cdots \hcomp 
(\one_{i} \boxtimes \cabledcross'_{m,1}\boxtimes\one_{n-1-i})
\hcomp \cdots \hcomp
(\cabledcross'_{m,1}\boxtimes \one_{n-1} )\\
& \hte
(\cabledcross'_{1,n}\boxtimes \one_{m-1}) 
\hcomp \cdots \hcomp 
(\one_{m-1-i} \boxtimes \cabledcross'_{1,n}\boxtimes\one_{i})
\hcomp \cdots \hcomp
(\one_{m-1}\boxtimes \cabledcross'_{1,n}).
\end{align*}
\end{lemma}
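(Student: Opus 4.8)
The statement is a factorization of the cabled crossing Rouquier complexes into horizontal composites ($\hcomp$) of $\boxtimes$-shifted Coxeter braid complexes. The underlying assertions are of two kinds: (a) exact equalities $=$ at the level of the underlying braids (hence exact equalities of Rouquier complexes once a braid word has been chosen), and (b) homotopy equivalences $\hte$ between the two displayed decompositions, which follow from Rouquier canonicity. I would treat these separately, reducing everything to bookkeeping of braid words in $\Braidg_{m+n}$ together with \Cref{thm:Rouquier-canonicity} and \Cref{eqn:Rouquier-can-tensor}.

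First I would set up notation: write $\cabledcross_{m,n}$ as $\Rouq$ of the positive braid word $\ub_{m,n} = (\Ag_n\cdots \Ag_1)(\Ag_{n+1}\cdots\Ag_2)\cdots(\Ag_{m+n-1}\cdots\Ag_m)$, with the grading shift $\langle -mn\rangle$ tracked additively (it is $-mn = \sum$ of the shifts $-n$ from the $m$ Coxeter factors $\cabledcross_{1,n}$, or equivalently $-m$ from each of the $n$ factors $\cabledcross_{m,1}$). Using \Cref{thm:equivalence} and \Cref{def:boxtimesdef}, each factor $\one_{m-1-i}\boxtimes\cabledcross_{1,n}\boxtimes\one_i$ in the first decomposition is, by the index-shift behaviour \eqref{eq:index-shift} of the color/variable-shifting functors, the Rouquier complex of the braid word obtained from $\Ag_n\cdots\Ag_1$ (the word for $\cabledcross_{1,n}$) by shifting all indices up by $i$, i.e.\ $\Ag_{n+i}\cdots\Ag_{1+i}$, again with shift $\langle -n\rangle$. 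Concatenating these for $i = m-1, m-2, \dots, 0$ via $\hcomp$ (which corresponds to juxtaposition of braid words and multiplication in $\Braidg_{m+n}$, and to $\hcomp$ of complexes with additively combining grading shifts) produces exactly the braid word $\ub_{m,n}$ and the shift $\langle -mn\rangle$. This establishes the first equality on the nose. The analogous computation for $\cabledcross'_{m,n}$ — reading the inverse generators and the reversed order — gives the first equality in the $\cabledcross'$ block; here one must be slightly careful that the order of $\hcomp$-factors matches the order in which the inverse generators appear in the word defining $\cabledcross'_{m,n}$, but this is the same bookkeeping as before applied to the word $(\Ag^{-1}_n\cdots\Ag^{-1}_{m+n-1})\cdots(\Ag^{-1}_1\cdots\Ag^{-1}_m)$.

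Next, for the homotopy equivalences $\hte$ between the first (Coxeter-$\cabledcross_{1,n}$) and second (Coxeter-$\cabledcross_{m,1}$) decomposition of $\cabledcross_{m,n}$: by the first step both sides are Rouquier complexes of positive braid words, so it suffices to show these two braid words represent the \emph{same} element of $\Braidg_{m+n}$, and then invoke \Cref{thm:Rouquier-canonicity} for the canonical homotopy equivalence. The two words are the two standard reduced expressions for the $(m,n)$-shuffle permutation $w_{m,n}\in S_{m+n}$ (the minimal-length element of the coset $S_{m+n}/(S_m\times S_n)$ sending block $\{1,\dots,m\}$ past block $\{m+1,\dots,m+n\}$), lifted to $\Braidg_{m+n}$. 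That these two specific words are related by a sequence of \emph{braid} relations (not merely equal as permutations) is classical — it follows because any two reduced words for the same element of a Coxeter group are related by braid moves (Matsumoto/Tits), and these moves lift to $\Braidg_{m+n}$. One inductive way to see it concretely: peel off the top Coxeter factor $\one_{m-1}\boxtimes\cabledcross_{1,n}$ from the first decomposition and the bottom factor $\cabledcross_{m,1}\boxtimes\one_{n-1}$ from the second, apply the inductive hypothesis to $\cabledcross_{m-1,n}$ (or $\cabledcross_{m,n-1}$) for the remaining factors, and reconcile the leftover generators using far-commutativity \eqref{gen2} and the $\Ag_i\Ag_{i+1}\Ag_i = \Ag_{i+1}\Ag_i\Ag_{i+1}$ braid relation; the compatibility \eqref{eqn:Rouquier-can-tensor} of $\psi$ with $\hcomp$ ensures the pieced-together homotopy equivalences cohere. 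The $\cabledcross'$ case is obtained from the $\cabledcross$ case by applying $\Rouq$ to inverse braids and using \Cref{lem:dualrouquier} (or simply by running the same Coxeter-word argument with inverse generators), noting that the grading shifts flip sign consistently.

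The main obstacle I anticipate is not conceptual but organizational: carefully matching the \emph{order} of the $\hcomp$-factors and the precise index ranges $i$ with the literal braid words in \eqref{cabledcrossing}, so that the claimed equalities are genuinely equalities (and the $\hte$'s are between the intended complexes) rather than off by a permutation of factors or a mismatched reduced expression. In other words, the substance is verifying that the two displayed reduced words are the ones I think they are — namely the two $\hcomp$-orderings of the $(m,n)$-shuffle braid coming from sweeping the $m$-block rightward one strand at a time versus sweeping the $n$-block leftward one strand at a time — and that the grading shift $\langle -mn\rangle$ (resp.\ $\langle mn\rangle$) decomposes additively as stated. Once this is pinned down, \Cref{thm:Rouquier-canonicity} does all the remaining work, and the proof is short.
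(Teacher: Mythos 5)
Your proposal is correct and follows exactly the same route as the paper: the paper's proof consists of the two sentences that the equalities are immediate from the definitions in \eqref{cabledcrossing} and that the homotopy equivalences come from applying braid relations via \Cref{thm:Rouquier-canonicity}. Your write-up simply makes explicit the braid-word bookkeeping, the additive decomposition of the grading shift, and the Matsumoto/Tits argument that the two reduced words for the $(m,n)$-shuffle are related by braid moves, all of which the paper leaves implicit.
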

\begin{proof} The equalities follow immediately from the definitions in  \eqref{cabledcrossing}. The
homotopy equivalences come from applying braid relations, see
\Cref{thm:Rouquier-canonicity}.
\end{proof}

\section{Naturality for the prebraiding}
\label{sec:natprebraid}

In this section we fix $m,n\in \N_0$ and consider the functor $\boxtimes\colon
    \DS_m\times \DS_n \to \DS_{m+n}$ alongside its opposite version
    \begin{equation}\label{eq:parabolicindop}
    \boxtimes^{\opp} \colon \DS_m\times \DS_n \to \DS_{m+n}, \quad (M,N)\mapsto  M \boxtimes^{\opp} N := N \boxtimes M.
    \end{equation}

\subsection{Slide chain maps}
\label{ssec:slides-homotopies}
Next, we provide the naturality data for the braiding. The crucial concept
hereby are the following slide chain maps which will be constructed. The development in this section is entirely parallel to \cite[Section 2.5]{liu2024braided}, but we include a self-contained version for the reader's convenience.

\begin{proposition}
    \label{prop:sliding-objects}  Given a cabled crossing,  \eqref{cabledcrossing},   there is for each pair of $1$-morphisms $Y_1\in \DS_m$, $Y_2\in \DS_n$ a  respective homotopy equivalences of chain complexes in $\Chb(\DS_{m+n})$: 
     \begin{equation}\label{defslidemaps}
     \begin{aligned}
    \slide_{Y_1,Y_2}\colon & \cabledcross_{m,n} \hcomp (Y_1 \boxtimes Y_2) \longrightarrow (Y_1 \boxtimes^\opp Y_2)\hcomp \cabledcross_{m,n} = (Y_2 \boxtimes Y_1)\hcomp \cabledcross_{m,n},\\
\slide'_{Y_1,Y_2}\colon & \cabledcross'_{m,n} \hcomp (Y_1 \boxtimes Y_2)
    \longrightarrow (Y_1 \boxtimes^\opp Y_2)\hcomp \cabledcross'_{m,n} = (Y_2
    \boxtimes Y_1)\hcomp \cabledcross'_{m,n}.
    \end{aligned}
    \end{equation}
    \end{proposition}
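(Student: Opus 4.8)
The plan is to reduce the existence of the slide chain maps to a single algebraic fact about the interaction of Rouquier complexes for shuffle braids with parabolic induction, and then to bootstrap from the Coxeter case using \Cref{lem:cabled-crossings-from-coxeter}. Concretely, for a cabled crossing $\cabledcross_{m,n}$, the underlying braid is the $(m,n)$-shuffle, which physically moves the first $m$ strands past the last $n$ strands. On the level of bimodules (via \Cref{rem:Rouquierswap}), tensoring with this Rouquier complex is derived-equivalent to twisting the right $R_{m+n}$-action by the shuffle permutation $w$. Since $w$ interchanges the block $\{1,\dots,m\}$ with $\{m+1,\dots,m+n\}$, this twist is exactly what turns $Y_1\boxtimes Y_2 = \jj_{0|n}(Y_1)\hcomp \jj_{m|0}(Y_2)$ into $Y_2\boxtimes Y_1 = \jj_{0|m}(Y_2)\hcomp \jj_{n|0}(Y_1)$ after recoloring: the shuffle permutation $w$ satisfies $w\circ \jj_{0|n}=\jj_{m|0}$ and $w\circ \jj_{m|0}=\jj_{0|m}$ as maps on the relevant subrings, which is the algebraic shadow of the geometric strand-swap. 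This suggests that $\slide_{Y_1,Y_2}$ should exist and moreover be essentially canonical.

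\textbf{Key steps.} First I would handle the base case where $Y_1$ and $Y_2$ are Bott--Samelson objects $c_{\underline{i}}\in\DS_m$ and $c_{\underline{j}}\in\DS_n$, by an explicit diagrammatic construction: the Rouquier complex $\cabledcross_{m,n}$ is a tensor product of elementary Rouquier complexes $\Rouq(\Ag_k)$, and sliding a single Bott--Samelson generator $B_k$ through a single $\Rouq(\Ag_\ell)$ with $|k-\ell|$ large is governed by the distant-color crossing generator \eqref{gen2} together with the Reidemeister 2 move \eqref{Reidem2}, while the case where the strand passes through the support of the crossing is governed by the braid relations of \Cref{thm:Rouquier-canonicity} (applied to the corresponding positive braid words). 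Assembling these termwise isomorphisms into a chain map, with Koszul signs incorporated as in the proof of \Cref{deftensorstrict}, produces $\slide_{c_{\underline{i}},c_{\underline{j}}}$; it is a homotopy equivalence because each elementary piece is (the $R2$ homotopy equivalences and braid-word homotopy equivalences $\psi$ are invertible up to homotopy). Second, I would extend from Bott--Samelson objects to arbitrary complexes $Y_1\in\Chb(\DS_m)$, $Y_2\in\Chb(\DS_n)$ by functoriality: since $\cabledcross_{m,n}\hcomp(-\boxtimes -)$ and $(-\boxtimes^\opp -)\hcomp\cabledcross_{m,n}$ are both dg-functors on $\Chb(\DS_m)\otimes\Chb(\DS_n)$ built from $\hcomp$ and $\boxtimes$, and the slide maps on generators are natural in $c_{\underline{i}}$ and $c_{\underline{j}}$ (which is where the compatibility relations \eqref{eq:comp} and parabolic relations \eqref{eq:para} — the same ones used in the proof of \Cref{thm:ssmtwocat} — enter, guaranteeing that diagrams slide through bundles of mutually distant strands), the slide map extends to the pretriangulated hull as a natural transformation of dg-functors; being a homotopy equivalence on each $c_i$, it is a homotopy equivalence on every object of $\Chb(\DS_{m+n})$. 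Third, the statement for $\slide'_{Y_1,Y_2}$ is entirely analogous, using negative Rouquier complexes, or can be deduced from the positive case by applying the contravariant duality symmetry $r_{yz}$ of \Cref{rem:dualities} together with \Cref{lem:dualrouquier}.

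\textbf{Main obstacle.} The hard part will be organizing the explicit construction in the base case into an honest chain map: the cabled crossing $\cabledcross_{m,n}$ has a complicated multi-index structure (it is an $mn$-fold iterated horizontal composite of length-one complexes, with many grading shifts), and one must track carefully how a Bott--Samelson strand $c_k$ interacts with each factor $\Rouq(\Ag_\ell)$ as $k$ is fixed but $\ell$ ranges over the whole shuffle word — some crossings are "distant" and handled by \eqref{gen2}, others require genuine braid relations. Verifying that the resulting termwise assignments commute with the differentials (which combine the internal differentials of $Y_1$, $Y_2$, and the Rouquier differentials with Koszul signs) is the computational heart; I expect to use \Cref{lem:cabled-crossings-from-coxeter} to reduce to the Coxeter cases $\cabledcross_{m,1}$ and $\cabledcross_{1,n}$, where the strand-through-cable analysis is one-dimensional and manageable, and then to bootstrap to general $(m,n)$ by composing slide maps for the Coxeter pieces and invoking the transitivity \eqref{eqn:Rouquier-can-tensor} of the Rouquier canonicity system to see that the composite is well-defined up to homotopy. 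The naturality coherences beyond mere existence (needed later in \Cref{thm:naturality}) are deferred, so at this stage it suffices to exhibit the maps and check they are homotopy equivalences.
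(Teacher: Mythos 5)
Your reduction scheme---decompose $Y_1\boxtimes Y_2$ into generating Bott--Samelson factors, use \Cref{lem:cabled-crossings-from-coxeter} to reduce the cabled crossing to Coxeter braids, use far-commutativity to reduce further to a minimal atomic configuration, and then compose the resulting maps---is exactly the paper's. But two points need correction. First, the atomic case cannot be dispatched by invoking the braid relations of \Cref{thm:Rouquier-canonicity}: the source and target of the atomic slide $\slide_{\one_1,B_1}$ (namely $\cabledcross_{1,2}\hcomp(\one_1\boxtimes B_1)$ and $(B_1\boxtimes \one_1)\hcomp\cabledcross_{1,2}$) are not Rouquier complexes of braid words, so Rouquier canonicity says nothing about maps between them. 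The paper instead writes down the two atomic slide chain maps and their homotopy inverses completely explicitly (they are the filtration-preserving pieces of the Reidemeister~3 chain maps, cf.\ \Cref{lem:atomicslide}) and verifies the chain-map property and the required null-homotopies by direct diagrammatic computation. This is irreducible computational input, not a corollary of the braid relations, and your proposal leaves it unproved.

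Second, your step of extending to arbitrary complexes $Y_i\in\Chb(\DS_\bullet)$ ``by functoriality'' is both unnecessary for this proposition (which only concerns objects $Y_1\in\DS_m$, $Y_2\in\DS_n$ of the additive diagrammatic categories) and false as stated: the slide maps are \emph{not} strictly natural in $Y_1,Y_2$. Naturality holds only up to homotopy---this is the content of \Cref{thm:naturality}, and the homotopies are genuinely nonzero already for start-dots, end-dots and one of the six-valent vertices. Moreover, the colors occurring in the cabled crossing are adjacent to, not distant from, those of $Y_1\boxtimes Y_2$, so the relations \eqref{eq:comp} and \eqref{eq:para} do not allow diagrams to slide through it. Consequently one cannot extend the slide maps to the pretriangulated hull as a natural transformation of dg-functors; the extension to complexes requires the slide homotopies and an infinite hierarchy of higher homotopies as off-diagonal components (see \Cref{exa:cone}) and occupies all of \Cref{sec:extensiontocomplexes}. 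Your treatment of the negative crossing via the duality $r_{yz}$ is a legitimate alternative to the paper's route, which instead obtains $\slide'_{Y_1,Y_2}$ by conjugating $\slide_{Y_2,Y_1}$ with inverse braiding $1$-morphisms.
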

    We will also write  $\slide_{Y}:=\slide_{Y_1,Y_2}$, $\slide'_{Y}:=\slide'_{Y_1,Y_2}$ when $Y=Y_1\boxtimes Y_2$ and $m$ and
    $n$ are clear from the context.
\begin{definition}
    The chain maps $\slide_{Y_1,Y_2}$  and $\slide'_{Y_1,Y_2}$ from
    \Cref{prop:sliding-objects} (which are explicitly given in that proof and in 
    \Cref{lem:atomicslide}) will be called \emph{slide chain maps}.
\end{definition}

We  illustrate the domain and codomain of
$\slide_{Y_1,Y_2}$ as follows (here: $m=2$, $n=3$): 
\[
    \slidefig
        \]
          
       The proof of \Cref{prop:sliding-objects}, i.e. the construction of the chain maps $\slide_{Y_1,Y_2}$ and
    $\slide'_{Y_1,Y_2}$ proceeds through a sequence of steps. We start with the  
        \emph{atomic slide chain maps} $\slide_{\one_1,B_1}$ for
$(m,n)=(1,2)$ and $\slide_{B_1,\one_1}$ for $(m,n)=(2,1)$. 
        \begin{proposition}[Atomic slide chain maps]\label{lem:atomicslide} 
      There are chain maps 
\[\slide_{\one_1,B_1}:= \hspace{-.5cm}\FigSlide,\quad \slide_{B_1,\one_1}:=\hspace{-.5cm} \FigSlidee \] 
which are invertible up to homotopy. The inverses are given by the chain maps
\[\slide^{-1}_{\one_1,B_1}:= \hspace{-.5cm}\FigSlidei, \quad
\slide^{-1}_{B_1,\one_1}:= \hspace{-.5cm}\FigSlideei \] 
\end{proposition}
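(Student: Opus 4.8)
The statement asserts the existence of explicit atomic slide chain maps $\slide_{\one_1,B_1}$ for $(m,n)=(1,2)$ and $\slide_{B_1,\one_1}$ for $(m,n)=(2,1)$, together with their homotopy inverses. The plan is to write down the relevant small complexes explicitly using the Rouquier complex formulas from \eqref{cabledcrossing} and the diagrammatic generators from \eqref{diaggenerators}, then define the slide maps componentwise and check two things: (a) they are chain maps (commute with the differentials of source and target), and (b) the displayed inverse maps are indeed two-sided homotopy inverses, i.e.\ their composites with the slide maps are homotopic to the respective identities.

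\textbf{Step 1: unwind the complexes.} For $(m,n)=(1,2)$ the cabled crossing is $\cabledcross_{1,2}=\Rouq(\Ag_2\Ag_1)\langle -2\rangle$, a length-two complex built from $B_2\hcomp B_1$, $B_2\hcomp R\langle-1\rangle$, $R\langle-1\rangle\hcomp B_1$, $R\langle-2\rangle$ with differentials assembled from the counit maps $\epsilon$ via \Cref{BSFrob}. Horizontally composing with $\one_1\boxtimes B_1 = \jj_{1|0}(B_1) = B_2$ (using \eqref{eq:index-shift}, since $m=1$) on the source side and with $B_1\boxtimes \one_1 = \jj_{0|2}(B_1)=B_1$ on the target side gives two explicit complexes over $\DS_3$ involving the colors red ($c_1$) and blue ($c_2$). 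The symmetric bookkeeping handles $(m,n)=(2,1)$, where $B_1\boxtimes\one_1=B_1$ and $\one_1\boxtimes B_1=B_2$, and the cabled crossing is $\cabledcross_{2,1}=\Rouq(\Ag_1\Ag_2)\langle -2\rangle$. I would draw out all chain objects and all components of the differential as diagrams, so that the maps $\slide$ from the figures \texttt{FigSlide}, \texttt{FigSlidee} (and their inverses from \texttt{FigSlidei}, \texttt{FigSlideei}) can be read off as matrices of diagrams.

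\textbf{Step 2: verify the chain map property.} With everything laid out diagrammatically, checking $d\circ\slide = \slide\circ d$ reduces to a finite collection of diagrammatic identities. These follow from: the Frobenius relations \eqref{eq:Drel}; the compatibility relations \eqref{eq:comp} (in particular how dots interact with six- and four-valent vertices); the polynomial forcing relations \eqref{eq:polyforce}; and isotopy. The key computational facts are the explicit images of the six-valent vertex and the crossing under the equivalence of \Cref{thm:equivalence} — e.g.\ the formula $1\otimes x_i\otimes 1\otimes 1\mapsto (x_i+x_{i+1})\otimes 1\otimes 1\otimes 1 - 1\otimes 1\otimes 1\otimes x_{i+1}$ — which encode precisely how a dot slides past a crossing, generating the "correction terms" that make the slide map a chain map rather than a naive identity.

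\textbf{Step 3: homotopy invertibility.} I would compute the two composites $\slide^{-1}_{\one_1,B_1}\vcomp \slide_{\one_1,B_1}$ and $\slide_{\one_1,B_1}\vcomp\slide^{-1}_{\one_1,B_1}$ as matrices of diagrams, and exhibit explicit degree $-1$ homotopies $h$ with $dh+hd$ equal to the difference from the identity. The natural candidates for $h$ are built from cups/caps (the (co)units of the self-duality adjunctions from \Cref{counitadj}) inserted between the red and blue strands — geometrically, these are the elementary Reidemeister~2 homotopies, consistent with \Cref{thm:Rouquier-canonicity} and the remark that $\cabledcross_{1,2}\cabledcross'_{1,2}$ contracts via such moves. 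The symmetric computation gives the result for $\slide_{B_1,\one_1}$; alternatively, one can deduce the $(2,1)$-case from the $(1,2)$-case by applying the duality automorphism $r_{yz}$ from \Cref{rem:dualities} together with \Cref{lem:dualrouquier}, which exchanges positive and negative cabled crossings and intertwines the two atomic slide maps.

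\textbf{Main obstacle.} The conceptual content is modest, but the bookkeeping is the real difficulty: one must track homological degrees, quantum grading shifts, Koszul signs in the differentials of the tensor-product (totalized) complexes, and the color-shift conventions simultaneously, across maps between complexes with on the order of half a dozen chain objects each. Getting the signs consistent between the cohomological conventions of \Cref{backgrounddg}, the totalization sign rule from \Cref{thm:moninherit}(2), and the orientation of the Frobenius (co)unit maps is where errors are most likely to creep in; the explicit figures in the statement are precisely what pins down the correct choices, so the proof is in large part a careful verification that those figures do what is claimed.
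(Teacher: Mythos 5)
Your plan coincides with the paper's proof, which is precisely such an explicit diagrammatic computation: the composites $\slide^{-1}\vcomp\slide$ are computed as matrices of diagrams and their difference from the identity is exhibited as $d(h)$ for an explicit homotopy $h$ built from cups/caps, with the $(2,1)$-case handled by symmetry. One small correction to your optional shortcut: the symmetry relating the two atomic cases is the Dynkin automorphism $r_x$ from \Cref{rem:dualities} (covariant, swaps the colors $c_1\leftrightarrow c_2$ and preserves the sign of Rouquier complexes), not $r_{yz}$, which is contravariant and exchanges positive with negative crossings and would therefore relate $\slide_{\one_1,B_1}$ to a slide map for $\cabledcross'$ rather than to $\slide_{B_1,\one_1}$.
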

Observe that the relevant chain complexes and chain maps for the two cases are
related by swapping the colours red and blue. This is a consequence of the
duality $r_x$ from \Cref{rem:dualities}, which preserves the sign of Rouquier
complexes.

\begin{proof}
The proof is given by an explicit calculation. As an example (the
remaining cases are checked analogously) we show that
$\slide^{-1}_{\one_1,B_1}\circ\; \slide_{\one_1,B_1}$ is homotopic to the
identity by computing their difference and by exhibiting a  null-homotopy, namely 
\[\slide^{-1}_{\one_1,B_1}\circ\; \slide_{\one_1,B_1} - \Id = \left(\FigSlidec\right) = d \left(\FigSlideh \right)\] 
realizing this difference.
  \end{proof}      

  \begin{remark} Readers familiar with the chain maps between Rouquier complexes
    associated to a Reidemeister 3 move will recognize the atomic slide chain maps
    as filtrations-preserving pieces of the former, see e.g. \cite[(3.3) and (3.4)]{MWW}.
\end{remark}

Since every Bott-Samelson bimodule is a
(monoidal and horizontal) composition of generating Bott-Samelson bimodules on
two strands, the homotopy equivalences needed for \Cref{prop:sliding-objects}
can be constructed from the atomic slide chain maps along the following scheme:
  \[
    \begin{tikzpicture}[anchorbase,xscale=-.5,yscale=.5]
        \draw[thick] (-1.5,2) to (1,2) \pr (4,0);
        \draw[thick] (-1.5,3) to (1,3) \pr (4,1);
        \draw[thick] (-1.5,4) to (1,4) \pr (4,2);
        \draw[wh] (1,1) \pr (3,4) to  (4,4);
        \draw[thick] (-1.5,1) to (1,1) \pr (3,4) to (4,4);
        \draw[wh] (1,0) to (2,0) \pr (4,3);
        \draw[thick] (-1.5,0) to (1,0) to (2,0) \pr (4,3);
        \draw[thick,fill=white] (0,-.2) rectangle (1,1.2);
        \draw[thick,fill=white] (-1,1.8) rectangle (0,4.2);
        \node at (.5,.5) {$Y_1$};
        \node at (-.5,3) {$Y_2$};
    \end{tikzpicture}
    \;\; \xrsquigarrow{\tiny\text{reduce to}} \;\;
    \begin{tikzpicture}[anchorbase,xscale=-.5,yscale=.5]
        \draw[thick] (-.5,2) to (1,2) \pr (4,0);
        \draw[thick] (-.5,3) to (1,3) \pr (4,1);
        \draw[thick] (-.5,4) to (1,4) \pr (4,2);
        \draw[wh] (1,1) \pr (3,4) to  (4,4);
        \draw[thick] (-.5,1) to (0,1) to (1,1) \pr (3,4) to (4,4);
        \draw[wh] (2,0) \pr (4,3);
        \draw[thick, gray] (-.5,0) to(0,0) to (1,0) to (2,0) \pr (4,3);
        \draw[thick, fill=white] (1,1.8) rectangle (0,4.2);
        \node at (.5,3) {$B_i$};
    \end{tikzpicture}
    \;\; \xrsquigarrow{\tiny\text{reduce to}} \;\;
    \begin{tikzpicture}[anchorbase,xscale=-.5,yscale=.5]
        \draw[thick] (-.5,2) to (1,2) \pr (4,0);
        \draw[thick] (-.5,3) to (1,3) \pr (4,1);
        \draw[thick, gray] (-.5,4) to (0,4) to (1,4) \pr (4,2);
        \draw[wh] (1,1) \pr (3,4) to  (4,4);
        \draw[thick] (-.5,1) to  (0,1) to (1,1) \pr (3,4) to (4,4);
        \draw[wh] (2,0) \pr (4,3);
        \draw[thick, gray] (-.5,0) to  (0,0) to (1,0) to (2,0) \pr (4,3);
        \draw[thick, fill=white] (1,1.8) rectangle (0,3.2);
        \node at (.5,2.5) {$B_i$};
    \end{tikzpicture}
\]
(Essentially the same argument would work for any monoidal bicategory generated
by a single object and one endomorphism of its tensor square.)        
\begin{proof}[Proof of \Cref{prop:sliding-objects}]
 We will focus on the version for the positive cabled crossing, since the maps  $\slide'_{Y_1,Y_2}
$ for sliding through the inverse braiding $1$-morphisms,  arise from the 
   slide maps  $\slide_{Y_2,Y_1}$ by composing with inverse braiding $1$-morphisms on both
    sides.
    
     We start now by reducing to the case when the $1$-morphism
    $Y_1\boxtimes Y_2$ is a generating one, i.e. of the form $B_i\boxtimes \one$ or $\one \boxtimes B_j$.
    Otherwise, we can decompose into generators:
    \[ Y_1\boxtimes Y_2= (Y_1\boxtimes \one) \hcomp (\one \boxtimes Y_2) =
    (B_{i_1}\boxtimes \one) \hcomp \cdots \hcomp (B_{i_a}\boxtimes \one) \hcomp (\one
    \boxtimes B_{j_1})\hcomp \cdots \hcomp (\one
    \boxtimes B_{j_b}) \] 
    and define
    \begin{align*}
        \slide_{Y_1,\one}&:= 
        (\id_{\one \boxtimes (B_{i_1}\hcomp \cdots \hcomp B_{i_{a-1}})}\hcomp \slide_{B_{i_{a}},\one}) 
        \vcomp \cdots \vcomp
        (\slide_{B_{i_{1}},\one} \hcomp \id_{(B_{i_1}\hcomp \cdots \hcomp B_{i_{a-1}}) \boxtimes \one}),
        \\
        \slide_{\one,Y_2}&:= 
        (\id_{(B_{j_1}\hcomp \cdots \hcomp B_{j_{b-1}})\boxtimes \one }\hcomp \slide_{\one, B_{j_{b}}}) 
        \vcomp \cdots \vcomp
        (\slide_{\one, B_{j_{1}}} \hcomp \id_{\one\boxtimes (B_{j_1}\hcomp \cdots \hcomp B_{j_{b-1}})}),
        \\
        \slide_{Y_1,Y_2} &:= (\id_{\one \boxtimes Y_1} \hcomp \slide_{\one,Y_2})  \vcomp (\slide_{Y_1,\one} \hcomp \id_{Y_2}).
    \end{align*}

    Now we turn to defining $\slide_{B,\one_n}$ and $\slide_{\one_m,B}$, where
    $B$ is a generating $1$-morphism. Here we place subscripts to distinguish the
    identity $1$-morphisms. We first consider the situation of $\slide_{\one_m,B}$ and reduce it to the case
    $m=1$, where the cabled crossing is a Coxeter braid. Indeed, suppose that
    $m>1$. Then we use the first equality from
    \Cref{lem:cabled-crossings-from-coxeter} to define $\slide_{\one_m,B}$ to be
    the composite
    \begin{gather}
        \nonumber
        \big((\slide_{\one_1,B}\boxtimes \id_{\one_{m-1}})
        \hcomp \cdots \hcomp 
        \id_{\one_{m-1-i} \boxtimes \cabledcross_{1,n}\boxtimes\one_{i}}
        \hcomp \cdots \hcomp
        \id_{\one_{m-1}\boxtimes \cabledcross_{1,n}}\big)
        \vcomp \cdots
        \\
        \label{eqn:slidecabledfromcoxeter}
        \vcomp \big(\id_{\cabledcross_{1,n}\boxtimes \one_{m-1}}
        \hcomp \cdots \hcomp 
        (\id_{\one_{m-1-i}} \boxtimes \slide_{\one_1,B}\boxtimes\id_{\one_{i}})
        \hcomp \cdots \hcomp
        \id_{\one_{m-1}\boxtimes \cabledcross_{1,n}}\big)
        \vcomp \cdots\\ \nonumber
        \vcomp \big(\id_{\cabledcross_{1,n}\boxtimes \one_{m-1}}
        \hcomp \cdots \hcomp 
        \id_{\one_{m-1-i} \boxtimes \cabledcross_{1,n}\boxtimes\one_{i}}
        \hcomp \cdots \hcomp
        (\id_{\one_{m-1}}\boxtimes \slide_{\one_1,B}) \big).
    \end{gather}
For the case of $\slide_{B,\one_n}$, we first choose chain maps $\phi$ and $\phi^{-1}$  realising
 the first homotopy equivalence in \Cref{lem:cabled-crossings-from-coxeter}, and then define $\slide_{B,\one_n}$ as the composition
 \begin{gather*}
    \nonumber
    \phi^{-1} \vcomp \big((\one_{n-1}\boxtimes \slide_{B,\one_1})
    \hcomp \cdots \hcomp 
    \id_{\one_{i} \boxtimes \cabledcross_{m,1}\boxtimes\one_{n-1-i}}
    \hcomp \cdots \hcomp
    \id_{\cabledcross_{m,1}\boxtimes \one_{n-1}}\big)
    \vcomp \cdots
    \\
    \vcomp \big(\id_{\one_{n-1}\boxtimes \cabledcross_{m,1}}
    \hcomp \cdots \hcomp 
    (\id_{\one_{i}} \boxtimes \slide_{B,\one_1}\boxtimes\id_{\one_{n-1-i}})
    \hcomp \cdots \hcomp
    \id_{\cabledcross_{m,1}\boxtimes \one_{n-1}}\big)
    \vcomp \cdots
    \\ \nonumber
    \vcomp \big(\id_{\one_{n-1}\boxtimes \cabledcross_{m,1}}
    \hcomp \cdots \hcomp 
    \id_{\one_{i} \boxtimes \cabledcross_{m,1}\boxtimes\one_{n-1-i}}
    \hcomp \cdots \hcomp
    (\slide_{B,\one_1}\boxtimes \one_{n-1}) \big)\vcomp \phi. 
 \end{gather*}
It only remains to construct $\slide_{\one_1,B_i}$ and $\slide_{B_j,\one_1}$ where 
$B_i$ is a generating $1$-morphism of $\DS_n$ and $B_j$ is a generating $1$-morphism of
$\DS_m$. We reduce this problem to the cases $n=2$ and $m=2$
respectively, and thus to the statement from  \Cref{lem:atomicslide}. 
The reduction uses the explicit form \eqref{defslidemaps}, namely  we define $\slide_{\one_1,B_i}$ as
the composite
\begin{align*}
    \Rouq(\Ag_{n}\cdots\Ag_{1}) \hcomp B_i =& \Rouq(\Ag_{n}\cdots\Ag_{i+1})\hcomp \Rouq(\Ag_{i}\Ag_{i-1})\hcomp \Rouq(\Ag_{i-2}\cdots\Ag_{1})\hcomp B_i \\
    \to 
    &\Rouq(\Ag_{n}\cdots\Ag_{i+1})\hcomp \Rouq(\Ag_{i}\Ag_{i-1}) \hcomp B_i \hcomp\Rouq(\Ag_{i-2}\cdots\Ag_{1}) 
   \\ \xrightarrow{\slide}
    &\Rouq(\Ag_{n}\cdots\Ag_{i+1})\hcomp B_{i-1} \hcomp \Rouq(\Ag_{i}\Ag_{i-1}) \hcomp\Rouq(\Ag_{i-2}\cdots\Ag_{1}) 
\\ \to&
 B_{i-1} \hcomp \Rouq(\Ag_{n}\cdots\Ag_{i+1})\hcomp \Rouq(\Ag_{i}\Ag_{i-1}) \hcomp\Rouq(\Ag_{i-2}\cdots\Ag_{1})\\ 
    =& B_{i-1} \hcomp \Rouq(\Ag_{n}\cdots\Ag_{1}),
\end{align*}
where the unlabelled maps are far-commutativity isomorphisms and the labelled
arrow is given by $\id \hcomp \slide_{\one_1,B_1} \hcomp \id $, i.e. it is indeed 
determined by the $n=2$ case. Analogously,   the construction of $\slide_{B_j,\one_1}$ is reduced to the case
$m=2$. 

By
construction, all slide maps in this proof are homotopy
equivalences.
\end{proof}

\begin{remark}
    The construction of the slide chain maps in \Cref{prop:sliding-objects}
    depends on a choice of homotopy equivalence in
    \Cref{lem:cabled-crossings-from-coxeter}. Different choices would produce
    homotopic slide maps. We could specify a choice once
    and for all by prescribing a specific way of rewriting braid words as
    required. However, this will not be necessary.  Instead, we will see in
    \Cref{cor:well-defined-up-to-coherent-homotopy} that the slide
    chain maps are well-defined up to coherent higher homotopy. This is all we need. 
\end{remark}

\subsection{Slide homotopies: the First Naturality Theorem}

Here we formulate the \emph{First Naturality Theorem} for the braiding:
\begin{theorem}[Naturality on Bott-Samelson bimodules up to homotopy]\hfill\\
    \label{thm:naturality}  
    Assuming $2$-morphisms $f_1 \colon Y_1 \to Y'_1$ in
    $\DS_m$ and $f_2 \colon Y_2 \to Y'_2$ in $\DS_m$ we obtain: 
    
     The diagram
    \begin{equation}
        \label{eqn:naturality2}
        \begin{tikzcd}[scale=1,column sep=2.5cm]        
            \cabledcross_{m,n} \hcomp (Y_1 \boxtimes Y_2)
            \ar[r, "\slide_{Y_1,Y_2}"]
            \ar[d, swap,"\id \hcomp (f_1\boxtimes f_2)"]
        &
        (Y_2\boxtimes Y_1)\hcomp \cabledcross_{m,n} 
            \ar[d,"(f_2\boxtimes f_1)\hcomp \id"]
            \ar[dl,swap,Rightarrow,shorten >=5ex,shorten <=5ex, "h_{f_1\boxtimes f_2}"]\\
              \cabledcross_{m,n} \hcomp Y'_1 \boxtimes Y'_2
                \ar[r, swap,"\slide_{Y'_1,Y'_2}"]
            &
            (Y'_2 \boxtimes Y'_1) \hcomp \cabledcross_{m,n} 
      \end{tikzcd}
    \end{equation}
    commutes up to homotopy; a homotopy $h_{f_1\boxtimes f_2}$ will be
    constructed in the proof.
   
    The analogous statement holds for the negative
    cabled crossings $\cabledcross_{m,n}'$, instead of $\cabledcross_{m,n}$, with their associated slide maps.
\end{theorem}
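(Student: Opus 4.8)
The plan is to reduce the naturality statement to the case of diagrammatic generators for $f_1$ and $f_2$, and then to verify it by explicit homotopy computations for those generators. First, I would reduce to the case where one of the two functors is the identity: since $\id \hcomp (f_1 \boxtimes f_2)$ factors as $(\id \hcomp (f_1 \boxtimes \id_{Y_2})) \vcomp (\id \hcomp (\id_{Y'_1} \boxtimes f_2))$ and similarly $(f_2 \boxtimes f_1)\hcomp \id$ factors compatibly through the intermediate complex, it suffices to produce homotopies $h_{f_1 \boxtimes \id}$ and $h_{\id \boxtimes f_2}$ separately and then paste them (tracking the fact that $\slide_{Y_1,Y_2}$ was itself built as $(\id \hcomp \slide_{\one,Y_2}) \vcomp (\slide_{Y_1,\one}\hcomp \id)$ in \Cref{prop:sliding-objects}). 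By the Dynkin automorphism $r_x$ from \Cref{rem:dualities}, which swaps the roles of the two sides while preserving the sign of Rouquier complexes, the two cases are equivalent, so I would treat only $h_{\id \boxtimes f_2}$ with $f_2 \colon Y_2 \to Y'_2$ a $2$-morphism in $\DS_n$.

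\textbf{Reduction to generators.} Next, since $\slide_{\one,Y_2}$ is defined in \Cref{prop:sliding-objects} by horizontal composition of atomic slide maps following a decomposition $Y_2 = B_{j_1} \hcomp \cdots \hcomp B_{j_b}$ into generating Bott--Samelson bimodules, and since every $2$-morphism in $\DS_n$ is a composite (vertical and horizontal) of the diagrammatic generators \eqref{gen1}--\eqref{gen3} together with the Frobenius structure maps and identities, I would argue that it is enough to construct the homotopy $h_{f_2}$ when $f_2$ is one of these generators (tensored on either side by identity $1$-morphisms). The usual bookkeeping applies: if $f_2 = g \vcomp f$ and homotopies exist for $f$ and $g$, then $h_{g\vcomp f}$ is obtained by the standard formula $h_g \vcomp (\slide \hcomp f) + (g \hcomp \slide) \vcomp h_f$ up to the Koszul sign conventions of $\Chbloc(\DS)$; if $f_2 = f \hcomp f'$ with $h_f, h_{f'}$ available, the horizontal composite of homotopies works because the slide maps were built compatibly with $\hcomp$. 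This reduces the theorem to a finite list of generator cases, which is carried out in \Cref{sec:slidereduction} and \Cref{sec:slidegens} — the merge, split, start-dot, end-dot, six-valent vertex, distant crossing, and polynomial-dot generators, each sliding through a Coxeter braid. For each generator, I would write down the source and target complexes, write down the slide chain maps on both paths explicitly using the atomic pieces from \Cref{lem:atomicslide}, compute the difference of the two composites term by term, and exhibit an explicit null-homotopy realizing that difference, exactly in the style of the sample computation in the proof of \Cref{lem:atomicslide} (``$\slide^{-1}\circ\slide - \Id = d(\ldots)$'').

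\textbf{The negative case and the main obstacle.} For the negative cabled crossings $\cabledcross'_{m,n}$, I would not redo the computations: by construction in \Cref{prop:sliding-objects}, $\slide'_{Y_1,Y_2}$ is obtained from $\slide_{Y_2,Y_1}$ by pre- and post-composing with the evaluation/coevaluation equivalences between $\cabledcross'_{m,n}$ and the inverse of $\cabledcross_{n,m}$ from \Cref{cor:Rouqinv} and \Cref{lem:dualrouquier}; pasting the square \eqref{eqn:naturality2} for $\cabledcross_{n,m}$ (with the roles of $Y_1$, $Y_2$ interchanged) with the naturality of those duality $2$-morphisms — which holds on the nose or up to a homotopy one can absorb — yields the homotopy $h'_{f_1 \boxtimes f_2}$ for the negative case. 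The main obstacle I anticipate is the sheer combinatorial bulk of the generator computations in \Cref{sec:slidegens}, together with the subtlety flagged in the introduction that \emph{higher} homotopies already appear for the height-move relations among generators: establishing that the generator-level homotopies $h_{f_2}$ can be chosen so as to glue consistently along the defining relations of $\DS_n$ (rather than merely existing generator by generator) is genuinely where care is needed. However, for the purposes of the present First Naturality Theorem one only needs \emph{existence} of some homotopy $h_{f_1\boxtimes f_2}$ for each composite $2$-morphism, and this follows from the generator cases plus the composition formulas above, with the question of canonicity and coherence deferred to \Cref{ssec:hhomotopies}. I expect the sign conventions — keeping track of the $(-1)^{b_j}$ in \eqref{eq:diff-susp-env} and the Koszul signs in the tensorators of \Cref{thm:ssmtwocatb} — to be the most error-prone routine part.
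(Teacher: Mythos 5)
Your proposal is correct and follows essentially the same route as the paper: reduce via vertical/horizontal composition formulas for homotopies (the paper's \Cref{lem:h-vcomp} and \Cref{lem:h-hcomp}) and via the decomposition of cabled crossings into Coxeter braids on a minimal number of strands (\Cref{lem:h-coxeter}, \Cref{lem:h-smaller-coxeter}) to the case of a single diagrammatic generator, then exhibit explicit null-homotopies generator by generator, with the negative case deduced from dualities. The only cosmetic difference is that you invoke the Dynkin automorphism $r_x$ to pass between $h_{f_1\boxtimes\id}$ and $h_{\id\boxtimes f_2}$, whereas the paper simply runs the analogous reduction on the other side.
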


We prove \Cref{thm:naturality} in the next subsection by explicitly constructing homotopies
\[h_{f_1\boxtimes f_2}\in \Hom^{-1}_{\Chb(\DS_{m+n})}(\cabledcross_{m,n} \hcomp
(Y_1\boxtimes Y_2),(Y'_2\boxtimes Y'_1)\hcomp \cabledcross_{m,n})\] such that $d(h_{f_1\boxtimes f_2})
 =  ((f_2\boxtimes f_1)\hcomp \id) \vcomp \slide_{Y_1,Y_2} -
 \slide_{Y'_1,Y'_2}\vcomp (\id \hcomp (f_1\boxtimes f_2))$.  
 
 The construction
 has two parts. First, we show in a sequence of reduction lemmas that the construction of homotopies $h_{f_1\boxtimes f_2}$ reduces to the case where $f_1\boxtimes f_2$ is a diagrammatic generator and $m,n \geq 1$ are minimal for the given type of generator. In the second step, we explicitly describe homotopies for these model situations.  Before we proceed, we indicate the necessity of higher homotopies.
 
 \begin{example}[Higher homotopies]
    \label{exa:hiho}
    The homotopies $h_f$ constructed in the proof of \Cref{thm:naturality}
    depend on a presentation of $f$ as a (linear combination of) $\hcomp$- and
    $\vcomp$-composites of diagrammatic generators. In particular, if $f_i$ are
    composites of diagrammatic generators with associated homotopies $h_{f_i}$
    and $\sum_i \alpha_i f_i=0$ is a relation in the diagrammatic calculus of
    $\DS$, then the corresponding linear combination of homotopies $\sum_i
    \alpha_i h_{f_i}$ need \emph{not} be zero. We encourage the reader to verify (after we have given the proof of \Cref{thm:naturality}) that 
    \[
        \begin{tikzpicture}[anchorbase,xscale=.2,yscale=.2]
            \draw[rd] (0,-0.01) \pu (0,-1.01);
            \fill[rd] (0,-1) circle (2.5mm); 
            \draw[rd] (1,-0.01) \pu (1,-2.01);
            \fill[rd] (1,-2) circle (2.5mm); 
            \end{tikzpicture}
         = \begin{tikzpicture}[anchorbase,xscale=-.2,yscale=.2]
            \draw[rd] (0,-0.01) \pu (0,-1.01);
            \fill[rd] (0,-1) circle (2.5mm); 
            \draw[rd] (1,-0.01) \pu (1,-2.01);
            \fill[rd] (1,-2) circle (2.5mm);  
            \end{tikzpicture} 
            \;\; \text{in} \; \DS \quad \text{but} \quad
        h_{\begin{tikzpicture}[anchorbase,xscale=.2,yscale=.2]
            \draw[rd] (0,-0.01) \pu (0,-1.01);
            \fill[rd] (0,-1) circle (2.5mm); 
            \draw[rd] (1,-0.01) \pu (1,-2.01);
            \fill[rd] (1,-2) circle (2.5mm); 
            \end{tikzpicture}
        }
        \neq
        h_{\begin{tikzpicture}[anchorbase,xscale=-.2,yscale=.2]
            \draw[rd] (0,-0.01) \pu (0,-1.01);
            \fill[rd] (0,-1) circle (2.5mm); 
            \draw[rd] (1,-0.01) \pu (1,-2.01);
            \fill[rd] (1,-2) circle (2.5mm);  
            \end{tikzpicture}
        }
    \]
    Although the difference of the two homotopies is not zero (on the nose), they are nullhomotopic via the \emph{higher homotopy}
    \[
    (\id_{B_1}
        \hcomp h_{\begin{tikzpicture}[anchorbase,xscale=-.2,yscale=.2]
        \draw[rd] (0,-0.01) \pu (0,-1.01);
        \fill[rd] (0,-1) circle (2.5mm); 
        \end{tikzpicture}}) \vcomp         h_{\begin{tikzpicture}[anchorbase,xscale=-.2,yscale=.2]
        \draw[rd] (0,-0.01) \pu (0,-1.01);
        \fill[rd] (0,-1) circle (2.5mm); 
        \end{tikzpicture}}.
    \]
    This is the shadow of a general phenomenon: in
    \Cref{cor:well-defined-up-to-coherent-homotopy} we will see that $h_f$ is
    well-defined and uniquely determined by $f$ up to \emph{coherent higher homotopy}.
\end{example}

\begin{remark}\label{rkprebraiding} \Cref{thm:naturality} is only a partial
    naturality result for the braiding on complexes of Soergel bimodules, since
    only objects from the additive subcategory of Bott--Samuelson bimodules are
    considered. The idea of naturality of braidings restricted to a
    subcategory was formalised in  \cite[Def.~2.4.1]{liu2024braided}  using the
    concept of \emph{prebraidings},  and it was proven  that the slide maps are
    part of such a prebraiding structure.  
    
    More precisely, assume we are given a functor $F\colon{\CS'} \to
    \CS$ of monoidal categories. Then a  \emph{prebraiding} $\beta$ on $F$
    consists of the data of isomorphisms 
    \[F(x)\otimes F(y) \xrightarrow{\beta_{x,y}} F(y) \otimes F(x)\]
    for $x,y\in \overline{\DS}$, that form a \emph{natural} transformation $\otimes\circ (F\times
    F) \Rightarrow \otimes^{\mathrm{op}}\circ (F\times F)$ and satisfy the two
    hexagon axioms of braided monoidal categories, \cite[Def.~8.1.1.]{EGNO}, in
    case $F=\Id$  and obvious analogs of these for general $F$. Intuitively, we
    study braidings on $\CS$ restricted to ${\CS'}$ along $F$  (i.e.
    braidings on $\CS$ in case $F=\id$). 
    
    In  \cite[Theorem 2.5.2]{liu2024braided}, a prebraiding was established for the
    embedding of the bicategory of Bott--Samelson bimodules into the bicategory
    of complexes of Soergel bimodules, \emph{but} viewed as a functor $\iota$
    of ordinary $1$-categories by passing to isomorphism classes of
    objects on the level of homomorphism categories. \Cref{thm:naturality}
    extends this construction by the naturality with respect to (generating)
    morphisms in the morphism categories.  As observed in \cite[Theorem~2.5.6, Corollary~
    7.4.15,  \S8]{liu2024braided}, a precise formulation of prebraidings for
    higher categories can be given in terms of centralizers \cite[Definitions~2.6.1 and ~7.4.1]{liu2024braided} of monoidal subcategory inclusions
    ${\CS'}\subset\CS$. 
    
    Whereas \cite{liu2024braided} provides a fully coherent braiding on a
    monoidal $(\infty,2)$-category version of $\Kb(\SBim)$ with the prebraiding
    on $\iota$ \emph{induced} on the very lowest level, we start now with this
    concrete prebraiding on $\iota$ and work with \emph{concrete models}.
    
    Whereas \cite{liu2024braided} works in $\infty$-categorical versions of
    centralizers, we will use in \Cref{sec:extensiontocomplexes} dg- resp.
    $A_\infty$-versions of centralizers to formulate the precise setup and
    finally establish an extension of the naturality result from
    \Cref{thm:naturality} to complexes of Soergel bimodules, see \Cref{cormain}. 
    \end{remark}

\subsection{Slide homotopies: reduction lemmas and proof of naturality theorem}
\label{sec:slidereduction}
For the following reduction lemmas, there exists analogous statements for negative cabled crossings, which we omit to
streamline the exposition, since they can directly be deduced from the dualities in
\Cref{rem:dualities}.
\begin{lemma}
    \label{lem:h-vcomp}
    Let $f_1\colon Y_1\to Y_1'$, $f_2\colon Y_2\to Y_2'$ be as in \Cref{thm:naturality} with 
    the naturality square \eqref{eqn:naturality2} being commutative up to a homotopy $h_{f_1\boxtimes f_2}$. Assume the same for $f_1'\colon Y_1'\to Y_1''$, $f_2'\colon Y_2'\to Y_2''$ with homotopy $h_{f'_1\boxtimes f'_2}$. Then the naturality square \eqref{eqn:naturality2} for $({f'_1\boxtimes f'_2})\vcomp
   ( {f_1\boxtimes f_2})$ is commutative up to the homotopy
    \[ h_{({f'_1\boxtimes f'_2}) \vcomp ({f_1\boxtimes f_2})} :=  h_{f'_1\boxtimes f'_2}\vcomp (\id \hcomp ({f_1\boxtimes f_2}))  + ((f'_2\boxtimes f'_1)\hcomp
    \id) \vcomp h_{f_1\boxtimes f_2}.\] 
\end{lemma}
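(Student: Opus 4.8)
The statement is essentially a cocycle/transitivity computation: given homotopies witnessing two commuting-up-to-homotopy squares that are "stacked" via $\vcomp$, we must produce a homotopy for the composite square. The plan is to verify directly that the proposed expression $h_{(f'_1\boxtimes f'_2)\vcomp(f_1\boxtimes f_2)}$ has the required boundary. First I would recall that the differential $d$ on a $\Hom$-complex in $\Chb(\DS_{m+n})$ satisfies the graded Leibniz rule with respect to $\vcomp$ and $\hcomp$, that closed degree-zero maps (like all the slide maps and the chain maps $f_i\boxtimes f_i$) have $d(-)=0$, and that $h_{f_1\boxtimes f_2}$, $h_{f'_1\boxtimes f'_2}$ are degree $-1$ with prescribed boundaries coming from \Cref{thm:naturality}, namely
\[ d(h_{f_1\boxtimes f_2}) = ((f_2\boxtimes f_1)\hcomp\id)\vcomp\slide_{Y_1,Y_2} - \slide_{Y'_1,Y'_2}\vcomp(\id\hcomp(f_1\boxtimes f_2)), \]
and similarly for the primed data.

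\textbf{Key steps.} I would then compute $d$ of the proposed homotopy term by term. Since $\id\hcomp(f_1\boxtimes f_2)$ and $(f'_2\boxtimes f'_1)\hcomp\id$ are closed of degree zero, the Leibniz rule gives
\[ d\big(h_{f'_1\boxtimes f'_2}\vcomp(\id\hcomp(f_1\boxtimes f_2))\big) = d(h_{f'_1\boxtimes f'_2})\vcomp(\id\hcomp(f_1\boxtimes f_2)) \]
and
\[ d\big(((f'_2\boxtimes f'_1)\hcomp\id)\vcomp h_{f_1\boxtimes f_2}\big) = ((f'_2\boxtimes f'_1)\hcomp\id)\vcomp d(h_{f_1\boxtimes f_2}). \]
Substituting the two boundary formulas and expanding, I expect a four-term expression in which the two "middle" terms
\[ \big(\slide_{Y'_1,Y'_2}\vcomp(\id\hcomp(f_1\boxtimes f_2))\big) \quad\text{versus}\quad \big(((f'_2\boxtimes f'_1)\hcomp\id)\vcomp\slide_{Y'_1,Y'_2}\vcomp(\id\hcomp(f_1\boxtimes f_2))\big) \]
cancel against each other once one uses the interchange law relating $\hcomp$ and $\vcomp$ (so that applying $f'_i\boxtimes f'_i$ after $\slide_{Y'_1,Y'_2}$ equals $\slide_{Y'_1,Y'_2}$ followed by sliding; this is precisely what it means for $h_{f'_1\boxtimes f'_2}$ to be the homotopy of the \emph{second} square). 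The two surviving terms assemble into
\[ ((f'_2\boxtimes f'_1)\hcomp\id)\vcomp((f_2\boxtimes f_1)\hcomp\id)\vcomp\slide_{Y_1,Y_2} - \slide_{Y''_1,Y''_2}\vcomp(\id\hcomp(f'_1\boxtimes f'_2))\vcomp(\id\hcomp(f_1\boxtimes f_2)), \]
which, again by functoriality of $\boxtimes$ and $\hcomp$ and the bifunctoriality of $\hcomp$ over $\vcomp$, equals exactly the boundary required for the composite naturality square of $(f'_1\boxtimes f'_2)\vcomp(f_1\boxtimes f_2)$. I would close by remarking that the degree of the proposed term is $-1$, as each summand is $\vcomp$ of a degree-$0$ closed map with a degree-$(-1)$ map.

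\textbf{Main obstacle.} The only subtlety — and the step I expect to require the most care — is bookkeeping the sign conventions in the graded Leibniz rule and the interchange law for the dg-$2$-category $\Chb(\DS_{m+n})$, since the homotopies carry odd degree and $\hcomp$ introduces Koszul signs (as flagged in \Cref{warn:nonstrict} and the discussion around \Cref{deftensorstrict}). I would be careful to check that, with the sign conventions fixed in \Cref{backgrounddg}, the term $((f'_2\boxtimes f'_1)\hcomp\id)\vcomp h_{f_1\boxtimes f_2}$ contributes with the stated $+$ sign (no extra $(-1)^{|f'|}$ factor appears because $f'_2\boxtimes f'_1$ has degree zero). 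Everything else is a formal manipulation that does not touch the diagrammatic calculus at all, so no new input beyond \Cref{thm:naturality} and the standard dg-categorical identities is needed; in particular this lemma does not yet require knowing the explicit homotopies, only their existence and boundaries.
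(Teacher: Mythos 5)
Your proposal is correct and is exactly the paper's argument: the paper's entire proof is ``straightforward when expanding $d(h_{(f'_1\boxtimes f'_2)\vcomp(f_1\boxtimes f_2)})$,'' which is precisely the Leibniz-rule computation you carry out. One small imprecision: the two middle terms that cancel are both equal to $((f'_2\boxtimes f'_1)\hcomp\id)\vcomp\slide_{Y'_1,Y'_2}\vcomp(\id\hcomp(f_1\boxtimes f_2))$ with opposite signs, so they cancel on the nose; the interchange law is only needed at the end to merge the two surviving terms into the boundary of the composite square, not for the cancellation itself.
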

\begin{proof} The proof is straightforward when  expanding $d(h_{({f'_1\boxtimes f'_2}) \vcomp ({f_1\boxtimes f_2})})$.
\end{proof}
Since any 2-morphism $f_1\boxtimes f_2$ in $\DS_{m+n}$ is a $\vcomp$-composite of
generating 2-morphisms coming from $\DS_m$ or $\DS_n$, \Cref{lem:h-vcomp} allows us to construct the desired homotopy $h_{f_1\boxtimes f_2}$ in \Cref{thm:naturality} from the homotopies of the generating 2-morphisms. The next three lemmas reduce the
complexity of constructing $h_{f_1\boxtimes f_2}$ by decomposing along $\hcomp$ and $\boxtimes$.
Focussing on the domains, the reduction proceeds schematically as follows: 

\[
    \reducefig
        \]

\begin{lemma}
    \label{lem:h-hcomp}
    Let $Y_1,Y'_1, W_1,Z_1$ be $1$-morphisms in $\DS_m$ and $f_1\colon Y_1 \to Y'_1$ a $2$-morphism. Similarly let $Y_2,Y'_2, W_2,Z_2$ be $1$-morphisms in $\DS_n$ and $f_2\colon Y_2 \to Y'_2$ a $2$-morphism. Set $Y:=Y_1\boxtimes Y_2$, $Y':=Y'_1\boxtimes Y'_2$, $W:=W_1\boxtimes W_2$ and $Z:=Z_1\boxtimes Z_2$. Suppose the naturality square
    \eqref{eqn:naturality2} for $f:=f_1\boxtimes f_2$ commutes up to the homotopy $h_{f}$. Then the naturality square for $\id_{W}\hcomp f \hcomp \id_{Z} \colon W \hcomp Y \hcomp Z
    \to W\hcomp Y' \hcomp Z$ is commutative up to the homotopy
    \[
        h_{\id_{W}\hcomp f \hcomp \id_{Z}}:=   
        (\id_{(W_2 \boxtimes W_1) \hcomp (Y_2\boxtimes Y_1)} \hcomp \slide_{Z})
        \vcomp 
        (\id_{W_2\boxtimes W_1}\hcomp h_f \hcomp \id_{Z})
        \vcomp
        (\slide_{W} \hcomp \id_{Y\hcomp Z})
    \]
\end{lemma}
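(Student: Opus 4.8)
The strategy is to verify directly that $d(h_{\id_W \hcomp f \hcomp \id_Z})$ equals the required difference of horizontal/vertical composites. First I would write down the claimed homotopy as the three-term vertical composite
\[
h_{\id_{W}\hcomp f \hcomp \id_{Z}} =
(\id_{(W_2 \boxtimes W_1) \hcomp (Y_2\boxtimes Y_1)} \hcomp \slide_{Z})
\vcomp (\id_{W_2\boxtimes W_1}\hcomp h_f \hcomp \id_{Z})
\vcomp (\slide_{W} \hcomp \id_{Y\hcomp Z}),
\]
and observe that the outer two factors $\slide_{W}$ and $\slide_{Z}$ are closed degree-zero chain maps, so that applying $d$ (a derivation satisfying the Leibniz rule with Koszul signs, and acting as zero on closed morphisms) leaves $d(h_f)$ sandwiched between them. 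Since $h_f$ is the homotopy provided by \Cref{thm:naturality}, we know $d(h_f) = ((f_2\boxtimes f_1)\hcomp \id)\vcomp \slide_{Y} - \slide_{Y'}\vcomp(\id \hcomp f)$, and substituting this in expresses $d(h_{\id_W \hcomp f \hcomp \id_Z})$ as a difference of two three-fold vertical composites.

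Next I would simplify each of the two resulting terms using the interchange law for $\hcomp$ and $\vcomp$ in the $2$-category, together with the definition of the slide chain maps for composite $1$-morphisms from the proof of \Cref{prop:sliding-objects} — specifically, the identities
\[
\slide_{W\hcomp Y\hcomp Z} = (\id \hcomp \slide_Z)\vcomp(\id\hcomp \slide_Y \hcomp \id)\vcomp(\slide_W \hcomp \id),
\]
obtained by iterating $\slide_{Y_1,Y_2} := (\id \hcomp \slide_{\one,Y_2})\vcomp(\slide_{Y_1,\one}\hcomp\id)$ and the reduction scheme. The point is that $\slide_Y$ appearing in $d(h_f)$ combines with the outer $\slide_W$ and $\slide_Z$ factors (using naturality of the interchange isomorphisms, which holds strictly here since $\hcomp$ is strict) to reassemble $\slide_{W\hcomp Y \hcomp Z}$; analogously $\slide_{Y'}$ combines to give $\slide_{W \hcomp Y' \hcomp Z}$. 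On the morphism side, $(f_2\boxtimes f_1)\hcomp\id$ becomes $\id_{(W_2\boxtimes W_1)}\hcomp(f_2\boxtimes f_1)\hcomp\id_{(Z_2\boxtimes Z_1)} = (\id_{W_2\boxtimes W_1} \boxtimes^{\text{loc}} \cdots) $, i.e.\ exactly the vertical-source/target morphism $(\id_{W_2}\hcomp f_2 \hcomp \id_{Z_2})\boxtimes(\id_{W_1}\hcomp f_1 \hcomp \id_{Z_1})$ for the naturality square of $\id_W \hcomp f \hcomp \id_Z$, and similarly $\id \hcomp f$ becomes $\id_W \hcomp f \hcomp \id_Z$.

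The main bookkeeping obstacle is tracking the Koszul signs and the ordered direct-sum structure when applying $d$ across the tensor (i.e.\ $\hcomp$) of complexes: $\slide_W$, $h_f$, and $\slide_Z$ carry homological degrees $0$, $-1$, $0$ respectively, so the Leibniz rule produces a sign $(-1)^{|\slide_W|} = +1$ in front of the middle term but one must still be careful that $d$ distributes correctly over $\id \hcomp (-) \hcomp \id$ — this is precisely controlled by \Cref{cor:moninherit} and the explicit description of the differential on $\Chb$ via the pretriangulated hull in \Cref{backgrounddg}. Once the signs are pinned down, the remaining identification is a purely formal manipulation of strictly associative composites. I would therefore present the proof as: (i) apply the Leibniz rule; (ii) substitute $d(h_f)$; (iii) invoke the two factorization identities for $\slide$ on composite $1$-morphisms together with strict interchange to recognize the result as $((\id_{W_2}\hcomp f_2 \hcomp \id_{Z_2})\boxtimes(\id_{W_1}\hcomp f_1 \hcomp \id_{Z_1}))\hcomp\id \;\vcomp\; \slide_{W\hcomp Y\hcomp Z} - \slide_{W\hcomp Y'\hcomp Z}\vcomp\id\hcomp(\id_W\hcomp f\hcomp\id_Z)$, which is exactly $d$ of the homotopy witnessing naturality for $\id_W\hcomp f \hcomp \id_Z$, as required.
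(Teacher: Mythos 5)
Your proof is correct and is precisely the expansion of $d(h_{\id_{W}\hcomp f \hcomp \id_{Z}})$ that the paper's one-line proof alludes to: the Leibniz rule kills the closed degree-zero outer factors, $d(h_f)$ is substituted in the middle, and the factorization of slide maps for $\hcomp$-composites (from the construction in \Cref{prop:sliding-objects}) together with strict interchange reassembles the two terms of the naturality square for $\id_W\hcomp f\hcomp\id_Z$. No gaps.
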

\begin{proof} The proof is straightforward when expanding $d(h_{\id_{W}\hcomp f \hcomp \id_{Z}})$.
\end{proof}

\begin{lemma}
    \label{lem:h-coxeter}
    Let $f_2\colon Y_2 \to Y'_2$ be a $2$-morphism in $\DS_n$. Suppose the naturality square
    \eqref{eqn:naturality2} for the morphism $\id_{\one_1}
    \boxtimes f_2$ in $\DS_{1+n}$ with the Coxeter braid complex
    $\cabledcross_{1,n}$ commutes up to the homotopy 
    $h_{\id_{\one_1} \boxtimes f_2}$. Then the naturality square for $\id_{\one_m} \boxtimes
    f_2$ in $\DS_{m+n}$ with respect to $\cabledcross_{m,n}$ commutes up to the homotopy
    \begin{gather*}
        h_{\id_{\one_m} \boxtimes f_2}:= 
        \big((h_{\id_{\one_1} \boxtimes f_2}\boxtimes \id_{\one_{m-1}})
        \hcomp \cdots \hcomp 
        \id_{\one_{m-1-i} \boxtimes \cabledcross_{1,n}\boxtimes\one_{i}}
        \hcomp \cdots \hcomp
        \id_{\one_{m-1}\boxtimes \cabledcross_{1,n}}\big)
        + \cdots
        \\
        + \big(\id_{\cabledcross_{1,n}\boxtimes \one_{m-1}}
        \hcomp \cdots \hcomp 
        (\id_{\one_{m-1-i}} \boxtimes h_{\id_{\one_1} \boxtimes f_2}\boxtimes\id_{\one_{i}})
        \hcomp \cdots \hcomp
        \id_{\one_{m-1}\boxtimes \cabledcross_{1,n}}\big)
        + \cdots
        \\
        + \big(\id_{\cabledcross_{1,n}\boxtimes \one_{m-1}}
        \hcomp \cdots \hcomp 
        \id_{\one_{m-1-i} \boxtimes \cabledcross_{1,n}\boxtimes\one_{i}}
        \hcomp \cdots \hcomp
        (\id_{\one_{m-1}}\boxtimes h_{\id_{\one_1} \boxtimes f_2}) \big).
    \end{gather*}
    An analogous description is available for the homotopies $h_{f_1 \boxtimes \id_{\one_n}}$.
\end{lemma}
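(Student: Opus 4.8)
The plan is to mimic the structure of the proof of \Cref{prop:sliding-objects}: the slide chain map $\slide_{\one_m,B}$ was \emph{defined} in \eqref{eqn:slidecabledfromcoxeter} as a $\vcomp$-composite of the atomic slide maps $\slide_{\one_1,B}$ placed at the $m$ different ``$\boxtimes$-slots'', interspersed with identity maps on the remaining $\cabledcross_{1,n}$-factors. Correspondingly, the homotopy $h_{\id_{\one_m}\boxtimes f_2}$ asserted in the statement is built by replacing, in the $k$-th term of the $\vcomp$-composite \eqref{eqn:slidecabledfromcoxeter}, the single occurrence of $\slide_{\one_1,B}$ in the $k$-th slot by the degree $-1$ chain map $h_{\id_{\one_1}\boxtimes f_2}$ and leaving everything else unchanged. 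One then verifies that this is indeed a homotopy realizing the commutativity of the naturality square \eqref{eqn:naturality2} for $\id_{\one_m}\boxtimes f_2$.

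\textbf{Key steps.} First I would fix notation: write $C_k := \one_{m-1-k}\boxtimes \cabledcross_{1,n}\boxtimes\one_k$ for the $1$-endomorphisms appearing between slots, so that $\cabledcross_{m,n}\hte C_{m-1}\hcomp\cdots\hcomp C_0$ via \Cref{lem:cabled-crossings-from-coxeter}, and denote by $s_k$ (resp.\ $h_k$) the $2$-morphism obtained by applying $\slide_{\one_1,B}$ (resp.\ $h_{\id_{\one_1}\boxtimes f_2}$) in the $k$-th slot and $\id$ elsewhere. Then $\slide_{\one_m,B} = s_{m-1}\vcomp\cdots\vcomp s_0$ (up to the far-commutativity reorderings, which are isomorphisms and can be absorbed) and the claimed homotopy is $h_{\id_{\one_m}\boxtimes f_2} = \sum_{k=0}^{m-1} (\text{suitably composed } h_k)$. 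Second, I would compute $d\big(h_{\id_{\one_m}\boxtimes f_2}\big)$ using the Leibniz rule together with the fact that $d(h_{\id_{\one_1}\boxtimes f_2}) = ((f_2\boxtimes\id_{\one_1})\hcomp\id)\vcomp\slide_{\one_1,B} - \slide_{\one_1,B}\vcomp(\id\hcomp(\id_{\one_1}\boxtimes f_2))$ from \Cref{thm:naturality} in the base case, and the fact that the $s_j$ and the identity maps in the neighboring slots are closed. This yields a telescoping sum: the ``incoming'' boundary term of the $k$-th summand is exactly the ``outgoing'' term of the $(k{+}1)$-st summand, since sliding $f_2$ past slot $k$ and then applying $\slide$ in slot $k{+}1$ equals applying $\slide$ in slot $k{+}1$ and then sliding $f_2$. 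After cancellation only the two extreme terms survive, namely $((f_2\boxtimes\id_{\one_m})\hcomp\id)\vcomp\slide_{\one_m,B} - \slide_{\one_m,B}\vcomp(\id\hcomp(\id_{\one_m}\boxtimes f_2))$, which is precisely the required equation $d(h_{\id_{\one_m}\boxtimes f_2})$. Finally, the statement for $h_{f_1\boxtimes\id_{\one_n}}$ follows by the completely symmetric argument using the \emph{second} equality in \Cref{lem:cabled-crossings-from-coxeter} (assembling $\cabledcross_{m,n}$ from $n$ copies of $\cabledcross_{m,1}$), or alternatively by transporting the first case through the Dynkin diagram automorphism $r_x$ from \Cref{rem:dualities}, which swaps the roles of the two tensor factors and preserves the sign of Rouquier complexes.

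\textbf{Main obstacle.} The genuine bookkeeping difficulty is to make the telescoping cancellation precise in the presence of the far-commutativity isomorphisms $\phi,\phi^{-1}$ that appear in \Cref{lem:cabled-crossings-from-coxeter} (needed only for $\slide_{B,\one_n}$, but one should check that no analogous subtlety intervenes here). For $\slide_{\one_m,B}$ these reorderings are \emph{equalities} of diagrams rather than mere homotopy equivalences—they are genuine isotopies of shuffle-permutation diagrams—so one must confirm that the interchange law \eqref{slideboxtimes}, together with relations \eqref{slideboxtimescomp}, lets the $2$-morphism $f_2$ and the homotopy $h_{\id_{\one_1}\boxtimes f_2}$ be slid across disjoint bundles of strands strictly, not just up to higher homotopy. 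This is exactly the strictness afforded by the last two compatibility relations \eqref{eq:comp} and the parabolic relations \eqref{eq:para}, invoked in the proof of \Cref{thm:ssmtwocat}; once this is in hand, the Leibniz computation is purely formal and the cancellation is immediate. I expect the remaining verifications to be routine diagram manipulation.
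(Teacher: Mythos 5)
Your proposal is correct and follows essentially the same route as the paper, whose proof is exactly the one-line instruction to expand $d(h_{\id_{\one_m}\boxtimes f_2})$ against the $\vcomp$-decomposition \eqref{eqn:slidecabledfromcoxeter} of the slide map into Coxeter-braid slides; your telescoping Leibniz computation (equivalently, an iterated application of \Cref{lem:h-vcomp}) is the intended verification, and your reading of the displayed formula as the sum of ``suitably composed'' single-slot homotopies is the right interpretation of the paper's slightly abbreviated notation. Your observations that the far-commutativity reorderings do not intervene for $\slide_{\one_m,B}$ and that the strict interchange in the semistrict $2$-category is what makes the cancellation exact are both accurate.
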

\begin{proof}
straightforward by expanding $d(h_{\id_{\one_m} \boxtimes f_2})$ and using the
description \eqref{eqn:slidecabledfromcoxeter} of the slide map for the cabled crossing in terms of slide maps for
Coxeter braids.
\end{proof}

\begin{lemma}
    \label{lem:h-smaller-coxeter}
    Let $f'_2\colon Z_1\to Z_2$ be a $2$-morphism in $\DS_k$ with $k\leq n$. Pick $0\leq l \leq n-k$ and consider the $2$-morphism  $f_2 :=
    \id_{\one_l} \boxtimes f'_2 \boxtimes \id_{\one_{n-l-k}}$ in $\DS_n$ and
    $f=\id_{\one_1} \boxtimes f_2$ in $\DS_{1+n}$. Suppose that
    $h_{\id_{\one_1} \boxtimes f'_2}$ is a homotopy for the naturality square
    \eqref{eqn:naturality2} for the interaction of $\id_{\one_1}
    \boxtimes f'_2$ in $\DS_{1+k}$ with the Coxeter braid complex
    $\cabledcross_{1,k}$. 
    
    Then the naturality square for $\id_{\one_1} \boxtimes
    f_2$ and $\cabledcross_{1,n}$ is commutative up to the homotopy $h_{\id_{\one_1} \boxtimes f_2}$ 
    constructed as the composite
 \begin{gather*}
    \cabledcross_{1,l+k+(n-l-k)} 
        \hcomp (\one_{l+1} \boxtimes Z_1 \boxtimes \one_{n-l-k})\\ 
        \cong 
        (\one_l \boxtimes \one_k \boxtimes \cabledcross_{1,(n-l-k)}) 
        \hcomp (\one_l \boxtimes (\cabledcross_{1,k}\hcomp Z_1) \boxtimes \one_{n-l-k})
        \hcomp (\cabledcross_{1,l}\boxtimes \one_k \boxtimes \one_{n-l-k})\\
   \quad \quad\quad\quad\quad\quad\quad\quad   \Big\downarrow{{}_{ \id \hcomp (\id_{\one_l} \boxtimes h_{\id_{\one_1}
        \boxtimes f'_2} \boxtimes \id_{\one_{n-k-l}}) \hcomp \id }}\\
        (\one_l \boxtimes \one_k \boxtimes \cabledcross_{1,(n-l-k)}) 
        \hcomp (\one_l \boxtimes (Z_2 \hcomp \cabledcross_{1,k}) \boxtimes \one_{n-l-k})
        \hcomp (\cabledcross_{1,l}\boxtimes \one_k \boxtimes \one_{n-l-k})\\
        \cong
        (\one_l \boxtimes Z_2 \boxtimes \one_{n-l-k+1})
        \hcomp \cabledcross_{1,l+k+(n-l-k)},
  \end{gather*}
    where the isomorphisms are given far-commutativity. An analogous
    description is available for $2$-morphisms arising from inclusions
    $\DS_k\hookrightarrow \DS_m$.
\end{lemma}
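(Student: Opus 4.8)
\textbf{Proof plan for \Cref{lem:h-smaller-coxeter}.}
The statement is a bookkeeping lemma: it says that a slide homotopy for the small Coxeter braid $\cabledcross_{1,k}$ (interacting with $\id_{\one_1}\boxtimes f_2'$) can be ``padded'' to a slide homotopy for the larger Coxeter braid $\cabledcross_{1,n}$ (interacting with $\id_{\one_1}\boxtimes f_2$) by tensoring with identity chain maps on the extra strands and conjugating by far-commutativity isomorphisms. The plan is to verify that the displayed composite $h_{\id_{\one_1}\boxtimes f_2}$ is indeed a homotopy witnessing commutativity of the naturality square \eqref{eqn:naturality2}, i.e.\ that
\[
d(h_{\id_{\one_1}\boxtimes f_2})
= \big((f_2\boxtimes \id_{\one_1})\hcomp \id\big)\vcomp \slide_{\one_1,Y_2}
- \slide_{\one_1,Y_2'}\vcomp \big(\id\hcomp(\id_{\one_1}\boxtimes f_2)\big),
\]
where $Y_2 = \one_l\boxtimes Z_1\boxtimes\one_{n-l-k}$ and similarly for $Y_2'$.

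First I would record the two structural ingredients. (1) The far-commutativity isomorphisms displayed in the lemma are chain isomorphisms built from tensorators of $\DS$; by the interchange relations \eqref{slideboxtimescomp} and \eqref{slideboxtimes} (equivalently, the diagrammatic relations \eqref{eq:comp} and \eqref{eq:para} guaranteeing that distant-colored bundles slide freely past one another, as used in the proof of \Cref{thm:ssmtwocat}), they are \emph{closed} degree-zero morphisms, hence commute with the differential and can be pulled outside $d(-)$. (2) The slide chain map $\slide_{\one_1,Y_2}$ factors, via the reduction in the proof of \Cref{prop:sliding-objects} together with \Cref{lem:h-hcomp}, through the displayed decomposition $\cabledcross_{1,n}\hcomp(\one_{l+1}\boxtimes Z_1\boxtimes\one_{n-l-k}) \cong (\one_l\boxtimes\one_k\boxtimes\cabledcross_{1,(n-l-k)})\hcomp(\one_l\boxtimes(\cabledcross_{1,k}\hcomp Z_1)\boxtimes\one_{n-l-k})\hcomp(\cabledcross_{1,l}\boxtimes\one_k\boxtimes\one_{n-l-k})$: concretely, $\slide_{\one_1,Y_2}$ equals the composite of the far-commutativity isomorphism, the map $\id\hcomp(\id_{\one_l}\boxtimes\slide_{\one_1,Z_1}\boxtimes\id_{\one_{n-k-l}})\hcomp\id$, and the inverse far-commutativity isomorphism. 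This is precisely the content of the reduction scheme drawn just before the lemma statement, and it is the same mechanism already used in the proof of \Cref{prop:sliding-objects} to reduce $\slide_{\one_1,B_i}$ to the atomic case.

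With these two facts in hand the verification is mechanical: apply $d$ to the three-step composite; by (1) the outer far-commutativity isomorphisms pass through $d$ untouched; by the defining property of $h_{\id_{\one_1}\boxtimes f_2'}$ (which says $d$ of it equals the relevant difference of slide maps for $\cabledcross_{1,k}$), and by the Leibniz rule together with the fact that $f_2'$ and the identity chain maps on $\one_l,\one_{n-l-k}$ are closed, one gets $d$ of the middle map equal to $\id\hcomp\big((\id_{\one_l}\boxtimes((f_2'\hcomp\id)\vcomp\slide_{\one_1,Z_1} - \slide_{\one_1,Z_2}\vcomp(\id\hcomp f_2'))\boxtimes\id_{\one_{n-k-l}})\big)\hcomp\id$. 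Re-conjugating by the far-commutativity isomorphisms and using (2) to reassemble each term into the corresponding slide map for $\cabledcross_{1,n}$ yields exactly $((f_2\boxtimes\id_{\one_1})\hcomp\id)\vcomp\slide_{\one_1,Y_2} - \slide_{\one_1,Y_2'}\vcomp(\id\hcomp(\id_{\one_1}\boxtimes f_2))$, as required. The analogous statement for inclusions $\DS_k\hookrightarrow\DS_m$ (i.e.\ padding on the $m$-side rather than the $n$-side) follows by the same argument, or alternatively by applying the Dynkin-diagram symmetry $r_x$ from \Cref{rem:dualities} as noted at the start of \Cref{sec:slidereduction}.

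The only genuine subtlety — and hence the main obstacle — is sign bookkeeping: the far-commutativity isomorphisms and the padding-by-identity operations involve Koszul signs coming from the $\Sigma$-shifts of the chain objects of the Rouquier complexes (cf.\ the sign discussion in \Cref{deftensorstrict} and in the proof of \Cref{thm:ssmtwocatb}), and one must check these signs are consistent so that no stray $-1$ appears. This is a finite, routine check once the conventions of \Cref{def:boxtimesdef}, \Cref{add}, and \Cref{deftensorstrict} are fixed, so I would carry it out explicitly but not belabor it here; everything else reduces to the Leibniz rule and the inductive hypothesis on $h_{\id_{\one_1}\boxtimes f_2'}$.
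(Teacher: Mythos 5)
Your proposal is correct and follows essentially the same route as the paper, whose proof of this lemma is simply the one-line remark that the claim is straightforward upon expanding $d(h_{\id_{\one_1}\boxtimes f_2})$. Your two ingredients — closedness of the far-commutativity isomorphisms so they pass through the differential, and the factorization of $\slide_{\one_1,Y_2}$ through the displayed decomposition coming from the construction in \Cref{prop:sliding-objects} — are exactly the implicit content of that expansion, so you have merely made the paper's computation explicit.
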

\begin{proof}
    The proof is straightforward when expanding $d(h_{\id_{\one_1} \boxtimes f_2})$.
\end{proof}

\begin{proof}[Proof of \Cref{thm:naturality}] For $2$-morphisms $f_1 \colon Y_1
    \to Y'_1$ in $\DS_m$ and $f_2 \colon Y_2 \to Y'_2$ we have to construct a
    homotopy $h_{f_1\boxtimes f_2}$ such that \[d(h_{f_1\boxtimes f_2}) =
    ((f_2\boxtimes f_1)\hcomp \id) \vcomp \slide_{Y_1,Y_2} -
    \slide_{Y'_1,Y'_2}\vcomp (\id \hcomp (f_1\boxtimes f_2)).\] By linearity, \Cref{lem:h-coxeter}, 
    \Cref{lem:h-smaller-coxeter} and 
    \Cref{lem:h-vcomp}, we may assume that one of the factors $f_1$ or $f_2$ is
    an identity $2$-morphism and the other one is a horizontal composition of
    identity $2$-morphisms and a single diagrammatic generating $2$-morphism.
    Then \Cref{lem:h-hcomp} lets us reduce to the case, when the
    interesting factor (say wlog. $f_2$ in $\DS_n$) is purely a diagrammatic generator. Next, \Cref{lem:h-coxeter} reduces the problem of
    constructing the desired $h_f$ to the case $m=1$. By
    \Cref{lem:h-smaller-coxeter}, we may moreover assume that $n=k$, where $k$ is the
    number of strands on which the diagrammatic generator of type $f$ is
    supported. The homotopies are now provided by \Cref{prop:slidehomotopyforgen} below.
\end{proof}

\subsection{Slide homotopies: explicit constructions}
\label{sec:slidegens}
We finally construct explicit homotopies for the naturality square \eqref{eqn:naturality2} for $f_1=\id_1$, $f_2=f$ where $f$ is a diagrammatic generator. We consider, one by one, the different diagrammatic generators from \eqref{diaggenerators}: the monomial generator, the start dot, the end dot, the merge trivalent vertex, the split trivalent vertex, two kinds of six-valent vertices, and the four-valent vertex.
\begin{proposition}[Slide homotopies for diagrammatic generators]
    \label{prop:slidehomotopyforgen}
Let $f$ be a type of diagrammatic generator of $\DS$ and $k=k(f)\in\N$ be the number of
strands on which the generator is supported. Then the naturality square \eqref{eqn:naturality2} for $(m,n)=(1,k)$ and $f_1=\id$ and $f_2=f$ commutes up to the explicit homotopy $h=h_{\id\boxtimes f}$ given in the proof. 
\end{proposition}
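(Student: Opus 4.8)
The plan is to treat each generator type of $\DS$ in turn, and for each to exhibit an explicit degree $-1$ morphism $h = h_{\id\boxtimes f}$ in $\Chb(\DS_{1+k})$ whose differential realizes the required failure of commutativity. Recall that after the reductions of \Cref{sec:slidereduction} we only need $(m,n) = (1,k)$ with $f_1 = \id_{\one_1}$ and $f_2 = f$ a single diagrammatic generator supported on $k$ strands, and the relevant cabled crossing is the Coxeter braid $\cabledcross_{1,k}$. Concretely, for the monomial generators $x_i$ we have $k=1$; for the start dot and end dot we have $k=1$; for the merge and split trivalent vertices we have $k=1$; for the two six-valent vertices we have $k=2$; and for the four-valent vertex we have $k=3$. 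In each case the source and target complexes $\cabledcross_{1,k}\hcomp(\one_1\boxtimes Y_2)$ and $(Y_2\boxtimes\one_1)\hcomp\cabledcross_{1,k}$ are small (tensor products of two-term Rouquier complexes with a Bott--Samelson bimodule), and the slide chain maps $\slide_{\one_1,Y_2}$, $\slide_{\one_1,Y'_2}$ are the explicit maps built in \Cref{prop:sliding-objects} out of the atomic slide maps of \Cref{lem:atomicslide}.

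First I would set up, once and for all, the explicit term-by-term description of the complexes involved: $\cabledcross_{1,k} = \Rouq(\Ag_k\cdots\Ag_1)\langle -k\rangle$ is the totalization of a tensor product of $k$ two-term complexes, hence a $(k{+}1)$-term complex whose terms are iterated $\hcomp$-products of Bott--Samelson bimodules $B_i$ and the regular bimodule $R$ (with appropriate shifts). Tensoring with a generating Bott--Samelson bimodule $B_j$ or with $R$ (for the two colorings of the source/target) doubles the number of terms. On these explicit bases, the slide maps and the vertical maps $\id\hcomp(f_1\boxtimes f_2)$ and $(f_2\boxtimes f_1)\hcomp\id$ can be written as matrices of diagrammatic $2$-morphisms, using \Cref{thm:equivalence} to move freely between the bimodule and diagrammatic pictures. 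The homotopy $h$ is then a matrix of diagrammatic $2$-morphisms of cohomological degree $-1$, and the verification that $d(h)$ equals $((f_2\boxtimes f_1)\hcomp\id)\vcomp\slide_{Y_1,Y_2} - \slide_{Y'_1,Y'_2}\vcomp(\id\hcomp(f_1\boxtimes f_2))$ is a finite diagrammatic computation in $\DS_{1+k}$, using the relations \eqref{eq:Drel}, \eqref{eq:polyforce}, \eqref{eq:comp}, \eqref{eq:para} and the isotopy relations. For the simplest cases — the monomial $x_i$, the two dots, and the merge/split vertices — one checks directly that $h$ can be taken of a uniform shape (a single off-diagonal diagrammatic generator threading through the Rouquier complex), as already previewed in the figures \verb|\FigSlidec|, \verb|\FigSlideh| in the proof of \Cref{lem:atomicslide} and in \verb|\eqnstartdothomotopy|, \verb|\eqnenddothomotopy|, \verb|\eqnmergehomotopy|, \verb|\eqnsplithomotopy|.

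The main obstacle will be the two six-valent vertices and the four-valent vertex, i.e. $k=2$ and $k=3$. Here the relevant complexes have $3\cdot 2 = 6$ resp. $4\cdot 2 = 8$ terms, the slide maps and vertical maps become genuine matrices with many nonzero entries (built from Reidemeister-$3$-type pieces and far-commutativity isomorphisms), and the homotopy $h$ is correspondingly a large matrix; the computation $d(h) = \text{(LHS)} - \text{(RHS)}$ unpacks into many scalar identities of diagrammatic $2$-morphisms, each of which must be verified using the parabolic relations \eqref{eq:para} and the compatibility relations \eqref{eq:comp}. The payload of these computations is recorded in the figures \verb|\sixvlhs|, \verb|\sixvrhs| (for one six-valent vertex, giving $LHS1 - RHS1 = [d, h]$ with $h$ displayed explicitly as a matrix), \verb|\sixvvlhs|, \verb|\sixvvrhs| (for the mirror six-valent vertex, where $LHS2 = RHS2$ on the nose so $h$ may be taken to be zero), and \verb|\fourvlhs|, \verb|\fourvrhs| (for the four-valent vertex, where again $LHS = RHS$ so $h = 0$ works). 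Thus the proof is: for each of the eight generator types, write down $h$ explicitly (most of them nonzero only for dots/vertices on a single strand, zero for the mirror six-valent vertex and the four-valent vertex by direct coincidence of LHS and RHS), and verify $d(h) = \text{LHS} - \text{RHS}$ by the diagrammatic calculations assembled in the figures cited above. Since every step is a finite check in the diagrammatic calculus of a fixed $\DS_{1+k}$ with $k\le 3$, the argument is complete once these checks are displayed.
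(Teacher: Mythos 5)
Your overall strategy coincides with the paper's: reduce to $(m,n)=(1,k)$, then for each of the eight generator types write down an explicit degree $-1$ matrix of diagrammatic $2$-morphisms and verify $d(h)=\mathrm{LHS}-\mathrm{RHS}$ by a finite computation in $\DS_{1+k}$, with the homotopy nonzero only for the polynomial generator, the two dots, and one of the six-valent vertices. However, there is a systematic off-by-one error in your values of $k$ that makes several of your naturality squares ill-posed. The quantity $k$ is the number of \emph{strands}, not the number of colors: the color $c_i$ corresponds to the simple transposition $s_i\in S_n$, which acts on strands $i$ and $i+1$, so a generator involving a single color (dot, merge, split) is supported on $k=2$ strands, the six-valent vertices (two adjacent colors) on $k=3$ strands, and the four-valent vertex (two distant colors, minimally $c_1$ and $c_3$) on $k=4$ strands. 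With your values, e.g.\ $(m,n)=(1,1)$ for the start dot, the factor $\DS_n=\DS_1$ contains no colors at all, so no dot generator exists there; likewise $\DS_2$ has a single color and hence no six-valent vertex, and $\DS_3$ has no pair of distant colors and hence no four-valent vertex. The correct cases are $(m,n)=(1,1)$ for the polynomial generator, $(1,2)$ for dots and trivalent vertices, $(1,3)$ for the six-valent vertices, and $(1,4)$ for the four-valent vertex, and the Coxeter braids $\cabledcross_{1,k}$ you must slide through are correspondingly longer than the ones you describe.

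Two smaller inaccuracies: $\cabledcross_{1,k}$ is a tensor product of $k$ two-term complexes and therefore has $2^k$ indecomposable summands distributed over $k+1$ cohomological degrees (hence the $3\times 3$ matrices in the six-valent case, from the splitting $1,3,3,1$ of $2^3$), not ``$k+1$ terms''; and the homotopies for the merge and split vertices are \emph{zero} (as your own cited displays \Cref{fig:merge} and \Cref{fig:split} show), not ``a single off-diagonal diagrammatic generator threading through the Rouquier complex'' — that description applies only to the polynomial generator and the two dots. Finally, for the nontrivial six-valent case the verification is not a routine application of \eqref{eq:comp} and \eqref{eq:para}: it requires two derived identities (one obtained from the Zamolodchikov relation by composing with six-valent vertices, and a rotationally $\Z/3\Z$-symmetric relation), as recorded in \Cref{rk:computesixvalent}; your proposal should acknowledge that these auxiliary relations must be established.
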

\begin{proof} We now compute the desired homotopies. We leave the verification that these are indeed homotopies with the correct differential to the reader, but refer to \Cref{rk:computesixvalent}.
 For convenience, the final results are collected in several Figures in  \Cref{sec:diags}. \begin{itemize}    
\item     
{\it The polynomial generator on $k=1$ strand.} Here the slide chain maps are both identities. We write the action of polynomials on the left with variables $x_1,x_2$ and on the right with variables $x_1', x_2'$. Then the failure of the naturality square to commute on the nose is the chain map that  acts component-wise by $x_1-x_2'$. This is nullhomotopic with homotopy $h$ given by the start dot.

       \item  {\it The start dot, for $k=2$ strands.} We let $f$ be the start dot and determine the homotopy $h=h_{\id\boxtimes f}$ satisfying 
\[((f\boxtimes \id)\hcomp \id) \vcomp \slide_{Y_1,Y_2} -
        \slide_{Y'_1,Y'_2}\vcomp (\id \hcomp (\id\boxtimes f))  
        =d(h_{\id\boxtimes f}).\] The homotopy is given  in \Cref{fig:startdot}.
      \item   {\it The end dot, for $k=2$ strands.}
        The homotopy is computed in \Cref{fig:enddot}.

       \item  {\it The merge vertex, for $k=2$ strands.}
        The homotopy is zero, see \Cref{fig:merge}.

      \item   {\it The split vertex, for $k=2$ strands.}
        The homotopy is zero, see \Cref{fig:split}.
       
        \item {\it The first 6-valent vertex, for $k=3$ strands.} 
        The homotopy is computed in \Cref{fig:first6valentvertex1} and \Cref{fig:first6valentvertex2}.
        \item
        {\it The second 6-valent vertex, for $k=3$ strands.}
        The homotopy is zero, see \Cref{fig:second6valentvertex1} and \Cref{fig:second6valentvertex2}.
       \item  {\it The four-valent vertex, for $k\geq 4$ strands.} Here we will only consider $k=4$ since
        the cases $k>4$ involve larger complexes, but are essentially analogous.
        Also, since the four-valent vertices are invertible, we only consider one
        version. The naturality for the inverse is a formal consequence. The homotopy is zero and computed in \Cref{fig:4valentvertex1} and \Cref{fig:4valentvertex2}.
\end{itemize}
We computed the homotopy for each possible type of generator.
\end{proof}

\begin{obs}
It is remarkable that many naturality relations (all but those for the polynomial generator, for the start
and end dots, and one version of the six-valent vertex) hold on the nose, not
just up to homotopy. 
\end{obs}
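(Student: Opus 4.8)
The statement to prove is \Cref{prop:slidehomotopyforgen}, which asserts that for each type of diagrammatic generator $f$ of $\DS$ with support on $k$ strands, the naturality square \eqref{eqn:naturality2} in the case $(m,n)=(1,k)$, $f_1=\id$, $f_2=f$ commutes up to an explicit homotopy $h = h_{\id\boxtimes f}$. The plan is to proceed case by case through the eight generator types listed in the statement: the polynomial generator $x_k$ (on $k=1$ strand), the start dot and end dot (on $k=2$ strands), the merge and split trivalent vertices (on $k=2$ strands), the two six-valent vertices (on $k=3$ strands), and the four-valent distant crossing (on $k=4$ strands).

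First I would fix notation for the relevant chain complexes. For $(m,n)=(1,k)$, the cabled crossing $\cabledcross_{1,k}$ is the Coxeter braid $\Rouq(\Ag_k\cdots\Ag_1)\langle -k\rangle$, an explicit $(k+1)$-term complex of Bott--Samelson bimodules, and the atomic slide chain maps $\slide_{\one_1, B_i}$ were described explicitly in \Cref{lem:atomicslide} and \Cref{prop:sliding-objects}. Writing out $\cabledcross_{1,k}\hcomp (\one_1\boxtimes Y_2)$ and $(Y_2\boxtimes\one_1)\hcomp \cabledcross_{1,k}$ term by term, the slide chain maps and the maps $\id\hcomp(\id\boxtimes f)$ and $(f\boxtimes\id)\hcomp\id$ become explicit matrices of diagrammatic $2$-morphisms. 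In each case I would then compute the difference $((f\boxtimes\id)\hcomp\id)\vcomp\slide_{Y_1,Y_2} - \slide_{Y'_1,Y'_2}\vcomp(\id\hcomp(\id\boxtimes f))$ as a degree-zero chain map and exhibit $h$ as a degree $-1$ map with $d(h)$ equal to this difference. For the polynomial generator on one strand the slide maps are identities, the failure to commute acts componentwise by $x_1 - x_2'$ (the shift of variables induced by color-shifting, cf.\ the discussion around \eqref{eqforshift}), and $h$ is the start dot; this is a two-line check. For the merge, split, and second six-valent vertex and the four-valent vertex I claim $h=0$, i.e.\ the squares commute on the nose; verifying this amounts to the diagrammatic identities among the defining relations of $\DS$ (Frobenius, compatibility \eqref{eq:comp}, parabolic \eqref{eq:para}, isotopy), applied term by term. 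For the start dot, end dot, and first six-valent vertex, a genuinely nonzero $h$ is needed; I would write it down explicitly (the answers are recorded in the figures of \Cref{sec:diags}: \Cref{fig:startdot}, \Cref{fig:enddot}, \Cref{fig:first6valentvertex1}, \Cref{fig:first6valentvertex2}) and then verify $d(h)$ equals the discrepancy by a diagrammatic computation using \eqref{eq:polyforce} and the Frobenius/compatibility relations.

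The main obstacle will be the six-valent vertex on three strands: here $\cabledcross_{1,3}$ has four terms, so the relevant complexes and the matrices involved are large, the slide maps are composites of three atomic slides interleaved with far-commutativity isomorphisms, and keeping track of Koszul signs (from the totalization convention of \Cref{thm:moninherit}(2)) while expanding $d(h)$ requires care. The strategy to manage this is to exploit the reduction lemmas implicitly: by \Cref{lem:h-hcomp} and the decomposition of the six-valent vertex, and by the fact that far-commutativity and distant-color interactions slide freely (as already used in the proof of \Cref{thm:ssmtwocat}, via \eqref{eq:comp} and \eqref{eq:para}), many matrix entries vanish and the nonzero part of the computation is localized to the colors actually appearing in the generator. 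I would also use the duality symmetries of \Cref{rem:dualities}: the two versions of the six-valent vertex are related by $r_x$, which preserves Rouquier complex signs, so only one genuinely needs hand-computation, and likewise for the two crossings. For the four-valent vertex I would note, as in the statement, that since distant crossings are invertible it suffices to treat one version, and the case $k>4$ is entirely parallel to $k=4$ (only the ambient complex is longer). With these reductions in place, each case reduces to a finite diagrammatic identity, and the verification that the stated $h$ has the correct differential — which we explicitly leave to the reader in the proof, pointing to \Cref{rk:computesixvalent} for the hardest case — follows by direct expansion.
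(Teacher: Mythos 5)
Your proposal matches the paper's proof of \Cref{prop:slidehomotopyforgen}: a case-by-case diagrammatic computation of the slide homotopies for each generator type, and the trivial/non-trivial classification you predict (zero homotopy for merge, split, the second six-valent vertex, and the four-valent vertex; non-zero for the polynomial generator, the dots, and the first six-valent vertex) agrees exactly with the figures in \Cref{sec:diags} and hence with the observation itself. One caveat: the duality $r_x$ relates the $(1,k)$-naturality square for one six-valent vertex to the $(k,1)$-square for the \emph{other}, not to the $(1,k)$-square for the other — consistent with the fact that the two homotopies genuinely differ (one vanishes, one does not) — so both six-valent vertices must be computed by hand in the $(1,3)$ setting, as the paper does.
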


\begin{obs} Some diagrammatic relations are inherited on the nose by the
 $h_f$. For example, the barbell relation \eqref{eq:barbell} in $\DS$  has such a corresponding relation. Namely,  
\[ \begin{tikzpicture}[anchorbase,smallnodes, scale=.2]
        \draw[rd] (1,-0.01) \pu (1,3.01);
        \draw[rd] (0,1.01) \pu (0,2.01);
        \fill[rd] (0,1) circle (2.5mm); 
        \fill[rd] (0,2) circle (2.5mm); 
    \end{tikzpicture} 
    +
    \;\begin{tikzpicture}[anchorbase,smallnodes,scale=.2]
        \draw[rd] (-1,-0.01) \pu (-1,3.01);
        \draw[rd] (0,1.01) \pu (0,2.01);
        \fill[rd] (0,1) circle (2.5mm); 
        \fill[rd] (0,2) circle (2.5mm); 
    \end{tikzpicture}  
=2 \;
\begin{tikzpicture}[anchorbase,smallnodes,scale=.2]
        \draw[rd] (0,-0.01) \pu (0,1.01);
        \fill[rd] (0,1) circle (2.5mm); 
        \fill[rd] (0,2) circle (2.5mm); 
        \draw[rd] (0,2) \pu (0,3.01);
    \end{tikzpicture}
\quad\quad\text{induces the relation}\quad\quad
 h_{\begin{tikzpicture}[anchorbase,smallnodes, scale=.2]
        \draw[rd] (1,-0.01) \pu (1,3.01);
        \draw[rd] (0,1.01) \pu (0,2.01);
        \fill[rd] (0,1) circle (2.5mm); 
        \fill[rd] (0,2) circle (2.5mm); 
    \end{tikzpicture} } 
    +
    h_{\;\begin{tikzpicture}[anchorbase,smallnodes,scale=.2]
        \draw[rd] (-1,-0.01) \pu (-1,3.01);
        \draw[rd] (0,1.01) \pu (0,2.01);
        \fill[rd] (0,1) circle (2.5mm); 
        \fill[rd] (0,2) circle (2.5mm); 
    \end{tikzpicture} } 
=2 h_{ \;
\begin{tikzpicture}[anchorbase,smallnodes,scale=.2]
        \draw[rd] (0,-0.01) \pu (0,1.01);
        \fill[rd] (0,1) circle (2.5mm); 
        \fill[rd] (0,2) circle (2.5mm); 
        \draw[rd] (0,2) \pu (0,3.01);
    \end{tikzpicture}} .
    \]
In general, the relations do not carry over to the  homotopies, see  \Cref{exa:hiho}.
\end{obs}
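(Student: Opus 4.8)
The plan is to trace the assignment $f\mapsto h_f$ through the relevant diagrammatic relation and reduce to the explicit homotopies of \Cref{prop:slidehomotopyforgen}. First I would observe that, in the $\mathfrak{gl}_n$-calculus of $\DS$, the displayed relation is nothing but an instance of polynomial forcing: using the middle identity of \eqref{eq:polyforce} to replace the floating $c_i$-barbell by the polynomial box $\alpha_i=x_i-x_{i+1}$, the relation $d_1+d_2=2d_3$ becomes the polynomial-forcing relation \eqref{eq:polyforce} for $p=\alpha_i$ past the strand $c_i$, since $s_i(\alpha_i)=-\alpha_i$ and $\partial_i(\alpha_i)=2$; here $d_1=\alpha_i\hcomp\id_{c_i}$ (polynomial floating on the left), $d_2=\id_{c_i}\hcomp\alpha_i$ (on the right), and $d_3$ is the end-dot--start-dot endomorphism of $c_i$. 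In particular each $d_\nu$ admits a presentation as a $\C$-linear combination of $\hcomp$/$\vcomp$-composites of polynomial generators $x_k$, the start and end dots, and identities, to which the construction of the slide homotopies (\Cref{lem:h-vcomp}, \Cref{lem:h-hcomp}, \Cref{lem:h-coxeter}, \Cref{lem:h-smaller-coxeter}, together with \Cref{prop:slidehomotopyforgen}) applies.

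Next I would assemble the three homotopies. Since $\boxtimes$ is $\C$-linear and the construction of $h$ extends $\C$-linearly once a presentation of each generator's homotopy is fixed, it suffices to treat $h_{\id_{\one_m}\boxtimes d_\nu}$. By \Cref{lem:h-hcomp} applied with the trivial factor on one side (so that the spectator slide map is an identity), $h_{\id_{\one_m}\boxtimes d_1}$ and $h_{\id_{\one_m}\boxtimes d_2}$ are expressed through the slide maps for $c_i$ and through the homotopy $h_{x_k}$ of a polynomial generator; by \Cref{prop:slidehomotopyforgen} (the polynomial case) together with \Cref{lem:h-smaller-coxeter}, the latter is a single start dot placed on the appropriate strand. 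On the other hand, writing $d_3=\text{(start dot)}\vcomp\text{(end dot)}$ and invoking the Leibniz-type formula of \Cref{lem:h-vcomp}, $h_{\id_{\one_m}\boxtimes d_3}$ is built from the two-strand homotopies $h_{\text{start dot}}$ and $h_{\text{end dot}}$ of \Cref{fig:startdot} and \Cref{fig:enddot}. A termwise comparison on the (short) complex $\cabledcross_{m,n}\hcomp(\one_m\boxtimes c_i)$ — no harder than the verifications underlying \Cref{prop:slidehomotopyforgen}, and reducing to the single Coxeter-braid strand via \Cref{lem:h-coxeter} — then yields $h_{d_1}+h_{d_2}=2h_{d_3}$.

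It is worth isolating the structural reason why this closes up, which also explains the contrast with \Cref{exa:hiho}. The naturality-defect chain maps
\[
\delta_{f}:=\big((f_2\boxtimes f_1)\hcomp\id\big)\vcomp\slide_{Y_1,Y_2}-\slide_{Y'_1,Y'_2}\vcomp\big(\id\hcomp(f_1\boxtimes f_2)\big)
\]
are $\C$-linear in $f$, so the relation $d_1+d_2-2d_3=0$ forces $d\big(h_{d_1}+h_{d_2}-2h_{d_3}\big)=\delta_{d_1}+\delta_{d_2}-2\delta_{d_3}=0$; thus $h_{d_1}+h_{d_2}-2h_{d_3}$ is a closed morphism of degree $-1$. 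For the relation of \Cref{exa:hiho} the analogous closed degree $-1$ morphism is the nonzero higher homotopy displayed there, whereas here the explicit forms of the previous paragraph show that it vanishes: after cancelling the parts supported off the strand $c_i$, the start dots entering $h_{d_1}$ and $h_{d_2}$ are precisely the two terms produced by the Leibniz rule for $h_{d_3}$, each with coefficient $1$, and they sum to $2h_{d_3}$.

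The main obstacle I anticipate is purely bookkeeping: keeping Koszul signs, internal grading shifts, and the far-commutativity isomorphisms straight when reducing $h_{x_k\in\DS_n}$ to the one-strand base case via \Cref{lem:h-smaller-coxeter}, and making sure that the presentation of $d_3$ as an end-dot--start-dot composite and its presentation via polynomial forcing (as the combination $\tfrac12(d_1+d_2)$) are matched by the construction of $h$. I do not expect any conceptual difficulty beyond the verifications already carried out for \Cref{prop:slidehomotopyforgen}.
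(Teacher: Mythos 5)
The paper records this statement as an unproved observation: it is meant to be checked directly from the explicit data of \Cref{prop:slidehomotopyforgen} (the homotopies of \Cref{fig:startdot} and \Cref{fig:enddot}, the polynomial case) assembled through \Cref{lem:h-vcomp}, \Cref{lem:h-hcomp}, \Cref{lem:h-coxeter}, \Cref{lem:h-smaller-coxeter}. Your reduction plan is in that spirit, but the decisive step is missing. Your structural argument only shows that $h_{d_1}+h_{d_2}-2h_{d_3}$ is a closed element of degree $-1$; by \Cref{prop:cohomologymorphismcomplex} such an element is exact, so this proves the relation only \emph{up to homotopy} --- which is precisely the situation of \Cref{exa:hiho}, where the on-the-nose relation fails. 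The ``termwise comparison'' that would show the closed element is actually zero is never carried out, and the bookkeeping you sketch for it does not add up: if the start dots occurring in $h_{d_1}$ and $h_{d_2}$ were ``precisely the two terms produced by the Leibniz rule for $h_{d_3}$, each with coefficient $1$'', their sum would be $h_{d_3}$, not $2h_{d_3}$. Moreover, in your application of \Cref{lem:h-hcomp} the spectator factor is the strand $c_i$ itself, whose slide map is the non-identity atomic slide of \Cref{lem:atomicslide} (not an identity, as your parenthetical claims); so $h_{d_1},h_{d_2}$ are start dots conjugated by these slide maps, and comparing them with $h_{d_3}$, which is built from the matrices of \Cref{fig:startdot} and \Cref{fig:enddot} pre- and post-composed with components of the differential, is exactly the explicit diagrammatic computation that constitutes the content of the observation. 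Asserting its outcome is the gap.

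A second, related issue is presentation dependence. Replacing the floating barbell by the box $\alpha_i$ via \eqref{eq:polyforce} changes the presentation of $d_1,d_2$ as composites of generators, and the construction $f\mapsto h_f$ depends on that presentation (again \Cref{exa:hiho}); the diagrams in the observation exhibit the barbell as an end dot composed with a start dot, so the homotopies $h_{d_1},h_{d_2}$ it refers to are the ones assembled from the dot homotopies of \Cref{fig:startdot} and \Cref{fig:enddot}. Proving the identity for the polynomial-box presentation is a priori a different statement, and transporting it back would require the additional on-the-nose claim that the two presentations produce equal (not merely homotopic) homotopies. Either verify the identity directly for the dot presentations, or verify this extra equality --- both come down to the explicit matrix computation with the slide maps and the figures, which your proposal defers.
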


\begin{remark}\label{rk:computesixvalent}
    The verification of the fact that the homotopy for the first six-valent vertex has the correct
    differential uses two somewhat nontrivial relations in the diagrammatic calculus. The first one 
  \begin{equation}
        \vcenter{\hbox{\includegraphics{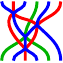}}}   
        -
        \vcenter{\hbox{\includegraphics{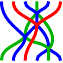}}}  
        =
        \vcenter{\hbox{\includegraphics{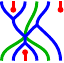}}}   
        -
        \vcenter{\hbox{\includegraphics{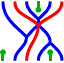}}}  
    \end{equation}
    is a consequence of the parabolic relation \eqref{eq:para} of type $A_3$ (called 
    Zamolodchikov relation in  \cite[\S1.4.3.]{EW}) that can be obtained by composing with sixvalent vertices.
    
         The second
        relation is the rotationally $\Z/3\Z$-symmetric relation that can be proved by sliding the trivalent vertex through one of the sixvalent vertices
\begin{equation}
    \secondrel.
\end{equation}
\end{remark}

\subsection{Higher homotopies}
\label{ssec:hhomotopies}

We need the following well-known auxiliary results.
\begin{lemma}
    \label{lem:pre-post-comp}
    Let $\CS$ be a dg-category and $f\colon Y_1 \to Y_2$ a homotopy equivalence
    (i.e.~a closed degree zero morphism inducing an isomorphism in the homotopy
    category). For objects $X$, $Z$ in $\CS$ we get homotopy equivalences of
    morphism complexes:
    \[
        f^*\colon \Hom_\CS(X,Y_1)
        \xrightarrow{g\mapsto fg}
        \Hom_\CS(X,Y_2), \qquad 
       f_*\colon \Hom_\CS(Y_2,Z)
        \xrightarrow{g'\mapsto g'f}
        \Hom_\CS(Y_1,Z)
    \]
\end{lemma}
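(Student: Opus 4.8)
The statement to prove is \Cref{lem:pre-post-comp}: if $f\colon Y_1 \to Y_2$ is a homotopy equivalence in a dg-category $\CS$, then pre-composition $f_*$ and post-composition $f^*$ induce homotopy equivalences on morphism complexes.

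\textbf{Plan.} The proof is a completely formal diagram chase at the level of dg-modules. First I would observe that both $f^*$ and $f_*$ are chain maps: for $f^*$, since $f$ is closed of degree zero, $d(f g) = f\, d(g) + (-1)^{0} (df)\, g = f\, d(g)$, so $f^*$ commutes with the differential; symmetrically for $f_*$, using $d(g' f) = (dg')f \pm g'(df) = (dg')f$. Next, I would choose a homotopy inverse: let $\bar f\colon Y_2 \to Y_1$ be a closed degree-zero morphism with $\bar f f - \id_{Y_1} = d(s_1)$ and $f \bar f - \id_{Y_2} = d(s_2)$ for some degree $-1$ morphisms $s_1\in \End^{-1}_\CS(Y_1)$, $s_2\in \End^{-1}_\CS(Y_2)$ (such data exists by the definition of homotopy equivalence recalled just before \Cref{newdefCb}).

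Then I claim $\bar f^*\colon \Hom_\CS(X,Y_2)\to \Hom_\CS(X,Y_1)$ is a homotopy inverse to $f^*$. Indeed, on $\Hom_\CS(X,Y_1)$ we compute $\bar f^* f^* - \id = (\bar f f - \id_{Y_1})_* $ acting by $g\mapsto (\bar f f - \id)g = d(s_1)\, g = d(s_1 g) - (-1)^{|g|} s_1\, d(g)$; that is, $\bar f^* f^* - \id_{\Hom(X,Y_1)} = d \circ (s_1)_\bullet + (s_1)_\bullet \circ d$ where $(s_1)_\bullet$ denotes $g\mapsto (-1)^{?}s_1 g$ — I would fix the sign convention carefully so that this reads as $d$ applied to the degree $-1$ operator $g\mapsto s_1 g$ on the morphism complex, exhibiting $\bar f^* f^*$ as homotopic to the identity. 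Symmetrically $f^* \bar f^* - \id_{\Hom(X,Y_2)}$ is the homotopy given by $g\mapsto s_2 g$. The argument for $f_*$ is identical with left/right composition swapped: $f_* \bar f_* - \id$ and $\bar f_* f_*-\id$ are witnessed by the homotopies $g'\mapsto g' s_2$ and $g'\mapsto g' s_1$ respectively, again up to a careful sign bookkeeping coming from the Koszul rule on $d$ of a composite.

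\textbf{Main obstacle.} There is no conceptual difficulty here; the only thing requiring care is the \emph{sign bookkeeping}, i.e.\ verifying that $g\mapsto s_i g$ (resp.\ $g'\mapsto g' s_i$) really is a chain homotopy in the sense $[d, (s_i)_\bullet] = \bar f^* f^* - \id$ with the conventions of \cite[\S2--3]{GHW} used in the paper. Since $s_i$ has degree $-1$, the graded Leibniz rule gives $d(s_i g) = (d s_i) g - (-1)^{|s_i|}\text{(lower sign)}\cdots$ and one must check that the cross terms assemble as stated. This is entirely routine and I would simply remark that it follows from the graded Leibniz rule, referring to the cohomological sign conventions fixed in \Cref{backgrounddg}.

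\begin{proof}
    Since $f$ is closed of degree zero, the graded Leibniz rule gives $d(fg)=f\,d(g)$ and $d(g'f)=(dg')f$, so $f^*$ and $f_*$ are morphisms of complexes. Pick a closed degree zero $\bar f\colon Y_2\to Y_1$ and degree $-1$ morphisms $s_1\in\End^{-1}_\CS(Y_1)$, $s_2\in\End^{-1}_\CS(Y_2)$ with $\bar f f-\id_{Y_1}=d(s_1)$ and $f\bar f-\id_{Y_2}=d(s_2)$. For $g\in\Hom_\CS(X,Y_1)$ the graded Leibniz rule yields $(\bar f f-\id_{Y_1})g=d(s_1)g=d(s_1g)-(-1)^{|s_1g|}s_1\,d(g)$, so the degree $-1$ operator $g\mapsto s_1g$ is a chain homotopy between $\bar f^*f^*$ and $\id_{\Hom_\CS(X,Y_1)}$; symmetrically $g\mapsto s_2g$ witnesses $f^*\bar f^*\simeq\id_{\Hom_\CS(X,Y_2)}$. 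Hence $f^*$ is a homotopy equivalence with homotopy inverse $\bar f^*$. The argument for $f_*$ is identical after exchanging left and right composition: $g'\mapsto g's_2$ and $g'\mapsto g's_1$ provide the required homotopies $f_*\bar f_*\simeq\id$ and $\bar f_*f_*\simeq\id$.
\end{proof}
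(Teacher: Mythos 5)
Your proof is correct and follows essentially the same route as the paper: both first observe that $f^*$ and $f_*$ are chain maps because $f$ is closed of degree zero, and both take the homotopy inverse to be post/pre-composition with a homotopy inverse $\bar f$ of $f$, with the null-homotopies given by post/pre-composition with the witnessing degree $-1$ morphisms $s_1$, $s_2$. One sign in your displayed computation is off: with the convention $d(ab)=d(a)\,b+(-1)^{|a|}a\,d(b)$ (the one the paper uses, equivalently $d_{\Hom}(g)=d_{Y}g-(-1)^{|g|}g\,d_X$), the fact that $|s_1|=-1$ gives $d(s_1)\,g=d(s_1g)+s_1\,d(g)$ with no sign depending on $|g|$, whereas your factor $-(-1)^{|s_1g|}=(-1)^{|g|}$ is spurious and, taken literally, would not assemble into the required identity $\bar f^*f^*-\id=d\circ(s_1)_\bullet+(s_1)_\bullet\circ d$. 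Your stated conclusion --- that the unsigned operator $g\mapsto s_1g$ is the homotopy --- is nevertheless the correct one and is exactly what the paper verifies.
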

\begin{proof} Pre- and post-composition with $f$ gives (degree zero) chain maps
 since $f$ is closed.
 Let $f'\colon Y_2 \to Y_1$ be a homotopy inverse to $f$, witnessed by homotopies $h_i\in \End(Y_i)$, $i=1,2$, i.e. $d_1 h_1+ h_1 d_1 = f'f-\id_{Y_1},\quad d_2 h_2+ h_2 d_2 = ff'-\id_{Y_2}$. 
 We check the second statement in the case of postcomposition and claim that the requisite homotopies are given by postcomposing with $h_1$ and $h_2$. For  $g\in \Hom_\CS(X,Y_1)$ we get
 \begin{align*}
(f'f-\id_{Y_1})^*(g)& = (f'f-\id_{Y_1})g  \\
&=(d_1 h_1+ h_1 d_1)g 
= d_1 h_1 g + (-1)^{|g|} h_1 g d_X + (h_1 d_1 g - (-1)^{|g|} h_1 g d_X)
\\
&= d_{\Hom}(h_1 g) + h_1 d_{\Hom}(g)
 =(d_{\Hom}\circ h_1^*)(g) + (h_1^* \circ d_{\Hom})(g).
 \end{align*}
 Analogously one checks $(ff'-\id_{Y_2})^* = d_{\Hom}\circ h_2^* + h_2^* \circ d_{\Hom}$.
 \end{proof}



\begin{lemma}
    \label{lem:invertible-tensoring}
Let $(\CS,\hcomp)$ be a monoidal dg-category, $Y_1$, $Y_2$ objects and $X$ an
    object that is invertible up to homotopy (see \Cref{def:hinvertibility} with examples in \Cref{cor:Rouqinv}).
    Then we have mutually inverse homotopy equivalences of morphism complexes:
    \[
        \Hom_\CS(Y_1,Y_2)
        \xrightleftharpoons[\psi]{\phi}
        \Hom_\CS(X \hcomp Y_1,X\hcomp Y_2)
    \]
    
defined by $\phi(f) = \id_X \hcomp f$ and $\psi(g)=(
\ev_{X'} \hcomp \id_{Y_2})(\id_{X'}\hcomp g)(\coev_{X} \hcomp \id_{Y_1})$. Similarly, we have
homotopy equivalences  $\Hom_\CS(Y_1,Y_2) \hte \Hom_\CS(Y_1\hcomp X,Y_2\hcomp X)$.
\end{lemma}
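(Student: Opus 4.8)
The statement to prove is \Cref{lem:invertible-tensoring}: for a monoidal dg-category $(\CS,\hcomp)$ with an object $X$ that is invertible up to homotopy (so it comes with the full package of $X'$, evaluation/coevaluation chain maps, and the homotopies from \eqref{homotopysnakes} and \Cref{def:hinvertibility}), the chain maps $\phi(f)=\id_X\hcomp f$ and $\psi(g)=(\ev_{X'}\hcomp\id_{Y_2})\vcomp(\id_{X'}\hcomp g)\vcomp(\coev_X\hcomp\id_{Y_1})$ are mutually inverse homotopy equivalences between $\Hom_\CS(Y_1,Y_2)$ and $\Hom_\CS(X\hcomp Y_1,X\hcomp Y_2)$, and similarly on the other side with $\hcomp X$.

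First I would check that $\phi$ and $\psi$ are indeed degree zero chain maps. For $\phi$ this is immediate: $\id_X$ is closed of degree zero, so $d(\id_X\hcomp f)=\id_X\hcomp d(f)$ by the Leibniz rule (no Koszul sign since $|\id_X|=0$), hence $\phi$ intertwines the differentials. For $\psi$, the three morphisms $\ev_{X'}$, $\coev_X$ are closed of degree zero by hypothesis (they are chain maps), so again pre- and post-composition with them commutes with $d$; thus $d(\psi(g))=\psi(d(g))$. This is the same mechanism as in \Cref{lem:pre-post-comp}, which I would cite.

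Next, the two composite computations. The composite $\psi\circ\phi$ sends $f\colon Y_1\to Y_2$ to $(\ev_{X'}\hcomp\id_{Y_2})\vcomp(\id_{X'}\hcomp\id_X\hcomp f)\vcomp(\coev_X\hcomp\id_{Y_1})$. Using functoriality of $\hcomp$ and the interchange law, this equals $\big((\ev_{X'}\hcomp\id_{X})\vcomp(\coev_X)\big)\hcomp f$ placed appropriately — more precisely it is $\big((\id_{X'}\hcomp\ev_{X})\vcomp(\coev_X\hcomp\id_{X'})\big)$-type expression composed horizontally with $f$; by the relevant snake relation from \eqref{homotopysnakes} (the one reading $(\id_{X'}\hcomp\ev_X)\vcomp(\coev_X\hcomp\id_{X'})\simeq\id_{X'}$, or rather its mirror involving $\ev_{X'},\coev_X$) this composite is homotopic to $\id_{X'\hcomp X}\hcomp f$, and since $X'\hcomp X\simeq\one$ via $\ev_{X'},\coev_X$ one further identifies this with $f$ up to homotopy. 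Concretely: let $h$ be the homotopy witnessing the snake relation; then $\id_{X'\hcomp X}\hcomp f$ differs from the composite by $d(h\hcomp f)=d(h)\hcomp f$ (sign-free as $|f|=0$), giving the null-homotopy for $\psi\phi-\id$ via horizontal composition of $h$ (and, one level up, the homotopy $X'\hcomp X\simeq\one$) with $f$. The composite $\phi\circ\psi$ is handled symmetrically, sending $g\colon X\hcomp Y_1\to X\hcomp Y_2$ to $(\id_X\hcomp\ev_{X'}\hcomp\id_{Y_2})\vcomp(\id_X\hcomp\id_{X'}\hcomp g)\vcomp(\id_X\hcomp\coev_X\hcomp\id_{Y_1})=(\coev_{X'}\hcomp\id_{Y_2})$-adjusted expression which, by the snake relation $(\id_X\hcomp\ev_{X'})\vcomp(\coev_{X'}\hcomp\id_X)$-type identity together with $X\hcomp X'\simeq\one$, is homotopic to $g$. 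The explicit null-homotopies are obtained by horizontally tensoring the given snake/invertibility homotopies with $\id$ on the appropriate objects, exactly as in the monoidal Leibniz computation above.

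Finally, the ``similarly'' clause. For $\Hom_\CS(Y_1,Y_2)\hte\Hom_\CS(Y_1\hcomp X,Y_2\hcomp X)$ one repeats the argument verbatim with $\phi'(f)=f\hcomp\id_X$ and $\psi'(g)=(\id_{Y_2}\hcomp\ev_X)\vcomp(g\hcomp\id_{X'})\vcomp(\id_{Y_1}\hcomp\coev_{X'})$, using the remaining two snake relations in \eqref{homotopysnakes} and the invertibility homotopies $\ev_X\vcomp\coev_{X'}\simeq\id_\one$, $\coev_{X'}\vcomp\ev_X\simeq\id_{X\hcomp X'}$.

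\textbf{Main obstacle.} The only real care needed is bookkeeping with Koszul signs and the interchange law when rewriting the composites $\psi\phi$ and $\phi\psi$ into ``(snake expression) $\hcomp$ $f$'' form — but since all the structure maps $\ev$, $\coev$, $\id$ are of cohomological degree zero, every sign that could appear is trivial, so this is genuinely routine. The substantive input is entirely imported: it is the homotopy-invertibility package of $X$ (\Cref{def:hinvertibility}), which for Rouquier complexes is supplied by \Cref{cor:Rouqinv}. I therefore expect no real difficulty; the proof is a formal diagram chase in a monoidal dg-category, and I would present it at the level of the two composite computations plus the sign-free Leibniz observation, leaving the elementary verifications to the reader as the paper does elsewhere.
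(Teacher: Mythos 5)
Your overall architecture matches the paper's: verify that $\phi$ and $\psi$ are chain maps because they are built from tensoring and composing with closed degree-zero morphisms, then produce explicit null-homotopies for $\psi\vcomp\phi-\id$ and $\phi\vcomp\psi-\id$ out of the invertibility package of $X$. However, your treatment of the first composite is off. Interchange gives directly $\psi(\phi(f))=(\ev_{X'}\vcomp\coev_X)\hcomp f$, i.e.\ the vertical composite $\one\to X'\hcomp X\to\one$ tensored with $f$; the relation needed here is the \emph{invertibility} relation $\ev_{X'}\vcomp\coev_X\simeq\id_\one$ from \Cref{def:hinvertibility}, not a snake relation, and your intermediate claim that the composite is homotopic to $\id_{X'\hcomp X}\hcomp f$ does not typecheck ($\psi(\phi(f))$ has source $Y_1$, not $X'\hcomp X\hcomp Y_1$). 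With $h$ a homotopy satisfying $d(h)=\ev_{X'}\vcomp\coev_X-\id_\one$, the null-homotopy is simply $h\hcomp(-)$, which is exactly what the paper writes.

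The genuine gap is in your claim that $\phi\circ\psi$ is ``handled symmetrically.'' It is not: a general $g\colon X\hcomp Y_1\to X\hcomp Y_2$ is not of the form $\id_X\hcomp(\cdot)$, so interchange cannot collapse $\phi(\psi(g))=(\id_X\hcomp\ev_{X'}\hcomp\id_{Y_2})\vcomp(\id_{X\hcomp X'}\hcomp g)\vcomp(\id_X\hcomp\coev_X\hcomp\id_{Y_1})$ into ``(closed loop) $\hcomp$ $g$.'' The paper's argument first replaces the middle identity $\id_{X\hcomp X'}$ by $\coev_{X'}\vcomp\ev_X$ up to the invertibility homotopy, which via interchange pulls $g$ out as $\bigl[\bigl((\id_X\hcomp\ev_{X'})\vcomp(\coev_{X'}\hcomp\id_X)\bigr)\hcomp\id_{Y_2}\bigr]\vcomp g\vcomp\bigl[\bigl((\ev_X\hcomp\id_X)\vcomp(\id_X\hcomp\coev_X)\bigr)\hcomp\id_{Y_1}\bigr]$, and only then applies \emph{both} snake relations from \eqref{homotopysnakes}. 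This insertion of $\coev_{X'}\vcomp\ev_X\simeq\id_{X\hcomp X'}$ is the idea your sketch is missing, and without it the second composite does not reduce. The remaining points (the chain-map check via \Cref{lem:pre-post-comp}, the triviality of Koszul signs, and the right-tensoring variant) are fine.
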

For the proof we use that for the horizontal composition (monoidal product) with any fixed morphism $k\colon X_1 \to X_2$ we have
\[ d(k \hcomp -) =  d(k) \hcomp - \colon   \Hom_\CS(Y_1,Y_2) \to \Hom_\CS(X_1\hcomp Y_1,X_1\hcomp Y_2).\]
In particular, if $k$ is closed, then horizontal composition with $k$ defines a chain map
\[k \hcomp - \colon \Hom_\CS(Y_1,Y_2) \to \Hom_\CS(X_1\hcomp Y_1,X_1\hcomp Y_2), \quad f \mapsto k \hcomp f.\]
\begin{proof}
The assignments $\phi$ and $\psi$ define chain maps since they are given by
tensoring and composing with closed degree zero morphisms. We compute for $f\in
\Hom_\CS(Y_1,Y_2)$:
\begin{align*}
(\psi \circ \phi - \id)(f)
& =
(\ev_{X'} \hcomp \id_{Y_2})(\id_{X'\hcomp X}\hcomp f)(\coev_{X}\hcomp \id_{Y_1} ) - f
\\
& = (\ev_{X'}\vcomp \coev_{X}-\id ) \hcomp f 
 = d(h_R) \hcomp f
 = d(h_R \hcomp -)(f).
\end{align*}
For the opposite composite $\phi \circ \psi$ we compute as follows:\footnotesize
\begin{align*}
    (\phi \circ \psi 
    )(g)
     &=
    \id_X \hcomp(\ev_{X'} \hcomp \id_{Y_2})(\id_{X'}\hcomp g)(\coev_{X}\hcomp \id_{Y_1} )
    \\
     &=
    (\id_X \hcomp \ev_{X'} \hcomp \id_{Y_2})(\id_{X\hcomp X'} \hcomp \id_{X} \hcomp \id_{Y_2}) (\id_{X\hcomp X'}\hcomp g)(\id_X \hcomp \coev_{X}\hcomp \id_{Y_1} )
    \\
    &=
   (\id_X \hcomp \ev_{X'} \hcomp \id_{Y_2})(\coev_{X'}\vcomp \ev_{X})  \hcomp \id_{X} \hcomp \id_{Y_2}) (\id_{X\hcomp X'}\hcomp g)(\id_X \hcomp \coev_{X}\hcomp \id_{Y_1} )
  \\ &\quad \BLUE{-
   (\id_X \hcomp \ev_{X'} \hcomp \id_{Y_2})(d(h_1))  \hcomp \id_{X} \hcomp \id_{Y_2}) (\id_{X\hcomp X'}\hcomp g)(\id_X \hcomp \coev_{X}\hcomp \id_{Y_1} )}
    \end{align*}
    {\normalsize

    For the last line, highlighted in blue, we have used the homotopy $h_1$ relating $\coev_{X'}\vcomp \ev_{X}\simeq \id_{X\hcomp X'}$. This blue term will be part of the differential of the homotopy and put aside for the moment. We continue rewriting the remaining component:
    }
    \footnotesize
    \begin{align*}
       & \hspace{-.5cm}(\id_X \hcomp \ev_{X'} \hcomp \id_{Y_2})((\coev_{X'}\vcomp \ev_{X})  \hcomp \id_{X} \hcomp \id_{Y_2}) (\id_{X\hcomp X'}\hcomp g)(\id_X \hcomp \coev_{X}\hcomp \id_{Y_1} )
      \\ 
      &=
      (((\id_X \hcomp \ev_{X'})(\coev_{X'} \hcomp \id_{X})) \hcomp \id_{Y_2}) g
      (((\ev_{X} \hcomp \id_{X})(\id_X \hcomp \coev_{X}))\hcomp \id_{Y_1} )
      \\
      &=
      (\id_X \hcomp \id_{Y_2}) g (\id_X \hcomp \id_{Y_2})\\
     & \quad \BLUE{-
      ( d(h_2) \hcomp \id_{Y_2}) g
      (((\ev_{X} \hcomp \id_{X})(\id_X \hcomp \coev_{X}))\hcomp \id_{Y_1} )}
    \BLUE{-
      (\id_X \hcomp \id_{Y_2}) g
      (d(h_3)\hcomp \id_{Y_1} ).
      }
        \end{align*}
        \normalsize
    Here we used the zigzag relations up to  homotopies $h_1$, $h_2$ for the evaluation and
    coevaluation of $X$ and $X'$ respectively. Thus we have $(\phi \circ \psi -
    \id )(g) = d(H(g))$ for a homotopy $H(g)$ constructed from the homotopies
    for the invertible object $X$ using vertical and horizontal composition with
    fixed degree zero morphisms and $g$.
\end{proof}

\begin{corollary}
    \label{cor:end-invertible}
    Let $(\CS,\hcomp)$ be a monoidal dg-category with unit $\one$. Then we have
    quasi-isomorphism of dg-algebras respectively chain complexes
    \[\End_\CS(\one)\qis \End_\CS(X),\quad \Hom_\CS(X,Y) \qis \End_\CS(\one)\]
for any objects $X$ which is invertible up to homotopy and $Y$ homotopy equivalent to $X$.

\end{corollary}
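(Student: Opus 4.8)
The plan is to derive Corollary~\ref{cor:end-invertible} directly from the two preceding lemmas by specializing the morphism-complex equivalences to the case where the first argument is the monoidal unit $\one$.

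First I would invoke \Cref{lem:invertible-tensoring} with $Y_1 = \one$ and $Y_2 = Y$, where $Y$ is homotopy equivalent to $X$. This yields a homotopy equivalence of morphism complexes
\[
\End_\CS(\one) = \Hom_\CS(\one,\one)
\xrightleftharpoons[\psi]{\phi}
\Hom_\CS(X \hcomp \one, X \hcomp Y) .
\]
Using the (strict) unitality of $\hcomp$, the right-hand side is $\Hom_\CS(X, X\hcomp Y)$. Since $Y \hte X$, \Cref{lem:pre-post-comp} (applied with the homotopy equivalence $X\hcomp Y \to X\hcomp X$ induced by tensoring a homotopy inverse of the chosen equivalence $Y\to X$ on the left, which is closed of degree zero) gives a homotopy equivalence $\Hom_\CS(X,X\hcomp Y) \hte \Hom_\CS(X, X\hcomp X)$. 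One more application of \Cref{lem:invertible-tensoring}, now in the form $\Hom_\CS(\one,X) \hte \Hom_\CS(X\hcomp\one, X\hcomp X) = \Hom_\CS(X,X\hcomp X)$, identifies $\Hom_\CS(X,X\hcomp X)$ with $\Hom_\CS(\one,X)$. Tracking these equivalences and reusing unitality where needed, one obtains $\End_\CS(\one)\hte \End_\CS(X)$; the point that this is a quasi-isomorphism (rather than merely a homotopy equivalence of underlying complexes) follows because homotopy equivalences of complexes are in particular quasi-isomorphisms, and $\phi$ is moreover a map of dg-algebras since $\phi(f)\vcomp\phi(g) = (\id_X\hcomp f)\vcomp(\id_X\hcomp g) = \id_X \hcomp (f\vcomp g) = \phi(f\vcomp g)$ by the interchange law in the dg-category $\CS$, and $\phi(\id_\one) = \id_X$. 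This gives the first claimed quasi-isomorphism $\End_\CS(\one)\qis\End_\CS(X)$.

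For the second claim $\Hom_\CS(X,Y)\qis\End_\CS(\one)$, I would again apply \Cref{lem:invertible-tensoring}, this time in its ``tensoring on the right'' version $\Hom_\CS(Y_1,Y_2)\hte\Hom_\CS(Y_1\hcomp X, Y_2 \hcomp X)$, or more directly: since $X$ is invertible up to homotopy with homotopy-inverse $X'$, tensoring on the left by $X'$ gives a homotopy equivalence $\Hom_\CS(X,Y) \hte \Hom_\CS(X'\hcomp X, X'\hcomp Y) \hte \Hom_\CS(\one, X'\hcomp Y)$, using the coevaluation/evaluation homotopy equivalences $X'\hcomp X\hte\one$ from \Cref{def:hinvertibility} together with \Cref{lem:pre-post-comp}. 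Since $Y\hte X$, we have $X'\hcomp Y \hte X'\hcomp X\hte \one$, so \Cref{lem:pre-post-comp} gives $\Hom_\CS(\one, X'\hcomp Y)\hte\Hom_\CS(\one,\one) = \End_\CS(\one)$. Composing, $\Hom_\CS(X,Y)\hte\End_\CS(\one)$, hence they are quasi-isomorphic as complexes.

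I expect the only real subtlety — and thus the ``main obstacle'', though it is a mild one — to be bookkeeping the unitors and the closedness/degree-zero hypotheses: one must check that each intermediate map used with \Cref{lem:pre-post-comp} or \Cref{lem:invertible-tensoring} is genuinely a closed degree-zero morphism (this is where the explicit formulas $\ev$, $\coev$ from \Cref{def:hinvertibility} and the fact that the equivalence $Y\to X$ is closed of degree zero get used), and that the dg-algebra structure is respected on the nose by $\phi$. Since the paper works with strictly unital monoidal dg-categories, the unitors cause no genuine difficulty. Everything else is a direct composition of the cited lemmas.
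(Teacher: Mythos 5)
Your treatment of the second claim is fine, and your overall instinct (everything follows from \Cref{lem:pre-post-comp} and \Cref{lem:invertible-tensoring}) matches the paper. But the argument for the first claim, $\End_\CS(\one)\qis\End_\CS(X)$, contains a genuine error. Applying \Cref{lem:invertible-tensoring} with $Y_1=\one$ and $Y_2=Y$ yields a homotopy equivalence $\Hom_\CS(\one,Y)\hte\Hom_\CS(X\hcomp\one,X\hcomp Y)$, \emph{not} $\Hom_\CS(\one,\one)\hte\Hom_\CS(X\hcomp\one,X\hcomp Y)$ as in your first display. Moreover, even after correcting this, the chain you build terminates at $\Hom_\CS(\one,X)$, which is not $\End_\CS(X)=\Hom_\CS(X,X)$ and is in general not quasi-isomorphic to $\End_\CS(\one)$ either (take $X=\Sigma\one$ in complexes of vector spaces: $\End_\CS(X)\cong\k$ in degree $0$ but $\Hom_\CS(\one,X)\cong\k$ in degree $1$). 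So the asserted conclusion $\End_\CS(\one)\hte\End_\CS(X)$ does not follow from the equivalences you actually constructed; the object $Y$ should not appear in this part of the argument at all.

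The fix is immediate and is exactly what the paper does: apply \Cref{lem:invertible-tensoring} with $Y_1=Y_2=\one$ to get $\Hom_\CS(\one,\one)\hte\Hom_\CS(X\hcomp\one,X\hcomp\one)=\Hom_\CS(X,X)$ in one step, and then observe (as you correctly do) that the map $f\mapsto \id_X\hcomp f$ (equivalently $f\mapsto f\hcomp\id_X$ in the right-handed version) is a dg-algebra homomorphism by the interchange law, hence a quasi-isomorphism of dg-algebras. Your route for the second claim, $\Hom_\CS(X,Y)\hte\Hom_\CS(X'\hcomp X,X'\hcomp Y)\hte\Hom_\CS(\one,\one)$, is valid (note $X'$ is itself invertible up to homotopy, so the lemma applies); the paper gets there slightly more directly by using the homotopy equivalence $Y\hte X$ and \Cref{lem:pre-post-comp} to reduce to the first claim, but both work.
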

\begin{proof}
    \Cref{lem:invertible-tensoring} gives homotopy equivalences
    \[
        \Hom_\CS(\one,\one)
        \hte
        \Hom_\CS(X \hcomp \one,X \hcomp \one)
        \hte
    \Hom_\CS(X,X)
    \]
    and the left-to-right map $\phi(f)=f\hcomp \id_X$ is clearly an algebra
    homomorphism, yielding an algebra isomorphism on homology. \Cref{lem:pre-post-comp} implies the second statement.
\end{proof}

\begin{proposition}
    \label{prop:cohomologymorphismcomplex}
    Let $Y_1,Y'_1$ be $1$-morphisms in $\DS_m$ and $Y_2,Y'_2$ $1$-morphisms in $\DS_n$. Then we have a
    quasi-isomorphism of morphism complexes:
  \begin{equation}
    \label{eq:homcomplex}
      \Hom_{\Ch(\DS_{m+n})}(X_{m,n}\hcomp (Y_1\boxtimes Y_2),(Y'_2\boxtimes Y'_1)\hcomp X_{m,n})
      \qis 
      \Hom_{\Ch(\DS_{m+n})}(Y_1\boxtimes Y_2,Y'_1\boxtimes Y'_2).
  \end{equation}
      In particular, these morphism complexes have cohomology concentrated in
      degree zero. 
\end{proposition}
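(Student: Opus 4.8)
\textbf{Proof plan for \Cref{prop:cohomologymorphismcomplex}.}
The plan is to produce the quasi-isomorphism \eqref{eq:homcomplex} by stripping off the two copies of the cabled crossing $X_{m,n}$ from the Hom-complex, one at a time, using invertibility of Rouquier complexes together with the homotopy-equivalence results already proved. Recall from \Cref{cor:Rouqinv} that $X_{m,n}$ (being a grading-shifted Rouquier complex) is invertible up to homotopy in $\Chb(\DS_{m+n})$, in the sense made precise by \Cref{def:hinvertibility}; thus \Cref{lem:invertible-tensoring} applies with $X = X_{m,n}$.

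First I would apply the ``tensoring'' version of \Cref{lem:invertible-tensoring}, in the form that postcomposition with the invertible object gives a homotopy equivalence $\Hom_{\CS}(Y_1\hcomp X, Y_2\hcomp X)\hte \Hom_{\CS}(Y_1,Y_2)$, to remove the right-hand factor $X_{m,n}$. This gives
\[
\Hom_{\Ch(\DS_{m+n})}\!\big(X_{m,n}\hcomp (Y_1\boxtimes Y_2),\,(Y'_2\boxtimes Y'_1)\hcomp X_{m,n}\big)
\hte
\Hom_{\Ch(\DS_{m+n})}\!\big(X_{m,n}\hcomp (Y_1\boxtimes Y_2),\,(Y'_2\boxtimes Y'_1)\big).
\]
Then I would apply the left-tensoring version of \Cref{lem:invertible-tensoring} to remove the remaining factor $X_{m,n}$ from the source, obtaining
\[
\Hom_{\Ch(\DS_{m+n})}\!\big(X_{m,n}\hcomp (Y_1\boxtimes Y_2),\,(Y'_2\boxtimes Y'_1)\big)
\hte
\Hom_{\Ch(\DS_{m+n})}\!\big(Y_1\boxtimes Y_2,\, X'_{m,n}\hcomp(Y'_2\boxtimes Y'_1)\big),
\]
where $X'_{m,n}$ is a homotopy inverse of $X_{m,n}$. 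The crucial last input is the slide chain map of \Cref{prop:sliding-objects}: the homotopy equivalence $\slide'_{Y'_1,Y'_2}$ (or rather its inverse, as produced explicitly in \Cref{prop:sliding-objects}) identifies $X'_{m,n}\hcomp(Y'_2\boxtimes Y'_1)$ with $(Y'_1\boxtimes Y'_2)\hcomp X'_{m,n}$ up to homotopy; composing with this and then applying \Cref{lem:pre-post-comp} (post-composition with a homotopy equivalence induces a homotopy equivalence of Hom-complexes) lets me strip the final $X'_{m,n}$ and arrive at $\Hom_{\Ch(\DS_{m+n})}(Y_1\boxtimes Y_2, Y'_1\boxtimes Y'_2)$, as desired. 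All the intermediate homotopy equivalences are in particular quasi-isomorphisms, so the composite is a quasi-isomorphism.

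For the ``in particular'' clause I would argue as follows: $Y_1\boxtimes Y_2$ and $Y'_1\boxtimes Y'_2$ are objects of $\DS_{m+n}$ viewed inside $\Chb(\DS_{m+n})$, i.e.\ chain complexes concentrated in cohomological degree zero. Hence $\Hom_{\Ch(\DS_{m+n})}(Y_1\boxtimes Y_2, Y'_1\boxtimes Y'_2)$ is a complex concentrated in degree zero (the morphism complex between two degree-zero objects of the pretriangulated hull is just the degree-zero Hom-space of the underlying category, with zero differential), so its cohomology is concentrated in degree zero; transporting along the quasi-isomorphism \eqref{eq:homcomplex} gives the claim for the left-hand complex. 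I expect the main obstacle to be purely bookkeeping: keeping track of which variant of \Cref{lem:invertible-tensoring} is being applied at each stage, matching up the explicit $\ev$/$\coev$ data for $X_{m,n}$ (and the grading shift $\langle -mn\rangle$) with the hypotheses of that lemma, and checking that the various homotopy equivalences compose to a single well-defined quasi-isomorphism rather than just an abstract zig-zag — but since each step is either \Cref{lem:invertible-tensoring}, \Cref{lem:pre-post-comp}, or \Cref{prop:sliding-objects}, no genuinely new computation is required.
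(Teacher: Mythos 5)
There is a genuine gap in your first (and last) ``stripping'' step. \Cref{lem:invertible-tensoring} only cancels an invertible factor $X$ that appears \emph{on the same side of both arguments} of the Hom-complex: it gives $\Hom_\CS(Y_1\hcomp X, Y_2\hcomp X)\hte\Hom_\CS(Y_1,Y_2)$. In your first step the right-hand factor $X_{m,n}$ occurs only in the \emph{target} $(Y'_2\boxtimes Y'_1)\hcomp X_{m,n}$, not in the source $X_{m,n}\hcomp(Y_1\boxtimes Y_2)$, so the claimed homotopy equivalence with $\Hom(X_{m,n}\hcomp(Y_1\boxtimes Y_2),\,Y'_2\boxtimes Y'_1)$ does not follow from the lemma and is in fact false in general: it would amount to $B'\hcomp X_{m,n}\simeq B'$, and already for $Y_i=Y'_i=\one$ it would assert $\Hom(X_{m,n},X_{m,n})\hte\Hom(X_{m,n},\one)$, which fails (the left side is quasi-isomorphic to $\End(\one)$ by \Cref{cor:end-invertible}, the right side is not). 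The same problem recurs at the end when you strip the final $X'_{m,n}$ from $\Hom(Y_1\boxtimes Y_2,\,(Y'_1\boxtimes Y'_2)\hcomp X'_{m,n})$. The two illegitimate removals happen to ``cancel'' so that your endpoints match the true statement, but the intermediate complexes are not quasi-isomorphic to the ones you start and end with, so the argument as written does not prove anything.

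The repair is exactly the paper's (shorter) route: first use the slide homotopy equivalence $\slide_{Y'_1,Y'_2}$ from \Cref{prop:sliding-objects} together with \Cref{lem:pre-post-comp} to replace the target $(Y'_2\boxtimes Y'_1)\hcomp X_{m,n}$ by $X_{m,n}\hcomp(Y'_1\boxtimes Y'_2)$, so that $X_{m,n}$ now sits as a \emph{left} factor of both arguments; then a single application of \Cref{lem:invertible-tensoring} cancels it and lands you directly in $\Hom(Y_1\boxtimes Y_2,\,Y'_1\boxtimes Y'_2)$. Your argument for the ``in particular'' clause (both objects are concentrated in degree zero, so the target Hom-complex has zero differential and cohomology only in degree zero) is fine.
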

\begin{proof}
    By \Cref{lem:pre-post-comp} the slide chain homotopy
    equivalences from \Cref{prop:sliding-objects} provide a
    homotopy equivalence:
    \begin{gather*}
        \Hom_{\Ch(\DS_{m+n})}(X_{m,n}\hcomp (Y_1\boxtimes Y_2),(Y'_2\boxtimes Y'_1)\hcomp X_{m,n})\\
      \qquad \qquad\qquad \qquad\hte
      \Hom_{\Ch(\DS_{m+n})}(X_{m,n}\hcomp (Y_1\boxtimes Y_2),X_{m,n}\hcomp (Y'_1\boxtimes Y'_2)). \end{gather*}
      Now we apply
      \Cref{lem:invertible-tensoring} to ``cancel'' the invertible
      factor $X_{m,n}$.
\end{proof}

\begin{lemma}[Compatibility of slides and tensorators]
    \label{lem:slidetensorator}  
Let $m,n\in \N_0$ and consider a tensorator $2$-morphism $\boxtimes_{c_{\underline{i}},c_{\underline{j}}}\colon
      (c_{\underline{i}} \boxtimes \id) \hcomp (\id \boxtimes c_{\underline{j}})\to(\id \boxtimes c_{\underline{j}})\hcomp (c_{\underline{i}} \boxtimes \id)
    $ in $\DS_{m,n}$ as in \Cref{pseudo7}. We also consider the tensorator $\boxtimes_{c_{\underline{j}},c_{\underline{i}}}$ in $\DS_{n,m}$. Then we have
    \begin{equation}
        \label{eqn:naturalitytensorator}
        \begin{tikzcd}[scale=1,column sep=2.5cm]        
            \cabledcross_{m,n} \hcomp (c_{\underline{i}} \boxtimes c_{\underline{j}})
            \ar[r, "\slide_{c_{\underline{i}},c_{\underline{j}}}"]
            \ar[d, swap,"\id \hcomp (\boxtimes_{c_{\underline{i}},c_{\underline{j}}})"]
        &
        (c_{\underline{i}}\boxtimes c_{\underline{j}})\hcomp \cabledcross_{m,n} 
            \ar[d,"(\boxtimes_{c_{\underline{j}},c_{\underline{i}}})\hcomp \id"]
            \ar[dl,swap,Rightarrow,shorten >=5ex,shorten <=5ex, "h_{\boxtimes_{c_{\underline{i}},c_{\underline{j}}}}"]\\
              \cabledcross_{m,n} \hcomp c_{\underline{i}} \boxtimes c_{\underline{j}}
                \ar[r, swap,"\slide_{c_{\underline{j}},c_{\underline{i}}}"]
            &
            (c_{\underline{i}} \boxtimes c_{\underline{j}}) \hcomp \cabledcross_{m,n} 
      \end{tikzcd}
    \end{equation}
    commutes up to homotopy; a homotopy $h_{\boxtimes_{c_{\underline{i}},c_{\underline{j}}}}$ will be
    constructed in the proof.
\end{lemma}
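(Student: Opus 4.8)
The plan is to deduce \Cref{lem:slidetensorator} from the already-established \Cref{thm:naturality} (First Naturality Theorem) together with the general structural results of \Cref{ssec:hhomotopies}, rather than to re-run the explicit computations of \Cref{sec:slidegens}. The key observation is that the tensorator $\boxtimes_{c_{\underline{i}},c_{\underline{j}}}$ is \emph{not} a new kind of $2$-morphism: by \Cref{pseudo7} it is a specific diagram built from four-valent vertices, hence an $\hcomp$- and $\vcomp$-composite of diagrammatic generators of $\DS_{m+n}$. Therefore \Cref{thm:naturality} already applies to it, once we identify $\boxtimes_{c_{\underline{i}},c_{\underline{j}}}$ as a $2$-morphism $f_1 \boxtimes f_2$ fitting the hypotheses.

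\textbf{Key steps.} First I would carefully set up the bookkeeping: write $c_{\underline{i}}\boxtimes c_{\underline{j}}$ both as the source and (after the shuffle) essentially as the target of $\boxtimes_{c_{\underline{i}},c_{\underline{j}}}$, and record that, under the equivalence $\DS_{m,n}\cong\DS_m\boxtimes\DS_n$ of \Cref{def:parabolicsub}, the diagram \eqref{pseudonat} is a composite whose constituent four-valent vertices are each of the form $\id_{\one_?}\boxtimes(\text{generator})\boxtimes\id_{\one_?}$ or the $\boxtimes$ of two words in generators from the two sides. Second, I would invoke \Cref{thm:naturality} for each such elementary factor to obtain homotopies $h$ filling the naturality square \eqref{eqn:naturality2}, and then use \Cref{lem:h-vcomp} and \Cref{lem:h-hcomp} to assemble these into a single homotopy $h_{\boxtimes_{c_{\underline{i}},c_{\underline{j}}}}$ realizing the commutativity of \eqref{eqn:naturalitytensorator} up to homotopy. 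Third — and this is the cleaner route — I would instead argue purely homologically: by \Cref{prop:cohomologymorphismcomplex}, the morphism complex
\[
\Hom_{\Ch(\DS_{m+n})}\bigl(\cabledcross_{m,n}\hcomp(c_{\underline{i}}\boxtimes c_{\underline{j}}),\;(c_{\underline{i}}\boxtimes c_{\underline{j}})\hcomp\cabledcross_{m,n}\bigr)
\]
is quasi-isomorphic to $\Hom_{\Ch(\DS_{m+n})}(c_{\underline{i}}\boxtimes c_{\underline{j}},\,c_{\underline{i}}\boxtimes c_{\underline{j}})$, which (being a chain complex over $\DS_{m+n}$ concentrated in degree $0$, with $\DS_{m+n}$ not idempotent-complete but with morphism spaces concentrated in degree $0$ after passing to the diagrammatic calculus) has cohomology concentrated in degree $0$. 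Hence the two closed degree-zero morphisms $((\boxtimes_{c_{\underline{j}},c_{\underline{i}}})\hcomp\id)\vcomp\slide_{c_{\underline{i}},c_{\underline{j}}}$ and $\slide_{c_{\underline{j}},c_{\underline{i}}}\vcomp(\id\hcomp(\boxtimes_{c_{\underline{i}},c_{\underline{j}}}))$ are homotopic as soon as they represent the same class in $H^0$; and that equality of classes follows from Rouquier canonicity (\Cref{thm:Rouquier-canonicity}) plus the fact that both sides descend, via \Cref{rem:Rouquierswap}, to the \emph{same} map of permutation bimodules — namely the one induced by naturality of the shuffle permutation on the $\boxtimes$-product. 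I would then lift an explicit witnessing homotopy from the homotopy-equivalence data, exactly as in the proof of \Cref{prop:sliding-objects}.

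\textbf{Main obstacle.} The genuine subtlety is not the existence of \emph{some} homotopy — that is forced by \Cref{prop:cohomologymorphismcomplex} — but pinning down one that is compatible with the later higher-homotopy bookkeeping of \Cref{sec:extensiontocomplexes}; i.e.\ one wants $h_{\boxtimes_{c_{\underline{i}},c_{\underline{j}}}}$ to be produced by the \emph{same} reduction-and-assembly recipe (\Cref{lem:h-vcomp}, \Cref{lem:h-hcomp}, \Cref{lem:h-coxeter}, \Cref{lem:h-smaller-coxeter}) that defines $h_{f_1\boxtimes f_2}$ in general, so that the twisted $A_\infty$-centralizer structure sees the tensorator coherently. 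Concretely, the work is to check that the shuffle diagram \eqref{pseudonat} decomposes into the elementary pieces to which \Cref{thm:naturality} applies \emph{with consistent choices}, and that the far-commutativity isomorphisms introduced along the way match those used in \Cref{prop:sliding-objects}. This is bookkeeping-heavy but follows the template already established; I would present it as: decompose $\boxtimes_{c_{\underline{i}},c_{\underline{j}}}$ along rows of the shuffle, apply the reduction lemmas row by row, and collect the resulting partial homotopies, with the final verification that $d(h_{\boxtimes_{c_{\underline{i}},c_{\underline{j}}}})$ equals the stated difference being a formal consequence of \Cref{lem:h-vcomp} and \Cref{lem:h-hcomp} exactly as in the proof of \Cref{thm:naturality}.
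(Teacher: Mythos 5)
Your ``cleaner route'' is essentially the paper's argument: the paper also deduces the lemma from \Cref{prop:cohomologymorphismcomplex}, which forces the two chain maps in \eqref{eqn:naturalitytensorator} to be homotopic once their classes in $H^0$ agree. However, your first assembly route rests on a misidentification. The four-valent vertices constituting the tensorator \eqref{pseudonat} cross a color $c_{i_a}$ (with $i_a\le m-1$) with a shifted color $c_{m+j_b}$; such a crossing is a morphism of $\DS_{m,n}$ but it is \emph{not} of the form $f_1\boxtimes f_2$ with $f_1\in\DS_m$, $f_2\in\DS_n$, since $\boxtimes$ on $2$-morphisms only ever produces horizontal juxtapositions $\jj_{0|n}(f_1)\hcomp\jj_{m|0}(f_2)$ of diagrams supported on disjoint color blocks, never a crossing between the blocks. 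So \Cref{thm:naturality} does not apply to these factors — this is precisely the gap that \Cref{lem:slidetensorator} exists to fill (and why \Cref{thm:naturalityfull} is obtained only by combining the two). The reduction lemmas \Cref{lem:h-vcomp} and \Cref{lem:h-hcomp} are still legitimately used, but only to reduce to the case where $c_{\underline{i}}=c_i$ and $c_{\underline{j}}=c_j$ are single letters, i.e.\ a single four-valent vertex.

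That reduction to length-one words is not optional for the endgame, and this is where your third route is under-justified. Knowing that the hom complex has cohomology concentrated in degree zero only converts the problem into comparing two classes in $H^0$; for general words $c_{\underline{i}},c_{\underline{j}}$ the degree-zero endomorphism space of $c_{\underline{i}}\boxtimes c_{\underline{j}}$ can be large (e.g.\ $B_1\hcomp B_1\cong B_1\langle 1\rangle\oplus B_1\langle -1\rangle$), so ``both sides descend to the same map of permutation bimodules'' would need a genuine faithfulness argument that you do not supply, and \Cref{thm:Rouquier-canonicity} by itself says nothing about these particular chain maps. The paper's resolution is to reduce to a single four-valent vertex, where the relevant bidegree-zero piece of $H^0$ is one-dimensional, so the two maps are automatically homotopic up to a scalar; the scalar is then pinned to $1$ by postcomposing with end-dot morphisms and using their naturality squares from \Cref{thm:naturality}. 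If you replace your permutation-bimodule step with this one-dimensionality-plus-end-dot argument (after the length-one reduction), your proof closes.
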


\begin{proof}
     By the previous reduction steps, it suffices to prove this in the case when
     $c_{\underline{i}}=c_i$ and $c_{\underline{j}}=c_j$ are words of length
     one, which implies that $\boxtimes_{c_{\underline{i}},c_{\underline{j}}}$
     is a single four-valent vertex. In this case, it follows from
     \Cref{prop:cohomologymorphismcomplex} that the hom complex in
     \eqref{eqn:naturalitytensorator} has cohomology concentrated in homological
     degree zero which is, moreover, $1$-dimensional in bidegree zero. Thus, the
     two chain maps being compared in \eqref{eqn:naturalitytensorator} have to
     be homotopic to scalar multiples of each other. In fact, the scalars are
     one, as can be seen by postcomposing with suitable end-dot morphisms and
     using their naturality squares from \cref{thm:naturality}.  
    \end{proof}

\begin{theorem}
    \label{thm:naturalityfull}  
    Let $m,n\in \N_0$ and $f\colon Y \to Y'$ be a morphism in $\DS_{m,n}$ from \Cref{def:parabolicsub} and denote by $\swap_{m,n}(f)\colon \swap_{m,n}(Y) \to \swap_{m,n}(Y')$ the swapped morphism in $\DS_{n,m}$, see \Cref{def:swap}.
    Then the diagramm
    \begin{equation}
        \label{eqn:naturality}
        \begin{tikzcd}[scale=1,column sep=2.5cm]        
            \cabledcross_{m,n} \hcomp (Y)
            \ar[r, "\slide_{Y}"]
            \ar[d, swap,"\id \hcomp f"]
        &
        \swap_{m,n}(Y)\hcomp \cabledcross_{m,n} 
            \ar[d,"\swap_{m,n}(f)\hcomp \id"]
            \ar[dl,swap,Rightarrow,shorten >=5ex,shorten <=5ex, "h_{f}"]\\
              \cabledcross_{m,n} \hcomp Y'
                \ar[r, swap,"\slide_{Y'}"]
            &
            \swap_{m,n}(Y') \hcomp \cabledcross_{m,n} 
      \end{tikzcd}
    \end{equation}
    commutes up to homotopy; a homotopy $h_f$ will be
    constructed in the proof.
\end{theorem}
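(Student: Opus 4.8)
The plan is to reduce \Cref{thm:naturalityfull} to the two naturality results we have already established: \Cref{thm:naturality} (naturality with respect to diagrammatic generating $2$-morphisms, i.e.\ those coming from a single $\DS_m$ or $\DS_n$) and \Cref{lem:slidetensorator} (naturality with respect to the tensorators). The point is that every $2$-morphism $f$ in $\DS_{m,n}$ is, up to linear combinations, a $\vcomp$-composite of $\hcomp$-composites of two types of elementary pieces: (a) diagrammatic generators supported on strands all of whose colors lie in $\intset{m-1}$, or all of whose colors lie in $[m+1,m+n-1]$ --- i.e.\ generators that are images of generators from $\DS_m$ or $\DS_n$ under the inclusion $\DS_{m,n}\hookrightarrow\DS_{m+n}$ --- and (b) four-valent vertices crossing a strand of $\DS_m$-color past a strand of $\DS_n$-color, which are exactly the components of the tensorators $\boxtimes_{c_{\underline i},c_{\underline j}}$ from \Cref{pseudo7}. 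This is built into the very definition of $\DS_{m,n}$ in \Cref{def:parabolicsub}: the morphism spaces are spanned by generating morphisms of $\DS_{m+n}$ not involving $c_m$, which are precisely the generators of the two parabolic factors together with the far-crossings between them.

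\textbf{Key steps.} First I would record a $\vcomp$-compatibility lemma analogous to \Cref{lem:h-vcomp}, but phrased for the naturality square \eqref{eqn:naturality} with $\swap_{m,n}$ in place of $\boxtimes^\opp$: if the square commutes up to homotopies $h_f$ and $h_{f'}$ for composable $f,f'$, then it commutes for $f'\vcomp f$ up to $h_{f'}\vcomp(\id\hcomp f) + (\swap_{m,n}(f')\hcomp\id)\vcomp h_f$ --- this is the same straightforward $d$-expansion as in \Cref{lem:h-vcomp}. Together with the $\hcomp$-compatibility lemma \Cref{lem:h-hcomp} (whose proof is insensitive to whether the ``swap'' is $\boxtimes^\opp$ or $\swap_{m,n}$, since it only uses the two outer slide maps to bracket the inner homotopy), and with $\k$-linearity, this reduces the construction of $h_f$ to the case where $f$ is a single elementary piece of type (a) or (b). For type (a), $f$ is --- after the further reductions already carried out in the proof of \Cref{thm:naturality} via \Cref{lem:h-coxeter} and \Cref{lem:h-smaller-coxeter} --- handled verbatim by \Cref{prop:slidehomotopyforgen}; one only needs to observe that $\swap_{m,n}$ restricted to such $f$ agrees with the relabelling $f_1\boxtimes f_2\mapsto f_2\boxtimes f_1$ used there, because $\swap_{m,n}$ is induced by the $(m,n)$-shuffle on variables and the $(m-1,n-1)$-shuffle on colors (\Cref{def:swap}), which is exactly the color/variable bookkeeping implicit in the statement of \Cref{thm:naturality}. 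For type (b), $f$ is a tensorator component and the homotopy is supplied by \Cref{lem:slidetensorator}.

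\textbf{Conclusion and the main obstacle.} Assembling the pieces via the $\vcomp$- and $\hcomp$-lemmas produces a homotopy $h_f$ with $d(h_f) = (\swap_{m,n}(f)\hcomp\id)\vcomp\slide_Y - \slide_{Y'}\vcomp(\id\hcomp f)$ for arbitrary $f\in\DS_{m,n}$, which is the claim. I expect the main subtlety --- rather than a genuine obstacle --- to be the bookkeeping in step one: one must check that $\swap_{m,n}$ is genuinely a \emph{monoidal} functor $\DS_{m,n}\to\DS_{n,m}$ and that it intertwines $\boxtimes$ and $\boxtimes^\opp$ in the sense needed for the reduction lemmas to glue, i.e.\ that on a product $f_1\boxtimes f_2$ with $f_i$ from the parabolic factors one has $\swap_{m,n}(f_1\boxtimes f_2) = f_2\boxtimes f_1$ up to the appropriate tensorator, so that the type-(a) and type-(b) reductions are mutually consistent. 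Once this compatibility is in place --- and it follows formally from the definition of $\swap_{m,n}$ as a shuffle on the ordered generator set together with the already-established \eqref{slideboxtimescomp} and \Cref{lem:slidetensorator} --- the remaining work is the same routine $d$-expansions that appear throughout \Cref{sec:slidereduction}, and no new computation beyond \Cref{prop:slidehomotopyforgen} and \Cref{lem:slidetensorator} is required.
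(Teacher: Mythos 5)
Your proposal is correct and follows essentially the same route as the paper, whose proof of \Cref{thm:naturalityfull} is literally the one-liner ``combination of \Cref{thm:naturality} and \Cref{lem:slidetensorator}''; you have simply spelled out how that combination works, namely by decomposing any morphism of $\DS_{m,n}$ into generators from the two parabolic factors and tensorator components and gluing the homotopies via (the $\swap_{m,n}$-versions of) \Cref{lem:h-vcomp} and \Cref{lem:h-hcomp}. The bookkeeping point you flag about $\swap_{m,n}$ agreeing with the flip $f_1\boxtimes f_2\mapsto f_2\boxtimes f_1$ on the parabolic factors is indeed the only subtlety, and it holds by \Cref{def:swap}.
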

\begin{proof}
This is a combination of \Cref{thm:naturality} and \Cref{lem:slidetensorator}.
\end{proof}

\begin{corollary}
    \label{cor:well-defined-up-to-coherent-homotopy}
    The homotopies constructed in \Cref{thm:naturality} and \Cref{thm:naturalityfull}  are well-defined and unique up to coherent higher homotopy.
\end{corollary}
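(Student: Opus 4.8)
The plan is to reduce the statement to the cohomology-vanishing result already in hand, namely \Cref{prop:cohomologymorphismcomplex}. The point is that the homotopies $h_f$ from \Cref{thm:naturality} (and \Cref{thm:naturalityfull}) should be interpreted as the degree $-1$ part of a system of witnesses that a certain morphism complex is acyclic in positive cohomological degrees. More precisely, for a morphism $f\colon Y\to Y'$ in $\DS_{m,n}$ (or $f=f_1\boxtimes f_2$), consider the mapping-cone-type complex whose underlying graded object is
\[
\Hom^{\bullet}_{\Chb(\DS_{m+n})}\!\big(\cabledcross_{m,n}\hcomp Y,\ \swap_{m,n}(Y')\hcomp \cabledcross_{m,n}\big),
\]
and inside it the closed degree zero morphism $g_f := (\swap_{m,n}(f)\hcomp \id)\vcomp \slide_Y - \slide_{Y'}\vcomp(\id\hcomp f)$. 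By \Cref{thm:naturality}/\Cref{thm:naturalityfull} this $g_f$ is exact, i.e.\ $g_f = d(h_f)$ for the explicit $h_f$ constructed there. The assertion ``well-defined and unique up to coherent higher homotopy'' then amounts to: the choice of a primitive $h_f$ for $g_f$, together with the analogous choices at all higher levels (primitives witnessing that differences of such primitives are themselves exact, and so on), forms a contractible space of choices.

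The key steps, in order, are as follows. First I would make precise the relevant ``space of choices'': given the closed exact morphism $g_f$ of degree $0$, the set of degree $-1$ morphisms $h$ with $d(h)=g_f$ is a torsor over the degree $-1$ cocycles, and two such primitives differ by a coboundary precisely when they are connected by a degree $-2$ homotopy, etc. Concretely, this says the primitives of $g_f$ form a space equivalent to the (based) loop space of the cohomology $H^{\bullet}$ of the Hom-complex above, shifted appropriately. Second, I would invoke \Cref{prop:cohomologymorphismcomplex}: via the slide homotopy equivalences of \Cref{prop:sliding-objects} and the invertibility of $\cabledcross_{m,n}$ up to homotopy (\Cref{cor:Rouqinv}), this Hom-complex is quasi-isomorphic to $\Hom_{\Chb(\DS_{m+n})}(Y_1\boxtimes Y_2, Y_1'\boxtimes Y_2')$, hence has cohomology concentrated in degree zero. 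Consequently $H^{\ell}=0$ for all $\ell\neq 0$ of the relevant Hom-complex, so every degree $-1$ cocycle is a coboundary, every degree $-2$ cocycle is a coboundary, and so on: there are no obstructions at any level, and the space of all choices of $h_f$ and its iterated higher homotopies is contractible. Third, for the coherence/compatibility part — that the $h_f$ assemble consistently with $\vcomp$, $\hcomp$, $\boxtimes$ composition as in the reduction lemmas \Cref{lem:h-vcomp}, \Cref{lem:h-hcomp}, \Cref{lem:h-coxeter}, \Cref{lem:h-smaller-coxeter} and \Cref{lem:slidetensorator} — I would observe that each such compatibility is again an identity between closed morphisms in a Hom-complex with cohomology in degree zero only, so any two valid choices of the structure constants differ by a coboundary and hence are coherently homotopic. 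The cleanest packaging is to phrase this in terms of the $A_\infty$-/dg-centralizer machinery set up in \Cref{sec:extensiontocomplexes}: the cohomology-concentration statement of \Cref{prop:cohomologymorphismcomplex} is exactly the vanishing needed to conclude that the relevant mapping space in the centralizer is discrete (equivalently, that the forgetful map from the $\infty$-groupoid of naturality data to the set of prebraiding data is a trivial Kan fibration).

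The main obstacle I expect is purely bookkeeping: spelling out precisely which Hom-complexes control which level of higher homotopy, and checking that \Cref{prop:cohomologymorphismcomplex} (as stated, for a single pair $(Y,Y')$) suffices to kill \emph{all} the higher obstructions simultaneously — one must be slightly careful that the higher-coherence obstructions live in Hom-complexes of the same shape (between tensor products of Bott--Samelson bimodules, conjugated by a cabled crossing) rather than in something more complicated, so that the same vanishing applies uniformly. Once that identification is made, the argument is formal: contractibility of the space of choices follows from acyclicity in nonzero cohomological degrees. I would also remark that for the reduction lemmas the relevant obstruction groups are literally the $H^{\neq 0}$ of the complexes appearing in their statements, all of which inherit concentration in degree zero from \Cref{prop:cohomologymorphismcomplex} by the same slide-and-cancel argument, so no new input is needed.
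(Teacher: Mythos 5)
Your proposal is correct and follows essentially the same route as the paper: the difference of two choices of $h_f$ is a closed element in nonzero degree of the morphism complex \eqref{eq:homcomplex}, hence exact by the degree-zero concentration of \Cref{prop:cohomologymorphismcomplex}, and iterating this gives the full tower of higher homotopies. The paper's proof is exactly this iteration (without the torsor/contractibility packaging or the extra discussion of coherence with the reduction lemmas, which it does not address in this corollary), so no further comparison is needed.
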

\begin{proof}
    Assume we are given $2$-morphisms $f\colon Y \to Y'$ in $\DS_{m,n}$. Then $h:=h_{f}$ is an
    element of degree one of the morphism complex \eqref{eq:homcomplex} such
    that $    d(h) =  (\swap_{m,n}(f) \hcomp \id) \vcomp \slide_{Y} -
    \slide_{Y'}\vcomp (\id \hcomp f)$. If $h'$ is another such homotopy, then
    the difference $h-h'$ is closed and thus exact, since the 
    first cohomology of \eqref{eq:homcomplex} vanishes by \Cref{prop:cohomologymorphismcomplex}.
    Consequently, there exists a
    higher homotopy $h^{(2)}$ with $d(h^{(2)})=h-h'$. Now we can
    iterate: $h^{(2)}$ may not be unique, but any different choice would have
    to be homotopic up to an even higher homotopy $h^{(3)}$, etc.

\end{proof}

\section{Naturality for the braiding}
\label{sec:extensiontocomplexes}
So far we have only provided naturality data for sliding (diagrammatic) Bott--Samelson bimodules (and
morphisms between them) through the proposed braiding given by Rouquier
complexes of cabled crossings. To extend this to complexes,
we are forced to study higher homotopies in a conceptual way.

\subsection{The 2-sided bar construction for monoidal dg-categories}
We briefly review the bar construction for (monoidal) dg-categories with the focus in explaining how it
can be used to formalize what it means for a transformation between dg-functors
to be \emph{natural up to homotopy}.  For background on bar constructions, see e.g. \cite{Positselski}, we will use the conventions from \cite[\S5.1]{GHW}. The case of interest for us is the prospective
braiding on chain complexes over the diagrammatic categories, which we can view as a family of functors related by transformations given in terms of slide maps from \Cref{thm:naturality} which we like to show are  natural up to homotopy.

Let $\CS$ be a dg-category over a field $\k$. We will consider $\CS$ as a
locally-unital dg-algebra with orthogonal idempotents $1_S$ indexed by objects
$S\in \Ob(\CS)$:
\[
\CS = \bigoplus_{S,T\in \Ob(\CS)} 1_T \CS 1_S,\qquad 1_T \CS 1_S:=\Hom_{\CS}(S,T)  
\]
with multiplication given by the composition of morphism.  Any dg-subcategory $\IS\subset \CS$ corresponds to a dg-subalgebra. 
\begin{example}
    \label{exa:subcat}
If $\CS$ is a dg-category, then we write $\IS_\CS$ for the dg-subcategory with
the same objects as $\CS$, but whose only non-zero morphisms are the scalar
multiples of identity morphisms. Then $\IS:=\IS_\CS$ is a dg-subalgebra of $\CS$. 
\end{example}

For simplicity we often write $T \CS S:=1_T \CS 1_S$.  
We can consider $\CS$ as a dg-bimodule over itself, which we denote by ${}_{\CS}\CS_{\CS}$. By shifting all
hom complexes we obtain dg-bimodules ${}_{\CS}\Sigma^k(\CS)_{\CS}$ for any $k\in
\Z$, where  the bimodule structure is given by $f_L \cdot f \cdot f_R:=
(-1)^{k|f_L|} f_L\circ f \circ f_R$.

Given $\IS\subset \CS$ as in \Cref{exa:subcat}, we can consider tensor products of the form ${}_\CS\CS\otimes_{\IS}\CS
\otimes_{\IS} \cdots \otimes_{\IS} \CS \otimes_{\IS} \CS_\CS$ and denote
elements of such a tensor product by $f_0\bbar f_1\bbar \cdots \bbar f_{r}\bbar
f_{r+1}$.  Note these tensor products are dg-bimodules for $\CS$, and in particular complexes. 

\begin{definition}
    Let $\CS$ be a dg-category and $\IS\subset \CS$ a dg-subcategory.  The 
    \emph{2-sided bar complex} of the pair $(\CS,\IS)$ is the dg-bimodule for $\CS$ given by the total complex
    $\barcx(\CS,\IS)$ of the following bicomplex:
\[ 
\cdots 
\xrightarrow{} 
{}_\CS\CS\otimes_{\IS}\CS \otimes_{\IS} \CS \otimes_{\IS} \CS_\CS    
\xrightarrow{} 
{}_\CS\CS\otimes_{\IS}\CS \otimes_{\IS} \CS_\CS    
\xrightarrow{} 
{}_\CS\CS \otimes_{\IS} \CS_\CS
\]
where the horizontal differential is given by:
\[
    f_0\bbar f_1\bbar \cdots \bbar f_{r}\bbar f_{r+1} 
    \xmapsto{d_{\mathrm{bar}}} 
    \sum_{i=0}^{r} (-1)^i f_0\bbar \cdots\bbar f_i \circ f_{i+1}\bbar\cdots \bbar f_{r+1}
    \]
Formally, we have $\barcx(\CS,\IS):=\bigoplus_{r\geq 0}
\Sigma^r(\underbrace{\CS\otimes_\IS \cdots \otimes_\IS \CS}_{r+2 \text{
factors}})$ with the differential given by the sum of $d_{\mathrm{bar}}$ and the
internal differentials on the summands $\Sigma^r(\CS\otimes_\IS \cdots
\otimes_\IS \CS)$, see e.g. \cite[\S5.1]{GHW} for explicit formulas. 
\end{definition}

We will be interested in the special case $\IS:=\IS_\CS$ and abbreviate $\barcx(\CS):=\barcx(\CS,\IS_\CS)$.
The 2-sided bar complex $\barcx(\CS)$ is then,  considered as a dg-bimodule for $\CS$, 
spanned by configurations $f_0\bbar f_1\bbar \cdots \bbar f_{r}\bbar f_{r+1}$
where $f_i\colon Y_{i+1}\to Y_{i}$ is a sequence of composable morphisms in
$\CS$. If moreover $\DS$ is a full subcategory of $\CS$, then $\barcx(\CS,\IS_\DS)$ is the
subcomplex of $\barcx(\CS)$ spanned by configurations $f_0\bbar f_1\bbar \cdots
\bbar f_{r}\bbar f_{r+1}$ of composable morphisms $f_i\colon Y_{i+1}\to
Y_{i}$, where $Y_1,\dots, Y_{r+1}$ are contained in $\DS$, although $Y_0,Y_{r+2}$ need not to be.

In the following let $\AS$ be a $\k$-linear category. As usual we consider $\AS$
    as a dg-category with morphism complexes concentrated in homological degree
    zero, embedded in the dg-category $\CS:=\Ch^b(\AS)$ as complexes
    concentrated in homological degree zero.    

\begin{proposition}\label{prop:barcxdefr}
     Consider $\AS\subset \CS:=\Ch^b(\AS)$. Then the inclusion of complexes     
        \[\barcx(\CS,\IS_\AS) \hookrightarrow \barcx(\CS,\IS_\CS)=\barcx(\CS)
    \] 
    is the section of a deformation retract of dg-bimodules over $\CS$.
\end{proposition}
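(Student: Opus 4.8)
The plan is to construct the deformation retract data explicitly, using the fact that every object of $\CS = \Ch^b(\AS)$ is a bounded complex built from objects of $\AS$ by iterated cones, and that morphisms in $\CS$ between complexes concentrated in (homological) degree zero are exactly morphisms in $\AS$. The key point is that a bar string $f_0 \bbar f_1 \bbar \cdots \bbar f_r \bbar f_{r+1}$ lies in the subcomplex $\barcx(\CS,\IS_\AS)$ precisely when the intermediate objects $Y_1,\dots,Y_{r+1}$ are objects of $\AS$, i.e.\ are concentrated in homological degree zero, whereas $Y_0, Y_{r+2}$ are arbitrary complexes. So I need to produce a dg-bimodule map $\rho\colon \barcx(\CS) \to \barcx(\CS,\IS_\AS)$ splitting the inclusion $\sigma$, together with a homotopy $H$ on $\barcx(\CS)$ witnessing $\id - \sigma\rho = d H + H d$, and compatible with the $\CS$-bimodule structure.

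First I would recall that for any $\k$-linear category $\AS$, its embedding $\AS \hookrightarrow \Ch^b(\AS)$ admits a retraction at the level of objects: each complex $C \in \Ch^b(\AS)$ has a canonical ``degree-zero part'' but more usefully, for the purposes of the bar complex, one uses that $\Ch^b(\AS)$ is the pretriangulated hull $\pretr{\AS}$ from \Cref{newdefCb}. The strategy is the standard contracting homotopy for two-sided bar complexes relative to a subalgebra, but run in the direction that contracts the ``outer'' intermediate objects onto $\AS$. Concretely, I would use the homological perturbation lemma / an explicit simplicial extra-degeneracy argument: on each summand ${}_\CS\CS \otimes_\IS \CS \otimes_\IS \cdots \otimes_\IS \CS_\CS$ with $r+2$ factors, decompose each $\CS$-factor (except the first and last) according to the homological degree filtration of its objects, and build $H$ by inserting identity morphisms / splitting cones so as to ``push'' non-degree-zero intermediate objects either to the left boundary or collapse them. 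The cone-filtration of complexes makes this inductive on the length of the complexes appearing as intermediate objects. Because the morphism complexes of $\AS$ sit in degree zero and the bimodule actions only multiply by $\CS$-morphisms on the two ends, the retraction $\rho$ and homotopy $H$ are automatically $\CS$-bimodule maps: they do not touch $f_0$ or $f_{r+1}$.

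The cleanest route, which I would actually carry out, is: (1) show that $\barcx(\CS, \IS_\AS) \hookrightarrow \barcx(\CS)$ is a quasi-isomorphism by a filtration/spectral-sequence argument using that $\AS \hookrightarrow \CS$ induces an equivalence on appropriate derived Hom's (every object of $\CS$ is built from $\AS$, so the bar resolution of $\CS$ over $\IS_\CS$ and over $\IS_\AS$ compute the same thing); and then (2) upgrade this quasi-isomorphism to a genuine deformation retract by invoking that $\sigma$ is a split monomorphism of complexes of $\k$-vector spaces (indeed of projective $\CS$-bimodules, since bar complexes are built from free/projective tensor products) together with the standard fact that a quasi-isomorphic inclusion of bounded-below complexes of projectives that is degreewise split is part of a deformation retract. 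To get the degreewise splitting and bimodule-compatibility simultaneously, I would exhibit $\rho$ directly: given $f_0 \bbar \cdots \bbar f_{r+1}$ with some intermediate $Y_i$ not in $\AS$, replace it using the expansion of $Y_i$ as an iterated cone, applying the simplicial contraction that merges the cone pieces, iterating until all intermediates lie in $\AS$; set $\rho$ to be this stabilized result and $H$ the accumulated homotopy. Verification that $d\rho = \rho d$, $\rho\sigma = \id$, and $\id - \sigma\rho = dH + Hd$ is then a bookkeeping computation with Koszul signs, entirely parallel to the classical proof that the normalized bar complex is a deformation retract of the unnormalized one.

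The main obstacle I anticipate is \textbf{making the retraction canonical and bimodule-equivariant at the same time as degreewise split}, rather than merely producing a quasi-isomorphism. The subtlety is that ``expanding an object as an iterated cone'' involves choices (which cone decomposition, in which order), and one must check these choices assemble into a well-defined chain map and that the resulting homotopy $H$ is natural enough to commute with the left and right $\CS$-actions. I expect this is handled by working with the strict model $\Ch^b(\AS) = \pretr{\AS}$ and the lexicographically-ordered direct sums from \Cref{assforposets} and \Cref{obviousbutcrucial}, which rigidify exactly these choices; the strictness results of \Cref{cor:moninherit} are what make the construction go through on the nose rather than up to coherent homotopy. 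Once the model is fixed, the contraction is the evident one and the sign-checks, though tedious, are routine.
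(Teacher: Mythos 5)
Your high-level picture is right, and in fact you are attempting more than the paper does: the paper's entire proof is the citation ``This is [GHW, \S5.3], since $\CS$ is the pretriangulated hull of $\AS$.'' Your identification of the subcomplex, of the need for $\rho$ and $H$ to be $\CS$-bimodule maps, and of the role of the strict model $\Chb(\AS)=\pretr{\AS}$ all match the intended argument.

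However, there is a genuine gap: the construction that carries the whole proposition is never actually given. Your route (1)+(2) does not work as stated: the assertion that ``the bar resolution of $\CS$ over $\IS_\CS$ and over $\IS_\AS$ compute the same thing'' is essentially the statement to be proved (note that the standard extra degeneracy $f_0\bbar\cdots \mapsto \id_{Y_0}\bbar f_0\bbar\cdots$ does \emph{not} preserve $\barcx(\CS,\IS_\AS)$, since it turns the arbitrary object $Y_0$ into an intermediate object), and upgrading a degreewise-split quasi-isomorphism to a deformation retract only yields a retract of complexes of $\k$-vector spaces, not of $\CS$-bimodules. Your fallback --- ``expand $Y_i$ as an iterated cone, apply the simplicial contraction that merges the cone pieces, iterate'' --- is not a well-defined chain-level operation as written. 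The missing mechanism is the following: for $Y=\tw_\alpha(\bigoplus_i \Sigma^{a_i}X^i)$ one writes $\id_Y=\sum_i \iota_i\pi_i$ with $\iota_i,\pi_i$ the summand inclusions and projections (which land in shifts of objects of $\AS$), and the homotopy $H$ inserts $\sum_i \iota_i\bbar\pi_i$ at a non-$\AS$ intermediate slot. The essential point you do not address is that $\iota_i$ and $\pi_i$ are \emph{not closed} morphisms (their differential involves the twist $\alpha$), and the proof consists precisely in checking that these non-closed terms cancel against the bar differential so that $dH+Hd=\id-\sigma\rho$; this is also where the bookkeeping with shifts $\Sigma^{a_i}$ and Koszul signs lives. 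Without exhibiting these formulas and this cancellation (or explicitly invoking [GHW, \S5.3] where they appear), the proposal remains a plan rather than a proof.
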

\begin{proof}
This is \cite[\S5.3]{GHW}, since $\CS$ is the pretriangulated hull of $\AS$. 
\end{proof}

\subsection{The cabled crossing as a (twisted) centralizing object}
Here we refine the data described in \Cref{ssec:slides-homotopies} and organise
them into a more conceptual framework  as indicated already in \Cref{rkprebraiding}.
We  show that even complexes of diagrammatic Bott--Samelson bimodules slide through the Rouquier complex associated to the cabled crossing.  

For a monoidal dg-category $\CS$ the 2-sided bar construction $\barcx(\CS)$ is
not just a dg-bimodule for $\CS$, but additionally carries the structure of a dg-algebra structure with multiplication given by the shuffle product, \cite{EZ, LQ}, see e.g. \cite[Section 6.1]{GHW}
for explicit formulas.

Given an object $Z$ of $\CS$, consider the $\CS$-dg-bimodule
\begin{equation}\label{eq:Z}
    {}_\CS Z_\CS :=\bigoplus_{X,Y\in \CS} (Y\otimes Z)\CS(Z \otimes X).
    \end{equation}
as an algebra object in $\CS$-dg-bimodules, see \cite[Section 6.2]{GHW}, where ${}_\CS Z_\CS$ is denoted
$\mathcal{X}_{12}(Z)$. 
The following dg-versions can be found in \cite{GHW} and generalize the notion of centralizers  from \cite[Def. 2.6.1]{liu2024braided}, see also \Cref{rkprebraiding}.
\begin{itemize}[leftmargin=5mm]
\item The data of a collection of morphisms $\tau_X\colon Z\otimes X \to X \otimes Z$ that are \emph{natural in $X$ up to coherent homotopy} is the data of a $\CS$-bimodule map $\tau \colon \barcx(\CS) \to {}_\CS Z_\CS$.
\item  The resulting natural transformation $Z\otimes - \to - \otimes Z$ is 
     \emph{strictly compatible with the monoidal structure} in $\CS$ if  $\tau$ is additionally a morphism of dg-algebras.

    In this case, the pair $(Z,\tau)$ can be considered as an object of the \emph{dg-Drinfeld center} $\dgZ(\CS)$.
 \item  Given a monoidal subcategory $\MS\subset \CS$,  then a dg-bimodule map
    $\tau \colon \barcx(\MS) \to {}_\MS Z_\MS$ that is also a morphism of dg-algebras  
    captures the data of a collection of morphisms $Z\otimes
    X \to X \otimes Z$ in $\CS$ that are \emph{natural up to coherent homotopy and strictly compatible
    with the tensor product in $\MS$}, i.e. now only for objects $X\in\MS$.  
    
        Such pairs $(Z,\tau)$ are the objects 
    in the  \emph{dg-monoidal centralizer} $\dgZ_{\CS}(\MS)$ of $\MS$ in $\CS$ defined in \cite[Definition 6.3]{GHW}. Note that $\dgZ_{\CS}(\CS)=\dgZ(\CS)$.  
\end{itemize}

We introduce the following twisted algebra object: 
\begin{definition}
Let $\MS$ and $\CS$ be monoidal dg-categories and $\phi,\psi\colon \MS\to
\CS$ a pair of monoidal dg-functors. We denote by $_{\MS}^{\phi} \CS^{\psi}_{\MS}$ 
the $\CS$-dg-bimodule $\CS$ considered as an $\MS$-dg-bimodule
by pulling-back the $\CS$-action along the functors (for the left action along $\phi$, for the right action along $\psi$.)
Directly generalizing \eqref{eq:Z} we define the $\MS$-dg-bimodule
\[
    _{\MS}^{\phi} Z^\psi_{\MS}:=\bigoplus_{X,Y\in \MS} (\phi(Y)\otimes Z)\CS(Z \otimes\psi( X)).
    \]
which is an algebra object in $\MS$-dg-bimodules.
\end{definition}

We finally introduce a \emph{twisted} and \emph{weaker} version of a 
dg-monoidal centralizer.
\begin{definition} 
    \label{def:twistedcentralizer}
An \emph{object of the twisted $A_\infty$-monoidal centralizer}
$\AiZ_\CS(\phi,\psi)$ is a pair $(Z,\tau)$, where $Z$ is an object of $\CS$ and
$\tau$ is a morphism $\tau\colon \barcx(\MS)\to {}_\MS^{\phi}
Z^\psi_\MS$ of $\MS$-dg-bimodules  that extends to an $\Ai$-morphism between dg-algebras.
\end{definition}

\begin{remark}
The concept of twisted centers for ordinary monoidal categories appears already in  \cite{Shimizu}, and for $2$-categories in \cite{Halbigetal} .
\end{remark}
We now connect this back to categories of Soergel bimodules, see also \Cref{rkprebraiding}. 

\begin{nota}\label{notationdgcats}
For $m,n\in\N_0$ we set 
$\CS_{m+n}=\Chb(\DS_{m+n})$ and consider
\begin{itemize}
\item the monoidal dg-category $\CS_{m,n}=\Chb(\DS_{m,n})$ where $\DS_{m,n}=\DS_m\boxtimes \DS_n$ is defined as in \Cref{def:parabolicsub}.
\item the monoidal dg-functor $\psi\colon \CS_{m,n}\to \CS_{m+n}$ determined by extending the standard inclusion
 $\DS_m\boxtimes \DS_n \to \DS_{m+n}$ induced by the $\boxtimes$-product from \eqref{parabinddiag},
\item the monoidal dg-functor $\phi\colon \CS_{m,n}\to \CS_{m+n}$ determined by extending the opposite inclusion
$\DS_m\boxtimes \DS_n \to \DS_{m+n}$ induced by the $\boxtimes^\opp$ from \eqref{eq:parabolicindop}.
\end{itemize}
\end{nota}

We finally extend the cabled crossing $\cabledcross_{m,n}$ with the help of the slide maps from \Cref{prop:sliding-objects} and the homotopies from \Cref{thm:naturality} to an object in a twisted $A_\infty$-monoidal centralizer. We refer to \cite{KellerAinf} for basic definitions on $A_\infty$-algebras. We obtain the \emph{Second Naturality Theorem}: 
\begin{theorem}[Naturality of the braiding on chain complexes]
    \label{thm:cabledcross-data}
With notation from \Cref{notationdgcats} the following hold for  $m,n\in\N_0$:
\begin{enumerate}
    \item Consider the Rouquier complex $X_{m,n}$ of the cabled crossing $\cabledcross_{m,n}$. 
    Then there is a morphism of $\CS_{m,n}$-bimodules
    \[\tau_{m,n}\colon \barcx(\CS_{m,n})\to {}^\phi_{\CS_{m,n}} (X_{m,n})^\psi_{\CS_{m,n}}\]
      extending the assignments  $\tau_{m,n}(\id_{Y_1}\bbar\id_{Y_1} ):= \slide_{Y_1}, \tau_{m,n}(\id_{Y_1}\bbar f_1 \bbar \id_{Y_2}) := h_{f_1}$, where $Y_1$, $Y_2$ are objects and $f_1\colon Y_2 \to Y_1$ a morphism in the subcategory $\DS_{m,n}$ of $\CS_{m,n}$. 
        \item The morphism $\tau_{m,n}$ extends to an $\Ai$-morphism $\tau^\infty_{m,n}$ of dg-algebras.  
\end{enumerate}
\begin{corollary}\label{cormain}
The cabled crossing $\cabledcross_{m,n}$ extends to an
object $(\cabledcross_{m,n}, \tau_{m,n})\in\AiZ_{\CS_{m+n}}(\phi,\psi)$. 
\end{corollary}
\end{theorem}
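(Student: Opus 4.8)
The claim of \Cref{thm:cabledcross-data} breaks into two parts: first constructing the $\CS_{m,n}$-bimodule map $\tau_{m,n}$ with the prescribed values on low bar-degree chains, and second upgrading it to an $A_\infty$-morphism of dg-algebras $\tau^\infty_{m,n}$; the corollary is then immediate by unwinding \Cref{def:twistedcentralizer}. My strategy is to build $\tau_{m,n}$ degree by degree in the bar filtration, using that the obstruction spaces vanish by \Cref{prop:cohomologymorphismcomplex}, and to treat the $A_\infty$-structure as a further obstruction-theoretic extension of the same flavour.

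\textbf{Step 1: the bimodule map in low degrees.} I would first note that $\barcx(\CS_{m,n})$, as a $\CS_{m,n}$-bimodule, is freely generated (over the idempotent subalgebra) by the bar chains $\id_{Y_1}\bbar f_1 \bbar \cdots \bbar f_r \bbar \id_{Y_{r+1}}$ with $Y_i$ ranging over objects and $f_i$ over morphisms in $\DS_{m,n}$ (cf.\ the description of $\barcx(\CS,\IS_\DS)$ after the definition and \Cref{prop:barcxdefr}); so specifying $\tau_{m,n}$ amounts to specifying its values on such chains subject to compatibility with $d_{\mathrm{bar}}$, the internal differentials, and the $\CS_{m,n}$-actions. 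On bar-degree $0$, $\tau_{m,n}(\id_{Y_1}\bbar \id_{Y_1}) := \slide_{Y_1}$, which is a closed degree-zero morphism by \Cref{prop:sliding-objects} (after incorporating the tensorator compatibility from \Cref{lem:slidetensorator}, or working with $\DS_{m,n}$ directly via \Cref{thm:naturalityfull}). On bar-degree $1$, $\tau_{m,n}(\id_{Y_1}\bbar f_1 \bbar \id_{Y_2}) := h_{f_1}$, and the defining identity $d(h_{f_1}) = (\swap_{m,n}(f_1)\hcomp \id)\vcomp \slide_{Y_2} - \slide_{Y_1}\vcomp(\id\hcomp f_1)$ from \Cref{thm:naturalityfull} is exactly the statement that $\tau_{m,n}$ is a chain map on this summand, given the formula for $d_{\mathrm{bar}}$ and the bimodule structure on $_{\MS}^{\phi}Z^{\psi}_{\MS}$. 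Here I must also check additivity of $h_{-}$ in the morphism argument, which follows from the construction in the proof of \Cref{thm:naturality} together with linearity of the slides.

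\textbf{Step 2: higher bar degrees by obstruction theory.} For $r\geq 2$, I want to define $\tau_{m,n}$ on the chain $\id_{Y_1}\bbar f_1 \bbar\cdots\bbar f_r \bbar \id_{Y_{r+1}}$ as a degree $-r$ element $h_{f_1,\dots,f_r}$ of $\Hom_{\Ch(\DS_{m+n})}(X_{m,n}\hcomp Y_{r+1}, \swap_{m,n}(Y_1)\hcomp X_{m,n})$ whose differential equals the prescribed alternating sum $\sum_{i=1}^{r-1}(-1)^i h_{f_1,\dots,f_i\circ f_{i+1},\dots,f_r}$ plus the two boundary terms obtained by acting with $f_r$ on the right, resp.\ $\swap_{m,n}(f_1)$ on the left, of the already-constructed lower-degree data (the precise form is dictated by $d_{\mathrm{bar}}$ and the internal differential). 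The point is that this prescribed element is automatically $d$-closed — a standard check using that $d_{\mathrm{bar}}^2=0$ and that the lower-degree pieces already satisfy their own equations — and \Cref{prop:cohomologymorphismcomplex} tells us $H^{\neq 0}$ of the relevant hom complex vanishes (it is concentrated in degree zero, and for $r\geq 1$ we are in negative degree), so a primitive $h_{f_1,\dots,f_r}$ exists. Proceeding inductively on $r$ yields $\tau_{m,n}$; compatibility with the $\CS_{m,n}$-bimodule structure is built in because we only ever specify values on the generating chains and extend bimodule-linearly, and functoriality in $(Y_i,f_i)$ holds because the hom complexes and the obstruction classes are natural. This proves part (1).

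\textbf{Step 3: the $A_\infty$-algebra structure, and the main obstacle.} For part (2), recall $\barcx(\CS_{m,n})$ is a dg-algebra under the shuffle product and $_{\MS}^{\phi}(X_{m,n})^{\psi}_{\MS}$ is an algebra object in $\MS$-dg-bimodules; I want $\tau^\infty_{m,n}$ with $\tau^\infty_{m,n,(1)} = \tau_{m,n}$ and higher components $\tau^\infty_{m,n,(k)}\colon \barcx(\CS_{m,n})^{\otimes k}\to {}_{\MS}^{\phi}(X_{m,n})^{\psi}_{\MS}$ satisfying the $A_\infty$-morphism equations. Again this is obstruction theory: given $\tau^\infty_{m,n,(1)},\dots,\tau^\infty_{m,n,(k-1)}$ satisfying the equations up to level $k-1$, the failure at level $k$ is a closed element of a hom complex built from shifted copies of the morphism complexes in \eqref{eq:homcomplex}, all of which have cohomology concentrated in degree zero by \Cref{prop:cohomologymorphismcomplex}; one checks the obstruction lives in a \emph{negative} cohomological degree (because each application of a structure map lowers degree, and $\tau_{m,n}$ already starts in non-positive degrees on positive bar-degree), hence vanishes, and a primitive $\tau^\infty_{m,n,(k)}$ can be chosen. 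The main obstacle I anticipate is bookkeeping: getting the signs right in the $A_\infty$-morphism equations relative to the shuffle-product differential and the Koszul conventions of \cite[\S5.1,\S6.1]{GHW}, and verifying carefully that the obstruction classes genuinely land in the degrees where \Cref{prop:cohomologymorphismcomplex} applies — the hom complexes appearing at level $k$ involve iterated tensor products of bar components, and one must confirm their cohomology is still degreewise controlled by \eqref{eq:homcomplex} (this should follow from \Cref{lem:invertible-tensoring} and \Cref{lem:pre-post-comp} applied summand-wise, but it is the step most prone to error). With parts (1) and (2) in hand, \Cref{cormain} is simply the observation that $(\cabledcross_{m,n},\tau^\infty_{m,n})$ satisfies the defining conditions of an object of $\AiZ_{\CS_{m+n}}(\phi,\psi)$ in \Cref{def:twistedcentralizer}.
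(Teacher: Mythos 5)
Your overall strategy coincides with the paper's: define $\tau_{m,n}$ on low bar degrees using the explicit slide maps and slide homotopies, extend to higher bar degrees by choosing antiderivatives of closed obstructions that are exact because the morphism complexes of \Cref{prop:cohomologymorphismcomplex} have cohomology concentrated in degree zero, and then run the same obstruction theory for the higher $A_\infty$-components. Two points, however, need repair.

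First, and most importantly, your justification for part (2) has a genuine gap at $k=2$. Evaluated on a pair of bar-degree-zero generators $(\id_{Y}\bbar\id_{Y},\,\id_{Y'}\bbar\id_{Y'})$, the obstruction to multiplicativity is the \emph{closed degree-zero} element
\[
\slide_{Y\hcomp Y'} \;-\; (\id\hcomp \slide_{Y'})\vcomp(\slide_{Y}\hcomp\id)
\]
of $\Hom(X_{m,n}\hcomp\psi(Y\hcomp Y'),\phi(Y\hcomp Y')\hcomp X_{m,n})$. The zeroth cohomology of this complex does \emph{not} vanish (by \Cref{prop:cohomologymorphismcomplex} it is $H^0$ of $\Hom(Y\hcomp Y',Y\hcomp Y')$), so your argument that ``the obstruction lives in a negative cohomological degree, hence vanishes'' fails exactly here. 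What saves the day is that the slide maps of \Cref{prop:sliding-objects} were \emph{constructed} by composing slide maps of $\hcomp$-factors, so that $\slide_{Y\hcomp Y'}$ is homotopic (though in general not equal, because of the braid-word choices made there) to $\slide_{Y}\ast\slide_{Y'}$; this separate verification must be supplied before the inductive degree argument can take over for $k\neq 2$ and for higher bar degrees.

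Second, your opening claim that $\barcx(\CS_{m,n})$ is freely generated as a bimodule by bar chains with intermediate objects and morphisms in $\DS_{m,n}$ is not correct: those chains span only the subcomplex $\barcx(\CS_{m,n},\IS_{\DS_{m,n}})$, while $\barcx(\CS_{m,n})=\barcx(\CS_{m,n},\IS_{\CS_{m,n}})$ allows arbitrary complexes in the intermediate slots. The construction should therefore be carried out on the subcomplex (where, as you need for closedness of the obstructions, all the $f_i$ are automatically closed) and then transported to the full bar complex by precomposing with the deformation retract of \Cref{prop:barcxdefr}; for part (2) one must additionally note that this retract extends to an $A_\infty$-morphism, so that the composite is again an $A_\infty$-morphism of dg-algebras.
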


Intuitively, \Cref{thm:cabledcross-data} and its summary in \Cref{cormain}
capture that all of $\CS_{m,n}=\Chb(\DS_{m,n})$, in particular arbitrary
bounded complexes, can slide through $\cabledcross_{m,n}$ in a natural way, up
to coherent homotopy.

\begin{proof}[Proof of \Cref{thm:cabledcross-data}] We will carefully prove (1) and discuss a proof outline of (2). For (1) we first define $\tau_{m,n}$ on the
    subcomplex 
    \[\barcx(\CS_{m,n},\IS_{\DS_{m,n}})\subset \barcx(\CS_{m,n})\] that
    is spanned by those sequences of composable morphisms $f_0\bbar f_1 \bbar
    \cdots \bbar f_{r+1}$ with $f_i\colon Y_{i+1} \to Y_{i}$ such that
    $Y_1,\dots Y_{r+1}$ are in the subcategory $\DS_{m,n}$ of $\CS_{m,n}$,
    although $Y_0$ and $Y_{r+2}$ need not be in this subcategory. 
    
    Since $\tau_{m,n}$ is a bimodule map, we only need to define it on elements
    of the form $\id_{Y_1} \bbar f_1 \bbar \cdots \bbar f_r \bbar
    \id_{Y_{r+1}}$. The morphisms $f_i$ for $1\leq i\leq r$ are
    closed since they belong to the full subcategory $\DS_{m,n}$. We define
    $\tau$ by induction on $r$. For $r=0$ and $r=1$ we set
    \[
        \tau_{m,n}(\id_{Y_1}\bbar\id_{Y_1} ):= \slide_{Y_1},\quad
        \tau_{m,n}(\id_{Y_1}\bbar f_1 \bbar \id_{Y_2}) := h_{f_1}
    \]
    for any $f_1\colon Y_2 \to Y_1$, where $h_{f_1}$ was constructed in \Cref{thm:naturalityfull}. The properties of
    $h_{f_1}$ established there guarantee that 
    \begin{align*} \tau_{m,n}(d(\id_{Y_1}\bbar f_1 \bbar \id_{Y_2}))
         &= \phi(f_1) \circ \tau_{m,n}(\id_{Y_2}\bbar \id_{Y_2}) - \tau_{m,n}(\id_{Y_1}\bbar \id_{Y_1}) \circ \psi(f_1)  \\
         &= \phi(f_1) \circ \slide_{Y_2} - \slide_{Y_1} \circ \psi(f_1)  = d(h_{f_1}) = d(\tau_{m,n}(\id_{Y_1}\bbar f_1 \bbar \id_{Y_2})).
    \end{align*}
    This shows that $\tau_{m,n}$ is a chain map on the locus where it has already been defined. 
    Now we proceed by induction. Suppose $\tau_{m,n}$ has already been
    defined as a chain map up to length $r-1>0$ in the bar complex. We like to define it
    for length $r$, say on $\id_{Y_1} \bbar f_1 \bbar \cdots \bbar f_r \bbar
    \id_{Y_{r+1}}$. Thus, we are looking for 
    \begin{equation}\label{complex}
    \tau_{m,n}(\id_{Y_1} \bbar f_1 \bbar \cdots \bbar f_r \bbar
    \id_{Y_{r+1}})   \in \Hom^{-r}(X_{m,n}\hcomp Y_{r+1},\phi(Y_{1}) \hcomp X_{m,n})
    \end{equation}
    such that $[d, \tau_{m,n}(\id_{Y_1} \bbar f_1 \bbar \cdots \bbar f_r \bbar
    \id_{Y_{r+1}})] = \tau_{m,n}( d(\id_{Y_1} \bbar f_1 \bbar \cdots \bbar f_r
    \bbar \id_{Y_{r+1}}))$. The right-hand side here is already defined (because
    $d$ lowers the degree) and closed (because $\tau_{m,n}$ is a chain map). By
    \Cref{prop:cohomologymorphismcomplex}, the relevant complex of
    \eqref{complex} has trivial homology outside of degree zero, so the
    element is also exact. We choose $\tau_{m,n}(\id_{Y_1} \bbar f_1 \bbar
    \cdots \bbar f_r \bbar \id_{Y_{r+1}})$ to be any antiderivative
    (i.e.~preimage under the differential; the choice does not matter) and
    proceed with the induction. Finally, we extend $\tau_{m,n}$ from
    $\barcx(\CS_{m,n},\IS_{\DS_{m,n}})$ to $ \barcx(\CS_{m,n})$ by pre-composing
    with the deformation retract $\barcx(\CS_{m,n}) \to
    \barcx(\CS_{m,n},\IS_{\DS_{m,n}})$ from \Cref{prop:barcxdefr}. This
    completes the proof of (1).

For (2) we have to consider the compatibility of $\tau_{m,n}$ with the
    multiplication. By definition, the multiplication on the target
    ${}^\phi_{\CS_{m,n}} (X_{m,n})^\psi_{\CS_{m,n}}$ is given as follows:
\begin{align*}
    &\Hom^{-r}(X_{m,n}\hcomp \psi(Y_{r+1}),\phi(Y_{1}) \hcomp X_{m,n})
    \times 
    \Hom^{-s}(X_{m,n}\hcomp \psi(Y'_{s+1}),\phi(Y'_{1}) \hcomp X_{m,n})
    \\&\to 
    \Hom^{-r-s}(X_{m,n}\hcomp \psi(Y_{r+1}\hcomp Y'_{s+1}),\phi(Y_{1}\hcomp Y'_{1}) \hcomp X_{m,n})\\
    (F,G) &\mapsto F \ast G := (\id_{\phi(Y_1)}\hcomp G) \vcomp (F \hcomp \id_{\psi(Y'_{s+1})})
\end{align*}
Here and in the following we suppress the coherence maps for the monoidal functors $\phi$ and $\psi$.

Since $\tau_{m,n}$ is a dg-bimodule map, it is enough to check the required compatibility for elements of the form $(\id_{Y_1} \bbar f_1 \bbar
\cdots \bbar f_r \bbar \id_{Y_{r+1}},\id_{Y'_1} \bbar g_1 \bbar \cdots
\bbar g_s \bbar \id_{Y'_{s+1}})$. 

If $r=s=0$, then 
\begin{align} \label{eqn:slide}
 \nonumber\tau_{m,n}(\id_{Y_1} \bbar\id_{Y_1} \ast \id_{Y'_1} \bbar\id_{Y'_1}) 
&= \tau_{m,n}(\id_{Y_1\hcomp Y'_1} \bbar\id_{Y_1\hcomp Y'_1}) 
= \slide_{Y_1\hcomp Y'_1} 
\\ \nonumber &\htc (\id_{\phi(Y_1)}\hcomp \slide_{Y'_1})\vcomp (\slide_{Y_1}\hcomp \id_{\psi(Y'_1)})
= \slide_{Y_1} \ast \slide_{Y'_1} 
\\&= \tau_{m,n}(\id_{Y_1} \bbar\id_{Y_1}) \ast \tau_{m,n}(\id_{Y'_1} \bbar\id_{Y'_1})
\end{align}
where we used for the second line the definition of the slide map of a $\hcomp$-composite in
terms of the slides of the factors, which we gave in the proof of
\Cref{prop:sliding-objects}. In this line we typically only get homotopies,
instead of an equality, because of the choices we made in the above proof. For
general $r,s\geq 0$ such problems become unavoidable, given that we have chosen
the elements $\tau_{m,n}(\id_{Y_1} \bbar f_1 \bbar \cdots \bbar f_r \bbar
\id_{Y_{r+1}})$ as \emph{arbitrary} antiderivatives of certain exact elements. At best
we can, thus, hope for $\tau_{m,n}$ to be a \textit{dg-algebra map up to coherent
homotopy}, namely an $\Ai$-morphism. 

Recall, \cite{KellerAinf}, that an $\Ai$-morphism between dg-algebras $(A,m_A,d_A)$ and $(B,m_B,d_B)$ is a
sequence of degree zero maps $\eta_k\colon (A[1])^{\otimes k}\to B[1]$ for $k\geq 1$ satisfying the conditions 
\begin{align}
\nonumber \sum_{i=1}^k \eta_k \circ (\id^{\otimes i-1}\otimes d_A \otimes \id^{\otimes k-i})
&+    \sum_{i=1}^{k-1} \eta_{k-1} \circ (\id^{\otimes i-1}\otimes m_A \otimes \id^{\otimes k-1-i})
\\
\label{eqn:Aimorph}
&= d_B \circ \eta_k + \sum_{a+b=k} m_B\circ (\eta_a\otimes \eta_b)
\end{align}
for all $k\geq 1$. Specifically, $\eta_1$ is a chain map and
intertwines the multiplication up to a homotopy specified by $\eta_2$, is
associative up to a homotopy specified by $\eta_3$ etc. 

We now claim that $\tau = \tau_{m,n}$ can be extended to an $\Ai$-morphism:
\[
    \tau^\infty \colon   
A:=\barcx(\CS_{m,n})\to {}^\phi_{\CS_{m,n}} (X_{m,n})^\psi_{\CS_{m,n}}=:B
\] 
with the component $k=1$ given by $\tau^\infty_1=\tau$. As before, we will
perform the extension for $k\geq 2$ first on the dg-subalgebra $\barcx(\CS_{m,n},\IS_{\DS_{m,n}})$
and on a basis with elements of the form $\id_{Y_1} \bbar f_1 \bbar \cdots \bbar
f_r \bbar \id_{Y_{r+1}}$, where the $f_i$ are necessarily closed. Then
\eqref{eqn:Aimorph} simplifies to:

\begin{equation}
\label{eqn:Aimorphclosed}
d_B \circ \tau^\infty_k 
=  \sum_{i=1}^{k-1} \tau^\infty_{k-1} \circ (\id^{\otimes i-1}\otimes m_A \otimes \id^{\otimes k-1-i}) - \sum_{a+b=k} m_B\circ (\tau^\infty_a\otimes \tau^\infty_b)
\end{equation}

So let $k\geq 2$ and suppose $\tau^\infty_l$ has been defined for all $l<k$. Then the right-hand side
of \eqref{eqn:Aimorphclosed} is closed, as is straightforward to compute.
Since the relevant complexes of $B$ have no cohomology in non-zero degree for
elements of the form $\id_{Y_1} \bbar f_1 \bbar \cdots \bbar f_r \bbar
\id_{Y_{r+1}}$, the right-hand side of \eqref{eqn:Aimorphclosed} is also exact
when $k\not=2$. Exactness for $k=2$, we have seen in \eqref{eqn:slide}. Thus,
the construction of $\tau^\infty_k$ is unobstructed. We choose an arbitrary
antiderivative to define the value of $\tau^\infty_{k}$ on the basis elements
$\id_{Y_1} \bbar f_1 \bbar \cdots \bbar f_r \bbar \id_{Y_{r+1}}$ and then
continue inductively. 

Finally, we extend $\tau^\infty$ from $\barcx(\CS_{m,n},\IS_{\DS_{m,n}})$ to
$\barcx(\CS_{m,n})$ by precomposing with the deformation retract from
\Cref{prop:barcxdefr}. For this we note that the section
$\barcx(\CS_{m,n},\IS_{\DS_{m,n}})\to \barcx(\CS_{m,n})$ of the deformation
retract is a dg-algebra map, and thus the retract can be extended to an
$\Ai$-morphism \cite{MR1856940}. Now the composite is also an $\Ai$-morphism
between dg-algebras, as required in (2).
 \end{proof}

We can now provide slide chain maps for the arbitrary complexes over the diagrammatic categories, as well as corresponding slide homotopies for morphisms between such.

 \begin{definition}[General slide maps and homotopies]
    \label{def:genslides}
    We retain the notation from \Cref{thm:cabledcross-data}. For every object $Y$ of
    $\CS_{m,n}$, we define the slide chain map\footnote{If $Y$ is an
    indecomposable object concentrated in degree zero, then this agrees with
    the previous construction in \S~\ref{ssec:slides-homotopies}.}
    \[
        \slide_{Y}:= \tau_{m,n}(\id_{Y}\bbar\id_{Y}) 
        \colon  \cabledcross_{m,n} \hcomp \psi(Y) 
        \longrightarrow \phi(Y)\hcomp \cabledcross_{m,n}   
     \]
    and for every closed morphism $f\colon Y \to
    Y'$ in $\CS_{m,n}$ the slide homotopy
    \[h_f:= \tau_{m,n}(\id_{Y'}\bbar f \bbar \id_{Y}) \]
 \end{definition}

 \begin{corollary}
    \label{cor:slideandhomotopy}
    For every object $Y$ of $\CS_{m,n}$ the slide chain map $\slide_{Y}$ is a
    homotopy equivalence and for every closed morphism $f\colon Y \to Y'$ in
    $\CS_{m,n}$, the naturality square of the form \eqref{eqn:naturality}
    commutes up to the homotopy $h_f$.
 \end{corollary}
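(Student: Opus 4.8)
The plan is to extract both statements directly from \Cref{thm:cabledcross-data}(1), using the defining formulas from \Cref{def:genslides} and the cohomological vanishing already established in \Cref{prop:cohomologymorphismcomplex}. First, recall that by construction $\tau_{m,n}$ is a morphism of $\CS_{m,n}$-dg-bimodules, hence a chain map; consequently $\slide_{Y} = \tau_{m,n}(\id_{Y}\bbar \id_{Y})$ is a closed degree zero morphism, i.e.\ a genuine chain map $\cabledcross_{m,n}\hcomp \psi(Y)\to \phi(Y)\hcomp \cabledcross_{m,n}$, and $h_f = \tau_{m,n}(\id_{Y'}\bbar f \bbar \id_{Y})$ is an element of degree $-1$ in the relevant morphism complex. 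Applying $\tau_{m,n}$ to the bar differential of $\id_{Y'}\bbar f \bbar \id_{Y}$ (using that $f$ is closed, so $d(\id_{Y'}\bbar f \bbar \id_{Y}) = \id_{Y'}\bbar (f\circ \id_Y)\bbar \id_Y - \id_{Y'}\bbar(\id_{Y'}\circ f)\bbar \id_Y = \phi$-and-$\psi$-twisted terms) gives precisely
\[
 d(h_f) = (\swap_{m,n}(f)\hcomp \id_{\cabledcross_{m,n}})\vcomp \slide_{Y} - \slide_{Y'}\vcomp (\id_{\cabledcross_{m,n}}\hcomp f),
\]
exactly the assertion that the naturality square \eqref{eqn:naturality} commutes up to the homotopy $h_f$; this is the same computation already performed for $r=1$ in the proof of \Cref{thm:cabledcross-data}(1), now read as a statement about arbitrary objects $Y, Y'$ of $\CS_{m,n}$ rather than objects of the subcategory $\DS_{m,n}$.

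It remains to show that $\slide_{Y}$ is a homotopy equivalence for every object $Y$ of $\CS_{m,n}=\Chb(\DS_{m,n})$. The key point is that $\slide_Y$ is a degree zero closed morphism between the chain complexes $\cabledcross_{m,n}\hcomp \psi(Y)$ and $\phi(Y)\hcomp \cabledcross_{m,n}$ which both, after the identification supplied by \Cref{prop:cohomologymorphismcomplex}, live in a setting where the relevant endomorphism and hom complexes have cohomology concentrated in degree zero. More concretely, I would argue as follows: by \Cref{prop:cohomologymorphismcomplex} the morphism complex $\Hom_{\Ch(\DS_{m+n})}(\cabledcross_{m,n}\hcomp \psi(Y),\phi(Y)\hcomp \cabledcross_{m,n})$ is quasi-isomorphic to $\Hom_{\Ch(\DS_{m+n})}(Y,Y)=\End_{\Chb(\DS_{m,n})}(Y)$ (more precisely to $\Hom$ between the two $\boxtimes$-orderings, but these are identified via $\swap_{m,n}$ and the tensorator, whose compatibility with the slides is \Cref{lem:slidetensorator}), and similarly in the other direction. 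Under this identification, $\slide_Y$ corresponds to a closed degree zero class. To see it is invertible up to homotopy, I would use that $\cabledcross_{m,n}$ is invertible up to homotopy in $\Chb(\DS_{m+n})$ by \Cref{cor:Rouqinv}, so \Cref{lem:invertible-tensoring} applies: after precomposing and postcomposing with the inverse cabled crossing $\cabledcross'_{m,n}$ and using $\ev$, $\coev$ data, $\slide_Y$ is homotopic to a morphism that, on homology, is the identity endomorphism of $Y$ (since the degree zero cohomology of all the hom complexes is one-dimensional over the endomorphisms of an indecomposable and $\slide_Y$ restricts to the previously constructed homotopy equivalences on the generating pieces). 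A morphism inducing an isomorphism in the homotopy category is a homotopy equivalence, giving the claim.

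The main obstacle I anticipate is making the last paragraph precise without circularity: the naturality-up-to-homotopy of $\tau_{m,n}$ tells us $\slide_Y$ is compatible with the differential, but \emph{a priori} it might not be invertible. The cleanest route is inductive along the filtration of $Y$ by subcomplexes (each step being a cone), exactly as indicated in \Cref{exa:cone}: one shows that if $\slide_{Y^{(k)}}$ and $\slide_{Y^{(k+1)}}$ are homotopy equivalences, then by the $A_\infty$-compatibility $\tau^\infty_{m,n}$ of \Cref{thm:cabledcross-data}(2) the induced map on the cone is also a homotopy equivalence (a standard two-out-of-three argument for cones, valid because $\slide$ is compatible with the mapping cone structure up to the coherent homotopies packaged by $\tau^\infty$). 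The base case — $Y$ concentrated in a single homological degree, hence a finite direct sum of shifts of objects of $\DS_{m,n}$ — follows from \Cref{prop:sliding-objects} together with additivity of $\slide$ in $Y$, which is immediate from the bimodule-map property of $\tau_{m,n}$. This reduces the corollary entirely to bookkeeping over the pretriangulated hull, with no new computation required.
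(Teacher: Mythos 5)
Your proposal is correct, and the treatment of $h_f$ coincides with the paper's: both read off $d(h_f)$ from the bar differential applied to $\id_{Y'}\bbar f\bbar \id_Y$ together with the fact that $\tau_{m,n}$ is a chain map. Where you diverge is in proving that $\slide_Y$ is a homotopy equivalence. The paper disposes of this in one line by observing that every object of $\CS_{m,n}$ is dualizable and citing an external duality result (\cite[Lemma 6.14]{GHW}), which produces a homotopy inverse to $\slide_Y$ directly from the slide data of the dual object. You instead run an induction over the cone filtration of $Y$ in the pretriangulated hull: base case from \Cref{prop:sliding-objects} plus additivity of $\tau_{m,n}$, inductive step from the upper-triangular structure of the slide map on a cone (as in \Cref{exa:cone}) and the standard fact that a filtered chain map with invertible associated graded is a homotopy equivalence. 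This is a legitimate, more self-contained alternative; note that it only needs the chain-level structure of $\slide_{\cone(g)}$, not the full $A_\infty$-data of \Cref{thm:cabledcross-data}(2), so you are invoking slightly more than necessary there. Your intermediate paragraph attempting to deduce invertibility from \Cref{prop:cohomologymorphismcomplex} and \Cref{lem:invertible-tensoring} alone is indeed insufficient as you suspect (a closed degree zero class need not be invertible, and $\End(Y)$ in the homotopy category is not local for a general complex $Y$), but your final inductive argument supersedes it and closes the gap.
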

\begin{proof}
    Since $Y$ has a dual in $\CS_{m,n}$, the chain map
    $\slide_{Y}:= \tau_{m,n}(\id_{Y}\bbar\id_{Y})$ is a homotopy equivalence, see e.g. \cite[Lemma
    6.14]{GHW}. The statement about $h_f:= \tau_{m,n}(\id_{Y'}\bbar f \bbar \id_{Y})$ follows from the description of the bar differential and the fact that $\tau_{m,n}$ is a chain map. 
\end{proof}

This completes the proof of \hyperlink{thmA}{Main Theorem}.

Next we illustrate in a concrete example, how (higher) homotopies for sliding objects in the additive category become relevant for assembling the slide maps for complexes. 

\begin{example} \label{exa:cone}
Consider a morphism $f\colon Y \to Y'$ in $\DS_{m,n}$ and its cone
\[\cone(f):=\bigl( 0\to Y \xrightarrow{f} Y'\to 0\bigr)\] in $\CS_{m,n}$. Let $h_f$ denote the slide homotopy from \eqref{eqn:naturality}. Then the slide chain map for $\cone(f)$ is given by:

\[
    \begin{tikzcd}[scale=1,column sep=2.5cm]
        \bigl( \cabledcross_{m,n} \hcomp Y
        \ar[d, "\slide_{Y}"]
        \ar[r, "\id \hcomp f"]
        \ar[dr, "-h_{f}"]
        &
        \cabledcross_{m,n} \hcomp Y'\bigr)
        \ar[d, "\slide_{Y'}"]
        \\
        \bigl(Y \hcomp \cabledcross_{m,n} 
        \ar[r, "f \hcomp\id"]
    &
    Y' \hcomp \cabledcross_{m,n} \bigr)
  \end{tikzcd}
\]
For longer complexes, the slide chain maps are assembled from slide chain maps
of objects in the additive category as well as higher slide homotopies for the
components of the differential, compare \cite[Remark 6.14]{GHW}.
\end{example}

\begin{remark}As a very concrete instance of \Cref{exa:cone}, one can recover
Reidemeister 3 chain maps from
\begin{itemize}
\item The atomic slide chain maps from \Cref{lem:atomicslide},
\item The slide chain map for an identity bimodule, which is the identity,
\item The slide homotopies for start-dots (or end-dots) from \Cref{fig:startdot} (or \Cref{fig:enddot}),
\end{itemize}
cf.~\cite[(3.3) and (3.4)]{MWW}, where this argument is applied in reverse to
show that particular Reidemeister 3 chain maps preserve a certain filtration. In
fact, naturality provides two chain maps realising the Reidemeister 3, depending
on which crossing the naturality argument is being applied to. These are
homotopic, but not equal, see \cite[Remark 2.4.4]{liu2024braided}. 
\end{remark}

We constructed the braiding equivalences and finally conjecture that the braiding data provided here indeed gives $\Kbloc(\DS)$ the structure of a braided monoidal $2$-category:

 \begin{conjecture}
    \label{conj:main}
    The Rouquier complexes $m\boxtimes n \to n\boxtimes m$, together with the
    naturality data provided by the bimodule morphisms $\tau_{m,n}$, equip the
    locally graded $\k$-linear semistrict monoidal $2$-category $\Kbloc(\DS)$
    of complexes of (diagrammatic) Bott-Samelson bimodules of type A and chain maps up
    to homotopy with a braided monoidal structure in the sense of \cite[Lemma
    7]{MR1402727} and the slightly stricter notion by Crans~\cite{MR1626844}.
\end{conjecture}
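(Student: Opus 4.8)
The plan is to verify the axioms of a braided semistrict monoidal $2$-category from \cite[Lemma~7]{MR1402727} directly, using the data already assembled: the braiding $1$-morphisms $X_{m,n}$ (Rouquier complexes of cabled crossings), the naturality data $\tau_{m,n}$ from \Cref{thm:cabledcross-data}, and the compatibility with $\hcomp$ encoded by the factorization in \Cref{lem:cabled-crossings-from-coxeter} together with Rouquier canonicity \Cref{thm:Rouquier-canonicity}. The key observation is that all the required coherence $2$-morphisms are forced to be unique up to coherent homotopy by \Cref{prop:cohomologymorphismcomplex}: the relevant hom-complexes between braiding-composites of Bott--Samelson (and hence, via the bar-resolution argument of \Cref{thm:cabledcross-data}, arbitrary) complexes have cohomology concentrated in degree zero and one-dimensional there. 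So the strategy is cohomology-vanishing throughout, rather than explicit diagram chases.

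First I would set up the two hexagonator $2$-isomorphisms. These express that $X_{m+m',n}\simeq (X_{m,n}\boxtimes \id)\hcomp(\id\boxtimes X_{m',n})$-type composites agree up to coherent $2$-isomorphism; but such equivalences are exactly the content of \Cref{lem:cabled-crossings-from-coxeter} combined with the transitive system of homotopy equivalences $\psi_{\ub_1,\ub_2}$ from \Cref{thm:Rouquier-canonicity}, including the tensor-compatibility \eqref{eqn:Rouquier-can-tensor}. I would define the hexagonators as (the chain maps underlying) these canonical homotopy equivalences and then invoke \Cref{prop:cohomologymorphismcomplex} to see that they are well-defined up to coherent higher homotopy. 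Next I would check the two naturality squares each hexagonator must fit into: these are squares of the same shape as \eqref{eqn:naturality} with $f$ replaced by a tensorator or a hexagonator, and by \Cref{lem:slidetensorator} (for tensorators) and the same cohomology-vanishing argument (for hexagonators) each commutes up to a homotopy that is itself unique up to coherent homotopy --- this is where one packages everything into the $A_\infty$-centralizer language of \Cref{def:twistedcentralizer}, extending \Cref{cormain} so that $\tau_{m,n}$ is compatible with the hexagonators. Then the remaining axioms of \cite[Lemma~7]{MR1402727} --- the three ``pentagon-type'' coherences relating hexagonators, associators, and the Gray tensorator, plus the Yang--Baxter-type coherence --- are all equalities (or homotopies) between $2$-morphisms living in hom-complexes whose cohomology is concentrated in degree zero and one-dimensional; hence the two sides are forced to agree up to a canonical higher homotopy, which by the same vanishing is itself contractible. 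For the stricter notion of Crans~\cite{MR1626844}, one additionally checks that the ``syllepsis''-type data degenerates appropriately; again this is controlled by the same one-dimensional cohomology.

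The main obstacle, and the reason this is stated as a conjecture rather than a theorem, is \emph{bookkeeping of signs and of the auxiliary choices made in \Cref{thm:cabledcross-data}}. The construction of $\tau_{m,n}$ (and of $\tau^\infty_{m,n}$) involves choosing arbitrary antiderivatives at each stage of two nested inductions, and to verify the braided-monoidal axioms \emph{on the nose} one would need these choices to be mutually compatible; they are not, so the axioms hold only up to coherent homotopy, and making this precise requires working in a homotopy-coherent framework (an $(\infty,1)$- or $(\infty,2)$-categorical enhancement of the bar/centralizer constructions) or else carefully tracking an infinite hierarchy of higher cells by hand. A secondary difficulty is purely combinatorial: the hexagon and pentagon diagrams for a semistrict braided $2$-category are large (cf.\ \cite{webpage, schommer-pries-thesis}), and even granting that every obstruction vanishes cohomologically, organizing the verification so that no coherence is overlooked is delicate. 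For this reason I would present the result as a conjecture with the structural input (the $\tau_{m,n}$, the hexagonators from \Cref{lem:cabled-crossings-from-coxeter}, and the vanishing \Cref{prop:cohomologymorphismcomplex}) assembled, and note that a complete proof likely factors through a comparison with the fully homotopy-coherent construction of \cite{liu2024braided}, whose homotopy $2$-category $\cH$ is expected to be equivalent to $\Kbloc(\DS)$.
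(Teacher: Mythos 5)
Your proposal takes essentially the same route as the paper: the paper does not prove the conjecture in full either, but establishes it \emph{up to} the six coherence conditions \eqref{eq:coherencecond} in \Cref{thm:last}, using exactly the ingredients you list (the semistrict monoidal structure, the slide data $\tau_{m,n}$ from \Cref{thm:cabledcross-data}, Rouquier canonicity for the hexagonator-type $2$-morphisms, \Cref{lem:slidetensorator}, and the cohomology-vanishing of \Cref{prop:cohomologymorphismcomplex}). Your diagnosis of why the statement remains conjectural (incompatible auxiliary choices of antiderivatives, and the expected detour through \cite{liu2024braided}) matches the paper's.

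One point to be careful about: you lean on the claim that the relevant hom-complexes have cohomology ``concentrated in degree zero \emph{and one-dimensional there},'' so that all pentagon/hexagon-type coherences are \emph{forced}. \Cref{prop:cohomologymorphismcomplex} only gives concentration in degree zero; the one-dimensionality is a special feature of the cases treated in \Cref{lem:slidetensorator} (a single four-valent vertex), not of general objects $Y$ of $\DS_{m,n}$, let alone of complexes. For the six coherences of \eqref{eq:coherencecond} --- which compare a composite of two slide maps across two bundles of strands with the single slide map of the merged bundle --- the degree-zero cohomology is $\Hom(Y,Y)$, which is typically larger than $\k$, so one must actually verify that the two sides represent the same class; vanishing in nonzero degrees alone does not do this. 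This is precisely the residual gap the paper isolates: these relations are built into the construction for objects of $\DS_{m,n}$ but remain open for non-trivial complexes, and your cohomological forcing argument does not close them.
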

\begin{remark}
If the conjecture holds, it provides a low-tech approach to an important consequence of the
main result of \cite{liu2024braided}: that $\Kbloc(\SBim)$ admits the structure
of a braided monoidal bicategory. Note, however, that the work here and
in \cite{liu2024braided} are independent in the sense that neither implies the
other: here we work in a semistrict $2$-categorical setting, but truncated at
the level of chain maps, while in \cite{liu2024braided} we work in a fully weak, but homotopy-coherent setting.
\end{remark}
The only missing part for the conjecture to hold is the verification of coherences: 
\begin{theorem}[Braiding]
    \label{thm:last}
    \Cref{conj:main} holds up to the coherence conditions 
   \begin{equation}
    \label{eq:coherencecond}
    \begin{aligned}
        (\bullet\otimes \bullet)\otimes \!\rightarrow\;,\quad 
        (\bullet\;\otimes \!\to\!)\otimes \bullet\;,\quad 
        (\!\to\!\otimes \;\bullet)\otimes \bullet,\\ 
        \bullet\otimes (\bullet\;\otimes \!\to)\;,\quad 
        \bullet\otimes (\!\to\!\otimes \;\bullet)\;,\quad 
        \to\!\otimes (\bullet\otimes \bullet)\\ 
    \end{aligned}
\end{equation} in the notation from  \cite[Lemma 7]{MR1402727}. 
Moreover, \eqref{eq:coherencecond} hold for complexes concentrated in a single degree.
\end{theorem}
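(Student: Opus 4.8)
The plan is to verify the remaining coherence axioms of \cite[Lemma~7]{MR1402727} that are \emph{not} already established by the results of \Cref{sec:extensiontocomplexes}. Recall that a braiding on a semistrict monoidal $2$-category consists of: the braiding $1$-morphisms $R_{m,n}\colon m\otimes n\to n\otimes m$ (here the cabled crossings $\cabledcross_{m,n}$), invertible $2$-morphisms implementing naturality in each variable (here the slide chain maps $\slide_Y$ together with the bimodule maps $\tau_{m,n}$ of \Cref{thm:cabledcross-data}), and further invertible $2$-morphisms — in the notation of \cite[Lemma~7]{MR1402727} the data $R_{-|--}$ and $R_{--|-}$ expressing that $R_{m,n\otimes p}$ is built from $R_{m,n}$ and $R_{m,p}$ (resp.\ $R_{m\otimes n,p}$ from $R_{m,p}$ and $R_{n,p}$) — subject to a list of coherence conditions. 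The first step is to \emph{provide this remaining data}: the required $2$-morphisms $R_{-|--}$ and $R_{--|-}$ are precisely the homotopy equivalences $\cabledcross_{m,n+p}\hte (\one_n\boxtimes\cabledcross_{m,p})\hcomp(\cabledcross_{m,n}\boxtimes\one_p)$ and its mirror, which come from \Cref{lem:cabled-crossings-from-coxeter} via \Cref{thm:Rouquier-canonicity}; Rouquier canonicity also supplies the coherences among these that only involve braiding $1$-morphisms and these structural $2$-morphisms (this is the footnote remark after the \hyperlink{thmA}{Main Theorem} referencing \Cref{thm:Rouquier-canonicity}).

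The second step is to verify those coherence conditions of \cite[Lemma~7]{MR1402727} that additionally involve the naturality $2$-morphisms $\tau_{m,n}$ and that are \emph{not} of the six excluded types listed in \eqref{eq:coherencecond}. The key technical tool is \Cref{prop:cohomologymorphismcomplex}: for any $1$-morphisms the relevant hom-complex between $X_{m,n}\hcomp(Y_1\boxtimes Y_2)$ and $(Y_2'\boxtimes Y_1')\hcomp X_{m,n}$ is quasi-isomorphic to $\Hom(Y_1\boxtimes Y_2, Y_1'\boxtimes Y_2')$ and hence has cohomology concentrated in degree zero. Each coherence condition asserts the equality of two $2$-morphisms (chain maps up to homotopy) between such complexes; since both sides are closed degree-zero elements of a complex whose $H^0$ is the corresponding hom-space in $\DS$ (or $\CS_{m,n}$), it suffices to check that they induce the same map on $H^0$, i.e.\ that the underlying diagrammatic/bimodule maps agree. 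That underlying identification is then either a consequence of the interchange isomorphisms \eqref{slideboxtimescomp}, \eqref{slideboxtimes} and \Cref{lem:slidetensorator}, or it follows by postcomposing with end-dot morphisms and invoking the naturality squares of \Cref{thm:naturality} — exactly the argument used in the proof of \Cref{lem:slidetensorator} to pin down the relevant scalars to $1$. For the final sentence of the theorem, one restricts all of the above to complexes concentrated in a single homological degree, where the slide maps and homotopies reduce to the explicit data of \Cref{sec:natprebraid} (via \Cref{def:genslides} and the footnote in \Cref{def:genslides}), so that the coherence checks become finite diagrammatic computations with no higher homotopies entering; these are verified directly, color by color, using the explicit homotopies collected in \Cref{sec:diags}.

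\textbf{Main obstacle.} The hard part is \emph{not} the cohomological reduction — that is uniform and handled by \Cref{prop:cohomologymorphismcomplex} — but rather the \emph{bookkeeping}: one must carefully match the shape of each coherence diagram in \cite[Lemma~7]{MR1402727} (written for an abstract Gray monoid with braiding) against the concrete data $(\cabledcross_{m,n}, \tau_{m,n}, \text{structural equivalences from \Cref{lem:cabled-crossings-from-coxeter}})$, keeping track of which $2$-morphisms are honest identities (because of semistrictness and the strictness of $\hcomp$), which are the degree-zero components of $\tau_{m,n}$, and which are the length-one components $h_f$. The six conditions in \eqref{eq:coherencecond} are precisely those where this matching would additionally require knowing that $\tau_{m,n}$ is \emph{strictly} (not just $A_\infty$-) compatible with the monoidal structure on the bar side — which \Cref{thm:cabledcross-data}(2) only gives up to coherent homotopy — so isolating them and showing that all \emph{other} conditions avoid this issue is the delicate point. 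I expect the cleanest route is to organize the remaining conditions by the number of ``$\bullet$'' (braiding-$1$-morphism) slots they contain and to dispatch each class by the $H^0$-comparison above, deferring the problematic classes to \eqref{eq:coherencecond}.
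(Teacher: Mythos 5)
Your overall architecture (supply the data of \cite[Lemma~7]{MR1402727}, verify the non-excluded coherences, isolate the six problematic ones) matches the paper's, and your use of Rouquier canonicity for the structural $2$-morphisms $R_{-|--}$, $R_{--|-}$ and of \Cref{lem:slidetensorator} and \Cref{thm:naturality} for the tensorator/naturality conditions is the same. However, there are two substantive problems.

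First, your diagnosis of \emph{why} the six conditions in \eqref{eq:coherencecond} are excluded is wrong. You attribute them to $\tau_{m,n}$ being only $A_\infty$- rather than strictly multiplicative. But the coherences governed by the multiplicativity of a \emph{single} $\tau_{m,n}$ are the ones asserting that slide maps for $\hcomp$-composites agree with composites of slide maps; those are \emph{verified} in the paper, precisely as a consequence of \Cref{thm:cabledcross-data}(2) (the $A_\infty$-structure, via $\tau^\infty_2$, is exactly enough, since $2$-morphisms in $\Kbloc(\DS)$ are chain maps up to homotopy). The six excluded conditions instead compare the naturality data for a \emph{merged} bundle, $\tau_{m,n+p}$, with composites of $\tau_{m,n}$ and $\tau_{m,p}$ across separate bundles. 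For objects of $\DS_{m,n}$ this compatibility is built into the construction of the slide maps (\Cref{lem:cabled-crossings-from-coxeter}, \Cref{prop:sliding-objects}, \Cref{lem:h-coxeter}); for general complexes the extensions $\tau_{m,n+p}$, $\tau_{m,n}$, $\tau_{m,p}$ are produced by \emph{independent, arbitrary} choices of antiderivatives in the bar-complex induction, so their compatibility is genuinely open --- which is why the theorem excludes them rather than proving them. This also undercuts your account of the final sentence: for complexes in a single degree the six conditions hold because the merged slide maps are \emph{defined} as the relevant composites, not because one re-verifies them color by color from the homotopies in \Cref{sec:diags} (those homotopies encode naturality for generators, not bundle-merging).

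Second, your proposed uniform tool --- reduce each coherence to an equality in $H^0$ via \Cref{prop:cohomologymorphismcomplex} --- does not do the work you want for general complexes. Two closed degree-zero elements of the morphism complex are homotopic iff they agree in $H^0$, and $H^0$ is the hom-space in the homotopy category; so ``check they induce the same map on $H^0$'' is a restatement of the problem, not a reduction. The reduction is only effective when $H^0$ is explicitly computable and small, as in \Cref{lem:slidetensorator} where it is one-dimensional and the scalar is pinned down by end-dots. The paper accordingly verifies each non-excluded condition by pointing to the specific construction or prior result that establishes it (\Cref{cor:slideandhomotopy}, \Cref{thm:cabledcross-data}(2), Rouquier canonicity, the Crans normalization), rather than by a blanket cohomological argument.
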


\begin{proof}
We check the required data and properties for a braided monoidal $2$-category
    following \cite[Lemma 7]{MR1402727}:
    \begin{enumerate}
        \item We gave $\DS$ a semistrict monoidal structure in
        \Cref{thm:ssmtwocat} and by \Cref{thm:ssmtwocatb} it is inherited by
        $\Kbloc(\DS)$.
        \item We use the Rouquier complex of the cabled crossing $X_{m,n}\colon m \boxtimes n \to n \boxtimes m$ as underlying $1$-morphisms of the braiding,
        \item The $2$-morphisms required here are slide chain maps of the form $\slide_{Y_1\boxtimes \one_n}$ from \Cref{def:genslides}. To show  \cite[Lemma 7, (4)]{MR1402727} we use $\slide_{\one_m\boxtimes Y_2}$.
        \item[(5)] The $2$-morphisms required here are provided by Rouquier canonicity, see \Cref{thm:Rouquier-canonicity}. Likewise we deal with \cite[Lemma 7, (6)]{MR1402727}.
    \end{enumerate}
    The $2$-morphisms have to satisfy certain conditions (for us: certain composites of chain maps are homotopic), which we list in the same order as in \cite[Lemma 7]{MR1402727}:
    \begin{itemize}
        \item The compatibility of slide maps in both arguments and the
        tensorators in $\DS$ was discussed in \Cref{lem:slidetensorator} and lifted to $\Kbloc(\DS)$ as part of \eqref{eqn:naturality} in \Cref{cor:slideandhomotopy}. 
        \item The next two requirements are the naturality squares for slides, i.e. \eqref{eqn:naturality2} for $\DS$, which were lifted to $\Kbloc(\DS)$ as part of \eqref{eqn:naturality} in \Cref{cor:slideandhomotopy}.
        \item The following two requirements posit that slide maps for
        $\hcomp$-composites are homotopic to composites of slide maps, which is a consequence of \Cref{thm:cabledcross-data}.(2).
        \item Six more requirements express the compatibility of composites of two slide maps across two bundles of strands with a single slide map associated to the merged bundle of strands. For objects in $\DS_{m,n}$, these relations are built into the construction of the slide maps, for non-trivial complexes over $\DS_{m,n}$, they remain conjectural.
        \item Three compatibility conditions that follow from Rouquier canonicity.
        \item The $S^+=S^-$ relation holds by Rouquier canonicity, see also \cite[Remark 2.4.4]{liu2024braided}.
    \end{itemize}
The data (1) to (6) additionally satisfy the normalization conditions of the
slightly stricter notion of braided $2$-category by Crans \cite{MR1626844},
which expresses that the unit for the tensor product is also the unit for the braiding.
\end{proof}

\newpage
\section{Diagrammatic computation of slide homotopies}
\label{sec:diags}
We display the diagrammatic calculations of slide homotopies for diagrammatic generators $f$ that constitute the heart of the proof of \Cref{prop:slidehomotopyforgen}. In each case we compute the difference of two chain maps: the composite of a slide chain map with $f$ left of the crossings, and the composite of $f$ on the right of the crossings with a slide chain map. After displaying simplified expressions, we provide a nullhomotopy for the difference.

\begin{figure}[h]
    \begin{gather*}
        \eqnstartdothomotopy
    \end{gather*} 
    \caption{(Non-trivial) Homotopy for the start dot.} 
    \label{fig:startdot}
\end{figure} 

\begin{figure}[!b]
    \begin{gather*}
        \eqnenddothomotopy
    \end{gather*}
    \caption{(Non-trivial) Homotopy for the end dot.}
    \label{fig:enddot}
\end{figure}
\begin{figure}[!b]
    \begin{gather*}
        \eqnmergehomotopy
    \end{gather*}
    \caption{(Trivial) Homotopy for the merge vertex.}
    \label{fig:merge}
\end{figure}
\begin{figure}[!b]
    \begin{gather*}
        \eqnsplithomotopy
    \end{gather*}
    \caption{(Trivial) Homotopy for the split vertex.}
    \label{fig:split}
\end{figure} 
\begin{figure}[!b]
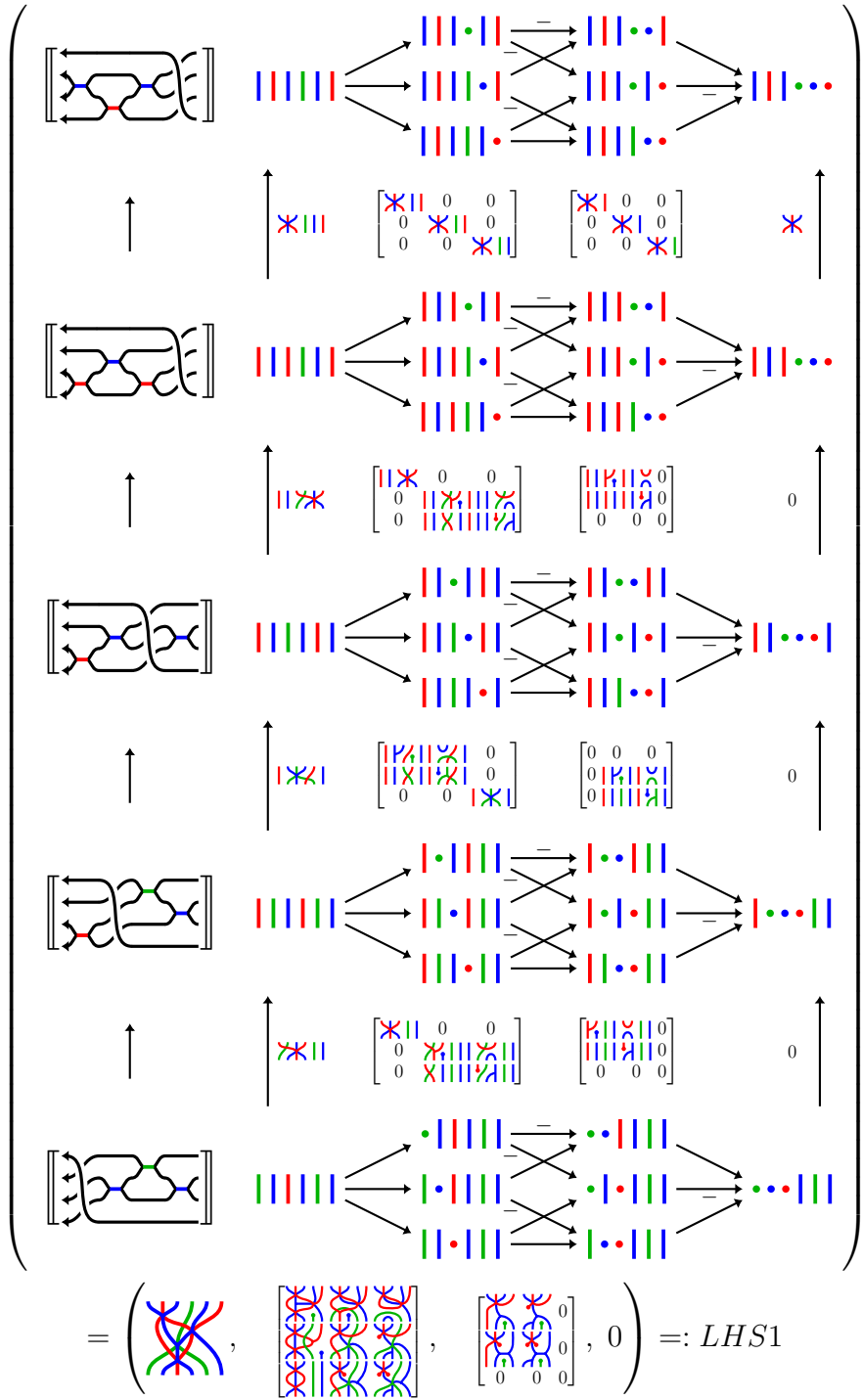

    \begin{gather*}
        \sixvlhs
    \end{gather*}
    \caption{Computing the composite of the first six-valent vertex with a slide map.}
    \label{fig:first6valentvertex1}
\end{figure}
\begin{figure}[!b]
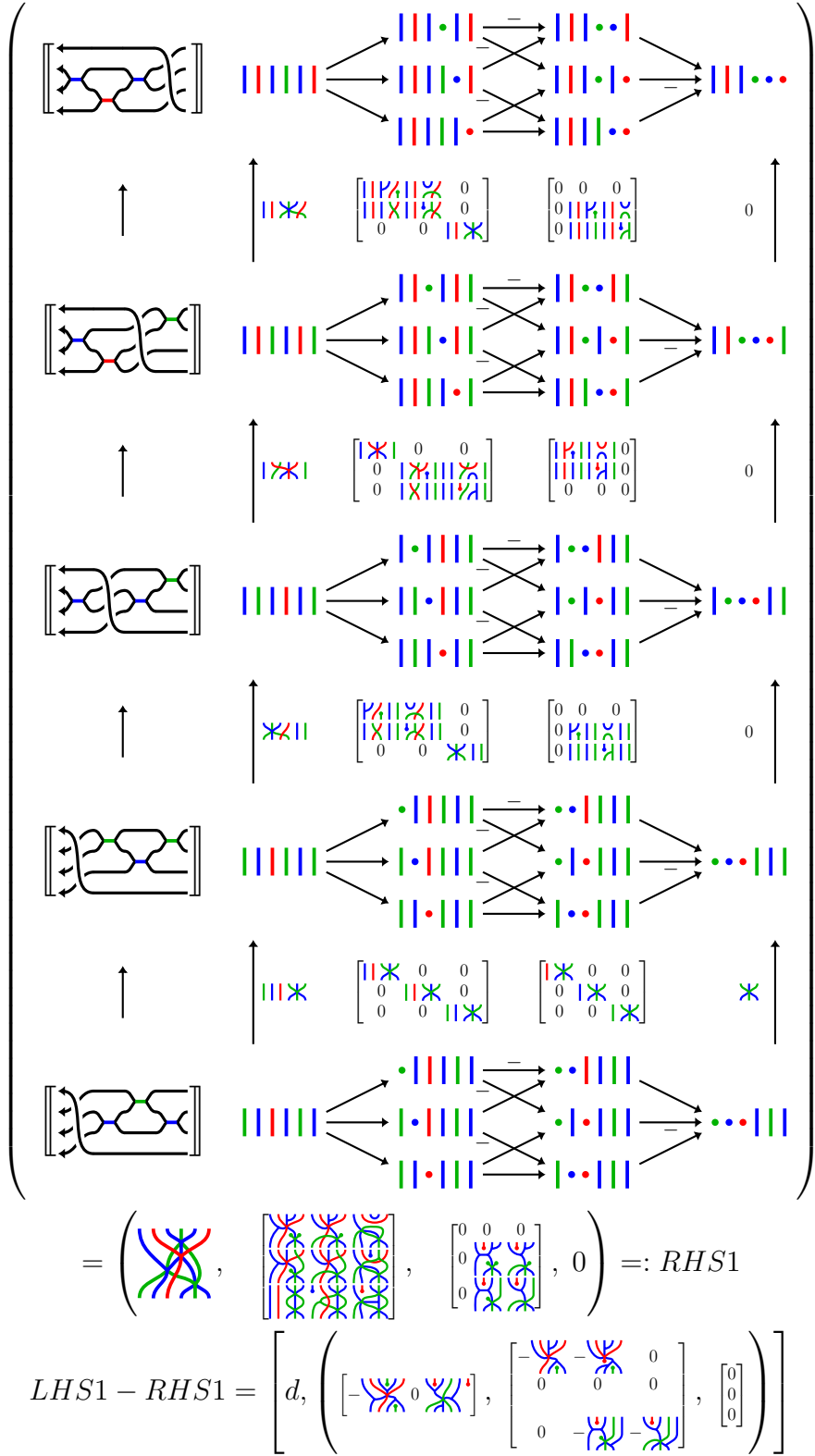

    \begin{gather*}
        \sixvrhs
    \end{gather*}
    \caption{Computing the composite of a slide map with the first six-valent vertex and  the (non-trivial) homotopy for the first six-valent vertex.}
    \label{fig:first6valentvertex2}
\end{figure}
\begin{figure}[!b]
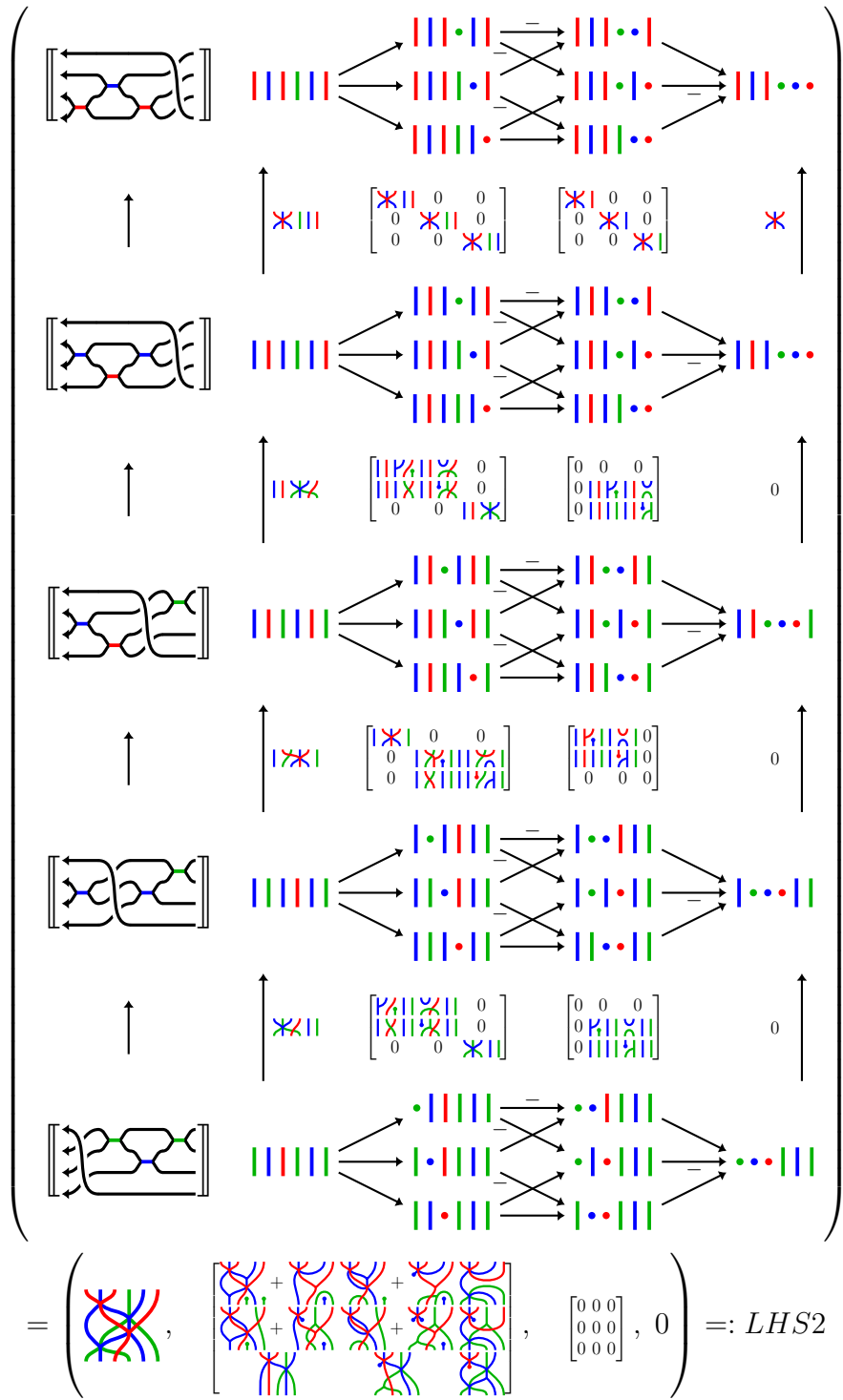

    \begin{gather*}
        \sixvvlhs
    \end{gather*}
    \caption{Computing the composite of the second six-valent vertex with a slide map.}
    \label{fig:second6valentvertex1}
\end{figure}
\begin{figure}[!b]
    \begin{gather*}
        \sixvvrhs
    \end{gather*} 
    \caption{Computing the composite of a slide map with the second six-valent vertex and the (trivial) homotopy for the second six-valent vertex.}
    \label{fig:second6valentvertex2}
\end{figure}
    \begin{figure}[!b]
   \begin{gather*}
        \fourvlhs
    \end{gather*}
    \caption{Computing the composite of the four-valent vertex with a slide map.}
    \label{fig:4valentvertex1}
\end{figure}
\begin{figure}
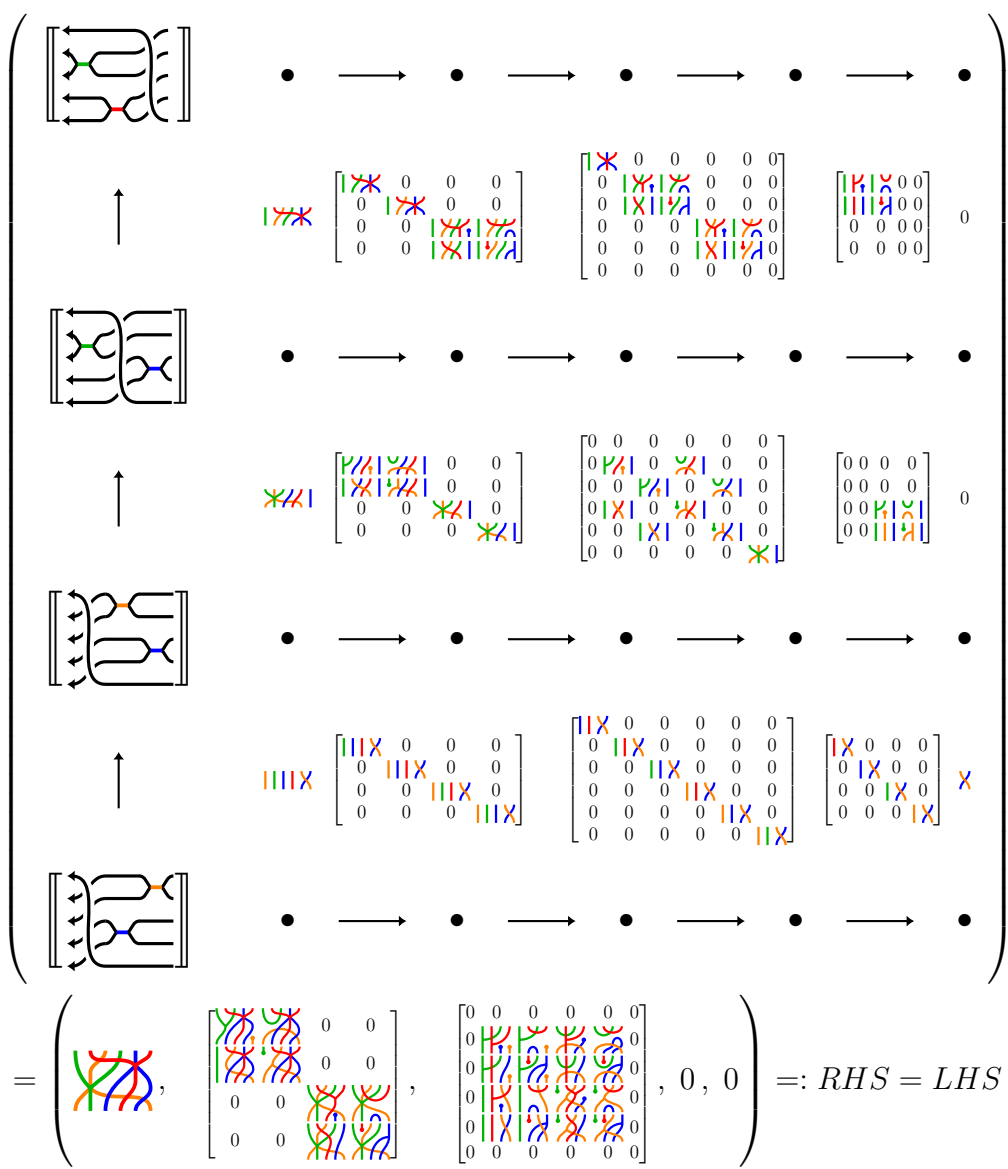

    \begin{gather*}
        \fourvrhs
    \end{gather*}
    \caption{Computing the composite of a slide map with the four-valent vertex and finding the (trivial) homotopy for the four-valent vertex.}
    \label{fig:4valentvertex2}
\end{figure}

\clearpage

\renewcommand*{\bibfont}{\small}
\setlength{\bibitemsep}{0pt}
\raggedright
\printbibliography

\end{document}